\newtheorem{thm}{Theorem}[section]
\newtheorem*{thm*}{Theorem}
\newtheorem{cor}[thm]{Corollary}
\newtheorem{lem}[thm]{Lemma}
\newtheorem{prop}[thm]{Proposition}
\newtheorem*{prop*}{Proposition}
\newtheorem*{conj*}{Conjecture}
\newtheorem*{dfn*}{Definition}
\theoremstyle{definition}
\newtheorem{rem}[thm]{\textbf{Remark}}
\newtheorem*{rmk*}{Remark}
\newtheorem*{fact*}{Fact}
\newtheorem{ex}[thm]{Example}
\theoremstyle{proof}
\numberwithin{equation}{section}
\newcommand{\norm}[1]{\left\Vert#1\right\Vert}
\newcommand{\abs}[1]{\left\vert#1\right\vert}
\newcommand{\set}[1]{\left\{#1\right\}}
\newcommand{\brac}[1]{\left(#1\right)}
\newcommand{\Real}{\mathbb{R}}
\newcommand{\eps}{\varepsilon}
\newcommand{\C}{\mathcal{C}}
\newcommand{\Vol}[1]{\textnormal{Vol} \left(#1 \right)} 
\newtheorem*{namedprop}{\theoremname}
\newcommand{\theoremname}{testing}
\newlength{\defbaselineskip}
\newcommand{\setlinespacing}[1]%
           {\setlength{\baselineskip}{#1 \defbaselineskip}}
\newcommand{\B}{\mathcal{B}}
\def \F {\mathcal{F}}
\newcommand{\diam}{\textnormal{diam}}
\def \Cap {\textnormal{Cap}}
\def \tVol {\textnormal{Vol}}
\begin{document}

\title{On the role of Convexity in Isoperimetry, Spectral Gap and Concentration}

\author{\\
Emanuel Milman\textsuperscript{1} \\ \\}

\footnotetext[1]{School of Mathematics,
Institute for Advanced Study, Einstein Drive, Simonyi Hall, Princeton, NJ 08540.
Email: emilman@math.ias.edu.\\
Supported by NSF under agreement \#DMS-0635607.}

\maketitle

\begin{abstract}
We show that for convex domains in Euclidean space, Cheeger's isoperimetric inequality,
spectral gap of the Neumann Laplacian, exponential concentration of Lipschitz functions, and
the a-priori weakest requirement that Lipschitz functions have \emph{arbitrarily slow} uniform tail-decay,
are all quantitatively equivalent (to within universal constants, independent of the
dimension). This substantially extends previous results of
Maz'ya, Cheeger, Gromov--Milman, Buser and Ledoux. As an application, we conclude a sharp quantitative stability
result for the spectral gap of convex domains under convex perturbations which preserve volume (up to
constants) and under maps which are ``on-average'' Lipschitz. We also provide a new characterization
(up to constants) of the spectral gap of a convex domain, as one over the square of the average distance from the ``worst''
subset having half the measure of the domain. In addition, we easily recover and extend many previously known lower bounds on the spectral gap of convex domains, due to Payne--Weinberger, Li--Yau, Kannan--Lov\'asz--Simonovits, Bobkov and Sodin. The proof involves estimates on the diffusion semi-group following Bakry--Ledoux and a result from Riemannian Geometry on the concavity of the isoperimetric profile. Our results extend to the more general setting of Riemannian
manifolds with density which satisfy the $CD(0,\infty)$ curvature-dimension condition of Bakry-\'Emery.
\end{abstract}

\section{Introduction}

Let $(\Omega,d,\mu)$ denote a metric probability space. More
precisely, we assume that $(\Omega,d)$ is a separable metric space
and that $\mu$ is a Borel probability measure on $(\Omega,d)$ which
is not a unit mass at a point. Although it is not essential for the
ensuing discussion, it will be more convenient to specialize to the
case where $\Omega$ is a smooth complete oriented $n$-dimensional
Riemannian manifold $(M,g)$, $d$ is the induced geodesic distance,
and $\mu$ is an absolutely continuous measure with respect to the
Riemannian volume form $vol_M$ on $M$. A question which goes back at
least to the 19th century (motivating the solution to the
isoperimetric problem in $\Real^n$), and arguably much before that
(e.g. Dido's problem), pertains to the interplay between the metric
$d$ and the measure $\mu$. There are various different ways to
measure this relationship, which may be typically arranged according
to strength, forming a hierarchy.
In this work, we will be primarily concerned with three such different ways.

\subsection{The Hierarchy}

The first way is by means of an isoperimetric inequality. Recall that Minkowski's (exterior)
boundary measure of a Borel set $A \subset \Omega$, which we denote here by $\mu^+(A)$, is defined
as:
\[
 \mu^+(A) := \liminf_{\eps \to 0} \frac{\mu(A_{\eps,d}) - \mu(A)}{\eps}~,
\]
where $A_{\eps,d} := \set{x \in \Omega ; \exists y \in A \;\; d(x,y) < \eps}$ denotes the
$\eps$-neighborhood of $A$ with respect to the metric $d$. It is clear that the boundary measure is
a natural generalization of the notion of surface area to the metric probability space setting. An
isoperimetric inequality measures the relation between $\mu^+(A)$ and $\mu(A)$ by means of the
isoperimetric profile $I = I_{(\Omega,d,\mu)}$, defined as the pointwise maximal function $I : [0,1]
\rightarrow \Real_+$, so that $\mu^+(A) \geq I(\mu(A))$
for all Borel sets $A \subset \Omega$. A set $A$ for which equality above is attained is
called an isoperimetric minimizer. Since $A$ and $\Omega \setminus A$ will typically
(but not necessarily, consider $\mu$ with non-continuous density) have the same boundary measure,
it will be convenient to also define $\tilde{I} = \tilde{I}_{(\Omega,d,\mu)}$ as the function
$\tilde{I} :[0,1/2] \rightarrow \Real_+$ given by $\tilde{I}(t) := \min(I(t),I(1-t))$.

A very useful isoperimetric inequality was considered by Cheeger \cite{CheegerInq}
(and in a more general form, independently by V. G. Maz'ya \cite{MazyaSobolevImbedding,MazyaCapacities}):

\begin{dfn*}
The space $(\Omega,d,\mu)$ is said to satisfy Cheeger's isoperimetric inequality if:
\[
\exists D>0 \; \text{ such that } \; \tilde{I}_{(\Omega,d,\mu)}(t) \geq D t \;\; \forall t \in [0,1/2] ~.
\]
The best possible constant $D$ above is denoted by $D_{Che} = D_{Che}(\Omega,d,\mu)$.
\end{dfn*}

A second way to measure the interplay between $d$ and $\mu$ is given
by functional inequalities. Let $\F = \F(\Omega,d)$ denote the space
of functions which are Lipschitz on every ball in $(\Omega,d)$ - we
will call such functions ``Lipschitz-on-balls'' - and let $f \in
\F$. We will consider functional inequalities which measure the
relation between $\norm{f}_{L_p(\mu)}$ and $\norm{\abs{\nabla
f}}_{L_q(\mu)}$, for $0 < p,q \leq \infty$ (more general Orlicz
norms will be treated in
\cite{EMilmanRoleOfConvexityInFunctionalInqs}). Here, the effect of
the metric $d$ is via the Riemannian metric $g$ which is used to
measure $\abs{\nabla f}:=g(\nabla f,\nabla f)^{1/2}$, although more
general ways exist to define $\abs{\nabla f}$ in the non manifold
setting. Of course if $f$ is constant there is no sense to compare
against $\norm{\abs{\nabla f}}_{L_q(\mu)} = 0$, so we will need to
exclude these cases. To this end, we will require that either the
expectation $E_\mu f$ or median $M_\mu f$ of $f$ are 0. Here $E_\mu
f = \int f d\mu$ and $M_\mu f$ is a value so that $\mu(f \geq M_\mu
f) \geq 1/2$ and $\mu(f \leq M_\mu f) \geq 1/2$.

A well known example of a functional inequality was studied by Poincar\'e:
\begin{dfn*}
The space $(\Omega,d,\mu)$ is said to satisfy Poincar\'e's inequality if:
\[
\exists D>0 \; \text{ such that } \; \forall f \in \F \;\;\;\;  D
\norm{f - E_\mu f}_{L_2(\mu)} \leq \norm{\abs{\nabla f}}_{L_2(\mu)} ~.
\]
The best possible constant $D$ above is denoted by $D_{Poin} = D_{Poin}(\Omega,d,\mu)$.
\end{dfn*}
It is well known (e.g. \cite{FollandBook}) that under appropriate smoothness assumptions, Poincar\'e's inequality is equivalent to the existence of a spectral gap of an appropriate Laplacian operator $-\Delta_{g,\mu}$ on $(M,g)$ associated to the measure $\mu$ with corresponding boundary conditions on its support. When $\mu$ is uniform on a domain $\Omega \subset (M,g)$, $\Delta_{g,\mu}$ coincides with the usual Laplace-Beltrami operator $\Delta_g$ with Neumann boundary conditions on $\Omega$. The first non-trivial eigenvalue of $-\Delta_{g,\mu}$ (the ``spectral gap'') is then precisely $D_{Poin}^2(\Omega,d,\mu)$.

\medskip

A third way to measure the relation between $d$ and $\mu$ is given
by concentration inequalities. These measure how tightly
$1$-Lipschitz functions are concentrated about their mean, by
providing a quantitative estimate on the tail decay $\mu(|f - E_\mu
f| \geq t)$. A typical situation is given by the following example:
\begin{dfn*}
The space $(\Omega,d,\mu)$ is said to have exponential concentration if:
\[
\exists c,D>0 \; \text{ such that } \; \forall \text{ 1-Lipschitz } f \;\; \forall t>0 \;\;\;\;
\mu(|f - E_\mu f| \geq t) \leq c \exp(-D t) ~.
\]
Fixing $c=e$, the best possible constant $D$ above is denoted by $D_{Exp} =
D_{Exp}(\Omega,d,\mu)$. The best constant for a specific $f$ is denoted by $D_{Exp}(f)$.
\end{dfn*}

It is known that the three examples mentioned above are arranged in a hierarchy. It was shown by
Cheeger \cite{CheegerInq}, and in a more general form, independently by Maz'ya
\cite{MazyaSobolevImbedding,MazyaCheegersInq1,MazyaCapacities} (see also \cite{GrigoryanAboutMazya}),
that Cheeger's isoperimetric inequality always implies Poincar\'e's inequality (or spectral gap):
\begin{thm}[Maz'ya, Cheeger] \label{thm:Cheeger}
$D_{Poin} \geq D_{Che}/2$ (``Cheeger's inequality'').
\end{thm}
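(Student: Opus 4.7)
The plan is to adapt the now-standard Cheeger-type argument: reduce to a median-zero test function, apply the coarea formula to the squared positive and negative parts, use the isoperimetric hypothesis on each super-level set (all of which have measure at most $1/2$), and finish with Cauchy--Schwarz.

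More precisely, let $f\in\F$ and set $g := f - M_\mu f$, so that $M_\mu g = 0$. Since $\|f - E_\mu f\|_{L_2(\mu)} \leq \|g\|_{L_2(\mu)}$ (the variance is minimized by the mean), it suffices to bound $\|g\|_{L_2(\mu)}$ by $(2/D_{Che})\|\,|\nabla g|\,\|_{L_2(\mu)}$. Write $g = g_+ - g_-$ for the positive and negative parts. Because $M_\mu g = 0$, for every $t > 0$ the super-level set $A_t := \{g_+^2 \geq t\} = \{g \geq \sqrt{t}\}$ satisfies $\mu(A_t) \leq 1/2$, and similarly for the negative part; hence Cheeger's hypothesis applies in the form $\mu^+(A_t) \geq D_{Che}\,\mu(A_t)$.

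The coarea formula for the Lipschitz-on-balls function $g_+^2$ then yields
\[
\int |\nabla g_+^2|\,d\mu \;=\; \int_0^\infty \mu^+(\{g_+^2 \geq t\})\,dt \;\geq\; D_{Che}\int_0^\infty \mu(\{g_+^2 \geq t\})\,dt \;=\; D_{Che}\int g_+^2\,d\mu .
\]
On the other hand $|\nabla g_+^2| = 2 g_+ |\nabla g|$ almost everywhere (supported on $\{g > 0\}$), so Cauchy--Schwarz gives
\[
D_{Che}\int g_+^2\,d\mu \;\leq\; 2 \Bigl(\int g_+^2\,d\mu\Bigr)^{1/2}\Bigl(\int_{\{g>0\}}|\nabla g|^2 d\mu\Bigr)^{1/2},
\]
that is, $\|g_+\|_{L_2(\mu)} \leq (2/D_{Che})\bigl(\int_{\{g>0\}}|\nabla g|^2 d\mu\bigr)^{1/2}$. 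Running the same argument for $g_-$ and then summing the squares produces
\[
\|g\|_{L_2(\mu)}^2 \;=\; \|g_+\|_{L_2(\mu)}^2 + \|g_-\|_{L_2(\mu)}^2 \;\leq\; \frac{4}{D_{Che}^2}\int |\nabla g|^2 d\mu ,
\]
which, combined with the variance inequality above and $|\nabla g| = |\nabla f|$, delivers $D_{Poin} \geq D_{Che}/2$.

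The only subtle point I expect is the rigorous justification of the coarea identity for $g_+^2$ in the metric-probability / Riemannian-with-density generality the paper works in, and the verification that $\mu^+$ of the level sets (rather than the $(n-1)$-Hausdorff measure of smooth level hypersurfaces) is what actually appears on the right-hand side; I would handle this by approximating with smooth compactly supported functions and invoking Sard's theorem together with Fleming--Rishel, so that almost every level is regular and $\mu^+$ agrees with the induced boundary measure there. Everything else is bookkeeping.
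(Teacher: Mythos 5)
Your argument is correct and is, at its core, the same one the paper uses: the paper proves this as the one-line chain $D_{Poin} \geq D^M_{Poin} = D_{2,2} \geq D_{1,1}/2 = D_{Che}/2$, where $D_{2,2} \geq D_{1,1}/2$ (Proposition~\ref{prop:pq-increase}) is obtained by applying the $L_1$ functional form of Cheeger's inequality (Lemma~\ref{lem:Cheeger=L1}, via the coarea inequality) to the single signed test function $\mathrm{sign}(g)\,g^2$ and then invoking Cauchy--Schwarz, and $D_{1,1}=D_{Che}$ is Lemma~\ref{lem:Cheeger=L1} itself. You unpack exactly that coarea--plus--Cauchy--Schwarz computation, merely splitting $g$ into $g_+$ and $g_-$ and treating the two nonnegative squares separately instead of working with the signed square; both variants produce the same constant $2$, and your worry about the coarea step is already handled in the paper by using it as an inequality (``$\geq$''), which is all that is needed.
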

The fact that Poincar\'e's inequality implies exponential concentration was first shown by M.
Gromov and V. Milman \cite{GromovMilmanLevyFamilies} in the Riemannian setting, and subsequently by
other authors in other settings as well (e.g. \cite{AlonMilmanSpectralGapImpliesConcentration}, see \cite{Ledoux-Book} and the references therein):
\begin{thm}[Gromov--Milman] \label{thm:GM}
There exists a universal numeric constant $c>0$ such that $D_{Exp} \geq c D_{Poin}$.
\end{thm}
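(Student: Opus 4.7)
The plan is to use the now-standard exponential-moment (Herbst-type) technique: apply Poincar\'e's inequality to the test function $g := \exp(\lambda \tilde f/2)$ for a centered $1$-Lipschitz $\tilde f$, derive a functional recursion for the Laplace transform $F(\lambda) := E_\mu \exp(\lambda \tilde f)$, iterate it to an explicit bound, and conclude by Chernoff.

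To address the a-priori possibility that $E_\mu \exp(\lambda f) = \infty$, I would first reduce to a bounded centered function by the truncation $f_N := \max(-N,\min(N,f - E_\mu f))$, which is $1$-Lipschitz and bounded; after subtracting its mean one obtains $\tilde f_N \in \F$ with $E_\mu \tilde f_N = 0$ and $|E_\mu f_N| \to 0$. For such $\tilde f_N$ and $g := \exp(\lambda \tilde f_N/2)$, the $1$-Lipschitz property gives the pointwise bound $|\nabla g| \le (\lambda/2)\,g$, so Poincar\'e yields
\[
D_{Poin}^2 \big( E_\mu g^2 - (E_\mu g)^2 \big) \le E_\mu |\nabla g|^2 \le (\lambda^2/4) E_\mu g^2,
\]
which rearranges into the key recursion
\[
\big( 1 - \lambda^2/(4 D_{Poin}^2) \big) F(\lambda) \le F(\lambda/2)^2 \qquad \text{for } \lambda \in [0, 2 D_{Poin}).
\]

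Next I would iterate this at the geometric sequence $\lambda_k := \lambda/2^k$. Setting $u_k := \log F(\lambda_k)$ and $\alpha_k := -\log(1 - \lambda_k^2/(4D_{Poin}^2)) \ge 0$, the recursion reads $u_k \le 2 u_{k+1} + \alpha_k$ and telescopes to $u_0 \le 2^n u_n + \sum_{k=0}^{n-1} 2^k \alpha_k$. Because $\tilde f_N$ is bounded with $E_\mu \tilde f_N = 0$, one has $u_n = O(\lambda_n^2)$, hence $2^n u_n \to 0$. For $\lambda \le D_{Poin}$ one uses $-\log(1-x) \le 2x$ on $[0,1/4]$ to obtain $\sum_k 2^k \alpha_k \le \lambda^2/D_{Poin}^2$, yielding $F(\lambda) \le \exp(\lambda^2/D_{Poin}^2)$ uniformly in the truncation level $N$. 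Markov's inequality $\mu(\tilde f_N \ge t) \le e^{-\lambda t} F(\lambda)$ optimized at $\lambda = D_{Poin}$ then gives $\mu(\tilde f_N \ge t) \le e \exp(-D_{Poin} t)$; symmetrizing via $-f$ and passing $N \to \infty$ by dominated convergence yields $\mu(|f - E_\mu f| \ge t) \le 2e \exp(-D_{Poin} t)$, from which $D_{Exp} \ge c\, D_{Poin}$ for a universal $c > 0$ follows by slightly shrinking the exponent to absorb the factor of $2$.

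The principal technical nuisance is handling the a-priori unboundedness of $F(\lambda)$ via truncation together with ensuring that the iteration genuinely telescopes, i.e.\ that $2^n u_n \to 0$; this is precisely why centering $\tilde f_N$ so that $E_\mu \tilde f_N = 0$ is essential (otherwise $u_n \sim \lambda_n E_\mu \tilde f_N$ and the $2^n$ factor would kill the argument). Verifying uniformity in $N$ of all the estimates is routine once the scheme is set up, and the rest reduces to optimization.
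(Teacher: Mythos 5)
Your proof is correct, but it is genuinely different from the paper's argument for Theorem~\ref{thm:GM}. The paper proves this (as Corollary~\ref{cor:prove-GM}) by first using Proposition~\ref{prop:pq-increase} to obtain $D_{p,p}\geq c\,D_{Poin}/p$ for all $p\geq 2$, which controls the $L_p$-moment growth $\norm{f - M_\mu f}_{L_p(\mu)} \leq C\,p/D_{Poin}$ for $1$-Lipschitz $f$, and then invokes the standard equivalence $\norm{g}_{\Psi_1(\mu)} \simeq \sup_{p\geq 1}\norm{g}_{L_p(\mu)}/p$ together with $1/D_{Exp}(f) \simeq \norm{f - E_\mu f}_{\Psi_1(\mu)}$ (and Lemma~\ref{lem:E-M} to pass between median and mean). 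This moment-growth route is tailored to fit the unified $(p,q)$-Poincar\'e framework the paper develops, and it avoids any truncation or limiting argument since no Laplace transform is involved. Your approach, by contrast, is the classical Herbst-type (Aida--Stroock) iteration: apply Poincar\'e to $g=\exp(\lambda\tilde f/2)$, derive the recursion $\brac{1 - \lambda^2/(4D_{Poin}^2)}F(\lambda) \leq F(\lambda/2)^2$, telescope over the geometric scale $\lambda/2^k$, show the tail term $2^n u_n$ vanishes by the second-order expansion (this is where centering is essential, as you correctly observe), and conclude via Chernoff. The two arguments are in fact dual manifestations of the same sub-exponential integrability, and they yield comparable constants; yours is more elementary in that it bypasses Orlicz norms entirely, while it costs a truncation/dominated-convergence step to justify the a-priori finiteness of the exponential moments, which the paper's proof sidesteps. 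One small point worth flagging: your truncation $f_N := \max(-N,\min(N,f-E_\mu f))$ presupposes that $E_\mu f$ exists, which is not a priori given for an arbitrary $1$-Lipschitz $f$; it is cleaner to truncate around a median, $f_N := \max(-N,\min(N,f-M_\mu f))$, re-center by $E_\mu f_N$, run the argument, and deduce a posteriori that $E_\mu f$ is finite from the uniform exponential tail bound. With that cosmetic change the proof is airtight.
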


\subsection{Reversing the Hierarchy}

It is known and easy to show that these implications
\emph{cannot} be reversed in general.
For instance, using $([-1,1],\abs{\cdot},\mu_\alpha)$ where
$d\mu_\alpha = \frac{1+\alpha}{2} |x|^\alpha dx$ on $[-1,1]$, clearly $\mu_\alpha^+([0,1]) = 0$ so $D_{Che} = 0$, whereas one can show that $D_{Poin}> 0$ for $\alpha \in (0,1)$ using a criterion for the Poincar\'e inequality on $\Real$ due to Artola, Talenti and Tomaselli (cf. Muckenhoupt \cite{MuckenhouptHardyInq}).
In addition, if $\mu$ is supported on a set
$\Omega$ with diameter bounded by a finite $D$, trivially one has
$D_{Exp} \geq 1/D > 0$; but if we choose $\Omega$ to be
disconnected, we will always have $D_{Poin} = D_{Che} = 0$. In fact,
one need not impose such topological obstructions on $\Omega$, it is
also easy to construct a connected set with arbitrarily narrow
``necks''. We conclude that in order to have any chance of
reversing the above implications, we will need to add some
additional assumptions, which will prevent the existence of such
narrow necks. Intuitively, it is clear that some type of convexity
assumptions are a natural candidate.
We start with two important examples when $(M,g) =
(\Real^n,\abs{\cdot})$ and $\abs{\cdot}$ is some
fixed Euclidean norm:
\begin{itemize}
\item
$\Omega$ is an \emph{arbitrary} bounded convex domain in $\Real^n$ ($n \geq 2$), and $\mu$ is the
uniform probability measure on $\Omega$.
\item
$\Omega = \Real^n$ ($n \geq 1$) and $\mu$ is an \emph{arbitrary}
absolutely continuous log-concave probability measure, meaning that
$d\mu = \exp(-\psi) dx$ where $\psi: \Real^n \rightarrow \Real \cup
\set{+\infty}$ is convex (we refer to the paper
\cite{Borell-logconcave} of C. Borell for more information).
\end{itemize}

In both cases, we will say that ``our convexity assumptions are fulfilled''. More generally, we
present the following definition:

\begin{dfn*}
We will say that our \emph{smooth convexity assumptions} are fulfilled if:
\begin{itemize}
\item
$(M,g)$ denotes an $n$-dimensional ($n\geq 2$) smooth complete oriented connected Riemannian manifold
or $(M,g)=(\Real,\abs{\cdot})$, and $\Omega = M$.
\item
$d$ denotes the induced geodesic distance on $(M,g)$.
\item
$d\mu = \exp(-\psi)  dvol_M$,  $\psi \in C^2(M)$, and as tensor fields on $M$:
\begin{equation} \label{eq:Intro-BE}
Ric_g + Hess_g \psi \geq 0 ~.
\end{equation}
\end{itemize}
We will say that our \emph{convexity assumptions} are fulfilled if
$\mu$ can be approximated in total-variation by measures $\set{\mu_m}$ so that $(\Omega,d,\mu_m)$ satisfy our smooth convexity assumptions.
\end{dfn*}

The condition (\ref{eq:Intro-BE}) is the well-known Curvature-Dimension condition $CD(0,\infty)$,
introduced by Bakry and \'Emery in their influential paper \cite{BakryEmery} (in the more abstract
framework of diffusion generators). Here $Ric_g$ denotes the Ricci curvature tensor and $Hess_g$ denotes the second covariant derivative. When the Ricci tensor satisfies a slightly relaxed condition
$Ric_g \geq -K g$, $K \geq 0$, it was first shown by Buser \cite{BuserReverseCheeger} that the
implication in Theorem \ref{thm:Cheeger} can be reversed. We only quote the $K=0$ case, which in our
setting reads:

\begin{thm}[Buser] \label{thm:Buser}
If $\mu$ is uniform on a closed $n$-dimensional manifold $(M,g)$ and $Ric_g
\geq 0$ then $D_{Che} \geq c D_{Poin}$, where $c>0$ is a universal numeric constant.
\end{thm}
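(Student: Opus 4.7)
The plan is to use the heat semigroup method of Bakry and Ledoux, which is well adapted to the $CD(0,\infty)$ hypothesis (with $\mu$ uniform, $\psi \equiv 0$, so that $Ric_g \geq 0$ is precisely \eqref{eq:Intro-BE}). Let $P_t = e^{t \Delta_g}$ denote the heat semigroup on the closed manifold $(M,g)$; it is Markov ($P_t 1 = 1$), self-adjoint with respect to $\mu$, and by Bochner's formula the curvature bound $Ric_g \geq 0$ is equivalent to the pointwise gradient commutation
\[
\abs{\nabla P_t f} \leq P_t \abs{\nabla f}.
\]

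The main technical input is the Bakry--Ledoux $L_1$-regularization inequality
\[
\norm{P_t f - f}_{L_1(\mu)} \leq \sqrt{2t/\pi} \, \norm{\abs{\nabla f}}_{L_1(\mu)},
\]
which follows from the gradient commutation by a semigroup-interpolation argument: write $P_t f - f = \int_0^t \Delta_g P_s f \, ds$, dualize against an $L_\infty$ test function, and invoke Cauchy--Schwarz in the $s$-variable to extract the $\sqrt{t}$ scaling. Applied to a smooth approximation of $\mathbf{1}_A$ for a Borel set $A$ with $\mu(A) \leq 1/2$, and combined with the elementary identity
\[
\int \abs{P_t \mathbf{1}_A - \mathbf{1}_A} \, d\mu = 2 \int_{A^c} P_t \mathbf{1}_A \, d\mu
\]
(which uses only $0 \leq P_t \mathbf{1}_A \leq 1$ together with $\int P_t \mathbf{1}_A d\mu = \mu(A)$), this upper bounds the heat leakage: $\int_{A^c} P_t \mathbf{1}_A \, d\mu \leq \sqrt{t/(2\pi)} \, \mu^+(A)$.

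In the opposite direction, the spectral gap provides a matching \emph{lower} bound on the same quantity. Setting $g = \mathbf{1}_A - \mu(A)$, one computes $\int_A P_t \mathbf{1}_A d\mu = \mu(A)^2 + \langle P_t g, g\rangle_{L_2(\mu)}$, while self-adjointness and the spectral theorem give
\[
\langle P_t g, g\rangle_{L_2(\mu)} = \norm{P_{t/2} g}_{L_2(\mu)}^2 \leq e^{-D_{Poin}^2 t} \norm{g}_{L_2(\mu)}^2 = e^{-D_{Poin}^2 t} \mu(A)(1-\mu(A)),
\]
so that
\[
\int_{A^c} P_t \mathbf{1}_A \, d\mu \geq \mu(A)(1-\mu(A))\bigl(1 - e^{-D_{Poin}^2 t}\bigr).
\]
Comparing the two bounds with the choice $t = 1/D_{Poin}^2$ (so that $1 - e^{-1} > 1/2$), and then using $\mu(A)(1-\mu(A)) \geq \mu(A)/2$ for $\mu(A) \leq 1/2$, yields $\mu^+(A) \geq c \, D_{Poin}\, \mu(A)$ for a universal $c > 0$, i.e., $D_{Che} \geq c \, D_{Poin}$.

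The only step requiring genuine work is the Bakry--Ledoux $L_1$-smoothing inequality; everything else is bookkeeping with self-adjointness, the Markov property, and the $L_2$ spectral decomposition. The curvature hypothesis enters exclusively through that smoothing step, and in fact an analogous argument under the weaker bound $Ric_g \geq -K g$ (using the $K$-deformed gradient commutation $\abs{\nabla P_t f} \leq e^{Kt} P_t \abs{\nabla f}$) recovers Buser's original result with a $K$-dependent constant.
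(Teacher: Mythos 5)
Your argument is correct and is, in essence, the Ledoux semi-group argument that the paper itself follows (see Section 3 and the paper's explicit attribution to \cite{LedouxSpectralGapAndGeometry}). The one genuine structural difference is in how the spectral gap is brought to bear on the heat-leakage quantity $\int_{A^c} P_t\chi_A\, d\mu$. You lower-bound it using self-adjointness together with the sharp exponential decay $\norm{P_{t/2} g}_{L_2(\mu)}^2 \leq e^{-D_{Poin}^2 t}\norm{g}_{L_2(\mu)}^2$ of the centered semi-group, which is the cleanest route for $p=q=2$ and coincides with Ledoux's original proof of Buser's theorem. The paper instead bounds the cross-term $\scalar{P_t\chi_A - \mu(A), \chi_A - \mu(A)}$ by H\"older's inequality, the $(p,q)$ Poincar\'e inequality, and the $L_q$-regularization $\norm{\abs{\nabla P_t f}}_{L_q(\mu)} \leq (2t)^{-1/2}\norm{f}_{L_q(\mu)}$; this is slightly lossier for $q=2$ but is precisely what makes the argument extend uniformly to all $(p,q)$ Poincar\'e inequalities, which is the point of Theorem \ref{thm:1st-ingr}. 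So your proof buys simplicity; the paper's buys generality.

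One imprecision worth flagging: you attribute the $L_1$-smoothing $\norm{P_t f - f}_{L_1(\mu)} \lesssim \sqrt{t}\,\norm{\abs{\nabla f}}_{L_1(\mu)}$ to ``the gradient commutation $\abs{\nabla P_t f} \leq P_t\abs{\nabla f}$ by a semigroup-interpolation argument \dots\ Cauchy--Schwarz in the $s$-variable to extract the $\sqrt{t}$ scaling.'' After dualizing, the bound you actually need is $\norm{\abs{\nabla P_s g}}_{L_\infty(\mu)} \leq (2s)^{-1/2}\norm{g}_{L_\infty(\mu)}$, and the $\sqrt{t}$ scaling then comes from integrating $s^{-1/2}$ over $[0,t]$, not from Cauchy--Schwarz. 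Crucially, this $s^{-1/2}$ estimate does not follow from the commutation $\abs{\nabla P_s g} \leq P_s\abs{\nabla g}$ alone (which merely preserves Lipschitz constants); it is the \emph{reverse Poincar\'e} inequality $2s\,\abs{\nabla P_s g}^2 \leq P_s(g^2)-(P_s g)^2$ of Bakry--Ledoux (the paper's Lemma \ref{lem:Ledoux}) that does the work. Both estimates are consequences of $CD(0,\infty)$, but they are not the same estimate, and the derivation of the smoothing inequality genuinely requires the reverse Poincar\'e form.
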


The fact that the constant $c$ above does not depend on the
dimension $n$ is quite remarkable. Buser's theorem was recently further generalized by M. Ledoux \cite{LedouxSpectralGapAndGeometry} (following the method developed by Bakry--Ledoux \cite{BakryLedoux}) to the Bakry-\'Emery abstract setting.
Again, we only quote the $CD(0,\infty)$ case:

\begin{thm}[Ledoux] \label{thm:Ledoux}
Under our smooth convexity assumptions $D_{Che} \geq c D_{Poin}$, where $c>0$ is a universal numeric constant.
\end{thm}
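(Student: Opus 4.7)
The plan is to use the Bakry--Ledoux semigroup approach. Under the smooth convexity assumptions, let $\set{P_t}_{t \geq 0}$ denote the Markov diffusion semigroup generated by $L := \Delta_g - \nabla \psi \cdot \nabla$, which is self-adjoint on $L^2(\mu)$ with $\mu$ as reversible invariant measure. The $CD(0,\infty)$ hypothesis (\ref{eq:Intro-BE}), via the Bakry--\'Emery $\Gamma_2$-calculus, yields two gradient inequalities that I would invoke as black boxes: (i) the commutation bound $\abs{\nabla P_s f} \leq P_s \abs{\nabla f}$, and (ii) the Bakry--Ledoux reverse gradient bound $2s \abs{\nabla P_s f}^2 \leq P_s f^2 - (P_s f)^2$. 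Both are obtained by monotonicity in $s$ of $s \mapsto P_s((P_{t-s} f)^2)$ or of $s \mapsto P_s(\abs{\nabla P_{t-s} f}^2)$, and both reduce to the $\Gamma_2 \geq 0$ statement equivalent to (\ref{eq:Intro-BE}).

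To deduce $\mu^+(A) \geq c\, D_{Poin} \mu(A)$ for a Borel set $A$ with $\mu(A) \leq 1/2$, I approximate $\mathbf{1}_A$ by Lipschitz-on-balls functions $f_\eps : M \to [0,1]$ (for instance $f_\eps(x) = \max(0, 1 - d(x,A)/\eps)$) supported in $A_{\eps,d}$, so that $\int \abs{\nabla f_\eps} d\mu \leq \mu(A_{\eps,d} \setminus A)/\eps$ tends to $\mu^+(A)$ along an appropriate subsequence $\eps \to 0$. Differentiating along the semigroup and integrating by parts gives the variance identity
\begin{equation*}
\Var_\mu(f_\eps) - \Var_\mu(P_t f_\eps) = 2 \int_0^t \int \abs{\nabla P_s f_\eps}^2 d\mu \, ds.
\end{equation*}
Poincar\'e's inequality forces $\Var_\mu(P_t f_\eps) \leq e^{-2 D_{Poin}^2 t} \Var_\mu(f_\eps)$, so the left-hand side is bounded below by $(1 - e^{-2 D_{Poin}^2 t}) \Var_\mu(f_\eps)$. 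For the right-hand side, combining (i) and (ii):
\begin{equation*}
\int \abs{\nabla P_s f_\eps}^2 d\mu \leq \snorm{\nabla P_s f_\eps}_\infty \int \abs{\nabla P_s f_\eps} d\mu \leq \frac{1}{\sqrt{2s}} \int \abs{\nabla f_\eps} d\mu,
\end{equation*}
where the $L^\infty$ bound comes from (ii) with $\snorm{f_\eps}_\infty \leq 1$, and the $L^1$ bound from (i) together with $\mu$-invariance of $P_s$. Integrating in $s$ yields $2 \sqrt{2t} \int \abs{\nabla f_\eps} d\mu$ on the right. Sending $\eps \to 0$ (so $\Var_\mu(f_\eps) \to \mu(A)(1-\mu(A)) \geq \mu(A)/2$) and optimizing at $t = 1/D_{Poin}^2$ gives $\mu^+(A) \geq c\, D_{Poin} \mu(A)$ with explicit universal $c = (1-e^{-2})/(4\sqrt{2})$.

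For the full (non-smooth) convexity assumptions, one approximates $\mu$ in total variation by smooth measures satisfying $CD(0,\infty)$ and passes to the limit, using lower semicontinuity of both $D_{Che}$ and $D_{Poin}$ under such convergence. The main obstacle is the Bakry--Ledoux reverse gradient bound (ii), which is the one genuinely geometric ingredient distinguishing $CD(0,\infty)$ from generic Markov semigroups; once (i) and (ii) are in hand, the remainder is a robust interpolation between the $L^2$ variance decay supplied by $D_{Poin}$ and the $L^1$ gradient control supplied by the commutation estimate.
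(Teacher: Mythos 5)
Your proof is correct and, in the $p=q=2$ case, arrives at the same Cheeger bound, but the route is not the one taken in the paper. The paper (Section \ref{sec:semi-group}) never writes down the variance decay $\Var_\mu(P_t f) \leq e^{-2D_{Poin}^2 t}\Var_\mu(f)$ nor the commutation bound $\abs{\nabla P_s f} \leq P_s\abs{\nabla f}$. Instead it starts from Ledoux's dual $L^1$ estimate $\norm{f - P_t f}_{L_1(\mu)} \leq \sqrt{2t}\,\norm{\abs{\nabla f}}_{L_1(\mu)}$ (Corollary \ref{cor:Ledoux}, which is obtained from the Bakry--Ledoux reverse bound (\ref{eq:L_q-bound}) at $q=\infty$ plus duality and integration by parts, not from the commutation bound), rewrites $\int\abs{\chi_A - P_t\chi_A}\,d\mu$ as $2\big(\mu(A)(1-\mu(A)) - \int(P_t\chi_A - \mu(A))(\chi_A - \mu(A))\,d\mu\big)$, and then controls the cross term by H\"older together with the $(p,q)$-Poincar\'e inequality and the $L^q$ gradient bound (\ref{eq:L_q-bound}) on $P_t\chi_A$. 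What that buys is generality: the same calculation proves the direction $(1)\Rightarrow(2)$ of Theorem \ref{thm:1st-ingr} for all $(p,q)$ with $p\geq 1$, $q\geq 2$, which is essential for the rest of the paper (e.g.\ deducing (\ref{eq:Intro-t2}) from the First-Moment inequality, which is the $(1,\infty)$ case). Your variance-decay argument uses the specific $L^2$ structure twice -- once to get the exponential decay of $\Var_\mu(P_t f)$, once in the identity $\frac{d}{ds}\Var_\mu(P_s f) = -2\int\abs{\nabla P_s f}^2 d\mu$ -- and does not obviously extend beyond $p=q=2$. On the other hand, for that single case it is a clean and self-contained derivation, with a transparent optimization at $t = 1/D_{Poin}^2$ and the same kind of explicit constant. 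Both derivations pivot on the same genuinely geometric input, the Bakry--Ledoux pointwise reverse gradient estimate (Lemma \ref{lem:Ledoux}); the paper uses it once (to get (\ref{eq:L_q-bound}) and its dual), you use it once for the $L^\infty$ control and supplement it with the $\Gamma$-commutation bound for the $L^1$ control. One small remark: the final paragraph about approximating non-smooth $\mu$ is not needed here, since Theorem \ref{thm:Ledoux} as stated assumes the \emph{smooth} convexity assumptions; the approximation issue is deferred to Section \ref{sec:AA} in the paper.
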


\subsection{Main Theorem}

How about reversing the implication in Theorem \ref{thm:GM} under our convexity assumptions?
This is one of the statements in our Main Theorem below. A second statement, which is much more surprising,
concerns a very weak type of concentration inequality, which we introduce:

\begin{dfn*}
The space $(\Omega,d,\mu)$ is said to satisfy First-Moment concentration if:
\begin{equation} \label{eq:FM-def}
\exists D>0 \; \text{ such that } \; \forall \text{ 1-Lipschitz } f \;\;\;\;
\norm{f - E_\mu f}_{L_1(\mu)} \leq \frac{1}{D} ~.
\end{equation}
The best possible constant $D$ above is denoted by $D_{FM} = D_{FM}(\Omega,d,\mu)$.
\end{dfn*}

Clearly, by the Markov-Chebyshev inequality, First-Moment concentration implies \emph{linear}
tail-decay:
\[
 \forall \text{ 1-Lipschitz } f \;\; \forall t>0 \;\;\;\; \mu(|f - E_\mu f| \geq t) \leq
\frac{1}{D_{FM} t} ~,
\]
and decay slightly faster than linear implies (integrating by parts) First-Moment concentration.
The First-Moment concentration is clearly a-priori \emph{much weaker} than exponential concentration. Our Main
Theorem, first announced in \cite{EMilman-RoleOfConvexityCRAS}, asserts that under our convexity assumptions, not only is First-Moment concentration \emph{equivalent} to exponential concentration, but in fact also to the a-priori stronger
inequalities of Poincar\'e and Cheeger:

\begin{thm} \label{thm:Main}
Under our convexity assumptions, the following statements are equivalent:
\begin{enumerate}
\item Cheeger's isoperimetric inequality (with $D_{Che}$).
\item Poincar\'e's inequality (with $D_{Poin}$).
\item Exponential concentration inequality (with $D_{Exp}$).
\item First Moment concentration inequality (with $D_{FM}$).
\end{enumerate}
The equivalence is in the sense that the constants above satisfy $D_{Che} \simeq D_{Poin} \simeq D_{Exp} \simeq D_{FM}$.
\end{thm}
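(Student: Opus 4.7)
The plan is to close the chain of implications into a cycle of equivalences. The three forward implications (1)$\Rightarrow$(2)$\Rightarrow$(3)$\Rightarrow$(4) are already available in dimension-free form without any convexity hypothesis: (1)$\Rightarrow$(2) is Theorem \ref{thm:Cheeger}, (2)$\Rightarrow$(3) is Theorem \ref{thm:GM}, and (3)$\Rightarrow$(4) is immediate from $\|f-E_\mu f\|_{L_1(\mu)}=\int_0^\infty \mu(|f-E_\mu f|\geq s)\,ds$ together with the exponential tail, giving $D_{FM}\geq D_{Exp}/e$. Hence the entire content of the theorem is the new implication (4)$\Rightarrow$(1), which must upgrade a priori very weak first-moment concentration to a Cheeger-type isoperimetric inequality.

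To prove (4)$\Rightarrow$(1) I would first exploit the Riemannian-geometric fact that under $CD(0,\infty)$ the isoperimetric profile $\tilde{I}=\tilde{I}_{(\Omega,d,\mu)}$ is concave on $[0,1/2]$ (Sternberg--Zumbrun, Bavard--Pansu, Bayle--Rosales). Since $\tilde{I}(0)=0$, concavity forces $\tilde{I}(t)/t$ to be non-increasing and hence $\tilde{I}(t)\geq 2\tilde{I}(1/2)\,t$ on $[0,1/2]$, so that $D_{Che}\geq 2\tilde{I}(1/2)$. This reduces the problem to producing the bound $\tilde{I}(1/2)\gtrsim D_{FM}$; in other words, it suffices to prove that every Borel set $A\subset\Omega$ with $\mu(A)=1/2$ satisfies $\mu^+(A)\gtrsim D_{FM}$.

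The semigroup argument that should deliver this bound proceeds in the spirit of Bakry--Ledoux. Let $P_t$ be the diffusion semigroup with generator $\Delta_{g,\mu}$. Two consequences of $CD(0,\infty)$ drive the proof: (i) the reverse Poincar\'e inequality $|\nabla P_t f|^2 \leq \frac{1}{2t}\bigl(P_t f^2-(P_t f)^2\bigr)$, which applied to $f=\mathbf{1}_A$ yields $\|\nabla P_t\mathbf{1}_A\|_\infty \leq C/\sqrt{t}$, so that $\sqrt{t}(P_t\mathbf{1}_A-1/2)/C$ is a centered $1$-Lipschitz function; and (ii) the Buser-type boundary estimate $\|P_t\mathbf{1}_A-\mathbf{1}_A\|_{L_1(\mu)} \leq C'\sqrt{t}\,\mu^+(A)$. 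Applying the definition of $D_{FM}$ to the Lipschitz function from (i) gives $\|P_t\mathbf{1}_A-1/2\|_{L_1(\mu)} \leq C''/(\sqrt{t}\,D_{FM})$, and combining with the triangle inequality and the trivial identity $\|\mathbf{1}_A-1/2\|_{L_1(\mu)}=1/2$ yields
$$
\tfrac{1}{2} \leq C'\sqrt{t}\,\mu^+(A)+\frac{C''}{\sqrt{t}\,D_{FM}}~.
$$
Optimizing the free parameter $t$ (the balance point is $t\sim 1/(\mu^+(A)\,D_{FM})$) produces $\mu^+(A)\geq c\,D_{FM}$ for a universal constant $c$, which is exactly what was needed. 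The passage from smooth to general convexity assumptions is handled by the approximation $\mu_m \to \mu$ built into the definition, since each of the four constants is stable under total-variation convergence.

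The main obstacle, in my view, is step (i) applied to the rough input $\mathbf{1}_A$: the pointwise gradient bound must be justified via a mollification and a careful limiting argument, and this is where the full strength of $CD(0,\infty)$ is really used rather than Euclidean convexity alone. A secondary technical point is quoting the correct form of the concavity of $\tilde{I}$ in the generality of our smooth convexity assumptions (rather than merely for a Euclidean convex body), but this is by now standard in the Riemannian-with-density literature.
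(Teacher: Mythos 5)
Your semi-group argument for the smooth case is correct and is essentially the paper's own mechanism, though carried out more directly: you first reduce via the concavity of $I$ to bounding $\tilde{I}(1/2)$ from below, and then run the Bakry--Ledoux reverse Poincar\'e estimate together with Ledoux's $L_1$ heat-semigroup bound $\|P_t\chi_A - \chi_A\|_{L_1} \leq \sqrt{2t}\,\mu^+(A)$ and the First-Moment inequality, and optimize over $t$. The paper instead proves, as a stand-alone intermediate result (Theorem \ref{thm:1st-ingr}), that the $(1,\infty)$ Poincar\'e inequality yields $\tilde{I}(t) \geq c\,D_{FM}\,t^2$ for all $t\in[0,1/2]$, and then applies concavity to linearize the power of $t$; this ordering lets it treat all $(p,q)$ inequalities simultaneously and to quantify the loss in passing from $(p,q)$ to $(p',q')$. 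Your specialization to $t=1/2$ and $(p,q)=(1,\infty)$ is a legitimate streamlining for this particular theorem, and your forward chain $(1)\Rightarrow(2)\Rightarrow(3)\Rightarrow(4)$, with $D_{FM}\ge D_{Exp}/e$ by layer-cake, is fine.

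There is, however, a genuine gap in your final sentence. The claim that ``each of the four constants is stable under total-variation convergence'' is false, and the paper devotes all of Section \ref{sec:AA} precisely to repairing this. The isoperimetric profile, and hence $D_{Che}$, is \emph{not} pointwise continuous under arbitrary TV-approximation: taking $\mu_m$ uniform on $[0,1]\setminus[\tfrac12-\tfrac1m,\tfrac12+\tfrac1m]$ gives $\mu_m\to\mu$ in TV but $I_{\mu_m}(1/2)=0$ for all $m$ while $I_\mu(1/2)=1$. What is true, and what the paper proves, is that (a) $D^M_{FM}$ passes to the TV-limit under the convexity assumptions, but this requires a tail-control argument (Lemma \ref{lem:Borell}, a Riemannian Borell-type estimate via Cordero-Erausquin--McCann--Schmuckenschl\"ager's Pr\'ekopa--Leindler inequality) to uniformly localize the extremizing Lipschitz functions; and (b) the concavity of $I$ survives the limit only when the approximation is ``from above'' or ``from within'' (Lemmas \ref{lem:above}, \ref{lem:within}, Proposition \ref{prop:convergence}), together with a continuity result for $I$ of the limiting measure (Lemma \ref{lem:I-continuity}). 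Both halves are needed before your semigroup estimate, proved under the \emph{smooth} convexity assumptions, can be transported to the general ones, and neither follows from naive TV-stability. You also correctly flag, but do not carry out, the mollification needed to make sense of $P_t\chi_A$ and its gradient; the paper handles this by applying Corollary \ref{cor:Ledoux} to smooth approximations $\chi_{A,\eps}$ and passing to the liminf.
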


Here and below, $A \simeq B$ means that $C_1 B \leq A \leq C_2 B$,
with $C_i>0$ some universal numerical constants, independent of any
other parameter, and in particular the dimension $n$. We will see in
Section \ref{sec:extensions} that the use of the First-Moment is not
essential in Statement (4); we may have required any \emph{arbitrarily slow} uniform tail decay, instead of linear decay.
In other words, if:
\begin{equation} \label{eq:slow-tail-decay}
 \exists \alpha: \Real_+ \rightarrow [0,1] \;\;\; \alpha(t) \rightarrow_{t \rightarrow \infty} 0\;\;\;\; \forall \text{ 1-Lipschitz } f \;\;\forall t>0 \;\;\;\; \mu(|f - E_\mu f| \geq t) \leq \alpha(t) ~,
\end{equation}
where $\alpha$ decays to 0 \emph{arbitrarily slow}, we can deduce under our convexity assumptions that Lipschitz
functions have in fact much faster \emph{exponential} tail decay (with rate depending solely on $\alpha$), and in
addition the stronger inequalities of Poincar\'e and Cheeger, as above.
In this sense, our result
extends the well-known Kahane-Khinchine type inequalities in Convexity Theory
(e.g. consequences of Borell's Lemma \cite{Borell-logconcave}, see \cite{Milman-Schechtman-Book} for an overview)
stating that \emph{linear functionals} have comparable moments, ensuring exponential tail decay,
to the same statement for the ``worst'' $1$-Lipschitz function (see Remark \ref{rem:FM-Exp-Equivalent}).

\medskip

The Main Theorem may also be interpreted as stating that under our
convexity assumptions, there exists a single $1$-Lipschitz function
$f$ whose level sets \emph{on average} attain the minimum (up to
constants) in Cheeger's isoperimetric inequality (see Section \ref{sec:extensions}).
In fact, one may choose this function to be of the form $f(x) = d(x,A)$, where $A$ is some set with $\mu(A)\geq
1/2$.
This is expressed in the following reformulation of the Main Theorem: \begin{thm} \label{thm:Lip-form}
Under our convexity assumptions on $(\Omega,d,\mu)$:
\[
 D_{Che}(\Omega,d,\mu) \simeq \inf \set{ \frac{1}{\int_\Omega d(x,A) d\mu} \; ; \; A \subset
\Omega \;,\; \mu(A) \geq 1/2} ~.
\]
\end{thm}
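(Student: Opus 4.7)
The plan is to deduce this from the Main Theorem, which already tells us that $D_{Che} \simeq D_{FM}$ under our convexity assumptions. So it suffices to prove that the quantity
\[
 D_* := \inf \set{ \frac{1}{\int_\Omega d(x,A) d\mu} \; ; \; A \subset \Omega \;,\; \mu(A) \geq 1/2}
\]
satisfies $D_* \simeq D_{FM}$, and this equivalence actually holds in complete generality (no convexity needed), since the argument is purely measure-theoretic.

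For the lower bound $D_* \gtrsim D_{FM}$, I would plug the $1$-Lipschitz test function $f(x) := d(x,A)$ into the definition of $D_{FM}$. Writing $e := E_\mu f$, since $f \equiv 0$ on $A$ and $\mu(A) \geq 1/2$, a trivial estimate gives
\[
 \frac{1}{D_{FM}} \geq \norm{f - e}_{L_1(\mu)} \geq \int_A e \, d\mu = \mu(A) \cdot e \geq \frac{1}{2}\int_\Omega d(x,A)\,d\mu,
\]
and taking infimum over admissible $A$ yields $D_* \geq D_{FM}/2$.

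For the upper bound $D_* \lesssim D_{FM}$, I would start from a near-extremal $1$-Lipschitz $f$ for $D_{FM}$ and produce a good set from its level sets about the median. Set $m := M_\mu f$, $A := \{f \leq m\}$, $B := \{f \geq m\}$, so that both $\mu(A), \mu(B) \geq 1/2$. For $x \in \Omega$ and any $y \in A$, the $1$-Lipschitz property gives $f(x) - m \leq f(x) - f(y) \leq d(x,y)$; taking infimum over $y \in A$ yields $(f - m)_+ \leq d(\cdot, A)$, and symmetrically $(m - f)_+ \leq d(\cdot, B)$. Integrating and adding,
\[
 \int_\Omega d(x,A)\,d\mu + \int_\Omega d(x,B)\,d\mu \geq \norm{f - m}_{L_1(\mu)} \geq \tfrac{1}{2}\norm{f - E_\mu f}_{L_1(\mu)},
\]
where the last step uses the standard bound $|E_\mu f - m| \leq \norm{f - m}_{L_1(\mu)}$ to compare mean and median in $L_1$. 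Hence at least one of $A,B$ has $\int d(x,\cdot)\,d\mu \geq \tfrac{1}{4}\norm{f - E_\mu f}_{L_1(\mu)}$, so that $D_* \leq 4/\norm{f - E_\mu f}_{L_1(\mu)}$, and taking $f$ near-optimal for $D_{FM}$ gives $D_* \leq 4 D_{FM}$.

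There isn't really a hard step here; the whole proof is essentially a repackaging of $D_{Che} \simeq D_{FM}$ via the two elementary one-sided inequalities $(f - m)_+ \leq d(\cdot,\{f \leq m\})$ and $f|_A \equiv 0 \Rightarrow \int |f - E_\mu f|\,d\mu \geq \mu(A) E_\mu f$. The only place where the Main Theorem is used is to replace $D_{FM}$ by $D_{Che}$ in the final comparison.
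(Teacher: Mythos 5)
Your proposal is correct and follows essentially the same route as the paper: plug $d(\cdot,A)$ as a $1$-Lipschitz test function for the easy direction, and for the hard direction invoke the Main Theorem to reduce to $D_{FM}$ and then dominate $|f - M_\mu f|$ pointwise by $d(\cdot,\{f\leq m\}) + d(\cdot,\{f\geq m\})$. Two small remarks on the differences: for the easy direction the paper plugs $g_A = d(\cdot,A)$ directly into Lemma \ref{lem:Cheeger=L1} (exploiting that $|\nabla g_A|$ vanishes on $A$ to get the constant $1/2$ without needing any convexity or the Main Theorem there), whereas you route through $D_{FM}$; for the hard direction the paper phrases the pointwise estimate via $d(\cdot,\partial A_i)$, while your inequality $(f-m)_+ \leq d(\cdot,A)$ is cleaner, avoids any discussion of boundaries, and gives the same bound. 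Your explicit observation that $D_* \simeq D_{FM}$ holds in full generality, with convexity entering only through the single invocation of the Main Theorem, accurately captures the structure of the argument.
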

\noindent
Equivalently, this is tantamount to saying that under our convexity assumptions, it is only necessary
to use test functions of the form $f(x)=d(x,A)$ when testing (up to a universal numeric constant) for the
spectral gap $D_{Poin}^2$ in Poincar\'e's inequality. Clearly, without any further assumptions, all of the above statements are in general false.

\subsection{Applications to Spectral Gap of Convex Domains}

In Section \ref{sec:cor}, we deduce from our Main Theorem \ref{thm:Main} several new results pertaining to the spectral gap of convex domains, and recover and extend numerous previously known results as well.
We will formulate our results in
Euclidean space $(\Real^n,\abs{\cdot})$, even though they hold for
the most part under our more general convexity assumptions.

For a bounded domain $\Omega \subset (\Real^n,\abs{\cdot})$, let $\lambda_\Omega$ denote the uniform
probability measure on $\Omega$, and denote $D_{Poin}(\Omega) :=
D_{Poin}(\Omega,\abs{\cdot},\lambda_\Omega)$. As our main application, we deduce the following
stability result for the spectral gap $D_{Poin}^2(\Omega)$ of the Neumann Laplacian on $\Omega$ under perturbations of the domain $\Omega$. Clearly, there can be no stability result without some further assumptions, which we add in the form of convexity. We formulate the stability in terms of the Cheeger constant $D_{Che}(\Omega) :=
D_{Che}(\Omega,\abs{\cdot},\lambda_\Omega)$ (this is a-priori stronger than using $D_{Poin}(\Omega)$ by the Maz'ya--Cheeger inequality, but in fact equivalent in the class of convex domains by the Buser-Ledoux Theorems):

\begin{thm} \label{thm:Intro-stability}
Let $K,L$ denote two bounded convex domains in $(\Real^n,\abs{\cdot})$. If:
\[
 \Vol{K \cap L} \geq v_K \Vol{K} \;\;\;,\;\;\; \Vol{K \cap L} \geq v_L \Vol{L} ~,
\]
then:
\begin{equation} \label{eq:thm-Intro-stability}
 D_{Che}(K) \geq c \; \frac{v_K^2}{\log(1+1/v_L)} D_{Che}(L) ~,
\end{equation}
where $c>0$ is some universal numeric constant.
\end{thm}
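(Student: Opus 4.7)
The plan is to invoke Theorem \ref{thm:Main} at both ends: first at $L$ to convert $D_{Che}(L)$ into exponential concentration of Lipschitz functions on $L$, then transfer this concentration from $L$ to $K$ via the volume density ratio $\Vol{L}/\Vol{K}$ on $K\cap L$, and finally re-apply Theorem \ref{thm:Main} on $K$ to convert the resulting weak concentration on $K$ into the desired lower bound on $D_{Che}(K)$. Concretely, by Theorem \ref{thm:Main} (or the equivalent Theorem \ref{thm:Lip-form}), it suffices to check that for every set $A \subset K$ with $\lambda_K(A) \geq 1/2$, one has
\[
\int_K d(x,A) \, d\lambda_K \leq \frac{C \log(1+1/v_L)}{v_K^2 \, D_{Che}(L)} ~.
\]

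The first main step is to pass to $L$. Given $A \subset K$ with $\lambda_K(A)\geq 1/2$, set $B := A \cap L$. A volume bookkeeping using $\Vol{K\setminus L} \leq (1-v_K)\Vol{K}$ gives $\Vol{B} \geq (v_K - 1/2)\Vol{K}$ (when $v_K > 1/2$; a simple bootstrap handles $v_K \leq 1/2$ at the cost of a factor $v_K$), and combined with $\Vol{L} \leq \Vol{K}/v_L$ yields $\lambda_L(B) \geq p := v_L(v_K - 1/2)$. Since $(L,|\cdot|,\lambda_L)$ satisfies our convexity assumptions, Theorem \ref{thm:Main} applied on $L$ gives exponential enlargement for sets of measure $\geq p$: $\lambda_L(L \setminus B_t) \leq (C/p) \exp(-c D_{Che}(L) t)$. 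Integrating the tail gives the bootstrapped estimate
\[
\int_L d(x,B)\, d\lambda_L \leq \frac{C \log(1/p)}{D_{Che}(L)} \leq \frac{C\,\log(1+1/v_L)}{D_{Che}(L)} ~,
\]
and transferring this to $K\cap L$ via the density ratio $d\lambda_K|_{K\cap L} = (\Vol{L}/\Vol{K}) \, d\lambda_L|_{K\cap L} \leq (1/v_L)\,d\lambda_L|_{K\cap L}$ bounds $\int_{K\cap L} d(x,A)\, d\lambda_K \leq \int_{K\cap L} d(x,B)\,d\lambda_K$ by the right order.

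The principal obstacle is the complement $K \setminus L$, where $d(x,A)$ could in principle be as large as $\diam(K)$, and where $\diam(K)$ is not a priori controlled by $D_{Che}(L)$ (a bound of the form $\diam(K) \lesssim 1/D_{Che}(K)$ would be circular). The idea to circumvent this is to exploit the convexity of $K$ and the 1-Lipschitz property of the test function $g(x) := d(x,B)$ on $\Real^n$: for any $x \in K$ and any $y \in B_t$ (the $t$-neighborhood of $B$ in $L$), we have $g(x) \leq |x-y| + t$, so $g(x) \leq d(x, B_t) + t$. Choosing $t \simeq \log(C/(v_K v_L))/D_{Che}(L)$ makes $B_t$ have $\lambda_L$-measure $\geq 1/2$, so $\int_L d(x,B_t)\,d\lambda_L \lesssim 1/D_{Che}(L)$ by Theorem \ref{thm:Lip-form} applied to $L$, and transferring back to $K \cap L$ via the same density argument controls $\int_{K \cap L} d(x,B_t)\, d\lambda_K$. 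The residual contribution from $K\setminus L$ to $\int_K d(x,B_t) d\lambda_K$ is absorbed by iterating this observation and by a truncation that produces, at the cost of a factor $v_K$, an arbitrarily slow uniform tail decay $\alpha(t)$ for $d(\cdot,A)$ on $K$ with $\alpha(t_0) \leq 1 - v_K/2$ at $t_0 \simeq \log(1+1/v_L)/(v_K D_{Che}(L))$.

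Assembling the two pieces produces a bound of the form $\int_K d(x,A)\,d\lambda_K \leq C \log(1+1/v_L)/(v_K^2 D_{Che}(L))$, where the two factors of $v_K$ enter respectively from the density transfer on $K\cap L$ and from the volume loss in relating $B = A\cap L$ back to $A$. Applying Theorem \ref{thm:Lip-form} at $K$ then yields the claimed stability inequality. The hard part is the bookkeeping for $K\setminus L$: getting any quantitative decay on $K$ (not merely the qualitative $\alpha(t) = 0$ for $t > \diam(K)$) in a way that retains the dimension-free dependence is where Theorem \ref{thm:Main}'s full strength — in particular, the passage from arbitrarily slow tail decay to Cheeger — is essential.
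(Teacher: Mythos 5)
Your high-level plan (reduce via Theorem \ref{thm:Lip-form} to bounding $\int_K d(x,A)\,d\lambda_K$, then route the estimate through $L$) is in the right spirit, but two of the central steps do not close, and they correspond exactly to the two places in your outline where you defer to ``by the right order'' and ``iterating this observation.'' The first gap is the density transfer. You bound $\int_{K\cap L} d(x,B)\,d\lambda_K = (\Vol{L}/\Vol{K})\int_{K\cap L}d(x,B)\,d\lambda_L \leq (1/v_L)\int_L d(x,B)\,d\lambda_L$, which puts a multiplicative $1/v_L$ in front of the $\log(1/p)/D_{Che}(L)$ you derived from the tail integral. Already in the special case $K\subset L$ (so $v_K=1$, $\Vol{L}/\Vol{K}=1/v_L$) this yields only $D_{Che}(K)\gtrsim \bigl(v_L/\log(1/v_L)\bigr)D_{Che}(L)$, polynomially weaker than the stated $1/\log(1+1/v_L)$. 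The paper never transports $L_1$-norms via the density ratio; the comparison between the inner and outer body in Lemma \ref{lem:going-down} is made at the level of \emph{quantiles}: the $(1-\eps_0)$-quantile of $|f_0|$ with respect to the inner measure is dominated by the $(1-\eps_0 v)$-quantile of the $1$-Lipschitz extension $|f_1|$ with respect to the outer measure, and the exponential concentration ($\Psi_1$-norm control by $1/D_{Exp}$) converts the shift of levels from $\eps_0$ to $\eps_0 v$ into an \emph{additive} $\log(1/v)$ penalty rather than a multiplicative $1/v$ one. The passage from $L_1$-norm to quantile is supplied by Proposition \ref{prop:useful} (Paley--Zygmund, powered by the Main Theorem on the inner body). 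That reverse-H\"older step is precisely what your argument is missing.

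The second gap is the treatment of $K\setminus L$. Knowing $\lambda_K\bigl(d(\cdot,A)>t_0\bigr)\leq 1-v_K/2$ at a single scale $t_0$ does not give a tail function $\alpha(t)\to 0$, and the ``arbitrarily slow tail decay implies Cheeger'' mechanism in Section \ref{sec:extensions} genuinely requires decay to zero at all scales, uniformly over $1$-Lipschitz functions; there is no iteration that improves the bound at $2t_0$ without already having a spectral gap estimate on $K$, which would be circular. The paper sidesteps this entirely with Lemma \ref{lem:going-up}: for the inclusion $K\cap L\subset K$ with volume ratio $v_K$, it proves $D_{Che}(K)\geq v_K^2\,D_{Che}(K\cap L)$ \emph{directly}, using the concavity of the isoperimetric profile of $K$ (Theorem \ref{thm:Intro-concavity}, which forces $D_{Che}(K)=2I_K(1/2)$) together with a comparison of boundary measures and an iteration through a chain of intermediate convex bodies $K\cap L= L_0\subset\cdots\subset L_m=K$. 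No integral of $d(\cdot,A)$ over $K\setminus L$ ever appears. Assembled, the paper's argument is the chain $D_{Che}(K)\geq v_K^2 D_{Che}(K\cap L)\gtrsim v_K^2 D_{FM}(K\cap L)\gtrsim \frac{v_K^2}{\log(1+1/v_L)}D_{Exp}(L)\gtrsim\frac{v_K^2}{\log(1+1/v_L)}D_{Che}(L)$, with the logarithm won by the quantile comparison and the $v_K^2$ won by the profile-concavity iteration. Your one-shot estimate on $\int_K d(x,A)\,d\lambda_K$ does not reproduce either mechanism.
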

\noindent
Here $\text{Vol}$ denotes the Lebesgue measure. In particular, we see that:
\[
 \Vol{K} \simeq \Vol{L} \simeq \Vol{K \cap L} \;\; \Rightarrow \;\; D_{Che}(K,|\cdot|,\lambda_K) \simeq D_{Che}(L,|\cdot|,\lambda_L) ~.
\]
Note that $K,L$ satisfying the above condition can be very different geometrically (consider for instance a Euclidean ball of radius $1$ and its intersection with a
centered slab of width $10/\sqrt{n}$), and yet share essentially the same spectral gap. Also note that our stability result holds with respect to all possible Euclidean structures $|\cdot|$ simultaneously, since the assumption in the left-hand side above is independent of the Euclidean structure.

We also observe that the quantitative dependence on $v_K,v_L$ in (\ref{eq:thm-Intro-stability}) is essentially best possible:
the logarithmic dependence on $1/v_L$ is (up to numeric constants) optimal, and the quadratic dependence on $v_K$ cannot
be improved beyond linear (and is in fact optimal in some restricted range, see Example \ref{ex:quadratic}).
In addition, Theorem \ref{thm:Intro-stability} implies that when $\frac{1}{a} L \subset K \subset L b$ with $a,b \geq 1$, $a b \leq 1 + \frac{c}{n}$, then $D_{Che}(K) \simeq D_{Che}(L)$. In fact, when $a b \leq 1 + \frac{s}{n}$ with $1 \leq s \leq n$, we obtain in Corollary \ref{cor:isomorphism} the best possible (up to numeric constants) quantitative bounds on $D_{Che}(K) / D_{Che}(L)$ as a function of $s$ (see Example \ref{ex:isomorphism}).
To the best of our knowledge, no quantitative bounds on the stability of $D_{Che}$ for convex domains under convex perturbations were previously known.
Completely analogous stability results hold for log-concave probability measures as well (see Theorem \ref{thm:stability-inclusion-lc}). Another useful result which we deduce from our Main
Theorem is that Cheeger's constant is preserved under maps which are not necessarily Lipschitz, but
rather Lipschitz on average (see Theorem \ref{thm:Lip-on-average}).

\smallskip

An intriguing conjecture of Kannan, Lov\'asz and Simonovits
\cite{KLS} states that under a natural non-degeneracy condition on a
bounded convex domain $K$ in $(\Real^n,\abs{\cdot})$, $D_{Che}(K)
\simeq 1$, independently of the dimension $n$. The upper bound
follows from standard Convexity Theory, but the lower bound is far
from being resolved. There are many known lower bounds which provide
dimension dependent results, and we are able to easily recover many
of them, without appealing to the localization method used by
Kannan--Lov\'asz--Simonovits (which may be traced back to the work
of Gromov--Milman \cite{Gromov-Milman}). These include results by
Payne and Weinberger \cite{PayneWeinberger}, Li and Yau
\cite{LiYauEigenvalues} and Kannan--Lov\'asz--Simonovits \cite{KLS}.
In fact, our estimates generalize to arbitrary Riemannian manifolds
satisfying our convexity assumptions, whereas the localization
method is confined to Euclidean space (and a few other special
manifolds). Using our
stability result, we are able to give a geometric proof of a recent
lower bound on $D_{Che}$ due to S. Bobkov \cite{BobkovVarianceBound}.
We also note that a recent result of Sasha Sodin
\cite{SodinLpIsoperimetry}, implying that $D_{Che}$ is uniformly bounded for
the suitably scaled unit-balls of $\ell_p^n$ for $p \in [1,2]$, is now an immediate
consequence of our Main Theorem together with a result of Schechtman and Zinn \cite{SchechtmanZinn2}.

\subsection{Ingredients in Proof of Main Theorem}

All of the four statements in our Main Theorem \ref{thm:Main} can be equivalently (up to universal constants)
rewritten using a single unified framework in terms of $(p,q)$ Poincar\'e inequalities:
\begin{dfn*}
The space $(\Omega,d,\mu)$ is said to satisfy a $(p,q)$ Poincar\'e inequality if:
\[
\exists D>0 \; \text{ such that } \; \forall f \in \F \;\;\;\;  D
\norm{f - M_\mu f}_{L_p(\mu)} \leq \norm{\abs{\nabla f}}_{L_q(\mu)} ~.
\]
The best possible constant $D$ above is denoted by $D_{p,q} = D_{p,q}(\Omega,d,\mu)$.
\end{dfn*}
We prefer to use the median $M_\mu$ in our definition for reasons
which will become apparent in Section \ref{sec:p-q}. It is known and
easy to establish that $D_{Poin} \simeq
D_{2,2}$, $D_{Che} = D_{1,1}$, $D_{FM} \simeq D_{1,\infty}$, so our
Main Theorem can be restated as the claim that all $(p,q)$
Poincar\'e inequalities in the range $1\leq p \leq q \leq \infty$
are equivalent under our convexity assumptions (see Theorem
\ref{thm:main-pq}).

\medskip

The convexity assumptions are used in an essential way in the proof of the Main Theorem in several
separate places. First, we employ the $CD(0,\infty)$ condition via the semi-group gradient estimates used by
Ledoux in his proof of Theorem \ref{thm:Ledoux}. Contrary to previous approaches, which could only
deduce isoperimetric information from functional inequalities with a $\norm{\abs{\nabla
f}}_{L_q(\mu)}$ term with $q=2$ (see \cite[p. 3]{BartheKolesnikov} and the references therein), we
can handle arbitrary $q\geq 1$ (and although we do not pursue this direction here, more general Orlicz norms too).
To demonstrate that our estimates are sharp, we remark that the isoperimetric inequalities we obtain are in fact equivalent (up to universal constants) to the $(p,q)$ Poincar\'e inequalities used to derive them. This is summarized in Theorem \ref{thm:1st-ingr}, which generalizes Theorems \ref{thm:Cheeger}, \ref{thm:GM}, \ref{thm:Buser} and \ref{thm:Ledoux} above into a single unified framework. Using this, we deduce from the First-Moment inequality ($p=1, q = \infty$ above) that:
\begin{equation} \label{eq:Intro-t2}
 \tilde{I}(t) \geq c D_{FM} t^2 \;\;\; \forall t \in [0,1/2] ~.
\end{equation}
To deduce Cheeger's isoperimetric inequality from (\ref{eq:Intro-t2}), we need to use our convexity
assumptions for the second time. We employ the following series of results in Riemannian Geometry,
due to numerous groups of authors
\cite{BavardPansu,GallotIsoperimetricInqs,MorganJohnson,SternbergZumbrun,Kuwert,BayleRosales,BayleThesis, MorganManifoldsWithDensity, BobkovExtremalHalfSpaces}, who proved them under
increasingly general conditions.
A detailed survey of these results
may be found in the Appendix. We learned about these results from the PhD Thesis of V.
Bayle \cite{BayleThesis}, which was referenced to us by Sasha Sodin, to whom we are indebted. In the formulation below, we use a slightly more general notion of smooth convexity assumptions, which is defined in Section \ref{sec:AA}.

\begin{thm}[Bavard--Pansu, B\'erard--Besson--Gallot, Gallot, Morgan--Johnson,
Sternberg--Zumbrun, Kuwert, Bayle--Rosales, Bayle, Morgan, Bobkov]
\label{thm:Intro-concavity} Under our generalized smooth convexity assumptions, the
isoperimetric profile $I = I_{(\Omega,d,\mu)}$ is concave on
$(0,1)$. Moreover, when $\mu$ is in addition uniform on $\Omega \subset (M,g)$, then $I^{n/(n-1)}$ is concave on $[0,1]$, where $n$ is the dimension of $M$.
\end{thm}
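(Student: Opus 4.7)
The plan is to establish the differential inequality $I''(t) \leq 0$ (resp. $(I^{n/(n-1)})''(t) \leq 0$ in the uniform case) pointwise on $(0,1)$ by applying the first and second variation formulas for weighted perimeter to an isoperimetric minimizer, with the $CD(0,\infty)$ condition $Ric_g + Hess_g \psi \geq 0$ killing the curvature contribution to the second variation.

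First I would invoke the standard existence and partial regularity theory from geometric measure theory to produce, for each $t_0 \in (0,1)$, a Borel set $A_{t_0}$ with $\mu(A_{t_0}) = t_0$ and $\mu^+(A_{t_0}) = I(t_0)$, whose reduced boundary $\Sigma$ is a smooth hypersurface outside a singular set of codimension at least $7$ in $M$ (and in the non-compact case, supplementing this with a tightness/coercivity argument to rule out mass loss at infinity). On the smooth part, the first variation of weighted perimeter forces the generalized mean curvature $H_\psi := H - \scalar{\nabla \psi, \nu}$ to be a constant $H_0$ along $\Sigma$; a further comparison across minimizers of slightly different masses identifies $H_0 = I'(t_0)$ at any differentiability point of $I$.

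The core computation compares the volume and perimeter functions $V(s) := \mu(A_s)$ and $P(s) := \mu^+(A_s)$ of the constant-unit-speed outward normal flow $A_s$ of $A_{t_0}$. By minimality, $I(V(s)) \leq P(s)$ with equality at $s=0$, whence $(I \circ V)''(0) \leq P''(0)$. Standard computations yield $V'(0) = I(t_0)$ and $V''(0) = H_0 I(t_0)$, while the weighted second-variation formula for the normal flow gives
\[
P''(0) = H_0^2 I(t_0) - \int_\Sigma \bigl( \abs{II}^2 + (Ric_g + Hess_g \psi)(\nu,\nu) \bigr) \, d\sigma_\mu ~,
\]
with $II$ the second fundamental form of $\Sigma$. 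Plugging $V'(0), V''(0)$ and $H_0 = I'(t_0)$ into the chain rule and using $Ric_g + Hess_g \psi \geq 0$ yields
\[
I''(t_0) \, I(t_0)^2 \leq - \int_\Sigma \abs{II}^2 \, d\sigma_\mu \leq 0 ~,
\]
which already gives concavity of $I$. In the uniform case the sharp Cauchy--Schwarz bound $\abs{II}^2 \geq H^2/(n-1)$ combined with the constancy of $H$ on $\Sigma$ upgrades the right-hand side to $-I'(t_0)^2 I(t_0)/(n-1)$; a direct calculation shows that the resulting inequality $I'' \leq -(I')^2/((n-1) I)$ is exactly the statement $(I^{n/(n-1)})'' \leq 0$.

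The main obstacle is justifying the second variation on possibly singular minimizers: the variation formulas are only directly valid on the smooth part of $\Sigma$, and one must either argue that the singular set contributes negligibly, or work with smooth almost-minimizers and pass to the limit via the lower semicontinuity of weighted perimeter. A secondary technical issue is that $I$ is not a priori $C^2$; the cleanest way around this is to derive the inequality in the viscosity (or distributional) sense using the competitor $I(V(s)) \leq P(s)$ as a one-sided test, which is enough to conclude concavity. Finally, closedness of the interval in the uniform case follows from the known asymptotics $I(t) \simeq c\, t^{(n-1)/n}$ as $t \to 0^+$, so that $I^{n/(n-1)}$ extends continuously (and to first order linearly) to the endpoints; the extension from smooth convexity assumptions to the general (approximated) ones uses stability of the isoperimetric profile under total-variation convergence of $\mu$.
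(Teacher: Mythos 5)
Your outline reproduces the standard variational argument behind the works the theorem cites (Bavard--Pansu, Sternberg--Zumbrun, Kuwert, Bayle--Rosales, Bayle, Morgan, Bobkov) and sketched in the paper's Appendix: GMT existence and partial regularity of an isoperimetric minimizer, constant weighted mean curvature from the first variation, a second-variation formula in which the $CD(0,\infty)$ condition kills the Bakry--\'Emery Ricci contribution, and Cauchy--Schwarz on $|II|^2$ in the uniform case to pass from $I$ to $I^{n/(n-1)}$. In spirit and in the interior computation your plan matches the sources.

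There is, however, a genuine gap: you write the second variation for a closed, boundaryless manifold, but the generalized smooth convexity assumptions explicitly allow $\Omega$ to be a proper locally convex domain with $C^2$ boundary --- which is the case of interest for convex bodies. When $\Sigma_r$ meets $\partial\Omega$ (orthogonally, by Gr\"uter's free-boundary regularity) the weighted second variation of area acquires a boundary integral involving the second fundamental form of $\partial\Omega$ evaluated on $\nu_\Sigma$, and it is exactly the local convexity of $\Omega$ that forces this term to have the favorable sign. Without tracking it, the inequality $I''(t_0)I(t_0)^2 \leq -\int_\Sigma |II|^2\,d\sigma_\mu$ does not follow. This is the Sternberg--Zumbrun part of the argument and cannot be absorbed into the interior calculation.

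Two smaller points. The Hausdorff codimension of the singular set in $M$ is at least $8$ (equivalently $\geq 7$ inside $\Sigma$), not $7$ in $M$, as the Appendix records and which is sharp. The paper's normalization of the generalized mean curvature is $H_{\Sigma_r} + \frac{1}{n-1}\,g(\nabla\psi,\nu)$ with $H_{\Sigma_r}$ the \emph{average} of the principal curvatures and $\nu$ the inward normal; if you use both $H_0 = I'(t_0)$ and $|II|^2 \geq H_0^2/(n-1)$ you must fix a consistent trace-vs.-average convention or the constant in the final differential inequality will be off. Finally, the closing appeal to total-variation approximation is not part of this theorem: the statement already assumes smooth density and $C^2$ boundary, and the extension to non-smooth data is handled separately in Theorem~\ref{thm:main-approx} and Corollaries~\ref{cor:approx-bodies}, \ref{cor:approx-lc}.
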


It is not hard to show (see Section \ref{sec:AA}) that the isoperimetric profile $I$ is continuous under
very general assumptions. It then follows by a general argument (e.g. Corollary
\ref{cor:profile-symmetry}) that $I$ must be symmetric about the point $1/2$. Hence, the concavity
of $I$ implies that $D_{Che} = 2 I(1/2)$ under our convexity assumptions. It is then immediate to
deduce Cheeger's isoperimetric inequality from (\ref{eq:Intro-t2}). In fact, a stronger statement can be deduced
when $\mu$ is uniform on $\Omega$ (see Remark \ref{rem:uniform}).

\medskip

A final ingredient in the proof is an approximation argument to handle non-smooth densities, which are typical
in applications as well as essential for handling uniform measures on bounded domains (with possibly non-smooth boundaries). Contrary to many results in Convexity Theory, where approximation arguments are standard, easy and usually omitted, the isoperimetric profile and the Cheeger constant are delicate objects, which in general are \emph{not stable} under approximation in the natural
total-variation metric (see Section \ref{sec:AA}). We therefore employ our convexity assumptions one last time, and provide in Section \ref{sec:AA} a careful argument for deducing the Main Theorem \ref{thm:Main} without any smoothness assumptions, and a different approximation procedure for extending Theorem \ref{thm:Intro-concavity}, which in particular applies to the entire class of log-concave measures in Euclidean space.

\medskip

The rest of this work is organized as follows. In Section \ref{sec:p-q}, we reformulate the Main Theorem in terms
of an equivalence between $(p,q)$ Poincar\'e inequalities, and using Theorem \ref{thm:Intro-concavity}, reduce it
to the statement of Theorem \ref{thm:1st-ingr}. The semi-group argument for proving Theorem \ref{thm:1st-ingr} is
described in Section \ref{sec:semi-group}. Further interpretations and an extension of the Main Theorem are described
in Section \ref{sec:extensions}. Applications for the spectral gap under our convexity assumptions are described
in Section \ref{sec:cor}. We conclude with an approximation argument for disposing of our smoothness assumptions in
Section \ref{sec:AA}, and an Appendix describing in more detail the results summarized in the statement of Theorem \ref{thm:Intro-concavity}.

\bigskip

\noindent \textbf{Acknowledgements.} I would like to thank Professor
Gideon Schechtman and the Weizmann Institute of Science where this
research project commenced during the last months of my PhD studies.
I would also like to thank Professor Jean Bourgain and the Institute
for Advanced Study for providing the perfect research environment.
Most especially, I would like to thank
Sasha Sodin for his invaluable help - acquainting me with
capacities, suggesting to look at the PhD Thesis of Bayle and
Ledoux's semi-group argument, countless other references, many
informative conversations and comments on this manuscript. I am also
grateful to Franck Barthe for his kind hospitality, remarks and advice, Bo'az Klartag for several references,
discussions and insightful remarks, Professors Sergey Bobkov and Michel Ledoux for their comments and advice, and
Professor David Jerison for several interesting conversations and suggestions. My gratitude also extends to the anonymous referees for their extraordinary careful reading and helpful suggestions, which greatly improved the presentation of this work.
Finally, I would also like to thank Professors David Jerison, Erwin Lutwak, Assaf Naor,
Vladimir Pestov and Santosh Vempala for their invitations to give talks on this work in its early development.

\section{$(p,q)$ Poincar\'e Inequalities} \label{sec:p-q}

We start by rewriting some of the statements of the Main Theorem \ref{thm:Main}.

We will use the following notation. A function $N: \Real_+ \rightarrow \Real_+$ will
be called a Young function if $N(0)=0$ and $N$ is convex increasing. Besides the classical Young functions $t^p$ ($p\geq 1$),
we will also frequently use the function $\Psi_1(t) = \exp(t)-1$. Given a Young function $N$,
the Orlicz norm $N(\mu)$ associated to $N$ is defined as:
\[
 \norm{f}_{N(\mu)} := \inf \set{ v>0 ; \int_\Omega N(|f|/v) d\mu \leq 1} ~.
\]

\begin{lem} \label{lem:E-M}
Let $N(\mu)$ denote an Orlicz norm associated to the Young function $N$. Then:
\[
 \frac{1}{2} \norm{f - E_\mu f}_{N(\mu)} \leq \norm{f - M_\mu f}_{N(\mu)} \leq 3 \norm{f - E_\mu
f}_{N(\mu)} ~.
\]
\end{lem}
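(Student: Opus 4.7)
The plan is to reduce both inequalities to bounding the scalar $|E_\mu f - M_\mu f|$ via the triangle inequality, and then to exploit the convexity of the Young function $N$ (via Jensen on one side, Markov on the other) to estimate that scalar by the Orlicz norm of the centering in question. Throughout, write $e = E_\mu f$, $m = M_\mu f$, and note that for any constant $c$, $\|c\|_{N(\mu)} = |c|/N^{-1}(1)$.

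For the left inequality, I would apply the triangle inequality to get
\[
\|f - e\|_{N(\mu)} \leq \|f - m\|_{N(\mu)} + |m - e|/N^{-1}(1),
\]
and then bound $|m - e| = |E_\mu(f - m)| \leq E_\mu|f - m|$. A standard Jensen argument (for any $v$ admissible in the definition of $\|f-m\|_{N(\mu)}$, convexity gives $N(E_\mu|f-m|/v) \leq \int N(|f-m|/v)\,d\mu \leq 1$) yields $E_\mu|f - m| \leq N^{-1}(1) \|f - m\|_{N(\mu)}$, which plugs in to give a factor of exactly $2$.

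For the right inequality, the triangle inequality again reduces matters to estimating $|m - e|$, this time against $\|f - e\|_{N(\mu)}$. Here the median enters through its defining property: without loss of generality $m \geq e$, and then $\mu(|f-e| \geq m-e) \geq \mu(f \geq m) \geq 1/2$. The Markov-type inequality for Orlicz norms, $\mu(|f-e| \geq t) \leq 1/N(t/\|f-e\|_{N(\mu)})$, then forces $N((m-e)/\|f-e\|_{N(\mu)}) \leq 2$, i.e.\ $|m-e| \leq N^{-1}(2) \|f - e\|_{N(\mu)}$. Combining via the triangle inequality gives the constant $1 + N^{-1}(2)/N^{-1}(1)$, so it remains to show $N^{-1}(2) \leq 2 N^{-1}(1)$. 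This is the only step where convexity of $N$ together with $N(0)=0$ is essential: these imply $N(2s) \geq 2 N(s)$ for all $s \geq 0$, and applying this at $s = N^{-1}(1)$ gives exactly $N(2 N^{-1}(1)) \geq 2$, hence $N^{-1}(2) \leq 2 N^{-1}(1)$, producing the overall constant $3$.

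The only potential obstacle is checking that the Markov/Jensen manipulations are legitimate at the infimum defining $\|\cdot\|_{N(\mu)}$ (they are, by applying the estimate for each admissible $v$ and then passing to the infimum), and verifying that the defining inequality for the median is correctly oriented in the $m \leq e$ case — symmetric, handled identically. No properties of $N$ beyond being a Young function (convex, increasing, vanishing at $0$) are used, so the constants $2$ and $3$ are universal.
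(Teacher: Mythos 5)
Your proposal reproduces the paper's own proof essentially step for step: the identity $\norm{c}_{N(\mu)} = |c|/N^{-1}(1)$, Jensen for the $\abs{E_\mu f - M_\mu f} \leq N^{-1}(1)\norm{f - M_\mu f}_{N(\mu)}$ bound giving the factor $2$, Markov--Chebyshev combined with the defining property of the median for the $N^{-1}(2)$ bound, and convexity of $N$ with $N(0)=0$ to conclude $N^{-1}(2)/N^{-1}(1) \leq 2$ and hence the factor $3$. The argument is correct and follows the same route, with no meaningful divergence.
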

\begin{proof}
Note that $\norm{1}_{N(\mu)} = 1 / N^{-1}(1)$. First, by Jensen's
inequality (applied twice):
\[
 \abs{E_\mu f - M_\mu f} \leq E_\mu(\abs{f - M_\mu f}) \leq N^{-1}(1) \norm{f - M_\mu f}_{N(\mu)} ~,
\]
hence:
\[
 \norm{f - E_\mu f}_{N(\mu)} \leq \norm{f - M_\mu f}_{N(\mu)} + \frac{\abs{E_\mu f - M_\mu
f}}{N^{-1}(1)} \leq 2  \norm{f - M_\mu f}_{N(\mu)} ~.
\]
Next, we may assume that $M_\mu f \geq E_\mu f$ (otherwise exchange
$f$ by $-f$). By the Markov-Chebyshev inequality:
\[
\frac{1}{2} \leq \mu( f \geq M_\mu f ) \leq \mu( \abs{f - E_\mu f} \geq M_\mu f -
E_\mu f ) \leq 1 \; / \; N\brac{\frac{M_\mu f -
E_\mu f}{\norm{f - E_\mu f}_{N(\mu)}}} ~,
\]
hence:
\[
 \abs{M_\mu f - E_\mu f} \leq N^{-1}(2) \norm{f - E_\mu f}_{N(\mu)} ~,
\]
and we deduce that:
\[
  \norm{f - M_\mu f}_{N(\mu)} \leq \norm{f - E_\mu f}_{N(\mu)} + \frac{\abs{E_\mu f - M_\mu
f}}{N^{-1}(1)} \leq \brac{1+\frac{N^{-1}(2)}{N^{-1}(1)}}  \norm{f - E_\mu f}_{N(\mu)} ~.
\]
We conclude by noting that $\frac{N^{-1}(2)}{N^{-1}(1)} \leq 2$ since $N$ is convex.
\end{proof}

The last lemma implies that we can pass back and forth between using
the median $M_\mu$ and the expectation $E_\mu$ when excluding
constant functions in our functional inequalities, at the expense of
losing a universal constant. We therefore see that Poincar\'e's
inequality is equivalent (up to constants) to the inequality:
\begin{equation} \label{eq:pq-Poin}
\forall f \in \F \;\;\; D^M_{Poin} \norm{f - M_\mu f}_{L_2(\mu)} \leq \norm{\abs{\nabla
f}}_{L_2(\mu)} ~,
\end{equation}
(and in fact in this case one clearly has $D_{Poin} \geq
D^M_{Poin}$). The next lemma, due to Maz'ya \cite{MazyaBook} and
Federer and Fleming \cite{FedererFleming} (see also \cite{BobkovHoudre}
for a careful derivation), rewrites Cheeger's isoperimetric
inequality in functional form:

\begin{lem}[Maz'ya, Federer--Fleming, Bobkov--Houdr\'e] \label{lem:Cheeger=L1}
Cheeger's isoperimetric inequality (with $D_{Che}$) holds iff:
\begin{equation} \label{eq:pq-Che}
\forall f \in \F \;\;\; D_{Che} \norm{f - M_\mu f}_{L_1(\mu)} \leq \norm{\abs{\nabla
f}}_{L_1(\mu)} ~.
\end{equation}
\end{lem}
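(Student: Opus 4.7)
The plan is to prove the lemma by the standard two directions, using the co-area formula for one direction and a Lipschitz approximation of indicators for the other.

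For the implication from Cheeger's isoperimetric inequality to the functional inequality (\ref{eq:pq-Che}), I would first reduce to the case $M_\mu f = 0$ by translating, and then split $f = f_+ - f_-$ into its positive and negative parts. The key property is that, because $0$ is a median, the super-level sets $\set{f_+ > t}$ and $\set{f_- > t}$ all have $\mu$-measure at most $1/2$. Applying the co-area formula
\[
 \int_\Omega \abs{\nabla f_+} \, d\mu \;=\; \int_0^\infty \mu^+(\set{f_+ > t}) \, dt,
\]
and using Cheeger's inequality in the form $\mu^+(A) \geq D_{Che} \mu(A)$ whenever $\mu(A) \leq 1/2$, together with the layer-cake identity $\int_\Omega f_+ \, d\mu = \int_0^\infty \mu(\set{f_+ > t}) \, dt$, gives $\int \abs{\nabla f_+} d\mu \geq D_{Che} \int f_+ d\mu$. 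The same estimate holds for $f_-$, and summing yields (\ref{eq:pq-Che}). A minor technical point is that the co-area formula requires $f$ to be Lipschitz-on-balls and one should truncate $f$ to bounded functions and pass to the limit, but this is routine.

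For the converse direction, given a Borel set $A$ with $\mu(A) \leq 1/2$, I would test the functional inequality on the family of Lipschitz-on-balls cutoffs
\[
 f_\eps(x) := \max\!\brac{0, \; 1 - d(x,A)/\eps}, \qquad \eps > 0,
\]
which satisfy $\abs{\nabla f_\eps} \leq 1/\eps$ almost everywhere, with gradient supported on $A_{\eps,d} \setminus A$. Since $\mu(A) \leq 1/2 \leq \mu(\Omega \setminus A_{\eps,d}) + \mu(A_{\eps,d} \setminus A)$ for $\eps$ small enough (or rather, since $f_\eps \geq 0$ and $\mu(\set{f_\eps = 0}) \geq 1/2$), one may take $M_\mu f_\eps = 0$. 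Then
\[
 \norm{f_\eps - M_\mu f_\eps}_{L_1(\mu)} \geq \mu(A), \qquad \norm{\abs{\nabla f_\eps}}_{L_1(\mu)} \leq \frac{\mu(A_{\eps,d}) - \mu(A)}{\eps},
\]
so applying (\ref{eq:pq-Che}) and letting $\eps \to 0^+$ gives $D_{Che} \mu(A) \leq \mu^+(A)$, which is Cheeger's isoperimetric inequality with constant $D_{Che}$.

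The main subtlety, which is really the only place to be careful, is the choice of median in both directions: in the forward direction one must split around $M_\mu f$ precisely so that all relevant level sets have measure at most $1/2$, and in the converse direction one must verify that $0$ is an admissible median of the test function $f_\eps$. A secondary but also routine issue is ensuring that $f_\eps$ lies in $\F$ and that the co-area formula applies, which follow from the fact that $x \mapsto d(x,A)$ is $1$-Lipschitz on $\Omega$. I expect no substantive obstacle beyond these bookkeeping points, which is why this lemma is classical (Maz'ya, Federer--Fleming, Bobkov--Houdr\'e).
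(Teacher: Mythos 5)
Your proof follows essentially the same route as the paper's (which in turn follows Bobkov--Houdr\'e): co-area plus Cheeger on the super-level sets for the forward direction, and a Lipschitz cutoff approximating $\chi_A$ for the converse. The decomposition $f = f_+ - f_-$ is only a cosmetic repackaging of the paper's direct split
\[
\int \abs{\nabla f}\,d\mu \;\geq\; \int_{-\infty}^0 \mu^+\set{f>t}\,dt + \int_0^\infty \mu^+\set{f>t}\,dt \;\geq\; D_{Che}\brac{\int_{-\infty}^0 (1-\mu\set{f>t})\,dt + \int_0^\infty \mu\set{f>t}\,dt},
\]
and both deliver the same estimate; note also that what you cite as the "co-area formula" is, in the general metric setting used here, only the inequality $\int \abs{\nabla f}\,d\mu \geq \int \mu^+\set{f>t}\,dt$, which is the direction you need.

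One point in the converse direction needs tightening. You assert that $0$ may be taken as a median of $f_\eps$ because $\mu(\set{f_\eps=0})\geq 1/2$; but $\mu(\set{f_\eps = 0}) = 1 - \mu(A_{\eps,d})$, and $\mu(A_{\eps,d})\leq 1/2$ is \emph{not} automatic for small $\eps$ even when $\mu(A)\leq 1/2$ (e.g.\ $\mu(A)=1/2$ exactly, or $A$ with $\mu(\overline A)>\mu(A)$). The first justification you offer, $1/2 \leq \mu(\Omega\setminus A_{\eps,d}) + \mu(A_{\eps,d}\setminus A) = 1-\mu(A)$, is vacuously true and does not yield the claim. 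The standard fix: when $\mu^+(A)<\infty$, along the minimizing sequence $\eps_k\to 0$ one has $\mu(A_{\eps_k,d})\to\mu(A)$, so for $\mu(A)<1/2$ the median is eventually $0$ and your limit argument gives $\mu^+(A)\geq D_{Che}\mu(A)$; the boundary case $\mu(A)=1/2$ is then recovered either by using $\norm{f_\eps - M_\mu f_\eps}_{L_1} = \min_c\norm{f_\eps-c}_{L_1} \geq \mu(A) - \mu(A_{\eps,d}\setminus A)$ directly, or by an exhaustion with sets of measure strictly below $1/2$. This is exactly the ``appropriate sense'' of approximation the paper defers to Bobkov--Houdr\'e, so it is a bookkeeping issue rather than a conceptual one, but your written justification as stated is incorrect.
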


\begin{proof}[Sketch of Proof following Bobkov--Houdr\'e \cite{BobkovHoudre}]
It is easy to show that Cheeger's isoperimetric inequality is recovered by applying
(\ref{eq:pq-Che}) to Lipschitz functions which approximate $\chi_A$, the
characteristic function of a Borel set $A$, in an appropriate sense. Conversely, the co-area
formula, which for general metric probability spaces becomes an inequality (see
\cite{BobkovHoudre}), implies for $f \in \F$ with $M_\mu f = 0$:
\begin{eqnarray*}
& & \int \abs{\nabla f} d\mu \geq \int_{-\infty}^\infty \mu^+\set{f > t} dt \\
&\geq& D_{Che} \brac{\int_{-\infty}^0 (1-\mu\set{f > t}) dt + \int_0^\infty \mu\set{f > t} dt }
= D_{Che} \int \abs{f} d\mu ~.
\end{eqnarray*}
\end{proof}

Since for a 1-Lipschitz function $f$, $\norm{\abs{\nabla f}}_{L_\infty(\mu)} \leq 1$,
our First-Moment inequality is clearly equivalent to:
\begin{equation} \label{eq:pq-FM}
 \forall f \in \F \;\;\; D^M_{FM} \norm{f - M_\mu f}_{L_1(\mu)} \leq \norm{\abs{\nabla
f}}_{L_\infty(\mu)} ~,
\end{equation}
in the sense that $D_{FM} \simeq D^M_{FM}$ where $D^M_{FM}$ is the best constant above.

\begin{rem} \label{rem:grad-defn}
The above functional reformulations remain valid for general metric probability spaces $(\Omega,d,\mu)$,
in which case we interpret $\abs{\nabla f}$ for any $f \in \F$ as the following Borel function:
\[
 \abs{\nabla f}(x) := \limsup_{d(y,x) \rightarrow 0+} \frac{|f(y) - f(x)|}{d(x,y)} ~.
\]
(and we define it as 0 if $x$ is an isolated point - see \cite[pp. 184,189]{BobkovHoudre}
for more details).
\end{rem}

\medskip

With the above reformulations (\ref{eq:pq-Poin}), (\ref{eq:pq-Che}),
(\ref{eq:pq-FM}) serving as motivation, the reasons behind our
definition of $(p,q)$ Poincar\'e inequalities in the Introduction
are now clear. Note that $D_{Che} = D_{1,1}$, $D^M_{Poin} = D_{2,2}$ and $D^M_{FM} = D_{1,\infty}$.
We can now restate our Main Theorem \ref{thm:Main} as follows:
\begin{thm} \label{thm:main-pq}
Under our convexity assumptions, all $(p,q)$ Poincar\'e inequalities are equivalent in the
range $1 \leq p \leq q \leq \infty$. More precisely, for any other $1 \leq p' \leq q' \leq \infty$:
\[
D_{p,q} \leq C p' D_{p',q'} ~,
\]
where $C>0$ is a universal constant.
\end{thm}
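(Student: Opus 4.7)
My plan is to deduce the general quantitative equivalence from a single nontrivial comparison between the weakest inequality $(p,q)=(1,\infty)$ and an arbitrary $(p',q')$ pair, and to prove that comparison by combining the two ingredients already announced in the Introduction: the semi-group extraction of isoperimetric information (Theorem \ref{thm:1st-ingr}) together with the Riemannian concavity of the isoperimetric profile (Theorem \ref{thm:Intro-concavity}).

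First I would record an easy monotonicity: on the probability space $(\Omega,\mu)$, Jensen's inequality makes $p\mapsto\|f-M_\mu f\|_{L^p(\mu)}$ and $q\mapsto\||\nabla f|\|_{L^q(\mu)}$ both non-decreasing, so that $D_{p,q}$ is non-increasing in $p$ and non-decreasing in $q$. Hence $D_{p,q}\leq D_{1,\infty}$ throughout the admissible range, and $D_{p,q}\leq D_{p',q'}$ whenever $p\geq p'$ and $q\leq q'$; these are the ``easy'' cases where the theorem holds with constant $Cp'=1$. The content of the theorem therefore reduces to the single converse estimate
\[
 D_{1,\infty}\;\leq\;C\,p'\,D_{p',q'}\qquad\text{for every }1\leq p'\leq q'\leq\infty,
\]
which together with the monotonicity above delivers the general bound.

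To prove this converse I would invoke Theorem \ref{thm:1st-ingr} --- the Bakry--Ledoux semi-group argument to be developed in Section \ref{sec:semi-group} --- which, under the smooth convexity assumptions, converts a $(p',q')$ Poincar\'e inequality into a pointwise lower bound on the isoperimetric profile $\tilde I$. In the special case $(p',q')=(1,\infty)$ this yields the quadratic estimate $\tilde I(t)\geq c\,D_{FM}\,t^2$ recalled in (\ref{eq:Intro-t2}); in the general case, carrying the $p'$ dependence through the same computation should give an estimate of the form $\tilde I(1/2)\geq c\,D_{p',q'}/p'$. Next I would apply Theorem \ref{thm:Intro-concavity}: under smooth convexity $I$ is concave on $(0,1)$, and because $I$ is continuous and satisfies $I(t)=I(1-t)$ (a consequence of $\mu^+(A)=\mu^+(\Omega\setminus A)$), $I$ is symmetric about $1/2$. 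Concavity together with symmetry forces $I(t)\geq 2I(1/2)\,t$ on $[0,1/2]$, so that $D_{Che}=2\tilde I(1/2)$ and any pointwise lower bound on $\tilde I(1/2)$ upgrades to the full Cheeger inequality. Combining the two steps gives $D_{Che}\gtrsim D_{p',q'}/p'$, and since $D_{1,\infty}\leq C\,D_{Che}$ under convexity --- itself the instance $p'=q'=1$ of the same argument --- the required converse bound follows.

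The main obstacle, and the real novelty, lies in executing the semi-group step for arbitrary $q'\in[1,\infty]$ with quantitatively sharp $p'$ dependence: previous implementations of the Bakry--Ledoux method known to me are confined to $q'=2$, and only the extreme case $q'=\infty$ unlocks the reversal of the Gromov--Milman implication. A secondary technical issue is that $\tilde I$ is not continuous under total-variation approximation of $\mu$, so transferring the conclusion from the smooth convexity setting to the full convexity class will require the dedicated approximation procedure of Section \ref{sec:AA}.
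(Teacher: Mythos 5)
Your reduction is correct: by the monotonicity of $p\mapsto\norm{\cdot}_{L_p(\mu)}$, $D_{p,q}\leq D_{1,\infty}$ throughout the admissible range, so it suffices to prove the single estimate $D_{1,\infty}\leq Cp'D_{p',q'}$. However, the way you then deploy the two ingredients does not close the argument, and the middle step is misstated.

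First, a quantitative error: the semi-group direction $(1)\Rightarrow(2)$ of Theorem~\ref{thm:1st-ingr}, applied to the $(p',q')$ inequality, gives $\tilde I(t)\geq c\,D_{p',q'}\,t^{r'}$ with $r'=1+1/p'-1/q'$ and a \emph{universal} constant $c$; there is no loss of a factor $1/p'$ in that direction. The factor $p$ in $D'_r\leq c_2\,p\,D_{p,q}$ of (\ref{eq:linear-decay}) belongs to the opposite, elementary implication $(2)\Rightarrow(1)$. So what you actually get from semi-group plus concavity applied to $(p',q')$ is $D_{Che}=2\tilde I(1/2)\geq c'D_{p',q'}$.

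Second, and more seriously, this is the wrong inequality to have derived. The estimate $D_{Che}\geq c'D_{p',q'}$ is an \emph{upper} bound on $D_{p',q'}$, whereas to conclude $D_{1,\infty}\leq Cp'D_{p',q'}$ you need a \emph{lower} bound on $D_{p',q'}$. Having in hand both $D_{1,\infty}\leq C\,D_{Che}$ (your ``instance $p'=q'=1$'') and $D_{p',q'}\leq C'D_{Che}$, nothing chains. The semi-group machinery should therefore be applied exactly once, to the $(1,\infty)$ inequality: it gives $\tilde I(t)\geq c\,D_{1,\infty}\,t^2$, and concavity plus symmetry of $I$ upgrades this to $D_{Che}\geq c'D_{1,\infty}$. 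The passage from this Cheeger estimate to a general $(p',q')$ is then the \emph{elementary} $(2)\Rightarrow(1)$ direction, not a semi-group application: by Proposition~\ref{prop:pq-increase} with $p=q=1$, $D_{p',p'}\geq D_{1,1}/p'=D_{Che}/p'$, and by Jensen $D_{p',q'}\geq D_{p',p'}$; this elementary step, requiring no convexity, is where the factor $p'$ in the theorem is lost. Chaining: $D_{p,q}\leq D_{1,\infty}\leq (1/c')D_{Che}\leq (p'/c')D_{p',q'}$. In particular, your remark that the ``real novelty'' is running the semi-group for arbitrary $q'$ is not accurate for this particular theorem --- only the $q=\infty$ case of the semi-group estimate is invoked; the full strength of Theorem~\ref{thm:1st-ingr} with general $q$ is a separate (and genuinely new) statement, but it is not what closes Theorem~\ref{thm:main-pq}.

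You do correctly identify the need for the dedicated approximation argument of Section~\ref{sec:AA} to transfer the conclusion beyond smooth densities.
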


In fact, a more precise dependence on $p$ and $p'$ may be obtained in
some cases. For instance, clearly $D_{p',q'} \geq D_{p,q}$ if $p' \leq p$ and $q' \geq q$ without any further convexity assumptions
(by Jensen's inequality), so we see that the First-Moment inequality ($(1,\infty)$ case) is the weakest among all $(p,q)$ Poincar\'e
inequalities in the above range. Another immediate observation is given by:

\begin{prop} \label{prop:pq-increase}
Let $0< p \leq p' \leq \infty$ and $0 < q \leq q' \leq \infty$ be such that:
\[
 \frac{1}{p} - \frac{1}{q} = \frac{1}{p'} - \frac{1}{q'} ~.
\]
Then \emph{without} any further convexity assumptions, $D_{p',q'} \geq \frac{p}{p'} D_{p,q}$.
\end{prop}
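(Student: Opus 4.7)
The plan is to derive the $(p',q')$ Poincar\'e inequality from the $(p,q)$ Poincar\'e inequality by means of a power substitution followed by a single application of H\"older's inequality, with no appeal to convexity. Fix $f\in\F$ and, after subtracting a constant, assume $M_\mu f=0$. Set $\alpha:=p'/p\geq 1$ and introduce the test function $g:=\textnormal{sgn}(f)\,|f|^{\alpha}$. Since the scalar map $t\mapsto\textnormal{sgn}(t)|t|^{\alpha}$ is $C^1$ with locally bounded derivative $\alpha|t|^{\alpha-1}$ (and the identity when $\alpha=1$), the chain rule gives $g\in\F$ with $|\nabla g|=\alpha|f|^{\alpha-1}|\nabla f|$ almost everywhere. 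Moreover $g$ has the same sign as $f$, so $M_\mu g=0$ as well.

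Applying the $(p,q)$ inequality to $g$ and using $\alpha p = p'$ yields
\[
D_{p,q}\,\norm{f}_{L_{p'}(\mu)}^{\alpha}\;\leq\;\alpha\,\norm{|f|^{\alpha-1}|\nabla f|}_{L_q(\mu)}.
\]
For the right-hand side, I would use H\"older's inequality with conjugate exponents $q'/q$ and $q'/(q'-q)$ (with the standard interpretation if $q'=\infty$), giving
\[
\norm{|f|^{\alpha-1}|\nabla f|}_{L_q(\mu)}\;\leq\;\norm{f}_{L_{(\alpha-1)qq'/(q'-q)}(\mu)}^{\alpha-1}\,\norm{|\nabla f|}_{L_{q'}(\mu)}.
\]

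The crux is then a one-line algebraic check: the hypothesis $1/p-1/q=1/p'-1/q'$ is equivalent to $1/p-1/p'=1/q-1/q'$, which upon clearing denominators reads $(p'-p)qq'=pp'(q'-q)$; this is exactly the identity $(\alpha-1)qq'/(q'-q)=p'$. Hence both sides of the combined inequality carry the factor $\norm{f}_{L_{p'}(\mu)}^{\alpha-1}$, and dividing (the case $\norm{f}_{L_{p'}(\mu)}=0$ being trivial) produces
\[
\tfrac{p}{p'}\,D_{p,q}\,\norm{f}_{L_{p'}(\mu)}\;\leq\;\norm{|\nabla f|}_{L_{q'}(\mu)},
\]
which is the $(p',q')$ Poincar\'e inequality (in its median-centered form) with the claimed constant $\tfrac{p}{p'}D_{p,q}$.

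No serious obstacle is anticipated. The minor points requiring some attention are: (i) the chain-rule identity for $|\nabla g|$ in the Lipschitz-on-balls regime of Remark \ref{rem:grad-defn}, which follows from $C^1$ composition with a locally Lipschitz function; (ii) the edge case $q'=\infty$, where the H\"older step degenerates to $\norm{|f|^{\alpha-1}|\nabla f|}_{L_q(\mu)}\leq\norm{f}_{L_{(\alpha-1)q}(\mu)}^{\alpha-1}\norm{|\nabla f|}_{L_\infty(\mu)}$ and the exponent identity $(\alpha-1)q=p'$ is immediate from the hypothesis; and (iii) the trivial cases $p'=\infty$ or $p=p'$. All three are handled by inspection, and the argument is entirely elementary, consistent with the proposition's assertion that no convexity hypothesis is needed.
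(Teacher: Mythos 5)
Your proof is correct and coincides with the paper's argument: the same power substitution $g=\mathrm{sgn}(f)|f|^{p'/p}$ (the paper writes it with $f$ and $g$ interchanged), the same application of the $(p,q)$ inequality to the transformed function, and the same single H\"older step whose exponent bookkeeping is exactly the hypothesis $1/p-1/q=1/p'-1/q'$. The additional remarks on the chain rule in the Lipschitz-on-balls setting and on the degenerate cases $q'=\infty$, $p=p'$ are harmless elaborations of points the paper leaves implicit.
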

\begin{proof}
Let $g \in \F$ denote a function with $M_\mu g = 0$. Define $f = \text{sign}(g) |g|^{p'/p}$, and apply the
$(p,q)$ Poincar\'e inequality to $f$. Clearly $M_\mu f = 0$, so we obtain by H\"{o}lder's
inequality:
\[
D_{p,q} \norm{g}^{p'/p}_{L_{p'}(\mu)} \leq \frac{p'}{p} \norm{|g|^{p'/p - 1} |\nabla g|}_{L_q(\mu)}
\leq \frac{p'}{p} \norm{g}^{p'/p - 1}_{L_{p'}(\mu)} \norm{|\nabla g|}_{L_{q'}(\mu)} ~,
\]
from which the assertion follows.
\end{proof}

\begin{cor}
Maz'ya--Cheeger inequality: $D_{Poin} \geq D_{Che} / 2$.
\end{cor}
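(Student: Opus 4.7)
The plan is to observe that this Corollary is a direct instantiation of the preceding Proposition with a particular choice of parameters; no convexity assumptions are needed, consistent with the original Maz'ya--Cheeger theorem being unconditional.

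First I would recall the two identifications established in this section: by Lemma \ref{lem:Cheeger=L1}, Cheeger's constant coincides exactly with $D_{1,1}$, i.e.\ $D_{Che} = D_{1,1}$; and from the discussion around (\ref{eq:pq-Poin}), the median version of Poincar\'e's inequality gives $D_{Poin} \geq D^M_{Poin} = D_{2,2}$.

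Next I would apply Proposition \ref{prop:pq-increase} with $(p,q) = (1,1)$ and $(p',q') = (2,2)$. The balance condition
\[
 \frac{1}{p} - \frac{1}{q} = 0 = \frac{1}{p'} - \frac{1}{q'}
\]
is trivially satisfied, so the Proposition yields $D_{2,2} \geq \frac{p}{p'} D_{1,1} = \frac{1}{2} D_{1,1}$. Chaining the two inequalities gives
\[
 D_{Poin} \;\geq\; D_{2,2} \;\geq\; \tfrac{1}{2} D_{1,1} \;=\; \tfrac{1}{2} D_{Che} ~,
\]
which is the claim.

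There is no real obstacle: the substantive work has been done in Lemma \ref{lem:Cheeger=L1} (translating an isoperimetric statement into an $L_1$ functional inequality) and in Proposition \ref{prop:pq-increase} (the $g \mapsto \operatorname{sign}(g)|g|^{p'/p}$ substitution combined with H\"older). The present corollary merely records what happens when these tools are specialized to the $L_1 \to L_2$ case.
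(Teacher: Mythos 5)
Your proof is correct and is essentially identical to the paper's one-line argument: $D_{Poin} \geq D^M_{Poin} = D_{2,2} \geq D_{1,1}/2 = D_{Che}/2$, with the middle inequality coming from Proposition~\ref{prop:pq-increase} applied to $(p,q)=(1,1)$, $(p',q')=(2,2)$. You have simply spelled out the justifications the paper leaves implicit.
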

\begin{proof}
 \[
  D_{Poin} \geq D^M_{Poin} = D_{2,2} \geq D_{1,1}/2 = D_{Che}/2.
 \]
\end{proof}

\begin{cor} \label{cor:prove-GM}
Gromov--Milman inequality: $D_{Exp} \geq c D_{Poin}$.
\end{cor}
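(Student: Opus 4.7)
The plan is to invoke Proposition \ref{prop:pq-increase} to obtain polynomial-in-$k$ moment bounds for $1$-Lipschitz functions, and then convert these into exponential tail decay via Markov's inequality. This is essentially a compact repackaging of the classical Herbst / Gromov--Milman argument within the $(p,q)$-Poincar\'e framework developed in this section; in particular, no convexity assumptions are required.

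First, I would apply Proposition \ref{prop:pq-increase} with $p=q=2$ and $p'=q'=2k$ for each integer $k\geq 1$. The scaling condition $\frac{1}{p}-\frac{1}{q} = \frac{1}{p'}-\frac{1}{q'} = 0$ is trivially satisfied, yielding
\[
D_{2k,2k} \;\geq\; \frac{2}{2k}\, D_{2,2} \;=\; \frac{D^M_{Poin}}{k} \;\simeq\; \frac{D_{Poin}}{k},
\]
where the last equivalence comes from Lemma \ref{lem:E-M}. For a $1$-Lipschitz $f$ one has $|\nabla f|\leq 1$ pointwise, so applying the $(2k,2k)$ Poincar\'e inequality to $f$ gives
\[
\norm{f - M_\mu f}_{L_{2k}(\mu)} \;\leq\; \frac{1}{D_{2k,2k}} \norm{|\nabla f|}_{L_{2k}(\mu)} \;\leq\; \frac{Ck}{D_{Poin}}
\]
for a universal constant $C>0$ and every integer $k\geq 1$.

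Next, the Markov--Chebyshev inequality applied to $|f-M_\mu f|^{2k}$ yields
\[
\mu\brac{|f - M_\mu f| \geq t} \;\leq\; \brac{\frac{Ck}{D_{Poin}\, t}}^{2k} \qquad \forall t>0,\; \forall k \geq 1.
\]
Choosing $k$ to be the integer closest to $D_{Poin}\, t/(eC)$ turns this into an exponential tail bound $\mu(|f-M_\mu f| \geq t) \leq \exp(-c D_{Poin} t)$ for a universal $c>0$, valid once $t$ exceeds a fixed multiple of $1/D_{Poin}$. Finally, the $k=1$ case together with Jensen's inequality bounds $|E_\mu f - M_\mu f|$ by $C/D_{Poin}$, which lets me transfer concentration from median to mean; the remaining small-$t$ regime is absorbed into the prefactor $e$ in the definition of $D_{Exp}$, and we conclude $D_{Exp} \geq c' D_{Poin}$.

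No serious obstacle is expected; the argument is a short chain of inequalities. The only points demanding modest care are the discrete optimization in $k$ and the absorption of the small-$t$ regime and the median-to-mean conversion into universal constants.
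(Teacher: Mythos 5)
Your proposal is correct and follows essentially the same route as the paper: both use Proposition~\ref{prop:pq-increase} with $p=q=2$ to obtain $D_{p,p}\gtrsim D_{Poin}/p$, apply the resulting $(p,p)$ Poincar\'e inequality to a $1$-Lipschitz $f$ to get $\norm{f-M_\mu f}_{L_p}\lesssim p/D_{Poin}$, and then convert these growing moment bounds into exponential concentration. The only difference is presentational: the paper packages the last step via the standard equivalence $\norm{g}_{\Psi_1(\mu)}\simeq\sup_p\norm{g}_{L_p(\mu)}/p$ (citing \cite{JSZ}) together with Lemma~\ref{lem:E-M}, whereas you unfold that lemma by hand through Markov's inequality and a discrete optimization in $k$ plus an explicit median-to-mean shift.
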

\begin{proof}
Since $D_{Poin} \simeq D_{2,2}$, we conclude by Proposition \ref{prop:pq-increase} that $D_{p,p}
\geq c D_{Poin}/p$ for every $2 \leq p \leq \infty$.
Let $f$ be a 1-Lipschitz function. It is elementary to show (e.g. \cite{JSZ}) that
$1 / D_{Exp}(f)$ is equivalent (to within universal constants) to $\norm{f - E_\mu
f}_{\Psi_1(\mu)}$, and that $\norm{g}_{\Psi_1(\mu)}$ is in turn
equivalent to $\sup_{p \geq 1} \norm{g}_{L_p(\mu)}/{p}$. Employing Lemma \ref{lem:E-M} and using
the $(p,p)$ Poincar\'e inequalities:
\begin{eqnarray}
\nonumber & &  \frac{1}{D_{Exp}(f)} \simeq \norm{f - E_\mu f}_{\Psi_1(\mu)} \simeq \norm{f - M_\mu
f}_{\Psi_1(\mu)} \simeq \sup_{p \geq 1} \frac{\norm{f- M_\mu f}_{L_p(\mu)}}{p} \\
\nonumber & & \leq \sup_{p \geq
1} \frac{\norm{|\nabla f|}_{L_p(\mu)}}{\min(D_{2,2},p D_{p,p})} \leq \frac{C}{D_{Poin}} \sup_{p \geq
1} \norm{|\nabla f|}_{L_p(\mu)} = \frac{C}{D_{Poin}} ~,
\end{eqnarray}
since $f$ was assumed 1-Lipschitz. Taking supremum on all such functions $f$, we obtain the
conclusion.
\end{proof}
\begin{rem}
The exact same proof shows that $D_{Exp} \geq c_r D_{r,r}$, for arbitrary $r \geq 1$.
\end{rem}

We have seen that passing from $(p,q)$ to $(p',q')$ is manageable if
$q' \geq q$ (perhaps under some additional assumptions on $p,p'$)
without any convexity assumptions. Unfortunately, we are interested
in the case $q' < q$, for which an analogous statement to
Proposition \ref{prop:pq-increase} is simply false without any
additional assumptions (counter examples are easy to construct, as
in the Introduction). Our first ingredient in the proof of Theorem \ref{thm:main-pq}
states that our convexity assumptions already suffice to extend
Proposition \ref{prop:pq-increase} to the case $q' < q$, $p' < p$:

\begin{thm} \label{thm:1st-ingr}
Let $0 < p \leq \infty$, $1 \leq q \leq \infty$, and set $r = 1 +
\frac{1}{p} - \frac{1}{q}$. Assume that $\frac{1}{2} \leq r \leq 2$.
Then under our smooth convexity assumptions, the following
statements are equivalent:
\begin{enumerate}
 \item
\[
\forall f \in \F \;\;\;\;  D_{p,q} \norm{f - M_\mu f}_{L_p(\mu)}
\leq \norm{\abs{\nabla f}}_{L_q(\mu)} ~,
\]
\item
\[
 \tilde{I}(t) \geq D'_r t^r \;\;\; \forall t \in [0,1/2] ~,
\]
\end{enumerate}
where the best constants $D_{p,q}$ and $D'_r$ above satisfy:
\begin{equation} \label{eq:linear-decay}
 c_1 D_{p,q} \leq D'_r \leq c_2 p D_{p,q},
\end{equation}
for some universal constants $c_1,c_2>0$. \\
In fact, the direction
$(2)\Rightarrow(1)$ holds for $p \geq q$ without any convexity
assumptions.
\end{thm}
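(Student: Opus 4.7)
The statement splits into two implications, which I would treat separately. The direction $(2)\Rightarrow(1)$ is the classical Maz'ya--Federer--Fleming--type implication, requiring no convexity when $p\geq q$ (i.e.\ $r\leq 1$); the direction $(1)\Rightarrow(2)$ is the substantive content and exploits the $CD(0,\infty)$ hypothesis via a Bakry--Ledoux semi-group argument generalising Theorem~\ref{thm:Ledoux}. The case $r>1$ of $(2)\Rightarrow(1)$ additionally invokes the concavity from Theorem~\ref{thm:Intro-concavity}.

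For $(2)\Rightarrow(1)$ with $r\leq 1$: for $f\in\F$ with $M_\mu f=0$, set $g=|f|^s$ for a suitable $s\geq 1$ and apply the (generalized) co-area inequality together with $\tilde I\geq D'_r t^r$ to the level sets of $g$ (each of measure $\leq 1/2$, after separating positive and negative parts of $f$):
\[
 s\int |f|^{s-1}|\nabla f|\,d\mu \;=\;\int|\nabla g|\,d\mu \;\geq\; D'_r \int_0^\infty \mu(\set{g>t})^r\,dt .
\]
Then apply H\"older on the left with exponents $(p/(s-1),q)$ and layer-cake on the right; the value of $s$ that matches the two sides in powers of $\norm{f}_{L_p(\mu)}$ is precisely dictated by the identity $r=1+1/p-1/q$, and rearranging produces (1) with $D_{p,q}\geq c D'_r$. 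When $r>1$, one first uses the concavity and symmetry of $I$ granted by Theorem~\ref{thm:Intro-concavity} to upgrade $\tilde I(t)\geq D'_r t^r$ to the Cheeger-type bound $\tilde I(t)\geq 2^{1-r}D'_r\,t$ on $[0,1/2]$, and then reduces to the $r\leq 1$ case.

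For $(1)\Rightarrow(2)$: let $\set{P_t}_{t\geq 0}$ denote the diffusion semi-group with generator $\Delta_{g,\mu}$. Under \eqref{eq:Intro-BE} one has the pointwise gradient estimate $|\nabla P_t f|\leq P_t(|\nabla f|)$, which combined with $L_q$-contractivity of $P_t$ and the identity $P_t f-f=\int_0^t \Delta P_s f\,ds$ yields smoothing bounds of the type $\norm{|\nabla P_t f|}_{L_q(\mu)}\lesssim t^{-1/2}\norm{f}_{L_q(\mu)}$. Given a Borel set $A$ with $a:=\mu(A)\leq 1/2$, I would approximate $\chi_A$ by a Lipschitz function $f_\eps$ and apply the $(p,q)$ Poincar\'e inequality to $P_t f_\eps$: as $\eps\to 0$, the left side is bounded below by $\gtrsim D_{p,q}\,a^{1/p}$ for sufficiently small $t$ (using $a\leq 1/2$ and that $M_\mu P_t\chi_A\to 0$), while the right side is controlled by a multiple of $t^{-1/2}\mu^+(A)\cdot a^{1/q-1}$ via the gradient estimate applied to an approximation whose gradient weighs $\mu^+(A)$. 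Optimizing $t$ and invoking the identity $1/p+1-1/q=r$ yields $\mu^+(A)\geq c D_{p,q}\,a^r/p$, i.e.\ $D'_r\geq c\,D_{p,q}/p$.

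\emph{Main obstacle.} The delicate step is the semi-group gradient control in $L_q$ for arbitrary $q\in[1,\infty]$: Ledoux's original proof of Theorem~\ref{thm:Ledoux} leans on the self-adjointness of $P_t$ on $L_2$, whereas the full range of $q$ (and especially $q=\infty$, which corresponds to First-Moment concentration) forces one to replace spectral reasoning with purely $CD(0,\infty)$ Bakry-type commutation estimates together with interpolation. A secondary technical point is keeping the dependence on $p$ in \eqref{eq:linear-decay} essentially linear; any polynomial loss would derail the circle of $(p,q)$ equivalences in Theorem~\ref{thm:main-pq} and the Gromov--Milman-style deduction in Corollary~\ref{cor:prove-GM}.
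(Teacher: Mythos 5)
Your direction $(2)\Rightarrow(1)$ is essentially the paper's: the Maz'ya--Federer--Fleming co-area argument (the paper's Proposition~\ref{prop:isop-L1-inqs}) combined with the H\"older step of Proposition~\ref{prop:pq-increase}, and then the upgrade via concavity of $I$ for the $r>1$ case. That part is fine.

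The direction $(1)\Rightarrow(2)$, however, has a gap. You apply the $(p,q)$ Poincar\'e inequality directly to $P_t\chi_A$, bound the left side below by $\gtrsim D_{p,q}\,a^{1/p}$ for small $t$, and claim the right side $\norm{\abs{\nabla P_t\chi_A}}_{L_q(\mu)}$ is $\lesssim t^{-1/2}\mu^+(A)\,a^{1/q-1}$. That last bound is not justified: the Bakry--Ledoux reverse Poincar\'e (the paper's Lemma~\ref{lem:Ledoux}) only gives $\norm{\abs{\nabla P_t f}}_{L_q(\mu)}\le (2t)^{-1/2}\norm{f}_{L_q(\mu)}$ for $q\ge 2$, which for $f=\chi_A-\mu(A)$ produces $a^{1/q}$, with no $\mu^+(A)$. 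The commutation estimate $\abs{\nabla P_t f}\le P_t\abs{\nabla f}$ also fails to give a bounded $L_q$ quantity as your Lipschitz regularization $f_\eps\to\chi_A$, since $\abs{\nabla f_\eps}$ concentrates near $\partial A$ and only its $L_1$ norm stays controlled. As written your argument never produces $\mu^+(A)$ on the small side of an inequality. The paper's actual route is structurally different: it uses the dual estimate $\norm{f-P_tf}_{L_1(\mu)}\le\sqrt{2t}\,\norm{\abs{\nabla f}}_{L_1(\mu)}$ (Corollary~\ref{cor:Ledoux}) applied to $\chi_A$ to get $\sqrt{2t}\,\mu^+(A)\ge\int\abs{\chi_A-P_t\chi_A}\,d\mu$; it then rewrites the right side as $2\big(\mu(A)(1-\mu(A))-\int(P_t\chi_A-\mu(A))(\chi_A-\mu(A))\,d\mu\big)$, applies H\"older and the $(p,q)$ Poincar\'e inequality to the correlation term, and only then inserts the $L_q$ gradient bound. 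Optimizing over $t$ at that point yields $\mu^+(A)\gtrsim D_{p,q}\,\min(\mu(A),1-\mu(A))^{r}$.

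A smaller point: your ``main obstacle'' paragraph misidentifies the difficulty. The paper never needs the semi-group gradient control for arbitrary $q$; it first reduces to $p\ge 1,q\ge 2$ via Proposition~\ref{prop:pq-increase} and Lemma~\ref{lem:E-M}, and for $q\ge 2$ the estimate is an immediate consequence of the pointwise Lemma~\ref{lem:Ledoux}, which has nothing to do with $L_2$ self-adjointness. The linear-in-$p$ loss in~\eqref{eq:linear-decay} then arises from that reduction, not from the semi-group step itself.
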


Note that when $p=q=2$, the direction $(2) \Rightarrow (1)$ reduces
(up to constants) to Theorem \ref{thm:Cheeger} (Maz'ya--Cheeger
inequality), and the direction $(1) \Rightarrow (2)$ to the
Buser--Ledoux Theorems \ref{thm:Buser},\ref{thm:Ledoux}. A
generalization of Theorem \ref{thm:1st-ingr} involving general
Orlicz norms will be derived in
\cite{EMilmanRoleOfConvexityInFunctionalInqs}.

\smallskip

There is essentially no novel content in the direction $(2)
\Rightarrow (1)$, which follows from the methods of Maz'ya
\cite[p. 89]{MazyaBook} and Federer--Fleming \cite{FedererFleming} (see
also \cite{BobkovHoudreMemoirs}). These authors deduced the optimal
constant in the Gagliardo inequality ($q=1$, $p=\frac{n}{n-1}$), as well as
the Sobolev inequalities ($1<q<n$, $p= \frac{qn}{n-q}$), from the isoperimetric inequality
in $\Real^n$ ($r=\frac{n-1}{n}$), using the following clever
generalization of Lemma \ref{lem:Cheeger=L1}:

\begin{prop}[Maz'ya, Federer--Fleming, Bobkov--Houdr\'e] \label{prop:isop-L1-inqs}
Let $0<r\leq 1$. Without any convexity
assumptions, the $(1/r,1)$ Poincar\'e inequality:
\[
\forall f \in \F \;\;\;\;  D \norm{f - M_\mu f}_{L_{1/r}(\mu)} \leq
\norm{\abs{\nabla f}}_{L_1(\mu)}
\]
is equivalent to the following isoperimetric inequality:
\[
 \tilde{I}(t) \geq D t^r \;\;\; \forall t \in [0,1/2] ~.
\]
\end{prop}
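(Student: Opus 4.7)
The plan is to mimic the proof of Lemma \ref{lem:Cheeger=L1}, adapting the two directions with the exponent $r$ in place of $1$.

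\textbf{Direction $(2)\Rightarrow(1)$ (isoperimetric implies functional).} Given $f \in \F$ with $M_\mu f = 0$, I would decompose $f = f_+ - f_-$, so that $G_\pm(t) := \mu\{\pm f > t\} \leq 1/2$ for all $t>0$, and the hypothesis applies. The co-area inequality (as in the sketch of Lemma \ref{lem:Cheeger=L1}) together with the isoperimetric inequality gives
\[
\int \abs{\nabla f_+} d\mu \geq \int_0^\infty \mu^+\set{f>t}\, dt \geq D \int_0^\infty G_+(t)^r dt,
\]
and similarly for $f_-$. The key step is then to show that $\norm{f_+}_{L_{1/r}(\mu)} \leq \int_0^\infty G_+(t)^r dt$. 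This is exactly Minkowski's integral inequality applied to the layer-cake representation $f_+ = \int_0^\infty \chi_{\set{f>t}} dt$, with exponent $q = 1/r \geq 1$:
\[
\norm{f_+}_{L_{1/r}(\mu)} = \Bigl\Vert \int_0^\infty \chi_{\set{f>t}} dt \Bigr\Vert_{L_{1/r}(\mu)} \leq \int_0^\infty \norm{\chi_{\set{f>t}}}_{L_{1/r}(\mu)} dt = \int_0^\infty G_+(t)^r dt.
\]
This is precisely where the restriction $r \leq 1$ is essential. Combining, $\norm{f_\pm}_{L_{1/r}(\mu)} \leq D^{-1}\int\abs{\nabla f_\pm} d\mu$. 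Since $f_+,f_-$ have disjoint supports (as do $\abs{\nabla f_+}$ and $\abs{\nabla f_-}$, modulo a null set on $\set{f=0}$), and $a^{1/r} + b^{1/r} \leq (a+b)^{1/r}$ for $1/r \geq 1$, I conclude $\norm{f - M_\mu f}_{L_{1/r}(\mu)} \leq D^{-1}\norm{\abs{\nabla f}}_{L_1(\mu)}$.

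\textbf{Direction $(1)\Rightarrow(2)$ (functional implies isoperimetric).} Given a Borel set $A$ with $0 < \mu(A) < 1/2$, I would test the inequality against $f_\eps(x) := \max\set{1 - d(x,A)/\eps, 0}$, which is $1$-Lipschitz up to the factor $1/\eps$. For $\eps$ sufficiently small, $\mu\set{f_\eps > 0} = \mu(A_{\eps,d}) \leq 1/2$, so $M_\mu f_\eps = 0$. Then
\[
D \norm{f_\eps}_{L_{1/r}(\mu)} \leq \norm{\abs{\nabla f_\eps}}_{L_1(\mu)} \leq \frac{\mu(A_{\eps,d}\setminus A)}{\eps}.
\]
Letting $\eps \to 0^+$, dominated convergence gives $\int f_\eps^{1/r} d\mu \to \mu(A)$, hence $\norm{f_\eps}_{L_{1/r}(\mu)} \to \mu(A)^r$, while the right-hand side has liminf equal to $\mu^+(A)$. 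The limiting case $\mu(A)=1/2$ is handled by approximating $A$ from within by sets of slightly smaller measure and using lower semi-continuity of $\mu^+$.

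\textbf{Expected main obstacles.} The substantive content is the Minkowski-integral step in $(2)\Rightarrow(1)$; it is short but crucially uses $1/r \geq 1$, which is also where $r \leq 1$ enters the statement. The other technical care is in $(1)\Rightarrow(2)$, ensuring that the median of $f_\eps$ is indeed $0$ for small $\eps$ and handling the boundary case $\mu(A)=1/2$; these are routine manipulations already present in the Bobkov--Houdr\'e treatment of the $r=1$ case, and they carry over verbatim.
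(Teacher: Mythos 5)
Your plan follows the classical Maz'ya--Federer--Fleming route that the paper's citation points to, and you have correctly isolated the substantive step: Minkowski's integral inequality applied to the layer-cake decomposition, which is precisely where the hypothesis $1/r\geq 1$ enters. The direction $(2)\Rightarrow(1)$ is essentially right in spirit.

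There is, however, a genuine gap in direction $(1)\Rightarrow(2)$: testing against $f_\eps$ for Borel sets $A$ with $\mu(A)\le 1/2$ only yields $I(t)\geq D t^r$ for $t\in(0,1/2]$, whereas the conclusion $\tilde{I}(t)=\min(I(t),I(1-t))\geq D t^r$ additionally requires $I(1-t)\geq D t^r$, i.e.\ $\mu^+(A)\geq D(1-\mu(A))^r$ for sets with $\mu(A)>1/2$. Since $\mu^+$ is an \emph{exterior} Minkowski content, $\mu^+(A)$ and $\mu^+(\Omega\setminus A)$ need not coincide, so this case is not a formal consequence of the one you treated. It is fixed by testing instead with $h_\eps(x):=\min\set{d(x,A)/\eps,1}$ (which vanishes on $\overline{A}$, so $M_\mu h_\eps=0$ once $\mu(A)>1/2$, satisfies $\norm{\abs{\nabla h_\eps}}_{L_1(\mu)}\leq \mu(A_{\eps,d}\setminus A)/\eps$, and whose $L_{1/r}$-norm tends to $(1-\mu(\overline{A}))^r$); you should include this companion case, and also note that whenever $\mu(\overline{A})>\mu(A)$ the boundary measure is infinite, so you may reduce to $\mu(\overline{A})=\mu(A)$ in both cases.

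A smaller point in $(2)\Rightarrow(1)$: your final step needs $\norm{\abs{\nabla f_+}}_{L_1(\mu)}+\norm{\abs{\nabla f_-}}_{L_1(\mu)}\leq\norm{\abs{\nabla f}}_{L_1(\mu)}$, i.e.\ that $\abs{\nabla f_+}$ and $\abs{\nabla f_-}$ have disjoint supports modulo a $\mu$-null set. This is clean in the Riemannian setting by Rademacher, but in the generality of ``without any convexity assumptions'' (a general metric probability space, with $\abs{\nabla f}$ the local Lipschitz constant as in Remark~\ref{rem:grad-defn}) it requires an extra word: on $\set{f=0}$ both local slopes may be positive simultaneously. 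The paper's own sketch of Lemma~\ref{lem:Cheeger=L1} avoids the split altogether by applying the co-area inequality once to the full function, $\int\abs{\nabla f}\,d\mu\geq\int_{-\infty}^{\infty}\mu^+\set{f>t}\,dt$, bounding the $t>0$ part by $D\int_0^\infty G_+(t)^r\,dt$ and the $t<0$ part by $D\int_0^\infty G_-(s)^r\,ds$ (using that $\mu\set{f\leq-s}\leq 1/2$), and only then invoking Minkowski on each half; doing the same here closes your argument with no pointwise gradient decomposition.
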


Combining Propositions \ref{prop:isop-L1-inqs} and
\ref{prop:pq-increase}, the direction $(2) \Rightarrow (1)$ for $p
\geq q$ (equivalently $r \leq 1$) immediately follows without any
further assumptions. For the case $p < q$, it is almost possible to
avoid using the convexity assumptions, but not completely. Instead,
we employ Theorem \ref{thm:Intro-concavity} on the concavity of $I$
under our (smooth) convexity assumptions, and deduce from $(2)$ that
in fact $\tilde{I}(t) \geq \frac{D_r'}{2^{r-1}} t$. The latter is
equivalent by Lemma \ref{lem:Cheeger=L1} to the statement $D_{1,1}
\geq \frac{D_r'}{2^{r-1}}$, and by using Proposition
\ref{prop:pq-increase} and Jensen's inequality, we deduce:
\[
D_{p,q} \geq D_{p,p} \geq \frac{D_{1,1}}{p} \geq \frac{D_r'}{2^{r-1}
p} \geq \frac{D_r'}{2 p} ~.\] The proof of $(2) \Rightarrow (1)$ is
thus complete.

\smallskip

Before proceeding to the proof of the direction $(1)\Rightarrow(2)$
(this will be the focus of the next section), let us recall how
Theorem \ref{thm:1st-ingr} coupled with Theorem
\ref{thm:Intro-concavity} conclude the proof of Theorem
\ref{thm:main-pq} and hence of our Main Theorem \ref{thm:Main}:

\begin{proof}[Proof of Theorem \ref{thm:main-pq}]
By an approximation argument we develop in Section \ref{sec:AA}, it is enough to prove
the theorem under our \emph{smooth} convexity assumptions.

By Jensen's inequality, $D_{1,\infty} \geq D_{p,q}$ in the range $1
\leq p \leq q \leq \infty$. Employing our (smooth) convexity
assumptions, the direction $(1) \Rightarrow (2)$ of Theorem
\ref{thm:1st-ingr} implies:
\begin{equation} \label{eq:quadratic}
\tilde{I}(t) \geq c D_{1,\infty} t^2 \;\;\; \forall t \in [0,1/2] ~.
\end{equation}
Using our (smooth) convexity assumptions for the second time, Theorem
\ref{thm:Intro-concavity} asserts that $I$ is concave on $(0,1)$.
Since $I$ is also symmetric about $1/2$ (see Corollary
\ref{cor:profile-symmetry}), we immediately deduce that:
\[
\tilde{I}(t) \geq \frac{c}{2} D_{1,\infty} t \;\;\; \forall t \in [0,1/2] ~,
\]
which is exactly Cheeger's isoperimetric inequality, and is
identical to stating $D_{1,1} \geq \frac{c}{2} D_{1,\infty}$. Using
Proposition \ref{prop:pq-increase} and Jensen's inequality if
necessary, we can pass from this to an arbitrary $(p',q')$
inequality in the range $1 \leq p' \leq q' \leq \infty$.
\end{proof}

\begin{rem} \label{rem:uniform}
Note that when $\mu$ is the uniform measure on $\Omega$, Theorem \ref{thm:Intro-concavity} in fact
ensures that $I^{\frac{n}{n-1}}$ is concave, so we may deduce from (\ref{eq:quadratic}) that in
fact:
\[
 \tilde{I}(t) \geq  \frac{c}{2^{\frac{n+1}{n}}} D_{1,\infty} t^{\frac{n-1}{n}} \;\;\; \forall t \in
[0,1/2] ~.
\]
Proposition \ref{prop:isop-L1-inqs} implies that the latter
isoperimetric inequality is equivalent to a $(\frac{n}{n-1}, 1)$
Poincar\'e inequality. Hence, it is clear that in this case, both
our Main Theorem \ref{thm:Main} and Theorem \ref{thm:main-pq} can be
strengthened.
\end{rem}

\section{The Semi-Group Argument} \label{sec:semi-group}

In this section, we prove the direction $(1) \Rightarrow (2)$ of
Theorem \ref{thm:1st-ingr}. Our proof closely follows Ledoux's proof
\cite{LedouxSpectralGapAndGeometry} of Theorem \ref{thm:Ledoux}.

Given a smooth complete oriented connected Riemannian manifold
$\Omega = (M,g)$ equipped with a probability measure $\mu$ with
density $d\mu = \exp(-\psi) dvol_M$, $\psi \in C^2(M)$, we define
the associated Laplacian $\Delta_{(\Omega,\mu)}$ by:
\begin{equation} \label{eq:Laplacian-def}
 \Delta_{(\Omega,\mu)} := \Delta_{\Omega} - \nabla \psi \cdot \nabla,
\end{equation}
where $\Delta_{\Omega}$ is the usual Laplace-Beltrami operator on
$\Omega$. $\Delta_{(\Omega,\mu)}$ acts on $\B(\Omega)$, the space of
bounded smooth real-valued functions on $\Omega$.
Let $(P_t)_{t \geq 0}$ denote the semi-group associated to the
diffusion process with infinitesimal generator
$\Delta_{(\Omega,\mu)}$ (cf.
\cite{DaviesSemiGroupBook,LedouxLectureNotesOnDiffusion}),
characterized by the following system of second order differential
equations:
\[
 \frac{d}{dt} P_t(f) = \Delta_{(\Omega,\mu)} (P_t(f)) \;\;\;\; P_0(f) = f \;\;\; \forall f \in
\B(\Omega)~.
\]
For each $t \geq 0$, $P_t : \B(\Omega) \rightarrow \B(\Omega)$ is a
bounded linear operator and its action naturally extends to the
entire $L_p(\mu)$ spaces ($p\geq 1$). We collect several elementary
properties of these operators:
\begin{itemize}
\item
$P_t (1) = 1$.
 \item
$f \geq 0 \Rightarrow P_t (f) \geq 0$.
\item
$\int P_t (f) d\mu = \int f d\mu$.
\item
$\abs{P_t(f)}^p \leq P_t(\abs{f}^p)$ for all $p \geq 1$.
\end{itemize}

The following crucial dimension-free reverse Poincar\'e inequality
was shown by Bakry and Ledoux in \cite[Lemma 4.2]{BakryLedoux},
extending Ledoux's approach \cite{LedouxBusersTheorem} for proving
Buser's Theorem (see also \cite[Lemma 2.4]{BakryLedoux}, \cite[Lemma
5.1]{LedouxSpectralGapAndGeometry}). It may also be interpreted as a
weak, dimension-free, form of the Li--Yau parabolic gradient
inequality \cite{LiYauParabolicInq}.

\begin{lem}[Bakry--Ledoux] \label{lem:Ledoux}
Assume that the following Bakry-\'{E}mery Curvature-Dimension
condition holds on $\Omega$:
\begin{equation} \label{eq:Ledoux-condition}
Ric_g + Hess_g \psi \geq -K g ~, K \geq 0 ~.
\end{equation}
Then for any $t\geq 0$ and $f \in \B(\Omega)$, we have:
\[
 c(t) \abs{\nabla P_t (f)}^2 \leq P_t(f^2) - (P_t (f))^2
\]
pointwise, where:
\[
 c(t) = \frac{1 - \exp(-2Kt)}{K} \;\;\; (= 2t \;\; \text{ if } K=0).
\]
\end{lem}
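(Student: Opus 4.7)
The plan is to use the standard Bakry--\'Emery $\Gamma_2$-calculus / interpolation along the semi-group. I would fix $t>0$ and define, for $s \in [0,t]$, the interpolating quantity
\[
 \phi(s) := P_s\bigl((P_{t-s} f)^2\bigr),
\]
so that $\phi(0) = (P_t f)^2$ and $\phi(t) = P_t(f^2)$; hence the right-hand side of the desired inequality equals $\phi(t) - \phi(0) = \int_0^t \phi'(s) \, ds$.

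The next step is to compute $\phi'(s)$. Writing $g_s := P_{t-s} f$ and using that $\partial_s P_s = P_s \Delta_{(\Omega,\mu)}$ on smooth functions and $\partial_s g_s = -\Delta_{(\Omega,\mu)} g_s$, I would get
\[
 \phi'(s) = P_s\bigl(\Delta_{(\Omega,\mu)}(g_s^2) - 2 g_s \Delta_{(\Omega,\mu)} g_s\bigr) = 2 P_s\bigl(\abs{\nabla g_s}^2\bigr),
\]
where the last equality is the carr\'e-du-champ identity, which follows from the chain rule $\Delta_{(\Omega,\mu)}(g^2) = 2g \Delta_{(\Omega,\mu)} g + 2 \abs{\nabla g}^2$ (valid because $\Delta_{(\Omega,\mu)}$ is a diffusion operator of the form (\ref{eq:Laplacian-def})).

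The heart of the argument is then to control the growth of $s \mapsto P_s(\abs{\nabla g_s}^2)$ under the curvature hypothesis (\ref{eq:Ledoux-condition}). Differentiating once more and again using $\partial_s g_s = -\Delta_{(\Omega,\mu)} g_s$ yields
\[
 \frac{d}{ds} P_s\bigl(\abs{\nabla g_s}^2\bigr) = P_s\Bigl(\Delta_{(\Omega,\mu)} \abs{\nabla g_s}^2 - 2 \nabla g_s \cdot \nabla \Delta_{(\Omega,\mu)} g_s\Bigr) = 2 P_s\bigl(\Gamma_2(g_s,g_s)\bigr),
\]
where $\Gamma_2$ is the Bakry--\'Emery iterated carr\'e-du-champ. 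The weighted Bochner formula gives $\Gamma_2(g,g) = \abs{\text{Hess}\, g}^2 + (Ric_g + Hess_g \psi)(\nabla g, \nabla g)$, so the assumption (\ref{eq:Ledoux-condition}) forces $\Gamma_2(g,g) \geq -K \abs{\nabla g}^2$ pointwise. This is the step I expect to be the main technical point, since it is precisely where the curvature-dimension condition enters; once it is in hand, the differential inequality
\[
 \frac{d}{ds} P_s\bigl(\abs{\nabla g_s}^2\bigr) \geq -2K P_s\bigl(\abs{\nabla g_s}^2\bigr)
\]
shows that $s \mapsto e^{2Ks} P_s(\abs{\nabla g_s}^2)$ is non-decreasing, and in particular
\[
 P_s\bigl(\abs{\nabla P_{t-s} f}^2\bigr) \geq e^{-2Ks} \abs{\nabla P_t f}^2 .
\]

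Finally, I would substitute this bound back into $\phi'(s) = 2 P_s(\abs{\nabla g_s}^2)$ and integrate in $s$ from $0$ to $t$:
\[
 P_t(f^2) - (P_t f)^2 = \int_0^t \phi'(s) \, ds \geq 2 \abs{\nabla P_t f}^2 \int_0^t e^{-2Ks} ds = \frac{1 - e^{-2Kt}}{K} \abs{\nabla P_t f}^2,
\]
with the case $K=0$ handled by the obvious limit, giving the factor $2t$. Modulo the regularity/approximation issues required to justify the pointwise differentiations above (which are standard for the diffusion semi-group $(P_t)$ associated to $\Delta_{(\Omega,\mu)}$ acting on $\B(\Omega)$), this yields the claimed reverse Poincar\'e inequality with the stated constant $c(t)$.
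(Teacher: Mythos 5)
Your proof is correct and is the standard Bakry--\'Emery semi-group interpolation argument; the paper does not reproduce a proof of this lemma but cites Bakry--Ledoux \cite[Lemma 4.2]{BakryLedoux}, whose proof follows exactly this route (interpolating $\phi(s) = P_s((P_{t-s}f)^2)$, differentiating twice to bring in $\Gamma_2$, invoking the weighted Bochner formula to get $\Gamma_2 \geq -K\,\Gamma$, and integrating the resulting differential inequality). The one caveat you already flag --- the pointwise differentiations and the exchange of derivative with $P_s$ require the usual regularity/decay justifications for $\B(\Omega)$ and the semi-group --- is precisely the technical content that makes the full argument in Bakry--Ledoux somewhat longer, but your outline captures the mathematical substance faithfully.
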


In fact, the proof of this lemma is very general and extends to the
abstract framework of diffusion generators, as developed by Bakry
and \'{E}mery \cite{BakryEmery}. We comment that in the Riemannian
setting, it is known \cite{QianGradientEstimateWithBoundary} (see
also
\cite{HsuGradientEstimateWithBoundary,WangGradientEstimateWithBoundary})
that the gradient estimate of Lemma \ref{lem:Ledoux} is preserved
when restricting to a locally convex domain (as defined in the
Appendix) with smooth boundary; we refer to Sturm \cite[Proposition
4.15]{SturmCD1} for a general statement about closedness of the
Bakry-\'{E}mery Curvature-Dimension condition in an arbitrary metric
probability space.
The above lemma therefore holds under more general conditions,
namely when $\mu$ is supported on a locally convex domain $\Omega
\subset (M,g)$ with $C^2$ boundary, and $d\mu|_\Omega = \exp(-\psi)
dvol_M|_\Omega$, $\psi \in C^2(\overline{\Omega})$. In this case,
$\Delta_{\Omega}$ in (\ref{eq:Laplacian-def}) denotes the Neumann
Laplacian on $\Omega$, $\B(\Omega)$ denotes the space of
bounded smooth real-valued functions on $\Omega$
satisfying Neumann's boundary condition on $\partial \Omega$, and
Lemma \ref{lem:Ledoux} remains valid.

Our convexity assumptions are that $K=0$ in Lemma \ref{lem:Ledoux},
and this is what we will henceforth assume. It is clear that our
results in this section
may be extended to the case of $K > 0$, but we do not pursue this
direction in this work.

From Lemma \ref{lem:Ledoux}, it is immediate that for any $2 \leq q
\leq \infty$:
\begin{equation} \label{eq:L_q-bound}
 \norm{\abs{\nabla P_t (f)} }_{L_q(\mu)} \leq \frac{1}{\sqrt{2t}} \norm{f}_{L_q(\mu)},
\end{equation}
and using $q = \infty$, Ledoux easily deduces the following dual
statement \cite[(5.5)]{LedouxSpectralGapAndGeometry}:
\begin{cor}[Ledoux] \label{cor:Ledoux}
\begin{equation}
 \norm{ f - P_t (f)}_{L_1(\mu)} \leq \sqrt{2 t} \norm{ \abs{\nabla f} }_{L_1(\mu)}.
\end{equation}
\end{cor}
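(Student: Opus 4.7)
The plan is to derive the estimate by duality from the $q=\infty$ case of the gradient bound (\ref{eq:L_q-bound}), which says that $P_t$ contracts the Lipschitz seminorm: $\norm{\abs{\nabla P_t(g)}}_{L_\infty(\mu)} \leq \norm{g}_{L_\infty(\mu)}/\sqrt{2t}$. The $L_1$–norm of $f-P_t(f)$ is obtained by testing against $L_\infty$ functions, and the self-adjointness of $P_t$ on $L_2(\mu)$ lets me transfer the semi-group onto the test function, where the contraction above can be applied.

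Concretely, I would write
\[
\norm{f-P_t(f)}_{L_1(\mu)} = \sup\set{\int (f-P_t(f)) g\, d\mu \; ; \; \norm{g}_{L_\infty(\mu)} \leq 1},
\]
and, using the identity $\int P_t(f) g\,d\mu = \int f\, P_t(g)\, d\mu$, rewrite the integrand as $\int f (g - P_t(g))\,d\mu$. Expressing $g - P_t(g) = -\int_0^t \frac{d}{ds} P_s(g)\,ds = -\int_0^t \Delta_{(\Omega,\mu)} P_s(g)\,ds$ from the semi-group equation, and integrating by parts via the standard identity $\int h\, \Delta_{(\Omega,\mu)} k\,d\mu = -\int \nabla h \cdot \nabla k\, d\mu$ for $h,k \in \B(\Omega)$, I obtain
\[
\int (f-P_t(f))\, g\, d\mu = \int_0^t \int \nabla f \cdot \nabla P_s(g)\, d\mu\,ds.
\]
The $L_1$–$L_\infty$ H\"{o}lder inequality bounds this pointwise in $s$ by $\norm{\abs{\nabla f}}_{L_1(\mu)} \norm{\abs{\nabla P_s(g)}}_{L_\infty(\mu)}$, and the $q=\infty$ case of (\ref{eq:L_q-bound}) yields $\norm{\abs{\nabla P_s(g)}}_{L_\infty(\mu)} \leq 1/\sqrt{2s}$. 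Since $\int_0^t ds/\sqrt{2s} = \sqrt{2t}$, taking the supremum over admissible $g$ delivers the claimed inequality.

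The main technical issue, which I would not dwell on, is the justification of the integration by parts and the differentiation under the integral sign on a sufficiently rich class of test functions; in the smooth setting of Lemma \ref{lem:Ledoux} this is handled by first working with $f, g \in \B(\Omega)$ (where Neumann boundary conditions, if $\Omega$ has a boundary, kill the boundary term) and then extending to general $f \in \F$ with $\abs{\nabla f} \in L_1(\mu)$ and $g \in L_\infty(\mu)$ by the standard approximation available within the semi-group framework of \cite{LedouxSpectralGapAndGeometry}.
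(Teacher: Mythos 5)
Your proof is correct and is essentially Ledoux's original argument, which the paper does not reproduce but simply cites (the paper's only hint is that the corollary is the ``dual statement'' to the $q=\infty$ case of \eqref{eq:L_q-bound}, and you fill this in precisely as Ledoux does: $L_1$--$L_\infty$ duality, symmetry of $P_t$, the semigroup identity $g-P_t g=-\int_0^t \Delta_{(\Omega,\mu)} P_s g\,ds$, integration by parts against $f$, the pointwise Cauchy--Schwarz and H\"older bound, the gradient estimate $\norm{\abs{\nabla P_s g}}_{L_\infty(\mu)}\le 1/\sqrt{2s}$, and finally $\int_0^t ds/\sqrt{2s}=\sqrt{2t}$).
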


\begin{proof}[Proof of $(1) \Rightarrow (2)$ of Theorem \ref{thm:1st-ingr}]

First, our assumption on the range of $r$ implies that by applying
Proposition \ref{prop:pq-increase} if necessary, we may assume that
$p \geq 1, q \geq 2$ at the expense of an additional universal
constant appearing in (\ref{eq:linear-decay}). An additional
universal constant will appear on account of Lemma \ref{lem:E-M},
with which we pass to $E_\mu$ instead of $M_\mu$ in (1), so our
assumption now reads:
\begin{equation} \label{eq:sg-proof-assumption}
p \geq 1, q \geq 2 \;\; , \;\; \forall f \in \F \;\;\;\; D_{p,q}
\norm{f - E_\mu f}_{L_p(\mu)} \leq \norm{\abs{\nabla f}}_{L_q(\mu)}.
\end{equation}

Let $A$ denote an arbitrary Borel set in $\Omega$, and let
$\chi_{A,\eps}(x) := (1 - \frac{1}{\eps} d_g(x,A)) \vee 0$ denote a
continuous approximation in $\Omega$ to the characteristic function
$\chi_A$ of $A$. Clearly:
\[
 \frac{\mu(A_\eps) - \mu(A)}{\eps} \geq \int \abs{\nabla{\chi_{A,\eps}}} d\mu.
\]
Applying Corollary \ref{cor:Ledoux} to functions in $\B(\Omega)$
which approximate $\chi_{A,\eps}$ (in say $W^{1,1}(\Omega,\mu)$) and
passing to the limit inferior as $\eps \rightarrow 0$, it follows
that:
\[
 \sqrt{2t} \mu^+(A) \geq \int \abs{\chi_A - P_t(\chi_A) } d\mu.
\]

We start by rewriting the right hand side above as:
\begin{multline*}
\int_A (1 - P_t(\chi_A)) d\mu + \int_{\Omega \setminus A}
P_t(\chi_A) d\mu = 2\brac{\mu(A) - \int_A
P_t(\chi_A) d\mu} \\
= 2 \brac{\mu(A)(1-\mu(A)) - \int_\Omega (P_t(\chi_A) -
\mu(A))(\chi_A - \mu(A)) d\mu} ~.
\end{multline*}
Note that by H\"{o}lder's inequality (recall that $p\geq 1$) and our
assumption (\ref{eq:sg-proof-assumption}):
\begin{eqnarray}
\nonumber \int_\Omega (P_t(\chi_A) - \mu(A))(\chi_A - \mu(A)) d\mu &
\leq & \norm{P_t(\chi_A) - \mu(A)}_{L_p(\mu)}
\norm{\chi_A - \mu(A)}_{L_{p^*}(\mu)} \\
\nonumber &\leq& D_{p,q}^{-1} \norm{\abs{\nabla
P_t(\chi_A)}}_{L_q(\mu)} \norm{\chi_A - \mu(A)}_{L_{p^*}(\mu)} ~.
\end{eqnarray}
Using (\ref{eq:L_q-bound}) (recall that $q \geq 2$) to estimate
$\norm{\abs{\nabla P_t (\chi_A) }}_{L_q(\mu)}$, we conclude that:
\begin{equation} \label{eq:sg-proof-conclude}
 \sqrt{2t} \mu^+(A) \geq 2 \brac{\mu(A)(1-\mu(A)) - \frac{1}{\sqrt{2t} D_{p,q}} \norm{\chi_A
- \mu(A)}_{L_{q}(\mu)} \norm{\chi_A - \mu(A)}_{L_{p^*}(\mu)}} ~.
\end{equation}
We may now optimize on $t$. Using the rough estimate:
\[
\norm{\chi_A - \mu(A)}_{L_{s}(\mu)} \leq 2
\brac{\mu(A)(1-\mu(A))}^{1/s}
\]
for $s \geq 1$, we evaluate (\ref{eq:sg-proof-conclude}) at time:
\[
 t = \frac{32}{D_{p,q}^2} \brac{\mu(A)(1-\mu(A))}^{2(1/q-1/p)}
\]
and deduce:
\[
 \mu^+(A) \geq \frac{D_{p,q}}{8} \brac{\mu(A)(1-\mu(A))}^{2-1/q-1/p^*} \geq \frac{D_{p,q}}{8
\cdot 2^{r}} \min(\mu(A),1-\mu(A))^r ~,
\]
where $r = 2 - 1/q - 1/p^* = 1 + 1/p - 1/q$. Since $r \leq 2$, this
concludes the proof.
\end{proof}

\begin{rem}
As evident from the proof, for deducing the direction $(1) \Rightarrow (2)$ of Theorem \ref{thm:1st-ingr}, the definition of smooth convexity assumptions given in the Introduction may be extended to encompass the more general case treated in this section. Moreover, it is possible to provide an approximation argument for deducing this direction without any smoothness assumptions. We provide the argument in \cite{EMilmanRoleOfConvexityInFunctionalInqs} and omit it here,
since it is not required for the results of this work.
\end{rem}

\section{Interpretations and Extensions} \label{sec:extensions}

In this section, we provide some further interpretations and
extensions of our Main Theorem, which will also be needed for the applications of the next section.
We assume throughout this section that our convexity assumptions on $(\Omega,d,\mu)$ are satisfied.

Lemma \ref{lem:Cheeger=L1} demonstrates that if $A$ is a set with
$\mu(A) \leq 1/2$ on which the minimal ratio $D_{Che} = \mu^+(A) /
\mu(A)$ in Cheeger's isoperimetric inequality is attained (or nearly attained),
then the function $f=\chi_A$ (or the sequence of
Lipschitz functions which approximate it) attains the same (nearly)
minimal ratio
\begin{equation} \label{eq:ratio}
\int \abs{\nabla f} d\mu \; / \int \abs{f} d\mu
\end{equation}
among all functions $f \in \F$ with $M_\mu f = 0$. Clearly $\chi_A$
(or its approximating sequence) is far from being $1$-Lipschitz. If
on the other hand we define:
\begin{equation} \label{eq:Lip-form}
f(x) = d(x,\Omega \setminus A),
\end{equation}
which \emph{is} a $1$-Lipschitz function, it is not clear that it
will have a small ratio in (\ref{eq:ratio}). Our Main Theorem \ref{thm:Main} (together with Lemma \ref{lem:E-M}) states
that under our convexity assumptions, any $1$-Lipschitz
function $f_0$ on $(\Omega,d)$ with $M_\mu f_0 = 0$ which is (essentially) optimal in the First-Moment
inequality (say $\int |f_0| d\mu \geq 1/(3 D^M_{FM})$), also essentially minimizes the ratio in (\ref{eq:ratio}). Moreover, using the co-area formula as in Lemma
\ref{lem:Cheeger=L1} and applying our Main Theorem, we have:
\[
D_{Che} \leq \frac{\int_{-\infty}^\infty \mu^+\set{f_0 > t}
dt}{\int_{-\infty}^\infty \min(\mu\set{f_0 > t},1-\mu\set{f_0 > t})
dt} \leq
\frac{\int \abs{\nabla f_0} d\mu}{\int \abs{f_0} d\mu} \leq 3 D^M_{FM} \leq C D_{Che} ~,
\]
from which we also see that the ratio
$\mu^+(A_t) / \min(\mu(A_t),1 - \mu(A_t))$ for the ``average" level set $A_t$ of $f_0$
is essentially $D_{Che}$, the smallest possible.

Theorem \ref{thm:Lip-form} from the Introduction states that $f_0$ as above may in fact be chosen to be of the
form (\ref{eq:Lip-form}).

\begin{proof}[Proof of Theorem \ref{thm:Lip-form}]
Given a Borel set $A \subset \Omega$ with $\mu(A)\geq 1/2$, we
denote $g_A(x) = d(x,A)$. Clearly $g_A$ is $1$-Lipschitz and $M_\mu
g_A = 0$, so one direction follows immediately by Lemma \ref{lem:Cheeger=L1}:
\[
 D_{Che}(\Omega,d,\mu) \leq \frac{\int \abs{\nabla g_A} d\mu}{\int \abs{g_A} d\mu} \leq
\frac{1/2}{\int d(x,A) d\mu} ~.
\]
For the other direction, we employ our Main Theorem (and
Lemma \ref{lem:E-M}):
\[
 D_{Che}(\Omega,d,\mu) \geq c D^M_{FM}(\Omega,d,\mu) = \inf \frac{c}{\int |f | d\mu} ~,
\]
where the infimum is over all $1$-Lipschitz functions $f$ on
$(\Omega,d)$ with $M_\mu f = 0$. Denoting $A_1 = \set{f \leq 0}, A_2 = \set{f
\geq 0}$, we have $\mu(A_i) \geq 1/2$, $i=1,2$. By continuity
of $f$, $f|_{\partial A_1} \equiv 0$, $f|_{\partial A_2}
\equiv 0$ (even though it is possible that $\partial A_1 \neq
\partial A_2$), and since it is $1$-Lipschitz:
\[
\int |f | d\mu \leq \int_{\Omega \setminus A_2}
d(x,\partial A_2) d\mu + \int_{\Omega \setminus A_1} d(x,\partial
A_1) d\mu =  \int d(x,A_2) d\mu + \int d(x,A_1) d\mu ~.
\]
This concludes the proof.
\end{proof}

The next proposition will prove to be very useful for the applications of the next section.
We start with some notations. Given a Borel function $f$ on a
Borel probability space $(\Omega,\mu)$ and $\delta \in [0,1]$, let us denote by
$Q_\delta(f) = Q_{\mu,\delta}(f)$ the \emph{$\delta$-quantile} of
$f$:
\[
Q_\delta(f) := \inf\set{q \in \Real ; \mu\set{f \leq q} \geq \delta}
~.
\]
Let us also recall an inequality due to Paley and Zygmund
\cite{PaleyZygmund} (see also \cite[Chapter 2]{KahaneBook}), which
in its simplest form reads as follows:
\begin{lem}\label{lem:PZ}
Let $f$ denote a Borel function on $\Omega$, and assume that:
\[
\exists D>0  \;\;\;\text{such that}\;\;\; \norm{f}_{L_2(\mu)} \leq D
\norm{f}_{L_1(\mu)} < \infty ~.
\]
Then for any $\theta \in (0,1)$, denoting $\eps(\theta) =
(1-\theta)^2/D^2$, one has $Q_{1-\eps(\theta)}(|f|) \geq \theta \norm{f}_{L_1(\mu)}$.
\end{lem}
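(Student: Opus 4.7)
The plan is to prove this by the classical Paley--Zygmund second-moment argument, followed by a routine translation from the resulting tail estimate into the desired quantile bound. Set $\alpha := \norm{f}_{L_1(\mu)}$. First I would split
\[
\alpha = \int_{\set{|f| \leq \theta\alpha}} |f| d\mu + \int_{\set{|f| > \theta\alpha}} |f| d\mu \leq \theta\alpha \cdot \mu(\set{|f| \leq \theta\alpha}) + \norm{f}_{L_2(\mu)} \sqrt{\mu(\set{|f| > \theta\alpha})} ~,
\]
bounding the first integrand pointwise by $\theta\alpha$ and applying Cauchy--Schwarz (with the indicator $\chi_{\set{|f| > \theta\alpha}}$) to the second. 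Writing $p := \mu(\set{|f| > \theta\alpha})$ and dropping the non-negative term $\theta\alpha p$ to obtain $(1-\theta)\alpha \leq \norm{f}_{L_2(\mu)} \sqrt{p}$, then invoking the hypothesis $\norm{f}_{L_2(\mu)} \leq D \norm{f}_{L_1(\mu)}$, yields the Paley--Zygmund tail estimate
\[
\mu(\set{|f| > \theta \alpha}) \;\geq\; \frac{(1-\theta)^2 \alpha^2}{\norm{f}_{L_2(\mu)}^2} \;\geq\; \frac{(1-\theta)^2}{D^2} \;=\; \eps(\theta) ~.
\]

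To conclude, I would translate this tail bound into a lower bound on the quantile. For any $q < \theta\alpha$ the sets $\set{|f| \leq q}$ and $\set{|f| > \theta\alpha}$ are disjoint, so the tail estimate forces $\mu(\set{|f| \leq q}) \leq 1 - \eps(\theta)$. Retaining the discarded term $\theta\alpha p$ actually upgrades the Paley--Zygmund bound to the strict inequality $p > (1-\theta)^2 \alpha^2 / \norm{f}_{L_2(\mu)}^2 \geq \eps(\theta)$ for $\theta \in (0,1)$, since saturation of the weak bound would force $1 \geq (1 + \theta(1-\theta)\alpha^2/\norm{f}_{L_2(\mu)}^2)^2 > 1$. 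Hence in fact $\mu(\set{|f| \leq q}) < 1 - \eps(\theta)$ for every $q < \theta\alpha$, so no such $q$ belongs to $\set{q : \mu(\set{|f|\leq q}) \geq 1-\eps(\theta)}$, and by definition $Q_{1-\eps(\theta)}(|f|) \geq \theta\alpha = \theta \norm{f}_{L_1(\mu)}$.

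No serious obstacle is anticipated here: the argument is essentially a careful Cauchy--Schwarz split, and the only mildly delicate point is the strict-versus-weak inequality needed in the quantile step, which is resolved by the slight sharpening indicated above.
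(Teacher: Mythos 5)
Your proof is correct. The paper states this as a classical result (citing Paley--Zygmund \cite{PaleyZygmund} and Kahane \cite{KahaneBook}) without supplying a proof, and your Cauchy--Schwarz split is the standard route to the tail estimate $\mu\set{|f| > \theta\alpha} \geq \eps(\theta)$. Two remarks. First, your strict-inequality upgrade tacitly assumes $\alpha := \norm{f}_{L_1(\mu)} > 0$, since the contradiction you extract needs $\theta(1-\theta)\alpha^2/\norm{f}_{L_2(\mu)}^2 > 0$; the degenerate case $\alpha = 0$ is trivial (then $f = 0$ a.e.\ and both sides of the conclusion vanish) but should be noted. Second, and more to the point, the strict-versus-weak delicacy in the quantile translation can be sidestepped entirely by performing the second-moment split at level $q$ rather than at $\theta\alpha$: for any $q$ with $\mu\set{|f|\leq q} \geq 1-\eps(\theta)$ (so necessarily $q \geq 0$, as $\eps(\theta) < 1$), one has $\mu\set{|f|>q} \leq \eps(\theta)$, hence
\[
\alpha \;\leq\; q\,\mu\set{|f|\leq q} + \norm{f}_{L_2(\mu)}\sqrt{\mu\set{|f|>q}} \;\leq\; q + D\alpha\sqrt{\eps(\theta)} \;=\; q + (1-\theta)\alpha ~,
\]
so $q \geq \theta\alpha$. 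Taking the infimum over such $q$ gives $Q_{1-\eps(\theta)}(|f|) \geq \theta\alpha$ directly from the weak Cauchy--Schwarz bound, with no need to rule out the boundary case.
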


\begin{prop} \label{prop:useful}
Let $f_0$ denote a $1$-Lipschitz function with either $M_\mu f_0 = 0$ and $\norm{f_0}_{L_1(\mu)} \geq 1/(2 D^M_{FM})$ or
$E_\mu f_0 = 0$ and $\norm{f_0}_{L_1(\mu)} \geq 1/(2 D_{FM})$. Then:
\begin{equation} \label{eq:Psi_1-condition}
\norm{f_0}_{\Psi_1(\mu)} \leq C_0 \norm{f_0}_{L_1(\mu)} ~,
\end{equation}
and consequently:
\begin{equation} \label{eq:L_1-Q}
Q_{1-\eps_0}(|f_0|) \geq \norm{f_0}_{L_1(\mu)} / 2 ~,
\end{equation}
for some universal constants $C_0 > 0$ and $0 < \eps_0 < 1$.
\end{prop}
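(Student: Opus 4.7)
The plan is to chain together the Main Theorem with the standard equivalence between exponential concentration and $\Psi_1$--Orlicz integrability of $1$--Lipschitz functions, and then deduce the quantile bound via Paley--Zygmund.

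First, by the Main Theorem (Theorem \ref{thm:Main}) applied under our convexity assumptions, together with Lemma \ref{lem:E-M}, the four constants $D_{FM}$, $D^M_{FM}$, $D_{Exp}$, and $D_{Poin}$ are all equivalent up to universal constants. In particular, $D_{Exp} \geq c\, D_{FM}$ and $D_{Exp} \geq c\, D^M_{FM}$. The hypothesis on $f_0$ therefore gives, in either case,
\[
\norm{f_0}_{L_1(\mu)} \;\geq\; \frac{c'}{D_{Exp}}
\]
for a universal constant $c'>0$. This is what allows the assumption on being nearly optimal in the first-moment inequality to be translated into the statement that $\norm{f_0}_{L_1(\mu)}$ controls $1/D_{Exp}$ from above.

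Next, since $f_0$ is $1$--Lipschitz and exponential concentration holds with constant $D_{Exp}$, the elementary equivalence (as recalled in the proof of Corollary \ref{cor:prove-GM}) between $1/D_{Exp}(f_0)$ and $\norm{f_0 - E_\mu f_0}_{\Psi_1(\mu)}$ yields
\[
\norm{f_0 - E_\mu f_0}_{\Psi_1(\mu)} \;\leq\; \frac{C}{D_{Exp}}.
\]
In the case $E_\mu f_0 = 0$ this is exactly $\norm{f_0}_{\Psi_1(\mu)} \leq C/D_{Exp}$; in the case $M_\mu f_0 = 0$ we apply Lemma \ref{lem:E-M} (with $N(t)=\Psi_1(t)$) to pass from $E_\mu$ to $M_\mu$ at the cost of a factor of $3$. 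Combining with the previous step gives
\[
\norm{f_0}_{\Psi_1(\mu)} \;\leq\; \frac{C'}{D_{Exp}} \;\leq\; \frac{C'}{c'}\,\norm{f_0}_{L_1(\mu)},
\]
which is (\ref{eq:Psi_1-condition}) with $C_0 := C'/c'$.

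For (\ref{eq:L_1-Q}), I would upgrade (\ref{eq:Psi_1-condition}) to an $L_2$ bound by the Taylor estimate $t^2/2 \leq e^t - 1$: if $v = \norm{f_0}_{\Psi_1(\mu)}$ then $\int (f_0/v)^2 d\mu \leq 2 \int(\exp(|f_0|/v)-1)d\mu \leq 2$, hence $\norm{f_0}_{L_2(\mu)} \leq \sqrt{2}\, \norm{f_0}_{\Psi_1(\mu)} \leq \sqrt{2}\, C_0 \norm{f_0}_{L_1(\mu)}$. Setting $D = \sqrt{2}\,C_0$ and applying the Paley--Zygmund Lemma \ref{lem:PZ} with $\theta = 1/2$ produces $Q_{1-\eps_0}(|f_0|) \geq \norm{f_0}_{L_1(\mu)}/2$ with the universal value $\eps_0 = 1/(4 D^2)$. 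The only step requiring care is the bookkeeping between the $E_\mu f_0 = 0$ and $M_\mu f_0 = 0$ cases and the matching $D_{FM}$ versus $D^M_{FM}$ constants; neither case presents a genuine obstacle once the Main Theorem is in hand, so the proof is essentially a packaging of previously established equivalences.
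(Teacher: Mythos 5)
Your proof is correct and follows essentially the same route as the paper's: chain the Main Theorem and Lemma \ref{lem:E-M} to bound $\norm{f_0}_{\Psi_1(\mu)} \lesssim 1/D_{Exp} \lesssim \norm{f_0}_{L_1(\mu)}$, upgrade to an $L_2$ bound via the elementary estimate $t^2/2 \leq e^t-1$, and invoke Paley--Zygmund with $\theta=1/2$. The only detail the paper adds and you omit is the observation that $\norm{f_0}_{L_1(\mu)}<\infty$ under the convexity assumptions (via Lemma \ref{lem:Borell}), which is needed to legitimize the hypothesis of Lemma \ref{lem:PZ}; this is a minor but worth-mentioning technicality.
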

\begin{proof}
Proceeding as in Corollary \ref{cor:prove-GM}, and using Lemma \ref{lem:E-M} and the Main Theorem:
\[
\norm{f_0}_{\Psi_1(\mu)} \simeq \norm{f_0 - E_\mu f_0}_{\Psi_1(\mu)} \simeq \frac{1}{D_{Exp(f_0)}} \leq
\frac{1}{D_{Exp}} \leq \frac{C}{\max(D_{FM},D^M_{FM})} \leq 2C
\norm{f_0}_{L_1(\mu)} ~.
\]
Consequently, it is easy to check that:
\[
\norm{f_0}_{L_2(\mu)} \leq 2 \norm{f_0}_{\Psi_1(\mu)} \leq D_0
\norm{f_0}_{L_1(\mu)} ~,
\]
for some universal constant $D_0>0$, and (\ref{eq:L_1-Q}) follows by Lemma \ref{lem:PZ} (with $\theta = 1/2$).
Note that our convexity assumptions necessarily imply that
$\norm{f_0}_{L_1(\mu)} < \infty$ (see Lemma \ref{lem:Borell}), so the appeal to Lemma \ref{lem:PZ} is
indeed legitimate.
\end{proof}

\begin{cor}
An arbitrarily slow uniform tail decay condition (\ref{eq:slow-tail-decay})
implies any of the statements of the Main Theorem \ref{thm:Main}, with $D_{Che},D_{Poin},D_{Exp},D_{FM}$
depending solely on $\alpha$. Moreover, $E_\mu f$ in (\ref{eq:slow-tail-decay}) may be replaced by $M_\mu f$.
\end{cor}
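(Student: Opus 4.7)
The strategy is to reduce the entire corollary to proving that, under our convexity assumptions, slow tail decay forces $D_{FM} \geq c(\alpha) > 0$, at which point Main Theorem \ref{thm:Main} immediately supplies the matching lower bounds $D_{Che}, D_{Poin}, D_{Exp} \gtrsim c(\alpha)$. The key tool will be Proposition \ref{prop:useful}, which --- via the Main Theorem --- packages the $L_1$-to-quantile anti-concentration needed to pit the slow tail decay against the existence of near-extremizers for the First-Moment inequality.

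The argument would proceed by contradiction. Let $\eps_0$ denote the universal constant furnished by Proposition \ref{prop:useful} and set $T_\alpha := \inf\set{t \geq 0 \; ; \; \alpha(t) < \eps_0}$, which is finite since $\alpha(t) \to 0$ and depends solely on $\alpha$. Suppose $D_{FM} < 1/(4 T_\alpha)$. By the definition of $D_{FM}$ as the best constant in (\ref{eq:FM-def}), one can select a $1$-Lipschitz $f$ with $E_\mu f = 0$ and $\norm{f}_{L_1(\mu)} \geq 1/(2 D_{FM}) > 2 T_\alpha$. Proposition \ref{prop:useful} then yields $Q_{1-\eps_0}(\abs{f}) \geq \norm{f}_{L_1(\mu)}/2$, and unwinding the definition of the quantile:
\[
\mu\bigl(\abs{f} \geq \tfrac{1}{2}\norm{f}_{L_1(\mu)}\bigr) \geq \eps_0 .
\]
On the other hand, applying slow tail decay at the level $t := \norm{f}_{L_1(\mu)}/2 > T_\alpha$ gives $\mu(\abs{f} \geq t) \leq \alpha(t) < \eps_0$ by the definition of $T_\alpha$, contradicting the previous inequality. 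Hence $D_{FM} \geq 1/(4 T_\alpha) =: c(\alpha)$, and the Main Theorem completes the first part.

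For the ``moreover'' clause, the identical contradiction argument applies verbatim with $M_\mu f = 0$ and $D^M_{FM}$ in place of $E_\mu f = 0$ and $D_{FM}$, invoking the median version of Proposition \ref{prop:useful}; this produces $D^M_{FM} \geq c(\alpha)$, and the equivalence $D_{FM} \simeq D^M_{FM}$ (noted right after (\ref{eq:pq-FM})) closes the loop via the Main Theorem. The only genuine step in the whole argument is the invocation of Proposition \ref{prop:useful} --- everything afterwards is driven purely by $\alpha \to 0$, so there is no further obstacle beyond setting up the appropriate extremizer.
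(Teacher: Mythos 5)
Your proposal is correct and takes essentially the same approach as the paper: invoke Proposition \ref{prop:useful} for a near-extremizer $f_0$ of the First-Moment inequality to obtain $Q_{1-\eps_0}(|f_0|)\geq \norm{f_0}_{L_1(\mu)}/2 \geq 1/(4D_{FM})$, then play the assumed tail decay against that quantile. The paper phrases it directly (rather than by contradiction), reading off $\max(D_{FM},D^M_{FM}) \geq 1/(4\,\alpha^{-1}(\eps_0))$ where $\alpha^{-1}(\eps_0)$ is exactly your $T_\alpha$, and treats the expectation/median cases simultaneously via Lemma \ref{lem:E-M} just as you do.
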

\begin{proof}
Given a $1$-Lipschitz function $f_0$ satisfying either of the assumptions of Proposition \ref{prop:useful},
these and (\ref{eq:L_1-Q}) imply that:
\[
\frac{1}{2 \max(D_{FM},D^M_{FM})} \leq \norm{f_0}_{L_1(\mu)} \leq 2
Q_{1-\eps_0}(|f_0|) ~.
\]
Consequently, the tail decay condition (\ref{eq:slow-tail-decay}) (whether stated with $E_\mu f$ or $M_\mu f$) ensures
that $\max(D_{FM},D^M_{FM}) \geq 1/(4 \alpha^{-1}(\eps_0)) > 0$, so by Lemma \ref{lem:E-M} the First-Moment concentration inequality is satisfied,
from which the other statements of the Main Theorem follow.
\end{proof}

\begin{rem} \label{rem:FM-Exp-Equivalent}
Using standard results in Convexity Theory (e.g. Borell's Lemma
\cite{Borell-logconcave}), it is well known that when $\mu$ is a
log-concave measure on $\Real^n$ and $f_0$ is a \emph{linear} (more
generally, convex homogeneous) functional, then (\ref{eq:Psi_1-condition}) is satisfied
with some universal constant $C>0$. In this sense, our essentially optimal $1$-Lipschitz function $f_0$
behaves like linear functionals. A conjecture of Kannan, Lov\'asz and Simonovits
which will be described in Section \ref{sec:cor}, states this even
more explicitly: linear functionals are essentially optimal in the
$(1,1)$ or $(2,2)$ Poincar\'e inequalities. Using our Main Theorem,
we now see that this conjecture is equivalent to
stating that linear functionals are essentially optimal in the
exponential concentration and First-Moment inequalities.
In this sense, the Main Theorem may be thought of as a qualitative step towards resolving the conjecture:
an essentially optimal function above has the form $f_0 = d(x,A)$ with $\mu(A) \geq 1/2$, and it remains to
show that one can choose $A$ to be a half-space (so that $f_0$ becomes linear).
\end{rem}

\section{Applications to Spectral Gap of Convex Domains} \label{sec:cor}

In this section, we provide several applications of our Main Theorem pertaining to the spectral gap $D_{Poin}^2(\Omega,d,\mu)$ of metric probability spaces satisfying our convexity assumptions. The results will be formulated in terms of the Cheeger constant $D_{Che}(\Omega,d,\mu)$, which by the Maz'ya--Cheeger inequality (Theorem \ref{thm:Cheeger}) and the Buser-Ledoux Theorems (\ref{thm:Buser} and \ref{thm:Ledoux}) is equivalent to $D_{Poin}(\Omega,d,\mu)$ under these assumptions (see also the approximation arguments of Section \ref{sec:AA} to handle non-smooth domains and densities). We will mostly restrict our attention to the case of $\Real^n$ with some fixed Euclidean structure $\abs{\cdot}$, although in some places we will mention our result in its full generality on Riemannian manifolds.

Given a bounded domain $\Omega \subset (M,g)$, we denote
the uniform probability measure on $\Omega$ by $\lambda_\Omega := \frac{vol_M|_\Omega}{vol_M(\Omega)}$. We will write $D_{Che}(\Omega)$, $D_{FM}(\Omega)$, and so on,  to denote
$D_{Che}(\Omega,\abs{\cdot},\lambda_\Omega)$,
$D_{FM}(\Omega,\abs{\cdot},\lambda_\Omega)$ for short. We will say
that $\Omega$ is a convex body if $\Omega$ is a convex bounded
domain in $(\Real^n,\abs{\cdot})$. We will sometimes not distinguish
between $\Omega$ and its closure $\overline{\Omega}$.

\subsection{Stability of $D_{Che}$ under Perturbations}

First, we would like to obtain a stability result for $D_{Che}(\Omega)$ (or equivalently $D_{Poin}(\Omega)$) for perturbations of $\Omega$. Clearly, without any further assumptions, there can be no such result (as seen by adding arbitrarily small ``necks'' to $\Omega$ as in the Introduction), so we restrict our attention to convex domains. In this case, our Main Theorem \ref{thm:Main} asserts that this is equivalent to obtaining a stability result for $D_{FM}(\Omega)$, which is much easier. To obtain the best quantitative bounds, we will also employ $D_{Exp}(\Omega)$.

\begin{lem} \label{lem:going-down}
Let $L \subset K \subset (\Real^n,\abs{\cdot})$, and assume that $L$
is a convex body. There exists a universal constant $c>0$ such that:
\[
\Vol{L} \geq v \Vol{K}  \;\;\; \Rightarrow \;\;\; D_{FM}(L) \geq \frac{c}{\log(1+1/v)}
D_{Exp}(K) ~.
\]
\end{lem}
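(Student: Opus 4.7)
The plan is to exploit the comparison between first-moment and exponential concentration across the two spaces. Given any $1$-Lipschitz function $f$ on $L$, extend it by McShane--Whitney to a $1$-Lipschitz function $\tilde f$ on $K$ (the Euclidean metric makes this extension automatic); apply the exponential concentration estimate for $\tilde f$ on $K$; and transfer the resulting exponential tail from $\lambda_K$ to $\lambda_L$, paying only a factor $1/v$ via the trivial bound $\lambda_L(A) = \lambda_K(A \cap L)/\lambda_K(L) \leq \lambda_K(A)/v$ for any Borel $A$, since $\lambda_K(L) = \Vol{L}/\Vol{K} \geq v$.

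More concretely: setting $a := E_{\lambda_K} \tilde f$, the exponential concentration on $K$ applied to $\tilde f$ together with the observation $\tilde f|_L = f$ yields
\[
\lambda_L\brac{\abs{f - a} \geq t} \leq \min\brac{1,\; \frac{e}{v} \exp\brac{- D_{Exp}(K)\, t}} \quad \forall t > 0 ~.
\]
Integrating this tail and truncating at the threshold $t_0 = \log(e/v)/D_{Exp}(K)$ where the exponential bound first drops below $1$, we obtain
\[
\int_L \abs{f - a}\, d\lambda_L \leq t_0 + \frac{1}{D_{Exp}(K)} = \frac{2 + \log(1/v)}{D_{Exp}(K)} \leq C\, \frac{\log(1 + 1/v)}{D_{Exp}(K)} ~,
\]
where the last inequality uses $v \leq 1$ so that $\log(1+1/v) \geq \log 2$ dominates the $2$.

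To conclude, we replace the ``foreign'' centering constant $a$ by the native mean $E_{\lambda_L} f$ at the cost of a factor $2$: the triangle inequality together with $\abs{E_{\lambda_L} f - a} = \abs{E_{\lambda_L}(f-a)} \leq \norm{f - a}_{L_1(\lambda_L)}$ gives $\norm{f - E_{\lambda_L} f}_{L_1(\lambda_L)} \leq 2 \norm{f - a}_{L_1(\lambda_L)}$. Taking the supremum over $1$-Lipschitz $f$ on $L$ yields $1/D_{FM}(L) \leq 2C\, \log(1+1/v)/D_{Exp}(K)$, which is the claim. There is no genuine obstacle here; the only subtlety worth flagging is that the concentration argument naturally centers around $E_{\lambda_K}\tilde f$, which has no a-priori relation to $E_{\lambda_L} f$, but this discrepancy is absorbed into the universal constant. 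Note that convexity of $L$ is not actually invoked in this argument; its role lies downstream, when this lemma is combined with the Main Theorem on $L$.
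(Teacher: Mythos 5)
Your proof is correct, and it takes a genuinely different — and simpler — route than the paper's. The paper extends $f_0$ from $L$ to $K$ via the metric projection $f_1 = f_0(\mathrm{Proj}_L\, x)$, which is where convexity of $L$ enters; it then invokes Proposition \ref{prop:useful} (which relies on the Main Theorem and a Paley--Zygmund argument) to lower-bound $\|f_0\|_{L_1(\lambda_L)}$ by its $(1-\eps_0)$-quantile, and finally passes quantiles from $\lambda_L$ to $\lambda_K$ paying the $v$-factor, estimating the shift $E_{\lambda_K}f_1$ via the exponential tail. Your argument short-circuits all of this: McShane--Whitney gives a $1$-Lipschitz extension of $f$ to $K$ for an arbitrary subset $L$, the pointwise domination $\lambda_L(A) \leq \lambda_K(A)/v$ transfers the exponential tail directly, and integrating $\min(1, \tfrac{e}{v}e^{-D_{Exp}(K)t})$ yields the $\log(1+1/v)$ factor without any quantile machinery. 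The re-centering step (replacing $E_{\lambda_K}\tilde f$ by $E_{\lambda_L}f$ at the cost of a factor $2$) is handled correctly by the triangle inequality. What your approach buys is twofold: it is elementary and self-contained (no appeal to the Main Theorem or to $\Psi_1$-norm estimates, hence no risk of circularity worries), and — as you correctly observe — it proves a strictly more general statement in which $L$ need only be a Borel subset of $K$, not a convex body; convexity of $L$ is needed only later when Lemma \ref{lem:going-down} is chained with the Main Theorem on $L$ (e.g.\ in the proof of Theorem \ref{thm:Intro-stability}). The only thing the paper's route gains is that it stays internal to the conceptual framework (median-centered constants, quantiles, $\Psi_1$-equivalence) that is reused elsewhere in the section, but as a standalone proof of this lemma your version is cleaner.
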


\begin{proof}
Let $f_0$ denote a $1$-Lipschitz function on $L$ with $M_{\lambda_L} f_0 = 0$ so that
$\int |f_0| d\lambda_L \geq 1/(2 D^M_{FM}(L))$. Since $L$ is convex, we may clearly extend $f_0$ to a
$1$-Lipschitz function on $K$, say by defining $f_1 =
f_0(\text{Proj}_L x)$.
Here $\text{Proj}_L x$ denotes the unique (by convexity) $y$ in $L$ so that $d(x,L) = d(x,y)$. We may assume that
$E_{\lambda_K} f_1 \geq 0$ (otherwise exchange $f_0$ with $-f_0$). Note
that we can estimate $E_{\lambda_K} f_1$ as follows:
\begin{equation} \label{eq:estimate-on-M}
\frac{v}{2} \leq \lambda_K\set{f_1 \leq 0}
\leq \lambda_K\set{\abs{f_1 - E_{\lambda_K} f_1}\geq E_{\lambda_K}
f_1} \leq e \cdot \exp(- D_{Exp}(K) E_{\lambda_K} f_1 ) ~.
\end{equation}

By Proposition \ref{prop:useful}, there exists some universal $\eps_0>0$ so that $\norm{f_0}_{L_1(\lambda_L)} \leq Q_{\lambda_L,1-\eps_0}(|f_0|)$. Using this, the ratio between the volumes of $L$ and $K$, the triangle inequality,
the Markov-Chebyshev inequality and the estimate on $E_{\lambda_K} f_1$ in (\ref{eq:estimate-on-M}), we evaluate:
\begin{eqnarray*}
\frac{1}{2 D^M_{FM}(L)} & \leq & \norm{f_0}_{L_1(\lambda_L)} \leq Q_{\lambda_L,1-\eps_0}(|f_0|) \leq Q_{\lambda_K,1-\eps_0 v}(|f_1|) \leq  Q_{\lambda_K,1-\eps_0 v}(|f_1 - E_{\lambda_K} f_1|) + E_{\lambda_K} f_1 \\
&\leq& \log\brac{1+\frac{1}{\eps_0 v}} \norm{f_1- E_{\lambda_K} f_1}_{\Psi_1(\lambda_K)} + \frac{\log(2e/v)}{D_{Exp}(K)} \leq C_0 \frac{\log(1+1/v)}{D_{Exp}(K)} ~,
\end{eqnarray*}
where $C_0>0$ is some universal constant. Using Lemma \ref{lem:E-M} and (\ref{eq:pq-FM}), the assertion follows.
\end{proof}

\begin{lem} \label{lem:going-up}
Let $L \subset K \subset (\Real^n,\abs{\cdot})$, and assume that $L$
and $K$ are convex bodies. Then:
\[
\Vol{L} \geq v \Vol{K}  \;\;\; \Rightarrow \;\;\; D_{Che}(K) \geq
v^2 D_{Che}(L) ~.
\]
\end{lem}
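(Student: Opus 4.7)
By the Main Theorem applied to both convex bodies $K$ and $L$ (valid since our convexity assumptions are fulfilled for the uniform measure on any convex body), the Cheeger and First-Moment constants are equivalent up to universal factors. It therefore suffices to establish, up to a universal constant, the analogous inequality $D_{FM}(K) \geq c\, v^2\, D_{FM}(L)$; equivalently, for every $1$-Lipschitz function $f$ on $K$ with $M_{\lambda_K}(f) = 0$, one must show
\[
\int_K |f|\, d\lambda_K \leq \frac{C}{v^2\, D_{FM}(L)}.
\]

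The strategy is to restrict $f$ to $L$ and recenter. Setting $m := M_{\lambda_L}(f|_L)$, the function $g := f|_L - m$ is $1$-Lipschitz on $L$ with $M_{\lambda_L}(g) = 0$, so by the definition of $D^M_{FM}(L)$ and Lemma \ref{lem:E-M} one has $\int_L |f - m|\, d\lambda_L \leq C_1/D_{FM}(L)$. Writing $I_K := \int_K |f|\, d\lambda_K$ and $\rho := \lambda_K(L) \geq v$, I would decompose $I_K = \rho \int_L |f|\, d\lambda_L + \int_{K \setminus L} |f|\, d\lambda_K$ and control each piece. The first piece splits via the triangle inequality into $\rho|m| + \rho \int_L |f-m|\, d\lambda_L$, so the key auxiliary estimate is a bound on the median shift $|m|$. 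Using the constraint $M_{\lambda_K}(f) = 0$ together with Lemma \ref{lem:E-M} to pass to the mean, and applying Proposition \ref{prop:useful} to $f$ on $K$ (for which $f$ may be assumed near-optimal for $D_{FM}(K)$, giving $\|f\|_{\Psi_1(\lambda_K)} \lesssim I_K$), a Cauchy--Schwarz bound $|E_{\lambda_L}(f)| \leq \|f\|_{L_2(\lambda_L)} \leq \|f\|_{L_2(\lambda_K)}/\sqrt{\rho} \lesssim I_K/\sqrt{v}$ yields $|m| \lesssim I_K/\sqrt{v} + 1/D_{FM}(L)$, which is the source of the first factor $v^{-1}$ (appearing as $v^{-1/2}$ twice after squaring in the final implicit bound). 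The tail $\int_{K \setminus L} |f|\, d\lambda_K$ is then handled via the $1$-Lipschitz inequality $|f(x)| \leq |f(\Proj_L(x))| + d(x, L)$, combined with the $\Psi_1$-concentration of $f$ restricted to $K \setminus L$, which picks up a $\log(1/v)$ factor absorbed into the second $v^{-1}$.

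Combining the estimates produces a bound of the form $I_K \lesssim I_K \cdot \phi(v) + C/(v^2 D_{FM}(L))$ where $\phi(v) \to 0$ as $v \to 1$, which after absorbing the $I_K$ term on the left yields $I_K \lesssim 1/(v^2 D_{FM}(L))$, completing the reduction. The main obstacle is the careful bookkeeping: the two factors of $v^{-1}$ must arise independently---one from comparing $\lambda_K|_L$ with $\lambda_L$ via $\lambda_K(L) \geq v$, and another from the $E_{\lambda_K}(f) = 0$ constraint forcing a compensating mass on $K \setminus L$---and combine cleanly into $v^{-2}$, without introducing spurious logarithmic-in-$v$ losses. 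A further subtlety is that Proposition \ref{prop:useful} must be invoked in its $\|f\|_{L_1(\lambda_K)}$-dependent form rather than via $1/D_{FM}(K)$ directly, otherwise the argument becomes circular; this is precisely the formulation provided by the hypothesis of near-optimality in Proposition \ref{prop:useful}.
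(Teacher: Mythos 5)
Your reduction to a First-Moment estimate has a gap at the final absorption step that I do not see how to close. You arrive at a bound of the form $I_K \leq \phi(v)\, I_K + C/(v^2 D_{FM}(L))$ and claim $\phi(v)\to 0$ as $v\to 1$, so that the $I_K$ term can be moved to the left. But the dominant contribution to $\phi(v)$ is the median-shift term $\rho\,|m|$, which your own estimate controls only by $|m|\lesssim I_K/\sqrt{v}+1/D_{FM}(L)$; the implied constant is (at least) the constant $C_0$ of Proposition \ref{prop:useful}, which chains through the Main Theorem and is certainly not smaller than $1$. So the estimates as written give $\phi(v)\geq c_0$ for a universal $c_0$ of order $C_0$, \emph{uniformly in $v$} (they do not detect that $m\to 0$ as $v\to 1$), and the inequality cannot be solved for $I_K$. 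Even granting a sharper bound in which $\phi(v)\to 0$ as $v\to 1$, this would prove the lemma only for $v$ near $1$ and with an extra universal constant in front of $v^2$; the statement is claimed for \emph{all} $v\in(0,1]$ with constant exactly $1$, and the natural fix --- iterating a near-$1$ version along a chain of intermediate bodies --- is blocked precisely by that extra constant, which compounds at each step of the iteration. More structurally: any route that passes through the equivalence $D_{Che}\simeq D_{FM}$ forfeits a universal factor at the outset and cannot produce the clean $v^2$. (The concentration route is the right tool for the \emph{other} direction, from the outer body to the inner one, which is Lemma \ref{lem:going-down}; there a logarithmic loss in $v$ is both tolerated and necessary.)

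The paper's proof goes in a completely different, purely isoperimetric direction and never invokes the Main Theorem. Since $K$ is convex, Theorem \ref{thm:Intro-concavity} (extended to non-smooth bodies in Section \ref{sec:AA}) gives that $I_{(K,\abs{\cdot},\lambda_K)}$ is concave and symmetric, hence $D_{Che}(K)=2\,I_{(K,\abs{\cdot},\lambda_K)}(1/2)$, so it suffices to test sets $A$ with $\lambda_K(A)=1/2$. For such $A$, the inclusion $L\subset K$ gives the elementary comparison $\lambda_K^+(A)\geq p\,\lambda_L^+(A)$ whenever $\Vol{L}\geq p\Vol{K}$, together with $1-\frac{1}{2p}\leq\lambda_L(A)\leq\frac{1}{2p}$, whence $D_{Che}(K)\geq(2p-1)D_{Che}(L)$ for $p>1/2$ with no extraneous constant. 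The general $v$ is then reached by interpolating a chain $L=L_0\subset L_1\subset\cdots\subset L_m=K$ of convex bodies with $\Vol{L_i}/\Vol{L_{i+1}}\geq v^{1/m}>1/2$ (e.g.\ $L_i=(1+r_i)L\cap K$) and letting $m\to\infty$, since $(2v^{1/m}-1)^m\to v^2$. If you want to salvage your approach, you would at minimum need a single-step estimate with no universal constant and the correct first-order behaviour in $1-v$, which the concentration machinery does not supply.
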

\begin{proof}
Note that for any $1/2<p \leq 1$ and in fact even without assuming
that $L$ is convex:
\begin{equation} \label{eq:going-up-direct}
\Vol{L} \geq p \Vol{K} \;\; \Rightarrow \;\; D_{Che}(K) \geq (2p - 1)
D_{Che}(L) ~.
\end{equation}
Indeed, since $K$ is convex, by Theorem \ref{thm:Intro-concavity} (more precisely,
its extension to non-smooth domains or densities given by Theorem \ref{thm:main-approx} and Corollaries \ref{cor:approx-bodies},\ref{cor:approx-lc}) we
know that $D_{Che}(K) = 2 I_{(K,\abs{\cdot},\lambda_K)}(1/2)$. Given
a Borel set $A$ with $\lambda_K(A) = 1/2$, we have:
\[
\lambda_K^+(A) \geq p \lambda_L^+(A)
\geq p D_{Che}(L)  \min(\lambda_L(A),1-\lambda_L(A)) ~.
\]
By the assumption in (\ref{eq:going-up-direct}), $1-\frac{1}{2p}
\leq \lambda_L(A) \leq \frac{1}{2p}$, and from this we easily
deduce the conclusion in (\ref{eq:going-up-direct}). Iterating this
using a sequence of intermediate convex bodies (here we already need
to use that $L$ is convex) $L = L_0 \subset L_1 \subset \ldots
\subset L_m = K$ so that $\Vol{L_i} / \Vol{L_{i+1}} \geq v^{1/m} > 1/2$ (for example, assuming $0 \in L$, choose $L_i = (1+r_i)L \cap K$ for
appropriate $r_i \geq 0$), we obtain that:
\[
\Vol{L} \geq v \Vol{K} \;\; \Rightarrow \;\; D_{Che}(K) \geq (2 v^{1/m} -1)^m D_{Che}(L) ~.
\]
Taking the limit as $m \rightarrow \infty$ yields the claimed power of $2$ (even without any additional numerical
constant!).
\end{proof}

We can now immediately deduce Theorem \ref{thm:Intro-stability} from the Introduction.
Indeed, if $K,L$ denote two convex bodies in $(\Real^n,\abs{\cdot})$ such that:
\[
\Vol{K \cap L} \geq v_K \Vol{K} \;\;\;,\;\;\; \Vol{K \cap L} \geq v_L \Vol{L} ~,
\]
then applying Lemma \ref{lem:going-up}, the Main Theorem \ref{thm:Main} and Lemma \ref{lem:going-down}, we obtain:
\begin{eqnarray}
\nonumber D_{Che}(K) &\geq & v_K^2 D_{Che}(K \cap L) \geq c_1 v_K^2 D_{FM}(K \cap L)
\geq c_2 \frac{v_K^2}{\log(1+1/v_L)} D_{Exp}(L) \\
\label{eq:thm-inclusion-stability}
&\geq & c_3 \frac{v_K^2}{\log(1+1/v_L)} D_{Che}(L) ~,
\end{eqnarray}
for some universal constants $c_i > 0$, concluding the proof of Theorem \ref{thm:Intro-stability}. Of course a similar upper bound on $D_{Che}(K)$ is obtained by interchanging the roles of $K,L$.

\medskip

In Convexity Theory, many interesting ways are known to cut a convex
body $K$ so that its volume is preserved up to a constant (e.g. by
slabs, parallelepipeds, balls etc...). We see that all of these
preserve (up to a constant) $D_{Che}(K)$ (equivalently, the
spectral gap $D_{Poin}^2(K)$). A useful way to measure the distance
between two convex bodies is given by the following variant on the
usual geometric distance:
\begin{equation} \label{eq:geometric-distance}
 d_G(K,L) := \inf\set{a b \; ; \; \frac{1}{a} L \subset K \subset b L \;,\; a,b \geq 1} ~.
\end{equation}
Clearly in $(\Real^n,\abs{\cdot})$:
\[
 \frac{\Vol{L}}{\Vol{K}} \leq d_G(K,L)^n ~,
\]
so by passing from the outer to the inner body (in which case our estimates are logarithmic), we deduce:
\begin{cor} \label{cor:isomorphism}
Let $K,L$ denote two convex bodies in $(\Real^n,\abs{\cdot})$. If:
\[
 d_G(K,L) \leq 1 + \frac{s}{n}
\]
for some $1 \leq s \leq C_1 n$, where $C_1>0$ is some universal constant, then:
\[
 C_2 s D_{Che}(L) \geq  D_{Che}(K) \geq \frac{1}{C_2 s} D_{Che}(L) ~,
\]
where $C_2>0$ is another universal constant.
\end{cor}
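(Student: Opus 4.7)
My plan is to deduce the corollary directly from Lemma \ref{lem:going-down} (which is where the full strength of the Main Theorem \ref{thm:Main} enters), applied twice with the roles of $K$ and $L$ interchanged. By the definition of $d_G$, I first pick $a, b \geq 1$ with $a b \leq 1 + s/n$ and $\frac{1}{a} L \subset K \subset b L$. Since $1 \leq s \leq C_1 n$, both $a$ and $b$ are bounded above by a constant depending only on $C_1$ (say by $1 + C_1$). The key quantitative point is that the volume-ratio loss in these inclusions is controlled by $(a b)^n \leq (1 + s/n)^n \leq e^s$, so any term of the form $\log(1 + (a b)^n)$ that appears below is of order $s$ for $s \geq 1$ via $\log(1+e^s) \leq s + \log 2 \leq 2 s$.

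For the upper bound $D_{Che}(K) \leq C_2 s \, D_{Che}(L)$, I apply Lemma \ref{lem:going-down} with $L_0 := \frac{1}{a} L$ (which is convex) playing the role of the inner body inside $K$. Since $K \subset b L$, the volume ratio satisfies $\Vol{L_0}/\Vol{K} \geq (a b)^{-n}$, so the lemma yields $D_{FM}(L_0) \geq c \, D_{Exp}(K) / s$. Using the scaling identity $D_{FM}(L_0) = a \, D_{FM}(L)$, the boundedness of $a$, and the Main Theorem \ref{thm:Main} to replace $D_{FM}(L)$ by $D_{Che}(L)$ and $D_{Exp}(K)$ by $D_{Che}(K)$, the upper bound follows.

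For the lower bound $D_{Che}(K) \geq D_{Che}(L) / (C_2 s)$, I swap the roles and apply Lemma \ref{lem:going-down} with $K_0 := \frac{1}{b} K$ as the inner convex body inside $L$ (the inclusion $K_0 \subset L$ being immediate from $K \subset b L$). The volume ratio is now $\Vol{K_0}/\Vol{L} = b^{-n} \Vol{K} / \Vol{L} \geq (a b)^{-n}$, where I used $\Vol{K} \geq \Vol{\frac{1}{a} L} = a^{-n} \Vol{L}$. Running the same chain of implications---Lemma \ref{lem:going-down}, the scaling $D_{FM}(K_0) = b \, D_{FM}(K)$ with $b$ bounded, and the Main Theorem---yields $D_{Che}(L) \leq C_2 s \, D_{Che}(K)$.

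The only real difficulty is bookkeeping: one must verify that the various universal constants emerging from Lemma \ref{lem:going-down} and the three equivalences of the Main Theorem combine into a single universal $C_2$, and that the final dependence is linear in $s$ rather than something weaker like $s \log s$. The latter is ensured by the sharp identity $\log(1+e^s) \leq 2 s$ valid for $s \geq 1$; the former is routine once the range of $s$ has confined $a$ and $b$ to a bounded interval, which is precisely the role of the hypothesis $s \leq C_1 n$.
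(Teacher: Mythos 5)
Your proof is correct and follows essentially the same route as the paper's: apply Lemma \ref{lem:going-down} with a rescaled copy of one body placed inside the other, control the volume ratio by $(ab)^{-n} \geq e^{-s}$ so the logarithm term is of order $s$, and absorb the bounded scaling factors $a,b \leq 1+C_1$ into the universal constant via the Main Theorem. The only cosmetic difference is that you carry out both directions explicitly, whereas the paper proves one inequality and leaves the reverse to the symmetry of $d_G$ under swapping $K$ and $L$.
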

\begin{proof}
Denoting $a,b$ the best constants in (\ref{eq:geometric-distance}) and applying Lemma \ref{lem:going-down}:
\[
 D_{Che}(K) \geq \frac{D_{Che}(bL)}{C \log(1+d_G(K,L)^n)} \geq \frac{D_{Che}(L)}{C' b s} ~,
\]
and since $b \leq d_G(K,L) \leq C_1 + 1$, the assertion follows.
\end{proof}

\medskip

Completely analogous results hold for absolutely
continuous log-concave probability measures $\mu$ on
$(\Real^n,\abs{\cdot})$. We will write $D_{Che}(\mu)$ (and so on) to denote $D_{Che}(\Real^n,\abs{\cdot},\mu)$ for short.
Lemmas \ref{lem:going-down} and \ref{lem:going-up}
were only formulated for uniform distributions $\lambda_K,\lambda_L$ on domains $K,L$, since in that case,
the condition:
\begin{equation} \label{eq:lem-cond}
L \subset K \text{ with } \Vol{L} \geq v \Vol{K}
\end{equation}
appearing in the assumptions of both lemmas has a clear and intuitive geometric meaning.
\begin{lem} \label{lem:gen-up-down}
Lemmas \ref{lem:going-down} and \ref{lem:going-up} remain valid for absolutely continuous
log-concave probability measures $\mu_K,\mu_L$ (replacing respectively $K,L$), if the
condition (\ref{eq:lem-cond}) in the assumption is replaced by the condition: \[
\frac{d\mu_K}{dx} \geq v \frac{d\mu_L}{dx} ~,
\]
and $D_{Che}(\Omega),D_{FM}(\Omega),D_{Exp}(\Omega)$ are replaced by
$D_{Che}(\mu_\Omega),D_{FM}(\mu_\Omega),D_{Exp}(\mu_\Omega)$ ($\Omega = K,L$) in the corresponding conclusion.
\end{lem}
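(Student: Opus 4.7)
The strategy is to mimic the proofs of Lemmas~\ref{lem:going-down} and~\ref{lem:going-up}, replacing the geometric inclusion $L\subset K$ with $\Vol{L}\geq v\Vol{K}$ by the pointwise density inequality $f_K\geq v f_L$, where $f_K=d\mu_K/dx$ and $f_L=d\mu_L/dx$ (which integrates to $\mu_K(B)\geq v\mu_L(B)$ for every Borel $B$, and in particular forces $v\leq 1$). Absolutely continuous log-concave measures fall under our convexity assumptions (see Section~\ref{sec:AA}), so the Main Theorem~\ref{thm:Main}, the concavity of the isoperimetric profile (Theorem~\ref{thm:Intro-concavity}), and its symmetry about $1/2$ are all available. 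For the analog of Lemma~\ref{lem:going-down} the adaptation is essentially verbatim: pick a $1$-Lipschitz $f_0$ on $\Real^n$ with $M_{\mu_L}f_0=0$ and $\norm{f_0}_{L_1(\mu_L)}\geq 1/(2D^M_{FM}(\mu_L))$. Since $\mu_L$ already lives on all of $\Real^n$, no extension is needed and one sets $f_1=f_0$; then $\mu_K\set{f_0\leq 0}\geq v\mu_L\set{f_0\leq 0}\geq v/2$ gives the exponential-concentration control on $E_{\mu_K}f_0$ via Proposition~\ref{prop:useful}, while $\mu_K\set{\abs{f_0}>t}\geq v\mu_L\set{\abs{f_0}>t}$ yields the quantile comparison $Q_{\mu_K,1-v\eps_0}(\abs{f_0})\geq Q_{\mu_L,1-\eps_0}(\abs{f_0})$. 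The remainder of the computation then transcribes directly.

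For the analog of Lemma~\ref{lem:going-up}, the high-overlap step is direct: if $f_K\geq p f_L$ with $1/2<p\leq 1$, then $D_{Che}(\mu_K)\geq(2p-1)D_{Che}(\mu_L)$. Indeed, by Theorem~\ref{thm:Intro-concavity} and symmetry, $D_{Che}(\mu_K)=2I_{\mu_K}(1/2)$; for any Borel $A$ with $\mu_K(A)=1/2$, the density comparison yields $\mu_L(A),\mu_L(A^c)\leq 1/(2p)$, so $\min(\mu_L(A),1-\mu_L(A))\geq(2p-1)/(2p)$, while integrating $f_K\geq p f_L$ over $A_\eps\setminus A$ and passing to the liminf gives $\mu_K^+(A)\geq p\,\mu_L^+(A)\geq\frac{2p-1}{2}D_{Che}(\mu_L)$, as required.

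To bootstrap to the full $v^2$ bound for $v\in(0,1]$, I would iterate through a chain $\mu_L=\mu_0,\mu_1,\ldots,\mu_m=\mu_K$ of intermediate log-concave measures given by the log-linear interpolation $f_i=Z(i/m)^{-1}f_L^{1-i/m}f_K^{i/m}$ with $Z(t)=\int f_L^{1-t}f_K^t\,dx$; these are log-concave since products of log-concave functions are log-concave. The pointwise density ratios satisfy
\[
\frac{f_{i+1}(x)}{f_i(x)}\geq v^{1/m}\cdot\frac{Z(i/m)}{Z((i+1)/m)}=:c_i,\qquad \prod_{i=0}^{m-1}c_i=v,
\]
the telescoping using $Z(0)=Z(1)=1$. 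Applying the high-overlap step to each consecutive pair yields $D_{Che}(\mu_K)\geq\prod_i(2c_i-1)D_{Che}(\mu_L)$, and a Taylor expansion of $\log(2c_i-1)$ about $c_i=1$, combined with $\sum\log c_i=\log v$, shows that this tends to $v^2$ as $m\to\infty$ provided $\max_i|c_i-1|\to 0$. This uniformity is the main obstacle: it requires $Z$ to be sufficiently regular (e.g.\ $C^1$ and bounded away from zero on $[0,1]$), which holds when $f_L,f_K$ are smooth and share a common support but can fail in general --- for instance, $Z$ has a jump at $t=1$ whenever $\text{supp}(\mu_L)\subsetneq\text{supp}(\mu_K)$, since $f_L^{1-t}f_K^t$ has support $\text{supp}(\mu_L)$ for every $t<1$. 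I would resolve this using the approximation machinery of Section~\ref{sec:AA}, first reducing to sufficiently regular log-concave densities on a common support (where the iteration delivers the $v^2$ bound cleanly) and then passing to the limit using the stability of $D_{Che}$ under total-variation approximation of log-concave measures.
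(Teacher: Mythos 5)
Your adaptations of the going-down lemma and of the direct high-overlap step of the going-up lemma ($f_K \geq p f_L$, $p > 1/2$, gives $D_{Che}(\mu_K)\geq(2p-1)D_{Che}(\mu_L)$) are correct and coincide with the paper's ``identical'' transcription; the quantile comparison $Q_{\mu_K,1-v\eps_0}\geq Q_{\mu_L,1-\eps_0}$ from $\mu_K\{\cdot\}\geq v\mu_L\{\cdot\}$ is exactly the right replacement for the volume-ratio step.

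Where you diverge from the paper is in the bootstrap to $v^2$, and here there is a genuine gap. Your chain uses the geometric interpolation $f_i\propto f_L^{1-i/m}f_K^{i/m}$ with partition function $Z(t)=\int f_L^{1-t}f_K^t\,dx$. As you yourself note, $Z$ jumps at $t=1$ whenever $\operatorname{supp}(\mu_L)\subsetneq\operatorname{supp}(\mu_K)$: by monotone convergence $Z(1^-)=\mu_K(\operatorname{supp}\mu_L)$, which is $\geq v$ but may be strictly below $1$ (and even $\leq 1/2$). Consequently the last link $c_{m-1}=v^{1/m}Z((m-1)/m)/Z(1)$ stays bounded away from $1$ no matter how fine the chain is, so $\prod(2c_i-1)$ does not tend to $v^2$; if $\mu_K(\operatorname{supp}\mu_L)\leq 1/2$ the factor $2c_{m-1}-1$ is nonpositive and the argument collapses outright. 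Your proposed repair --- ``reduce to a common support via Section \ref{sec:AA}'' --- is not filled in and is not obviously available: the approximation results there (Lemmas \ref{lem:above}, \ref{lem:within}, \ref{lem:approx-easy-direction}) concern stability under approximation from above/within or by measures with concave profiles, not enlargement of the support of $\mu_L$ to that of $\mu_K$ while \emph{simultaneously} preserving log-concavity of the approximant and the pointwise inequality $f_K\geq v\tilde f_L$. Naive fixes do not help: convolution or a mixture $(1-\delta)f_L+\delta f_K$ spoils the pointwise inequality or log-concavity respectively, and $\max(f_L,\delta f_K)$ is not log-concave.

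The paper takes a different route that bypasses the support issue entirely: it interpolates by
\[
\frac{d\eta_{L_i}}{dx}(x)=\min\!\left((1+r_i)\frac{d\mu_L}{dx}\!\left(\frac{x}{1+r_i}\right),\;\frac{d\mu_K}{dx}(x)\right),\qquad \mu_{L_i}=\frac{\eta_{L_i}}{|\eta_{L_i}|},
\]
with $r_i>0$ increasing (and the origin placed in the interior of $\operatorname{supp}\mu_L$). This is the direct density analogue of the bodies construction $L_i=(1+r_i)L\cap K$: the minimum of log-concave densities is log-concave (minimum of concave functions is concave), the dilated inner density swells to fill $\operatorname{supp}\mu_K$, and the cap by $f_K$ keeps the chain inside $\mu_K$ and recovers $\mu_K$ in the limit $r_i\to\infty$. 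This construction produces a chain of log-concave measures with pointwise density ratios $\geq v^{1/m}$ between consecutive links even when supports differ, which is precisely what your $Z$-interpolation fails to deliver. Your interpolation would work, with the regularity caveats you identify, if $\operatorname{supp}\mu_L=\operatorname{supp}\mu_K$; in full generality you should substitute the paper's minimum-dilatation construction for your geometric one.
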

\begin{proof}
Identical to the proof of the original lemmas; the only
minor point is the construction of intermediate measures $\mu_{L_i}$ in the proof of Lemma \ref{lem:going-up}, which may
be defined e.g. by $\mu_{L_i} = \frac{\eta_{L_i}}{|\eta_{L_i}|}$, $\frac{d\eta_{L_i}}{dx}(x) = \min((1+r_i)
\frac{d\mu_{L}}{dx} (\frac{x}{1+r_i}),\frac{d\mu_K}{dx}(x))$, for appropriate $r_i > 0$ (assuming the origin is in the interior of the support of $\mu_L$).
\end{proof}

The analogue of Theorem \ref{thm:Intro-stability} may then be conveniently formulated using the total-variation metric:
\[
 d_{TV}(\mu_1,\mu_2) := \frac{1}{2} \int \abs{ \frac{d\mu_1}{dx}(x) - \frac{d\mu_2}{dx}(x) } dx ~.
\]

\begin{thm} \label{thm:stability-inclusion-lc}
Let $\mu_1,\mu_2$ denote two log-concave probability measures in $(\Real^n,\abs{\cdot})$. If:
\[
d_{TV}(\mu_1,\mu_2) \leq 1 - \eps < 1 ~,
\]
then:
\[
c(\eps)^{-1} D_{Che}(\mu_2) \geq D_{Che}(\mu_1) \geq c(\eps) D_{Che}(\mu_2) ~,
\]
with $c(\eps) = c' \eps^2 / \log(1+1/\eps)$ and $c'>0$ a universal constant.
\end{thm}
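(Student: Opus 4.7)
The plan is to mimic the chain of inequalities used to derive Theorem \ref{thm:Intro-stability} from Lemmas \ref{lem:going-down} and \ref{lem:going-up}, but replacing the geometric intersection $K \cap L$ with the "measure-theoretic intersection" of $\mu_1$ and $\mu_2$. Concretely, set $f_i = d\mu_i/dx$ and define an intermediate measure $\nu$ by
\[
 \frac{d\nu}{dx} := \frac{1}{Z} \min(f_1,f_2), \qquad Z := \int \min(f_1,f_2) \, dx.
\]
A standard identity yields $Z = 1 - d_{TV}(\mu_1,\mu_2) \geq \eps > 0$, so $\nu$ is well-defined. The key observation is that if $f_i = \exp(-\psi_i)$ with $\psi_i$ convex, then $\min(f_1,f_2) = \exp(-\max(\psi_1,\psi_2))$, and the pointwise maximum of convex functions is convex. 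Hence $\nu$ is itself an absolutely continuous log-concave probability measure, and our convexity assumptions apply to it.

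With $\nu$ in hand, the stability follows by chaining three ingredients in exactly the same pattern as in the derivation of Theorem \ref{thm:Intro-stability} (see (\ref{eq:thm-inclusion-stability})). By construction we have $f_i \geq Z \cdot (d\nu/dx)$ pointwise for $i=1,2$. Applying the "going up" part of Lemma \ref{lem:gen-up-down} with $\mu_K = \mu_1$, $\mu_L = \nu$ and $v = Z$ yields
\[
 D_{Che}(\mu_1) \geq Z^2 \, D_{Che}(\nu).
\]
The Main Theorem \ref{thm:Main} (applied to $\nu$) then gives $D_{Che}(\nu) \gtrsim D_{FM}(\nu)$, and the "going down" part of Lemma \ref{lem:gen-up-down}, now with $\mu_K = \mu_2$, $\mu_L = \nu$ and $v = Z$, produces
\[
 D_{FM}(\nu) \geq \frac{c}{\log(1 + 1/Z)} \, D_{Exp}(\mu_2).
\]
A final appeal to the Main Theorem converts $D_{Exp}(\mu_2)$ back into $D_{Che}(\mu_2)$ up to a universal constant. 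Combining these bounds and using monotonicity of $Z^2/\log(1+1/Z)$ in $Z$ together with $Z \geq \eps$ produces
\[
 D_{Che}(\mu_1) \geq c' \, \frac{\eps^2}{\log(1+1/\eps)} \, D_{Che}(\mu_2),
\]
and the reverse inequality follows by interchanging the roles of $\mu_1$ and $\mu_2$.

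The only nontrivial step is verifying that the "minimum" measure $\nu$ genuinely falls within our log-concave framework; this is where working with log-concave densities (as opposed to arbitrary probability measures) is essential, and the convexity of $\max(\psi_1,\psi_2)$ handles it cleanly. One small technicality to dispatch is the possibility that $\min(f_1,f_2)$ is supported on a set of lower affine dimension than the supports of $\mu_1,\mu_2$, but the extension of Theorem \ref{thm:Intro-concavity} to arbitrary log-concave measures (cited as Corollary \ref{cor:approx-lc}) already covers this degenerate case, so Lemma \ref{lem:gen-up-down} applies without modification.
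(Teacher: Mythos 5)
Your proof is correct and follows the same route as the paper: both take the measure with density $\min(d\mu_1/dx, d\mu_2/dx)$, normalize it to a log-concave probability measure, verify the pointwise domination $d\mu_i/dx \geq Z\, d\nu/dx$ with $Z = 1 - d_{TV}(\mu_1,\mu_2) \geq \eps$, and then chain Lemma \ref{lem:gen-up-down} (going up from $\nu$ to $\mu_1$, going down from $\mu_2$ to $\nu$) with the Main Theorem exactly as in (\ref{eq:thm-inclusion-stability}). The paper's proof is a one-liner citing these same steps; you have merely spelled out the log-concavity check and the monotonicity of $Z^2/\log(1+1/Z)$, both of which are correct.
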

\begin{proof}
Let $\mu_0$ denote the measure whose density is $\min(\frac{d\mu_1}{dx},\frac{d\mu_2}{dx})$, and note that
$d_{TV}(\mu_1,\mu_2) = 1 - |\mu_0|$. Denoting by $\mu_3$ the (log-concave) probability measure
$\frac{\mu_0}{|\mu_0|}$, since
$\frac{d\mu_i}{dx} \geq |\mu_0| \frac{d\mu_3}{dx}$, $i=1,2$, we may apply Lemma \ref{lem:gen-up-down} and the
Main Theorem to pass from $\mu_1$ to $\mu_3$ to $\mu_2$ as in
(\ref{eq:thm-inclusion-stability}), concluding the proof.
\end{proof}

\subsection{Optimality of Stability}

To the best of our knowledge, no quantitative results on the
stability of $D_{Che}$ or $D_{Poin}$ for convex domains with respect to volume
preserving perturbations or geometric distance were previously
known. Moreover, we claim that the bounds obtained in Theorem
\ref{thm:Intro-stability} (or (\ref{eq:thm-inclusion-stability})) are optimal (up to numeric
constants) with respect to $v_L$ and close to optimal with respect
to $v_K$ (note that the dependence is logarithmic in the former yet
quadratic in the latter; in other words, the deterioration in the
Cheeger constant when passing from an outer convex body to an inner
one is genuinely different than when passing from the inner one
outward). This is witnessed by the following:

\begin{ex} \label{ex:quadratic}

Let $Q^{k}$ denote a $k$-dimensional cube of volume 1, and let
$B_1^k$ denote the homothetic copy of the unit-ball of $\ell_1^k$
having volume 1.
For $2 \leq k \leq n-1$, set $K_k = Q^{n-k} \times B_1^k$ and
$L_k = Q^{n-k} \times [-c_1 k, c_1 k] \times c_2 B_1^{k-1}$, where $0<c_1,c_2<1$ are
universal constants chosen so that $L_k \subset K_k$ (it is easy to check that this is possible).
Using a tensorization result of Bobkov and Houdr\'e \cite{BobkovHoudre}, it follows that:
\begin{eqnarray*}
D_{Che}(K_k) &\simeq& \min(D_{Che}(Q^{n-k}),D_{Che}(B_1^k)) ~, \\
D_{Che}(L_k) &\simeq& \min(D_{Che}(Q^{n-k}),D_{Che}(B_1^{k-1}),D_{Che}([-k,k])) ~.
\end{eqnarray*}
It is known (see Subsection \ref{subsect:che-families}) that $D_{Che}(Q^{m}) \simeq D_{Che}(B_1^{m}) \simeq 1$, so by the $-1$-homogeneity of $D_{Che}$, it follows that $D_{Che}(K_k) \simeq 1$ and $D_{Che}(L_k) \simeq \frac{1}{k}$. Denoting $v_k = \frac{\Vol{L_k}}{\Vol{K_k}}$, since $\log 1/v_k \simeq k$, we conclude that:
\[
 D_{Che}(L_k) \simeq \frac{1}{\log(1+1/v_k)} D_{Che}(K_k) ~,
\]
uniformly for all $k=2,\ldots,n-1$. So one cannot expect better than logarithmic dependence on $1/v$
(at least when $v \geq \exp(-n)$), which coincides with the estimate given by Lemma \ref{lem:going-down}.

On the other hand (as is well-known), if we set $L = Q^n$ and $K = Q^{n-1}
\times t Q^1$ a circumscribing box with $t > 1$, since $D_{Che}(K) \simeq 1/t$ in that range, it is clear that the quadratic dependence on $v$ in Lemma \ref{lem:going-up} cannot be improved beyond linear. Although we do not know
whether the optimal bound is, up to a constant, closer to the linear
or quadratic asymptotic, we comment that for very small
perturbations (i.e. $v$ very close to 1), it is possible to show
that the exact quadratic bound in Lemma \ref{lem:going-up} \emph{is}
optimal (in this range of $v$, we of course do not allow any
additional numerical constants).

\end{ex}

The next example (which is similar yet different from the previous one) shows that the bounds in Corollary \ref{cor:isomorphism} are optimal too (up to numeric constants), as a function of $s$ in the stated range.

\begin{ex} \label{ex:isomorphism}
Continuing with the notations of Example \ref{ex:quadratic}, let us denote by $r_n$ half the diameter of $B_1^n$, so that $B_1^n = r_n \text{Conv}(\pm e_1,\ldots, \pm e_n)$, where $\text{Conv}$ denotes the convex-hull operation and $\set{e_i}$ is the standard orthonormal basis of $\Real^n$.
It is easy to check that $r_n / n \simeq 1$ uniformly on $n$. For $1 \leq s \leq c_1 n$, where $0<c_1< \frac{r_n}{2n}$
is some universal constant, define $K_s = B_1^n \cap \set{\abs{x_1} \leq s}$. It is easy to check
that in that range of $s$, $\Vol{K_s} \geq c_2 \Vol{B_1^n}$ for some universal constant $c_2>0$, and hence by
Theorem \ref{thm:Intro-stability} we deduce that $D_{Che}(K_s) \simeq 1$ uniformly on $s,n$. Now define:
\[
L_s = \text{Conv}(K_s \cap \set{x_1 = s} , K_s \cap \set{x_1 = -s}) = [-s,s] \times \brac{1 - \frac{s}{r_n}} (B_1^n \cap \set{x_1 = 0}) ~.
\]
It follows as in Example \ref{ex:quadratic} that:
\[
D_{Che}(L_s) \simeq \min\brac{D_{Che}([-s,s]), \frac{D_{Che}(B_1^n \cap \set{x_1 = 0})}{1 - \frac{s}{r_n}}} \simeq \min\brac{\frac{1}{s}, \frac{r_{n-1}}{r_n} D_{Che}(B_1^{n-1})} \simeq \frac{1}{s} ~.
\]
Since clearly $L_s \subset K_s$, it remains to note that $(1 - \frac{s}{r_n}) K_s \subset L_s$,
so $d_G(K_s,L_s) - 1 \simeq \frac{s}{n}$. By interchanging the roles of $K_s,L_s$ appropriately,
we observe that the estimates on $D_{Che}(K) / D_{Che}(L)$ in Corollary \ref{cor:isomorphism} are sharp
both from above and from below.
\end{ex}

\begin{rem}
It is easy to adapt the proofs of Lemma \ref{lem:going-down} and consequently Corollary \ref{cor:isomorphism} to
obtain even sharper quantitative bounds (up to universal constants) on the stability of $D_{Che}$ for \emph{specific} convex bodies, such as the Euclidean ball $B_2^n$. For instance, in the latter case, one obtains that if $d_G(K,B_2^n) \leq 1 + \frac{s}{n}$ for $1 \leq s \leq C_1 n$, then:
\[
 D_{Che}(K) \geq \frac{1}{C_2 \sqrt{s}} D_{Che}(B_2^n) ~.
\]
This is an improvement over Corollary \ref{cor:isomorphism} and known to be sharp for $s=n$ (folklore).
\end{rem}

\subsection{Stability of $D_{Che}$ under Lipschitz Maps}

It is well known and immediate to see that isoperimetric inequalities are preserved under
$1$-Lipschitz mappings. Given two metric probability spaces $(X,d_X,\mu)$ and $(Y,d_Y,\nu)$, recall
that a Borel map $T : (X,d_X) \rightarrow (Y,d_Y)$ is said to push forward
$\mu$ onto $\nu$, if $\nu(A) = \mu(T^{-1}(A))$ for every Borel set $A \subset Y$. This is
equivalent to requiring that for any Borel function $g$ on $(Y,d_Y)$:
\[
 \int_Y g(y) d\nu(y) = \int_X g(T(x)) d\mu(x) ~.
\]
This will be denoted by $T_*(\mu) = \nu$. The following is then immediate from the definitions:

\begin{fact*}
Assume that $T_*(\mu) = \nu$. Then:
\[
 I_{(Y,d_Y,\nu)} \geq \frac{1}{\norm{T}_{Lip}} I_{(X,d_X,\mu)} ~.
\]
\end{fact*}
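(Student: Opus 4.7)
The plan is to compare boundary measures directly via the pullback under $T$. Set $L = \snorm{T}_{Lip}$ and fix an arbitrary Borel set $A \subset Y$. Put $B := T^{-1}(A)$, so that by the pushforward identity $\mu(B) = \nu(A)$. The essential geometric observation is:
\[
 T^{-1}(A_{\eps,d_Y}) \supset B_{\eps/L,\,d_X} ~,
\]
because if $x \in X$ satisfies $d_X(x,y) < \eps/L$ for some $y \in B$, then $d_Y(T(x),T(y)) \leq L \, d_X(x,y) < \eps$ and $T(y) \in A$, so $T(x) \in A_{\eps,d_Y}$. (If $L = 0$ the map is constant, $\nu$ is a unit mass and there is nothing to prove; if $L = \infty$ the asserted inequality is vacuous.)

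From this inclusion and $T_*(\mu) = \nu$, I get $\nu(A_{\eps,d_Y}) = \mu(T^{-1}(A_{\eps,d_Y})) \geq \mu(B_{\eps/L,\,d_X})$, and therefore
\[
 \nu^+(A) = \liminf_{\eps \to 0} \frac{\nu(A_{\eps,d_Y}) - \nu(A)}{\eps} \geq \liminf_{\eps \to 0} \frac{\mu(B_{\eps/L,\,d_X}) - \mu(B)}{\eps} = \frac{1}{L}\,\mu^+(B) ~.
\]
Applying the defining property of $I_{(X,d_X,\mu)}$ to $B$ and using $\mu(B) = \nu(A)$, this yields
\[
 \nu^+(A) \geq \frac{1}{L}\, I_{(X,d_X,\mu)}(\mu(B)) = \frac{1}{L}\, I_{(X,d_X,\mu)}(\nu(A)) ~.
\]
Since $A$ was arbitrary and $I_{(Y,d_Y,\nu)}$ is, by definition, the pointwise maximal function satisfying $\nu^+(A) \geq I_{(Y,d_Y,\nu)}(\nu(A))$ for all Borel $A$, the claim follows.

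There is no real obstacle here; the only point requiring any care is the elementary inclusion of $\eps$-neighborhoods, which follows directly from the Lipschitz hypothesis, and the passage to the $\liminf$ with the scaling factor $1/L$. Everything else is bookkeeping via the pushforward relation.
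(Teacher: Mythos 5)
Your proof is correct and fills in exactly the elementary argument the paper omits (the paper simply labels the Fact as ``immediate from the definitions''): the neighborhood inclusion $T^{-1}(A_{\eps,d_Y}) \supset (T^{-1}(A))_{\eps/L,\,d_X}$ together with the pushforward identity gives $\nu^+(A) \geq L^{-1}\mu^+(T^{-1}(A))$, and the desired bound on the profile follows since $T^{-1}(A)$ is Borel with $\mu(T^{-1}(A)) = \nu(A)$. The treatment of $L=0$ and $L=\infty$ and the change of variables in the $\liminf$ are handled correctly.
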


Here as usual:
\[
\norm{T}_{Lip} := \sup_{x \neq y \in X} \frac{d_Y(T(x),T(y))}{d_X(x,y)} ~.
\]

The following result states that when our convexity assumptions hold for the target space, as far
as Cheeger's isoperimetric inequality is concerned, one need not require that $T$ be Lipschitz on
the entire space, but rather just on average. We would like to thank Bo'az Klartag for a fruitful discussion
regarding this point.

\begin{thm} \label{thm:Lip-on-average}
Assume that $(Y,d_Y,\nu)$ verifies our convexity assumptions and that $T_*(\mu) = \nu$ for some
Lipschitz-on-balls map $T$. Then:
\[
 D_{Che}(Y,d_Y,\nu) \geq \frac{c}{\int_X \norm{D T}_{op}(x) d\mu(x)} D_{Che}(X,d_X,\mu) ~,
\]
for some universal constant $c>0$.
\end{thm}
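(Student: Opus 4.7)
The natural strategy is to transfer the Cheeger inequality from $X$ to $Y$ via pull-back through $T$, using only an $L_1$-type quantity for the gradient so that the on-average Lipschitz constant appears; since $(Y,d_Y,\nu)$ satisfies our convexity assumptions, the Main Theorem \ref{thm:Main} then upgrades the resulting First-Moment estimate to a full Cheeger bound on $Y$. Concretely, I would lower bound $D_{FM}(Y,d_Y,\nu)$ rather than $D_{Che}(Y,d_Y,\nu)$ directly, since First-Moment concentration involves an $L_\infty$-gradient norm on the test function side, which is exactly what allows one to trade the pointwise Lipschitz hypothesis on $T$ for an averaged one.

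So let $g$ be an arbitrary $1$-Lipschitz function on $(Y,d_Y)$ with $E_\nu g = 0$, and set $f := g \circ T$ on $X$. Two facts then fall out. First, the push-forward property $T_*(\mu) = \nu$ gives $E_\mu f = E_\nu g = 0$ and $\norm{f}_{L_1(\mu)} = \norm{g}_{L_1(\nu)}$. Second, from the definition of $\abs{\nabla f}$ in Remark \ref{rem:grad-defn} and the fact that $g$ is $1$-Lipschitz on $Y$, one gets the pointwise bound
\[
 \abs{\nabla f}(x) \;\leq\; \limsup_{d_X(y,x) \to 0+} \frac{d_Y(T(y),T(x))}{d_X(y,x)} \;\leq\; \norm{DT}_{op}(x),
\]
so that $\norm{\abs{\nabla f}}_{L_1(\mu)} \leq \int_X \norm{DT}_{op}(x)\, d\mu(x)$. (The map $f$ is Lipschitz-on-balls because $T$ is, so $f \in \F(X,d_X)$ and the above is legitimate.) Now apply the functional form of Cheeger's inequality on $X$ from Lemma \ref{lem:Cheeger=L1}, combined with Lemma \ref{lem:E-M} to pass from median to mean, to obtain
\[
 \frac{D_{Che}(X,d_X,\mu)}{2} \norm{g}_{L_1(\nu)} \;=\; \frac{D_{Che}(X,d_X,\mu)}{2} \norm{f - E_\mu f}_{L_1(\mu)} \;\leq\; \int_X \norm{DT}_{op}\, d\mu.
\]

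Taking supremum over all admissible $g$ yields $D_{FM}(Y,d_Y,\nu) \geq D_{Che}(X,d_X,\mu)\,/\,(2\int_X \norm{DT}_{op}\, d\mu)$. Since $(Y,d_Y,\nu)$ satisfies our convexity assumptions, the Main Theorem \ref{thm:Main} gives $D_{Che}(Y,d_Y,\nu) \geq c\, D_{FM}(Y,d_Y,\nu)$, and chaining the two bounds produces the asserted inequality with a universal constant $c>0$.

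The only delicate point is the pointwise gradient estimate $\abs{\nabla f}(x) \leq \norm{DT}_{op}(x)$, which requires that the quantity $\norm{DT}_{op}(x)$ appearing in the statement be interpreted consistently with the limsup-based definition of $\abs{\nabla f}$ used throughout the paper. In the Riemannian setting this follows from Rademacher's theorem applied locally on balls where $T$ is Lipschitz, and in the general metric setting one simply reads $\norm{DT}_{op}(x)$ as the local Lipschitz constant $\limsup_{y \to x} d_Y(T(y),T(x))/d_X(y,x)$; either interpretation makes the inequality immediate and the rest of the argument unchanged.
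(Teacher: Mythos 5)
Your proposal is correct and follows essentially the same strategy as the paper: pull back the functional form of Cheeger's inequality on $(X,d_X,\mu)$ through $T$, observe that the on-average Lipschitz constant of $T$ naturally bounds $\norm{\abs{\nabla (g\circ T)}}_{L_1(\mu)}$ for any $1$-Lipschitz $g$ on $Y$, read the result as a First-Moment concentration estimate on $Y$, and upgrade it to Cheeger via the Main Theorem. The only cosmetic difference is that the paper normalizes $g$ by its median (and notes $M_\nu g = M_\mu(g\circ T)$ since $T_*\mu=\nu$), thereby applying Lemma \ref{lem:Cheeger=L1} directly without invoking Lemma \ref{lem:E-M} at this step, whereas you normalize by the mean and absorb a harmless factor of $2$; both are valid.
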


Here $\norm{D T}_{op}(x)$ denotes the local Lipschitz constant of $T$ at $x$:
\[
 \norm{D T}_{op}(x) := \limsup_{y \rightarrow x} \frac{d_Y(T(x),T(y))}{d_X(x,y)} ~.
\]
When $T$ is smooth and $X,Y$ are linear spaces, this coincides with the operator norm of the
usual derivative matrix $D T$ at $x$.

\begin{proof}
First, rewrite Cheeger's isoperimetric inequality on $(X,d_X,\mu)$ in functional form (Lemma
\ref{lem:Cheeger=L1}):
\begin{equation} \label{eq:Lip-ass}
\forall f \in \F(X,d_X) \;\;\; D_{Che}(X,d_X,\mu) \norm{f - M_\mu f}_{L_1(X,\mu)} \leq
\norm{\abs{\nabla_X
f}}_{L_1(X,\mu)} ~.
\end{equation}
Using this, we estimate the First-Moment constant on $(Y,d_Y,\nu)$. Given a $1$-Lipschitz function
$g$ on $(Y,d_Y)$, clearly $g \circ T$ is Lipschitz-on-balls on $(X,d_X)$, hence in $\F(X,d_X)$.
We then have by the definition of push-forward and our assumption (\ref{eq:Lip-ass}):
\begin{eqnarray*}
& & \int_Y \abs{g - M_\nu g} d\nu = \int_X \abs{g(T x) - M_\mu (g \circ T)} d\mu \\
&\leq& \frac{1}{D_{Che}(X,d_X,\mu)} \int_X \abs{\nabla_X (g\circ T)}(x) d\mu(x) \\
&\leq& \frac{1}{D_{Che}(X,d_X,\mu)} \int_X \norm{D T}_{op}(x) \abs{\nabla_Y g}(Tx) d\mu(x)
\leq \frac{\int_X \norm{D T}_{op}(x) d\mu(x)}{D_{Che}(X,d_X,\mu)} ~.
\end{eqnarray*}
Hence:
\[
 D^M_{FM}(Y,d_Y,\nu) \geq \frac{D_{Che}(X,d_X,\mu)}{\int_X \norm{D T (x)}_{op} d\mu(x)} ~.
\]
We conclude by our Main Theorem (and Lemma \ref{lem:E-M}), which imply that $D_{Che}(Y,d_Y,\nu) \geq c
D^M_{FM}(Y,d_Y,\nu)$ under our convexity assumptions on $(Y,d_Y,\nu)$.
\end{proof}

\subsection{Estimating $D_{Che}$}

In this subsection, we easily recover some previously known
estimates on the Cheeger constant of convex domains in a single
framework and extend some results to the Riemannian setting.
We begin with the following stimulating conjecture from \cite{KLS}:

\begin{conj*}[Kannan--Lov\'asz--Simonovits]
There exists a universal constant $c>0$ such that for any convex body $K$ in
$(\Real^n,\abs{\cdot})$, and more generally, for any log-concave probability measure $\mu$ on
$(\Real^n,\abs{\cdot})$:
\begin{equation} \label{eq:KLS-conj}
 D_{Che}(\mu) \geq \frac{c}{\sigma_1(\mu)} ~.
\end{equation}
\end{conj*}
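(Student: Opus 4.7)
The plan is to exploit our Main Theorem to reduce the conjecture to an a-priori much weaker statement, and then to try to verify this weaker statement directly. By the Main Theorem applied under our convexity assumptions (which log-concave measures satisfy), $D_{Che}(\mu) \simeq D_{FM}(\mu)$. Hence it suffices to produce a universal constant $c>0$ such that
\[
\norm{f - E_\mu f}_{L_1(\mu)} \leq C \sigma_1(\mu)
\]
for every $1$-Lipschitz function $f$ on $(\Real^n,\abs{\cdot})$. For a linear functional $f(x) = \scalar{x,\theta}$ with $\abs{\theta}=1$, the bound is immediate from Cauchy--Schwarz: $\norm{f - E_\mu f}_{L_1(\mu)} \leq \textnormal{Var}(f)^{1/2} \leq \sigma_1(\mu)$. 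So, in view of our reduction, the content of the conjecture is precisely that linear functionals are essentially extremal for the First-Moment inequality on log-concave measures -- exactly the viewpoint emphasized in Remark \ref{rem:FM-Exp-Equivalent}.

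A second, equivalent reformulation (by Theorem \ref{thm:Lip-form}) replaces arbitrary $1$-Lipschitz test functions with distance functions: it suffices to show that for every Borel $A \subset \Real^n$ with $\mu(A) \geq 1/2$,
\[
\int d(x,A) \, d\mu(x) \leq C \sigma_1(\mu).
\]
The natural plan is then to compare $d(\cdot,A)$ with $|\scalar{\cdot - x_0,\theta}|$ for a suitably chosen reference point $x_0$ and direction $\theta$. A first attempt is: choose a half-space $H = \set{\scalar{x,\theta} \geq t}$ with $\mu(H) = 1/2$ whose boundary is tangent in a measure-theoretic sense to $A$, and then try to bound $\int d(x,A) d\mu$ by $\int |\scalar{x-x_0,\theta}| d\mu \lesssim \sigma_1(\mu)$ using log-concavity of the one-dimensional marginal in direction $\theta$.

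The main obstacle is precisely the geometric step: showing that, up to universal constants, the infimum in Theorem \ref{thm:Lip-form} is essentially attained on \emph{half-spaces}. Such a statement is essentially equivalent to the KLS conjecture itself and is, in the generality stated, unresolved. Thus our framework clarifies matters -- in place of all Poincar\'e-type test functions one only needs to control $1$-Lipschitz (and in fact distance) functions -- but it does not by itself overcome the central probabilistic/geometric obstruction. What our framework \emph{does} yield are dimension-dependent lower bounds on $D_{Che}(\mu)$: in the following subsections we recover those of Payne--Weinberger, Li--Yau, and Kannan--Lov\'asz--Simonovits by combining the Main Theorem with elementary estimates on $\int d(x,A) d\mu$ via diameter, inradius, or the localization lemma, and we obtain Bobkov's variance bound by combining the Main Theorem with our stability result.
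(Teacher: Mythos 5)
The statement you are asked to prove is an open conjecture, not a theorem, and the paper does not prove it either: it states it, discusses its meaning, and then surveys the known dimension-dependent partial results. You correctly recognize this, so there is no missing step to fault you for -- the appropriate output here is exactly what you produced, a reduction plus an honest identification of the obstruction.

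Your reduction is correct and matches the paper's own discussion. Combining the Main Theorem with Lemma \ref{lem:E-M} indeed gives $D_{Che}(\mu) \simeq D_{FM}(\mu)$, so the conjecture reduces to a First-Moment bound $\norm{f - E_\mu f}_{L_1(\mu)} \leq C\,\sigma_1(\mu)$ uniformly over $1$-Lipschitz $f$; your Cauchy--Schwarz check that linear functionals satisfy this with $C=1$ is right, and the resulting viewpoint (that KLS is equivalent to linear functionals being essentially extremal for the First-Moment or exponential concentration inequalities) is precisely what the paper records in Remark \ref{rem:FM-Exp-Equivalent}. Your further reformulation via Theorem \ref{thm:Lip-form}, that it suffices to show $\int d(x,A)\,d\mu \leq C\,\sigma_1(\mu)$ for every Borel $A$ with $\mu(A)\geq 1/2$, is also correct and is again the paper's own stated rephrasing: the remaining task is to show $A$ may be taken to be a half-space, up to constants. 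You are right that this geometric step is the crux and is unresolved; no amount of massaging of the functional framework resolves it, and you appropriately do not claim otherwise. The final paragraph, pointing to the dimension-dependent consequences (Payne--Weinberger, Li--Yau, KLS bounds, Bobkov via the stability theorem), is exactly how the paper positions the conjecture within Section \ref{sec:cor}. In short: correct assessment, same route as the paper, no false claim of proof.
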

\noindent
Here $\sigma_1(\mu)^2$ denotes the largest eigenvalue of the
symmetric covariance matrix $\Sigma(\mu)$ of $\mu$:
\[
 \Sigma(\mu) := E_\mu(x \otimes x) - E_\mu(x) \otimes E_\mu(x) ~.
\]
We will write $\sigma_1(K)$ for $\sigma_1(\lambda_K)$.

\medskip

Standard results in Convexity Theory easily imply that the opposite
inequality in (\ref{eq:KLS-conj}) holds with some universal constant
$c>0$. The reason for this is that it is easy to analyze the
isoperimetric inequality for sets which are half-spaces in
$\Real^n$, and when restricting to these sets, both the upper bound
and the conjectured lower bound hold with some (explicitly known)
universal constants. The KLS conjecture is therefore a striking
statement on the nature of isoperimetric minimizing sets for
Cheeger's isoperimetric inequality in the convex setting: these sets
do not minimize boundary-measure much better than just half-spaces.
An explicit description of the isoperimetric minimizers is known
only in a few cases, even in the Euclidean setting
$(\Omega,\abs{\cdot},\lambda_\Omega)$ (see e.g.
\cite{RosIsoperimetricProblemNotes}),
so it is extremely important to at least identify some
\emph{essentially} minimizing sets (up to universal constants).

\medskip

Although the KLS conjecture is far from being resolved, some general
lower bounds on $D_{Che}$ are known, but these produce
dimension-dependent results. We will see that our Main Theorem
easily reproduces these bounds.

\smallskip

The following result in the Euclidean setting is due to Payne and Weinberger \cite{PayneWeinberger}.
This was generalized to the Riemannian setting by Li and Yau \cite{LiYauEigenvalues}. We refer to
the Appendix for missing definitions.

\begin{thm}[Payne--Weinberger, Li--Yau] \label{thm:Yau-Li}
If $K \subset (M,g)$ is a locally convex bounded domain with smooth boundary and $Ric_g \geq 0$,
then:
\[
 D_{Poin}(K,d_g,\lambda_K) \geq \frac{\pi}{2 \diam(K)} ~,
\]
where $\diam$ denotes the diameter and $d_g$ the induced geodesic distance.
In fact, when $(M,g)$ is Euclidean space the constant 2 above may be omitted.
\end{thm}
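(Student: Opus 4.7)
The plan is to obtain the bound as a quick consequence of the Main Theorem, which in this setting is tailor-made for extracting spectral gap estimates from essentially trivial concentration bounds.

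First, I would observe that $(K, d_g, \lambda_K)$ satisfies our smooth convexity assumptions: the hypothesis $Ric_g \geq 0$ together with local convexity of $K$ with smooth boundary corresponds precisely to the Bakry--\'Emery $CD(0,\infty)$ condition on the locally convex domain (cf.\ the remarks in Section \ref{sec:semi-group} on Qian--Hsu--Wang gradient estimates for locally convex domains, and the non-smooth-density extension in Section \ref{sec:AA}). Consequently, Main Theorem \ref{thm:Main} applies and gives $D_{Poin}(K, d_g, \lambda_K) \simeq D_{FM}(K, d_g, \lambda_K)$.

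Next, I would derive the diameter bound on the First-Moment constant. Given any 1-Lipschitz function $f$ on $(K, d_g)$ and any reference point $y \in K$, we have $|f(x) - f(y)| \leq d_g(x,y) \leq \diam(K)$ for every $x \in K$, so $\|f - E_{\lambda_K} f\|_{L_\infty(\lambda_K)} \leq \diam(K)$, which immediately yields $\|f - E_{\lambda_K} f\|_{L_1(\lambda_K)} \leq \diam(K)$ and therefore $D_{FM}(K, d_g, \lambda_K) \geq 1/\diam(K)$. Alternatively, one could invoke Theorem \ref{thm:Lip-form}: for any Borel $A \subset K$ with $\lambda_K(A) \geq 1/2$, the trivial estimate $\int_K d_g(x,A)\, d\lambda_K \leq \diam(K) \cdot \lambda_K(K \setminus A) \leq \diam(K)/2$ gives $D_{Che}(K) \geq c/\diam(K)$ directly, and Maz'ya--Cheeger (Theorem \ref{thm:Cheeger}) converts this to the Poincar\'e bound.

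Combining the two steps yields $D_{Poin}(K, d_g, \lambda_K) \geq c/\diam(K)$ for some universal $c > 0$, which is the correct order of magnitude in both the Riemannian and Euclidean cases. The main obstacle, which I do not expect to overcome within this framework, is recovering the \emph{sharp} constants $\pi/2$ (Riemannian) and $\pi$ (Euclidean): the Main Theorem, Buser--Ledoux, and Maz'ya--Cheeger each introduce a universal (but lossy) numeric constant, so a careful accounting along this chain cannot reproduce the sharp eigenvalue of the one-dimensional Neumann Laplacian on $[0,\diam(K)]$. Matching $\pi/2$ would presumably require either a localization/needle-decomposition argument (which the author avoids) or a quantitative refinement tracking best constants through the semigroup argument of Section \ref{sec:semi-group} in the specific case $p=q=2$; for the purposes of ``easily recovering'' Payne--Weinberger and Li--Yau up to universal constants, however, the three-line argument above already suffices.
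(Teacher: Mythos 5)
This theorem is not proved in the paper at all: it is quoted as a citation to Payne--Weinberger \cite{PayneWeinberger} and Li--Yau \cite{LiYauEigenvalues}, and the author never reproves it with the sharp constants $\pi/2$ (Riemannian) or $\pi$ (Euclidean). What the paper does is ``recover'' it up to universal constants, and that is precisely via the argument you give: the author's Theorem \ref{thm:My-KLS} shows $D_{Che}(\Omega,d,\mu) \geq c \sup_{x_0}\bigl(\int d(x,x_0)\,d\mu\bigr)^{-1}$ by bounding $D_{FM}$ with the triangle inequality and invoking the Main Theorem, and since $\int d(x,x_0)\,d\mu \leq \diam(K)$ this trivially implies $D_{Che}(K) \geq c/\diam(K)$, hence $D_{Poin}(K) \geq c'/\diam(K)$. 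Your two variants (the $L_\infty$-to-$L_1$ bound on $D_{FM}$, or the estimate $\int d(x,A)\,d\lambda_K \leq \diam(K)/2$ plugged into Theorem \ref{thm:Lip-form}) are both essentially the same argument, and both are correct.

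You are also right to flag that this chain cannot recover the sharp constant: the Buser--Ledoux, semigroup, and Maz'ya--Cheeger steps each lose universal factors, so the best one can hope for this way is $D_{Poin}(K)\geq c/\diam(K)$ with an unspecified $c$. The exact $\pi/2$ and $\pi$ constants come from the original proofs --- Payne--Weinberger's slicing of a convex domain into thin needles and reducing to the one-dimensional Neumann problem on an interval, and Li--Yau's gradient-estimate refinement of that idea --- which the author explicitly chooses not to reproduce (these are exactly the localization-type techniques the author says are confined to Euclidean space and a few special manifolds). So your proposal correctly reproduces what the paper actually does with this theorem; the sharp statement itself is taken from the cited sources, and neither the paper nor your argument is intended to reprove it.
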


Ledoux's Theorem \ref{thm:Ledoux} implies that the same lower bound
(up to an additional constant) holds for $D_{Che}(K,d_g,\lambda_K)$.
In the Euclidean case, this was strengthened in \cite{KLS}:

\begin{thm}[Kannan--Lov\'asz--Simonovits] \label{thm:KLS}
Let $\mu$ be a log-concave probability measure on
$(\Real^n,\abs{\cdot})$. Then:
\[
 D_{Che}(\mu) \geq \sup_{x_0 \in \Real^n} \frac{c}{\int \abs{x - x_0} d\mu(x)} ~,
\]
for some universal constant $c>0$.
\end{thm}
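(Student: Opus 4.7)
The plan is to deduce this directly from our Main Theorem \ref{thm:Main}, using the observation that absolutely continuous log-concave probability measures on $\Real^n$ satisfy our convexity assumptions, so that $D_{Che}(\mu) \simeq D_{FM}(\mu)$ up to universal constants. Fix any $x_0 \in \Real^n$ and write $R(x_0) := \int |x - x_0| d\mu(x)$.

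The key observation is the following upper bound on the First-Moment deviation of an arbitrary $1$-Lipschitz function $f$ on $(\Real^n,\abs{\cdot})$:
\[
\norm{f - E_\mu f}_{L_1(\mu)} \leq 2 R(x_0) ~.
\]
Indeed, the Lipschitz property gives $|f(x) - f(x_0)| \leq |x - x_0|$, so integrating against $\mu$ yields $E_\mu|f - f(x_0)| \leq R(x_0)$. Combined with the elementary inequality $E_\mu|f - E_\mu f| \leq 2 E_\mu|f - c|$ (valid for any constant $c$, as $|f - E_\mu f| \leq |f - c| + |c - E_\mu f| \leq |f - c| + E_\mu|f - c|$), applied with $c = f(x_0)$, the claim follows. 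Taking the supremum over all $1$-Lipschitz functions $f$ gives $1/D_{FM}(\mu) \leq 2 R(x_0)$, and invoking our Main Theorem \ref{thm:Main} yields
\[
D_{Che}(\mu) \geq c_0 D_{FM}(\mu) \geq \frac{c_0}{2 R(x_0)}
\]
for some universal constant $c_0>0$. As this holds for every $x_0$, taking the supremum gives the asserted inequality.

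There is no substantive obstacle: once the Main Theorem is in hand, the application to $R(x_0)$ reduces to a two-line triangle/Lipschitz estimate, since any $1$-Lipschitz $f$ cannot deviate from its mean in $L_1$ more than it can deviate from the constant $f(x_0)$, which is in turn controlled by the mean distance $R(x_0)$. An essentially equivalent route is to apply Theorem \ref{thm:Lip-form} to the specific test sets $A = \set{x : |x - x_0| \leq M_\mu|x - x_0|}$, which are balls of measure at least $1/2$ and satisfy $d(x,A) = (|x - x_0| - M_\mu|x - x_0|)_+ \leq |x - x_0|$, so that $\int d(x,A) d\mu \leq R(x_0)$ and the bound on $D_{Che}(\mu)$ follows directly.
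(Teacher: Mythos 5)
Your argument is correct and coincides with the paper's own proof (given there for the Riemannian generalization, Theorem \ref{thm:My-KLS}): one bounds $\norm{f - E_\mu f}_{L_1(\mu)} \leq 2\int |x-x_0|\,d\mu$ for any $1$-Lipschitz $f$ via two applications of the triangle inequality, thereby controlling $D_{FM}$, and then invokes the Main Theorem.
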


To obtain this result, Kannan, Lov\'asz and Simonovits developed a
geometric localization technique (which in fact can be traced back
to the work of M. Gromov and V. Milman \cite{Gromov-Milman}). As
pointed out to us by Sasha Sodin, it is interesting to note that
this technique uses some geometric properties of Euclidean space and
does not generalize to other Riemannian manifolds (except in special
cases, like that of the Euclidean Sphere, as in the work of
Gromov--Milman). Our method, on the other hand, does allow us to
state the following generalization of Theorem \ref{thm:KLS} to the
Riemannian setting, which also improves over Theorem \ref{thm:Yau-Li}:

\begin{thm} \label{thm:My-KLS}
Assume that $(\Omega,d,\mu)$ satisfies our convexity assumptions.
Then:
\[
 D_{Che}(\Omega,d,\mu) \geq \sup_{x_0 \in \Omega} \frac{c}{\int d(x,x_0) d\mu(x)} ~,
\]
for some universal constant $c>0$.
\end{thm}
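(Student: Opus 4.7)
The plan is to reduce the bound on $D_{Che}$ to a bound on $D_{FM}$ via the Main Theorem, and then to observe that the First-Moment inequality can be verified pointwise by comparing any $1$-Lipschitz function to the distance function from a fixed point $x_0$. Under our convexity assumptions, the Main Theorem \ref{thm:Main} gives $D_{Che}(\Omega,d,\mu) \geq c_1 D_{FM}(\Omega,d,\mu)$ for a universal $c_1 > 0$, so it suffices to establish
\[
D_{FM}(\Omega,d,\mu) \geq \frac{c_2}{\int d(x,x_0)\,d\mu(x)}
\]
for every $x_0 \in \Omega$, with a universal constant $c_2 > 0$, and then take the supremum over $x_0$.

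To prove this First-Moment bound, fix $x_0 \in \Omega$ and let $f$ be an arbitrary $1$-Lipschitz function on $(\Omega,d)$. Since $f$ is $1$-Lipschitz,
\[
|f(x) - f(x_0)| \leq d(x,x_0) \quad \text{for all } x \in \Omega,
\]
so integrating against $\mu$ gives $\|f - f(x_0)\|_{L_1(\mu)} \leq \int d(x,x_0)\,d\mu(x)$. The constant $c = f(x_0)$ is not optimal in $L_1$, but a standard triangle-inequality argument (using that the mean $E_\mu f$ minimizes $\|f - c\|_{L_2(\mu)}$, while any constant $c$ satisfies $\|f - E_\mu f\|_{L_1(\mu)} \leq \|f - c\|_{L_1(\mu)} + |E_\mu f - c| \leq 2\|f-c\|_{L_1(\mu)}$ by Jensen) yields
\[
\|f - E_\mu f\|_{L_1(\mu)} \leq 2 \int d(x,x_0)\,d\mu(x).
\]
Taking the supremum over all $1$-Lipschitz $f$ gives $D_{FM}(\Omega,d,\mu) \geq \tfrac{1}{2} \big(\int d(x,x_0)\,d\mu\big)^{-1}$, as required.

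Combining with $D_{Che} \geq c_1 D_{FM}$ and optimizing over $x_0 \in \Omega$ completes the proof with $c = c_1/2$. The essential content of the argument is entirely contained in the Main Theorem; once that reduction is in hand, the rest is a one-line comparison against the function $x \mapsto d(x,x_0)$, which is itself $1$-Lipschitz and realizes the trivial bound on the right-hand side. There is no serious obstacle: the only ingredient specific to this statement, beyond the Main Theorem, is the observation that the First-Moment concentration inequality is a pointwise, non-convexity-based estimate on the $L_1$-variance of Lipschitz functions, and such estimates are tautologically controlled by first moments of the distance function.
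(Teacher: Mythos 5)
Your proposal is correct and follows essentially the same route as the paper: reduce to bounding $D_{FM}$ via the Main Theorem, then bound $\norm{f - E_\mu f}_{L_1(\mu)}$ by two applications of the triangle inequality (with Jensen) against the comparison function $x \mapsto d(x,x_0)$, yielding the factor $2$. The parenthetical remark about $E_\mu f$ minimizing the $L_2$-norm is an unnecessary aside (and not what is actually used — the inequality chain you write is the relevant Jensen step), but the argument itself is sound and matches the paper's proof line for line.
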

\begin{proof}
As usual, we just need to bound $D_{FM}(\Omega,d,\mu)$. Let $f$
denote a $1$-Lipschitz function on $(\Omega,d)$. Then for any $x_0
\in \Omega$, applying the triangle inequality twice:
\begin{eqnarray*}
 & &\int \abs{f(x) - E_\mu f} d\mu(x) \leq \int \abs{f(x) - f(x_0)} d\mu(x) + \abs{E_\mu f - f(x_0)}
\\ & \leq&  2 \int \abs{f(x) - f(x_0)} d\mu(x) \leq 2 \int d(x,x_0) d\mu(x) ~.
\end{eqnarray*}
Hence:
\[
 D_{FM}(\Omega,d,\mu) \geq \sup_{x_0 \in \Omega} \frac{1}{2 \int d(x,x_0) d\mu(x) } ~,
\]
and the claim follows by our Main Theorem.
\end{proof}

\begin{rem}
An alternative approach to localization for proving isoperimetric inequalities was developed by Bobkov
\cite{BobkovGaussianIsoLogSobEquivalent} in the Euclidean setting. Bobkov's approach was extended by Barthe
\cite{BartheIntegrabilityImpliesIsoperimetryLikeBobkov} and subsequently by Barthe and Kolesnikov \cite{BartheKolesnikov}.
This approach is based on the Pr\'ekopa--Leindler inequality (e.g. \cite{BrascampLiebPLandLambda1}),
or equivalently, on optimal transportation, which have both been recently generalized to the Riemannian-with-density-setting by
Cordero-Erausquin, McCann and Schmuckenschl{\"a}ger \cite{CMSInventiones,CMSManifoldWithDensity}. Using these tools we
expect that it should be possible to provide an alternative proof of Theorem
\ref{thm:My-KLS} following Bobkov's approach, but as pointed out to us by one of the referees, this has yet to be accomplished.
We would like to thank the
referee for his comments regarding our original simpleminded remark in this direction.
\end{rem}

We would like to mention another bound on $D_{Che}$ obtained in \cite{KLS} using the localization
method.

\begin{thm}[Kannan--Lov\'asz--Simonovits] \label{thm:KLS2}
Let $\mu$ be a log-concave probability measure on $(\Real^n,\abs{\cdot})$ with bounded support
$B$. Then:
\[
 D_{Che}(\mu) \geq \frac{c}{\int \theta_B(x) d\mu} ~,
\]
where $\theta_B(x)$ denotes the longest symmetric interval contained
in $B$ and centered at $x$, and $c>0$ is a universal constant.
\end{thm}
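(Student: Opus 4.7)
The plan is to deduce Theorem \ref{thm:KLS2} from the Main Theorem by bounding the First-Moment constant and then localizing to one dimension. By Theorem \ref{thm:Main} (together with Lemma \ref{lem:E-M}), the Cheeger bound in question is equivalent, up to universal constants, to
\[
 \int_B |f - E_\mu f| \, d\mu \leq C \int_B \theta_B(x) \, d\mu(x)
\]
for every $1$-Lipschitz $f$; that is, to $D_{FM}(\mu) \geq c/\int \theta_B \,d\mu$. So I only need to establish this First-Moment estimate.

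To do so I would invoke the Kannan--Lov\'asz--Simonovits localization lemma, which reduces functional inequalities for log-concave probability measures on $\Real^n$ to their one-dimensional counterparts on log-concave (in fact log-affine) densities supported on bounded intervals. The key compatibility is that when one restricts to a line $\ell$ intersecting $B$ in an interval $[a,b]$, the function $\theta_B$ pointwise dominates $\theta_{[a,b]}(x) = 2\min(x-a,\,b-x)$. The $n$-dimensional statement therefore reduces to the one-dimensional claim: for every log-concave probability measure $\nu$ on $[a,b]$ and every $1$-Lipschitz $g:[a,b]\to\Real$,
\[
 \int_a^b |g - E_\nu g|\, d\nu \leq C' \int_a^b 2\min(x-a,\,b-x)\, d\nu(x).
\]

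For the one-dimensional step let $m$ be a median of $\nu$. Since $g$ is $1$-Lipschitz, $|g(x)-g(m)| \leq |x-m|$, and the triangle inequality $|E_\nu g - g(m)| \leq \int |g - g(m)|\,d\nu$ yields
\[
 \int_a^b |g - E_\nu g|\,d\nu \;\leq\; 2 \int_a^b |x - m|\, d\nu(x).
\]
It then remains to verify the purely measure-theoretic inequality $\int |x-m|\,d\nu \leq C'' \int \min(x-a,\,b-x)\, d\nu$ for log-concave $\nu$ on $[a,b]$ with median $m$, which is a direct calculation using unimodality of the density and splitting the integral at $m$.

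The main obstacle is the localization step: it is a genuinely Euclidean tool and lies outside the semi-group framework developed in Section \ref{sec:semi-group}. One could hope to bypass it by using the symmetric-chord construction directly, replacing $f(x)$ by the average $\tfrac{1}{2}(f(x+r(x)v(x)) + f(x-r(x)v(x)))$ (introducing an error of at most $\theta_B(x)/2$ by the $1$-Lipschitz property) and then integrating; but closing the argument would still require measure-theoretic control of the pushforwards of $\mu$ under the chord-endpoint maps $x \mapsto x \pm r(x)v(x)$, which seems to demand either localization or an iterated Markov-chain averaging that essentially re-encodes the same one-dimensional log-concave information.
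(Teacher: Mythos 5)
First, a point of reference: the paper does not prove Theorem \ref{thm:KLS2}. It is quoted as a result of Kannan--Lov\'asz--Simonovits, whose proof is via their localization lemma, and the text explicitly states that a derivation from the Main Theorem ``will be described elsewhere''; in this paper the theorem is used only as an input (to deduce Theorem \ref{thm:Bobkov}). So there is no in-paper argument to compare against, and what you have written is essentially a sketch of the \emph{original} KLS route, wrapped in the First-Moment reformulation.

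As a proof, the proposal has a genuine gap, which you yourself flag: the entire $n$-dimensional content is delegated to ``invoke the KLS localization lemma,'' and that invocation is never carried out. Two concrete obstacles stand in the way as written. (i) The quantity $\int |f - E_\mu f|\,d\mu$ is not of the form $\int h\,d\mu$ for a fixed function $h$, because the centering constant $E_\mu f$ changes when $\mu$ is replaced by a needle $\nu$; localization only handles linear conditions in the measure. One must first freeze the constant (e.g.\ replace $E_\mu f$ by a global median $c = M_\mu f$, at the cost of a factor $2$ as in Lemma \ref{lem:E-M}) and then carry the two linear constraints $\mu\set{f \le c} \ge 1/2$ and $\mu\set{f \ge c} \ge 1/2$ through the bisection. (ii) On the resulting needle $\nu$ on $[a,b]$ the constant $c$ is then only a near-median of $f$ with respect to $\nu$ --- it is neither $E_\nu f$ nor $f$ evaluated at the median point of $\nu$ --- so the one-dimensional lemma you state (with $E_\nu g$, then reduced to the median point $m$ of $\nu$) is not the statement localization hands you; it must be reformulated for a near-median value $c$ of $g$, with $|c - g(m)|$ controlled via Markov's inequality before one can invoke $|g(x)-g(m)| \le |x-m|$ and the unimodality of $\nu$. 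These repairs are standard but they are precisely where the work lies, and they are absent. A final remark: the detour through $D_{FM}$ and Theorem \ref{thm:Main} is correct but not load-bearing here --- once one is willing to run localization, one can localize the isoperimetric statement directly, which is exactly what Kannan, Lov\'asz and Simonovits do; conversely, without executing the localization (or the chord-averaging alternative you mention, whose difficulties you correctly diagnose), the Main Theorem alone does not yield the bound.
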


We have recently managed to derive this result using our Main
Theorem, but this will be described elsewhere. Instead, we would like to show
how this bound may be used to
recover a result of Bobkov \cite{BobkovVarianceBound};
in fact, the bound we deduce is formally stronger than Bobkov's.
Bobkov employs the localization method as well, but then relies on
some nice trick involving moment inequalities for polynomials in the
log-concave setting. Our argument, on the other hand, is more
geometric. Independently of our proof, we heard about a similar idea
for bounding the boundary measure of large sets from Santosh Vempala
(using localization as well).

\begin{thm}[Bobkov] \label{thm:Bobkov}
Let $\mu$ be a log-concave probability measure on $(\Real^n,\abs{\cdot})$. Then:
\[
 D_{Che}(\mu) \geq \sup_{x_0 \in \Real^n} \frac{c}{(Var_\mu(|x-x_0|^2))^{1/4}} ~,
\]
where $Var_\mu$ denotes the variance with respect to $\mu$.
\end{thm}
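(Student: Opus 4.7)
The plan is to apply the Main Theorem to the natural $1$-Lipschitz test function $h(x):=|x-x_0|$ and to combine the resulting First-Moment lower bound with Theorem~\ref{thm:My-KLS}. The final link between these bounds and $\Var_\mu(h^2)=\Var_\mu(|x-x_0|^2)$ is a moment inequality for log-concave measures coming from Borell's lemma.

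First, fix $x_0\in\Real^n$ and set $h(x):=|x-x_0|$, a $1$-Lipschitz function. By the Main Theorem (the equivalence $D_{Che}\simeq D_{FM}$) and Jensen's inequality,
\[
D_{Che}(\mu)\;\geq\;\frac{c}{\|h-E_\mu h\|_{L^1(\mu)}}\;\geq\;\frac{c}{\sqrt{\Var_\mu(h)}},
\]
and Theorem~\ref{thm:My-KLS} applied with center $x_0$ gives $D_{Che}(\mu)\geq c/E_\mu(h)$. Since $\max(A,B)\geq\sqrt{AB}$ for $A,B>0$, taking the geometric mean of these two lower bounds shows that it suffices to establish the moment inequality $(E_\mu h)^2\cdot\Var_\mu(h)\leq C\,\Var_\mu(h^2)$, which would yield $D_{Che}(\mu)\geq c'\,\Var_\mu(h^2)^{-1/4}$.

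To prove the moment inequality I expand $h^2-E_\mu(h^2)=2(E_\mu h)(h-E_\mu h)+\brac{(h-E_\mu h)^2-\Var_\mu h}$ and integrate the square, yielding
\[
\Var_\mu(h^2)\;=\;4(E_\mu h)^2\,\Var_\mu(h)\;+\;4(E_\mu h)\,E_\mu\brac{(h-E_\mu h)^3}\;+\;\Var_\mu\brac{(h-E_\mu h)^2}.
\]
After translating $x_0$ to the origin, $h(x)=|x|$ is a $1$-homogeneous convex function, so Borell's lemma applied to the still log-concave translated measure yields comparability of all $L^p$-moments of $h$ up to universal constants; in particular $|E_\mu((h-E_\mu h)^3)|\leq C\,\Var_\mu(h)^{3/2}$. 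An AM--GM absorption of the cross-term into the main term then furnishes the desired bound $\Var_\mu(h^2)\geq c(E_\mu h)^2\,\Var_\mu(h)$, at least in the regime $(E_\mu h)^2\geq C_0\,\Var_\mu(h)$.

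The main obstacle is the complementary regime $(E_\mu h)^2\leq C_0\,\Var_\mu(h)$: a narrow band, enforced by the reverse Borell inequality $\Var_\mu(h)\leq C_1(E_\mu h)^2$, in which the two lower bounds from Theorem~\ref{thm:My-KLS} and the Main Theorem coincide up to constants. Here one instead needs $\Var_\mu(h)^2\leq C\,\Var_\mu(h^2)$, which I would extract from the identity $\Var_\mu(h^2)+\Var_\mu(h)^2=E_\mu\brac{(h-E_\mu h)^2(h+E_\mu h)^2}$, using $h\geq 0$ so that $(h+E_\mu h)^2\geq h^2$, together with a Paley--Zygmund-type lower bound on $E_\mu((h-E_\mu h)^2\,h^2)$ again supplied by Borell. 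Taking the supremum over $x_0\in\Real^n$ then completes the proof. The more geometric alternative indicated in the text uses Theorem~\ref{thm:KLS2} directly: one bounds $\int\theta_B(x)\,d\mu$ by $(\Var_\mu(|x-x_0|^2))^{1/4}$ via the parabolic behavior of $|y-x_0|^2$ along the longest symmetric segment through $x$ and log-concavity along such needles, producing a formally stronger estimate.
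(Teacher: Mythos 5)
Your first displayed inequality is a genuine error, and the rest of the argument collapses with it. The Main Theorem gives $D_{Che}(\mu)\simeq D_{FM}(\mu)$, where $1/D_{FM}(\mu)$ is the \emph{supremum} of $\norm{f-E_\mu f}_{L_1(\mu)}$ over \emph{all} $1$-Lipschitz $f$. Testing on the single function $h(x)=\abs{x-x_0}$ therefore yields $\norm{h-E_\mu h}_{L_1(\mu)}\leq 1/D_{FM}(\mu)$, i.e.\ an \emph{upper} bound $D_{Che}(\mu)\leq C/\norm{h-E_\mu h}_{L_1(\mu)}$; the lower bound you assert would require $h$ to be essentially the extremal function in the First-Moment inequality, which is false in general. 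Concretely, for $\mu$ uniform on a centered Euclidean ball of radius $R$ in $\Real^n$ and $x_0=0$, one has $\norm{\,\abs{x}-E_\mu \abs{x}\,}_{L_1(\mu)}\simeq R/n$ while $D_{Che}(\mu)\simeq \sqrt{n}/R$, so your inequality fails by a factor of $\sqrt{n}$. (The bound $D_{Che}(\mu)\geq c/\sqrt{\Var_\mu(\abs{x-x_0})}$ that you would need is essentially the ``thin-shell implies spectral gap'' problem and lies far beyond the Main Theorem.) Since the geometric-mean step requires \emph{both} $D_{Che}\geq c/E_\mu h$ and $D_{Che}\geq c/\Var_\mu(h)^{1/2}$, and only the first is available (Theorem~\ref{thm:My-KLS}), the proof does not go through.

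The route you relegate to a closing aside is in fact the paper's proof: setting $E=E_\mu\abs{x-x_0}$ and $S=\Var_\mu(\abs{x-x_0})^{1/2}$, one restricts $\mu$ to the ball $B=\set{\abs{x-x_0}\leq E+2S}$, which carries at least $3/4$ of the mass, so that $D_{Che}$ changes only by a universal factor by Theorem~\ref{thm:stability-inclusion-lc} (this reduction is also needed because Theorem~\ref{thm:KLS2} requires bounded support). One then applies Theorem~\ref{thm:KLS2} with the exact formula $\theta_B(x)=2\sqrt{(E+2S)^2-\abs{x-x_0}^2}$; integration by parts and Chebyshev give $\int\theta_B\,d\mu_0\leq C\sqrt{ES}$ when $E\geq 2S$ (the case $E<2S$ is handled by the diameter bound as in Theorem~\ref{thm:My-KLS}), whence $D_{Che}(\mu)\geq c/\sqrt{ES}$, which is formally stronger than the stated bound. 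Your algebraic identity for $\Var_\mu(h^2)$ is correct, and the moment inequality $(E_\mu h)^2\Var_\mu(h)\leq C\,\Var_\mu(h^2)$ is indeed what relates $\sqrt{ES}$ to $\Var_\mu(\abs{x-x_0}^2)^{1/4}$; but note that even there the bound $\abs{E_\mu((h-E_\mu h)^3)}\leq C\,\Var_\mu(h)^{3/2}$ does not follow from Borell's lemma, which compares the \emph{uncentered} moments of the norm and says nothing about its (possibly much smaller) centered moments.
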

\begin{proof}[Sketch of Proof]
Without loss of generality, we may assume that $x_0=0$; for general $x_0$ the
claimed bound follows by translating $\mu$. Let $E:= E_\mu |x|$, $S  := (Var_\mu |x|)^{1/2}$, and denote: \[
B := \set{x \in \Real^n ; |x| \leq E + 2 S} ~.
\]
By Chebyshev's inequality,
$\mu(B) \geq 3/4$, so if we define $\mu_{0} := \mu|_{B} / \mu(B)$, it follows
that $d_{TV}(\mu,\mu_{0}) \leq 1/4$. Hence $D_{Che}(\mu) \simeq D_{Che}(\mu_{0})$ by
Theorem \ref{thm:stability-inclusion-lc}. Assume that $E \geq 2S$, otherwise the support of $\mu_0$ has
diameter bounded by $8 S$, and one can conclude as in Theorem \ref{thm:My-KLS}. We now employ Theorem \ref{thm:KLS2}
to bound $D_{Che}(\mu_0)$:
\begin{equation} \label{eq:Bobkov-proof}
D_{Che}(\mu_0) \geq \frac{c}{\int \theta_B(x) d\mu_0(x)} = \frac{c \mu(B)}{\int_B \theta_B(x)
d\mu(x)} ~.
\end{equation}
The crucial geometric observation is that for the Euclidean ball $B$:
\[
 \theta_B(x) = 2 \sqrt{(E+2 S)^2 - |x|^2} ~.
\]
It remains to plug this into (\ref{eq:Bobkov-proof}) and evaluate the resulting expression using integration
by parts and Chebyshev's inequality. We leave it as an exercise to conclude that:
\[
D_{Che}(\mu) \geq \frac{c'}{\sqrt{ES}} ~,
\]
for some universal constant $c'>0$. This bound is in
fact formally better than Bobkov's bound (by several applications of H\"{o}lder's inequality), but
using some standard results in Convexity Theory, it is in fact equivalent in the interesting
situations.
\end{proof}

\subsection{$D_{Che}$ for Specific Families of Convex Bodies} \label{subsect:che-families}

Embarrassingly, hardly any concrete examples exist of non-degenerate
convex bodies $K$ in $\Real^n$ for which the asymptotic value of
$D_{Che}(K)$ (as a function of the dimension $n$) is known. The KLS conjecture stating that $D_{Che}(K) \simeq 1$ for such
bodies has only been confirmed in a few special cases.
These include the Euclidean ball (see e.g. \cite{BuragoZalgallerBook}) and the unit cube
$K = [-1/2,1/2]^{n}$  (Hadwiger \cite{HadwigerCube}, see also \cite{BollobasLeaderCube},\cite{BartheMaureyIsoperimetricInqs}).
By the tensorization results of Bobkov and Houdr\'e \cite{BobkovHoudre}, this is in fact
true for an arbitrary log-concave product measure (appropriately normalized).
When $K = \tilde{B}(\ell_p^n)$, the volume one homothetic copy of
the unit-ball of $\ell_p^n$, for $p\in [1,2]$, the KLS conjecture was only recently
confirmed by Sasha Sodin \cite{SodinLpIsoperimetry} (note that
indeed $\sigma_1(\tilde{B}(\ell_p^n)) \simeq 1$). Even more recently, the case $p \geq 2$
has been confirmed by R. Lata{\l}a and J. Wojtaszczyk \cite{LatalaJacobInfConvolution} by an elegant construction of a Lipschitz map pushing forward the Gaussian measure onto the uniform measure on $\tilde{B}(\ell_p^n)$. We are not aware of any other (sufficiently different) examples.

We comment that our Main Theorem easily implies the result for $K = \tilde{B}(\ell_p^n)$, $p \in
[1,2]$, due to Sodin \cite{SodinLpIsoperimetry}. However, Sodin's result provides a sharp bound on
the isoperimetric profile of these spaces, whereas we only deduce the bound on Cheeger's constant.

\begin{thm}[Sodin]
For any $n \geq 1$, $p \in [1,2]$:
\[
 D_{Che}(\tilde{B}(\ell_p^n)) \geq c > 0 ~,
\]
where $c>0$ is a universal constant.
\end{thm}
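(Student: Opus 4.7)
The plan is to invoke the Main Theorem and reduce the claim to verifying a concentration inequality that is, a priori, much weaker than Cheeger's. Since $\tilde{B}(\ell_p^n)$ is a convex body in Euclidean space and $\lambda_{\tilde{B}(\ell_p^n)}$ is its uniform probability measure, our convexity assumptions are trivially fulfilled, and Theorem \ref{thm:Main} (together with the extension to arbitrarily slow uniform tail decay described around (\ref{eq:slow-tail-decay})) applies. Consequently it suffices to produce \emph{any} dimension-free tail estimate for $1$-Lipschitz functions on $(\tilde{B}(\ell_p^n),|\cdot|)$: exponential concentration, First-Moment concentration, or indeed any fixed rate of uniform decay will do.

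The missing input is supplied by the Schechtman--Zinn concentration inequality \cite{SchechtmanZinn2}. For $p \in [1,2]$, their result asserts the existence of constants $c_p,C_p>0$ (independent of $n$) such that, for every $1$-Lipschitz function $f$ on $(\tilde{B}(\ell_p^n),|\cdot|)$,
\[
\lambda_{\tilde{B}(\ell_p^n)}\set{|f - E_{\lambda_{\tilde{B}(\ell_p^n)}} f| \geq t} \leq C_p \exp(-c_p t^p) \quad \forall t>0.
\]
In particular, exponential concentration holds with a constant bounded below uniformly in $n$, i.e.\ $D_{Exp}(\tilde{B}(\ell_p^n)) \geq c'_p > 0$.

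Combining the two ingredients gives the claim: under our convexity assumptions the Main Theorem yields $D_{Che} \simeq D_{Exp}$, so the Schechtman--Zinn bound on $D_{Exp}(\tilde{B}(\ell_p^n))$ transfers immediately to the desired lower bound $D_{Che}(\tilde{B}(\ell_p^n)) \geq c > 0$, with $c$ independent of $n$. I do not expect a genuine obstacle here; the only bookkeeping to verify is that the Schechtman--Zinn statement is formulated with respect to the Euclidean metric on the volume-one normalization $\tilde{B}(\ell_p^n)$, which matches exactly the quantities appearing in the definition of $D_{Che}$. Note also that, as emphasized in the introduction, one does not actually need the full strength of Schechtman--Zinn: it would be enough to know that moments of linear functionals (or any other family of $1$-Lipschitz test functions) are bounded uniformly in $n$ on $\tilde{B}(\ell_p^n)$, as this already forces a uniform tail decay rate and hence, via Theorem \ref{thm:Main}, a uniform lower bound on the Cheeger constant.
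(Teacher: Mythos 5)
Your proposal is correct and follows precisely the same route as the paper: cite Schechtman--Zinn for a dimension-free lower bound on $D_{Exp}$ (or indeed $D_{FM}$, as the paper notes) and transfer it to $D_{Che}$ via the Main Theorem under the convexity assumptions. The only point to tighten is your notation $c_p, C_p$, which suggests the Schechtman--Zinn constants depend on $p$; for the stated conclusion with a universal $c>0$ you should note that their estimate is in fact uniform over $p\in[1,2]$, so $D_{Exp}(\tilde{B}(\ell_p^n))$ is bounded below independently of both $n$ and $p$.
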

\begin{proof}
This is immediate from the results of Schechtman and Zinn \cite{SchechtmanZinn2}, who showed that
$D_{Exp}$ of these bodies is bounded from below by a universal constant. The result then follows
from our Main Theorem (in fact, we only need a bound on $D_{FM}$).
\end{proof}

Another family of convex bodies for which the KLS conjecture is
almost confirmed, is that of unconditional convex bodies $K$, i.e.
convex bodies for which $(x_1,\ldots,x_n) \in K$ iff $(\pm x_1,
\ldots, \pm x_n) \in K$. It was recently shown by Bo'az Klartag
\cite{KlartagUnconditionalVariance} that if $K$ is an unconditional
body with $\sigma_1(K) = 1$ then $D_{Che}(K) \geq c / \log n$, for
some universal constant $c>0$. To obtain this result, Klartag
employed Theorem \ref{thm:Intro-stability} to pass to an
unconditional body contained inside the cube $(C \log n) [-1,1]^n$,
and then used some symmetry properties of the Laplacian's
eigenfunctions to conclude his result. In fact, one can just use
Theorem \ref{thm:Intro-concavity} on the concavity of the
isoperimetric profile (in the form of Lemma \ref{lem:going-up}) for
this application.

\subsection{Some dimension dependent bounds on $D_{Che}$}

We conclude this section by stating the known dimension dependent
bounds on $D_{Che}(K)$ for non-degenerate convex bodies $K$ (in the
sense that $\sigma_1(K) = 1$).

It is known in this case that $\diam(K) \leq c n$ (by a simple volume estimate). Theorem \ref{thm:Yau-Li} (together with Theorem \ref{thm:Ledoux}) then gives $D_{Che}(K) \geq c / n$. The first KLS bound
(Theorem \ref{thm:KLS}) improves this to $D_{Che}(K) \geq c / \sqrt{n}$, since:
\[
 \int_K \abs{x - E_\mu x} dx \leq (\int_K \abs{x - E_\mu x}^2 dx)^{1/2} \leq \sqrt{n} \sigma_1(K)
~.
\]
The second KLS bound (Theorem \ref{thm:KLS2}) is incomparable to the
first bound, since it gives the right order for the Euclidean ball,
but gives $c / n$ for the regular simplex of volume 1 in $\Real^n$.

Bobkov's bound (Theorem \ref{thm:Bobkov}) is always at least as good
as the first KLS bound (up to a constant), since (using the bound
derived in the proof together with a standard application of
Borell's lemma \cite{Borell-logconcave}):
\[
 Var_\mu(\abs{x - x_0})^{1/2} \leq E_\mu(\abs{x-x_0}^2)^{1/2} \leq C E_\mu(\abs{x-x_0}) ~,
\]
for some universal constant $C>0$. We see that whenever some non-trivial information on
$Var_\mu(\abs{x - x_0})$ is known, Bobkov's bound is strictly better. Such a remarkable
result was proved by Bo'az Klartag \cite{KlartagCLP,KlartagCLPpolynomial}, allowing him to
deduce a Central-Limit type result for the class of convex bodies (and more generally, log-concave
measures). Klartag's improved estimate in \cite{KlartagCLPpolynomial} reads:
\[
 Var_\mu (\abs{x - E_\mu x})^{1/2} \leq C_\eps n^{1/2 - 1/10 + \eps} \sigma_1(\mu) \;\;\; \forall
\eps > 0 ~.
\]
Combining this with Bobkov's bound, one deduces the following
result, already noticed among specialists, for log-concave measures
in $\Real^n$ with $\sigma_1(\mu) = 1$:
\[
 D_{Che}(\mu) \geq \frac{c_\eps}{n^{1/2 - 1/20 + \eps}} \;\;\; \forall \eps > 0 ~.
\]
At the moment, this is the best known bound on Cheeger's constant for general log-concave
measures (or convex bodies) in $\Real^n$.

\section{Approximation Argument} \label{sec:AA}

In this section, we develop an approximation argument for extending the following theorems
to non-necessarily smooth densities (or boundaries) in our convexity assumptions:
\begin{itemize}
\item Theorem \ref{thm:Intro-concavity} on the concavity of the isoperimetric profile.
\item Our Main Theorem \ref{thm:Main}.
\end{itemize}
We will develop different procedures for extending each of these theorems.

\subsection{Stability of the Isoperimetric Profile}

We begin by extending our definition of smooth convexity assumptions (we refer to the Appendix for the
definition of \emph{locally convex}).

\begin{dfn*}
We will say that our \emph{generalized smooth convexity assumptions} are fulfilled if:
\begin{itemize}
\item
$(M,g)$ denotes an $n$-dimensional ($n\geq 2$) smooth complete oriented connected Riemannian manifold
or $(M,g)=(\Real,\abs{\cdot})$.
\item
$\Omega \subset M$ is a locally convex domain with $C^2$ boundary.
\item
$d$ denotes the induced geodesic distance on $(M,g)$.
\item
$d\mu = \exp(-\psi)  dvol_M|_{\Omega}$, $\psi \in C^2(\overline{\Omega})$, and as tensor
fields on $\Omega$:
\[
Ric_g + Hess_g \psi \geq 0 ~.
\]
\end{itemize}
\end{dfn*}

This definition was already used in the statement of Theorem \ref{thm:Intro-concavity} on the concavity of the isoperimetric profile.
The smoothness assumptions in the above definition are used in an essential way in the proof of this theorem
to deduce the existence and regularity of the isoperimetric minimizers, which are otherwise false.
This permits the use of variational methods from Riemannian Geometry, consequently obtaining a second-order
differential inequality which the isoperimetric profile must satisfy (see the Appendix for more details).
Nevertheless, the restriction to smooth densities and domains still seems like a technical artifact of the proofs.
Some authors have suggested various methods to remove these smoothness assumptions
(see e.g. Morgan \cite{MorganLevyGromovForConvexManifolds} and Bayle \cite[Chapter 4]{BayleThesis}),
but unfortunately these are not well suited for our purposes.
We therefore attempt to use a different approximation argument for extending Theorem \ref{thm:Intro-concavity}
to a more general setting.

\medskip

At first glance, it is tempting to believe that the isoperimetric
profile of $(\Omega,d,\mu)$ should be stable under approximating the
measure $\mu$ by measures $\mu_m$ in, say, total-variation distance.
However, the profile is in fact not even pointwise continuous under arbitrary
approximation in total-variation. To see this,
consider the measures $\mu_m$ which are uniform on the set $[0,1] \setminus [1/2-1/m,1/2+1/m]$,
and converge to $\mu$, the uniform measure on $[0,1]$. Clearly $I_{\mu_m}(1/2) = 0$ for every $m \geq 3$,
even though $I_\mu(1/2) = 1$. So one must take care when specifying the approximation.

\begin{dfn*}
We say that a sequence of Borel probability measures $\set{\mu_m}$ tends to $\mu$ from above if
$\set{\mu_m}$ converges to $\mu$ in total-variation and in addition there exists a sequence
$\set{c_m}$ which tends to 1, so that $\mu_m(A) \geq \mu(A) / c_m$ for any Borel set $A$.
\end{dfn*}

\begin{lem} \label{lem:above}
Let $(\Omega,d)$ be a metric space and let $\set{\mu_m}$ be a sequence of Borel
probability measures on $(\Omega,d)$ which tends to $\mu$ from above. Then for
any $t\in(0,1)$:
\[
\liminf_{m \rightarrow \infty} I_{(\Omega,d,\mu_m)}(t) \geq \liminf_{s \rightarrow t}
I_{(\Omega,d,\mu)}(s) ~.
\]
\end{lem}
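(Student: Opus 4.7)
The plan is to extract, for a subsequence realizing the $\liminf$ on the left-hand side, an almost-optimal isoperimetric set $A_k$ for $\mu_{m_k}$, then transfer a lower bound on $\mu_{m_k}^+(A_k)$ to a lower bound on $\mu^+(A_k)$ by means of the ``from above'' hypothesis, and finally invoke total-variation convergence to identify $\mu(A_k)$ with $t$ in the limit.

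Denote $L := \liminf_{m \to \infty} I_{(\Omega,d,\mu_m)}(t)$; the inequality is trivial if $L = +\infty$, so assume $L < \infty$. Pass to a subsequence $\{m_k\}$ realizing the limit, and for each $k$ large enough that $I_{(\Omega,d,\mu_{m_k})}(t) < \infty$, select a Borel set $A_k$ with $\mu_{m_k}(A_k) = t$ and
\[
\mu_{m_k}^+(A_k) \leq I_{(\Omega,d,\mu_{m_k})}(t) + 1/k.
\]
Such $A_k$ exist by the very definition of $I$ as the pointwise maximal function satisfying $\mu^+(A) \geq I(\mu(A))$.

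The heart of the argument is the following observation. Since $\mu_{m_k}(B) \geq \mu(B)/c_{m_k}$ for every Borel set $B$, applying this to the \emph{annular shell} $B = (A_k)_{\epsilon,d} \setminus A_k$ gives
\[
\mu_{m_k}((A_k)_{\epsilon,d}) - \mu_{m_k}(A_k) \geq \frac{\mu((A_k)_{\epsilon,d}) - \mu(A_k)}{c_{m_k}} \qquad \forall \epsilon > 0.
\]
Since $c_{m_k}$ does not depend on $\epsilon$, dividing by $\epsilon$ and taking $\liminf_{\epsilon \to 0^+}$ yields
\[
\mu_{m_k}^+(A_k) \;\geq\; \frac{\mu^+(A_k)}{c_{m_k}} \;\geq\; \frac{I_{(\Omega,d,\mu)}(s_k)}{c_{m_k}},
\]
where $s_k := \mu(A_k)$.

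To conclude, observe that $|s_k - t| = |\mu(A_k) - \mu_{m_k}(A_k)| \leq d_{TV}(\mu_{m_k},\mu) \to 0$, so $s_k \to t$, while $c_{m_k} \to 1$. Combining all of the above,
\[
L = \lim_{k \to \infty} \mu_{m_k}^+(A_k) \;\geq\; \liminf_{k \to \infty} \frac{I_{(\Omega,d,\mu)}(s_k)}{c_{m_k}} \;\geq\; \liminf_{s \to t} I_{(\Omega,d,\mu)}(s),
\]
which is the claim. There is no serious obstacle here; the only conceptual subtlety is recognizing that the ``from above'' hypothesis must be applied to the shell $(A_k)_{\epsilon,d} \setminus A_k$ \emph{as a single Borel set}. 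Using it separately on $(A_k)_{\epsilon,d}$ and on $A_k$ would produce an additive defect of order $(1 - c_{m_k}^{-1})$ that would blow up after division by $\epsilon$ and destroy the argument.
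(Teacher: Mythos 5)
Your proof is correct and essentially coincides with the paper's: both select near-optimal isoperimetric sets for $\mu_m$ at measure $t$, obtain $\mu_m^+(B) \geq \mu^+(B)/c_m$ by applying the from-above domination to the annular shell $B_{\eps,d}\setminus B$, and then use total-variation convergence to place $\mu(B)$ near $t$ so that $I(\mu(B))$ is bounded below by the local infimum of $I$ near $t$. The only difference is cosmetic: you organize the limit via a subsequence and a $1/k$ tolerance, whereas the paper uses $\eps,\delta$ bookkeeping directly.
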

\begin{proof}
Denote $I = I_{(\Omega,d,\mu)}$ and $I_m = I_{(\Omega,d,\mu_m)}$ for short.
Let $\eps>0$. Then there exists $m_0$ such that for all $m \geq m_0$, $\abs{\mu(B) - \mu_m(B)} <
\eps$ for any Borel set $B$. Let $\delta>0$, then for every $m \geq m_0$ there exist a Borel set
$B_m$ such that:
\[
 I_m(t) + \delta \geq \mu_m^+(B_m) \geq \mu^+(B_m) / c_m \geq I(\mu(B_m)) / c_m \geq
\inf_{\abs{s-t}<\eps} I(s) / c_m ~.
\]
Taking the limit as $m \rightarrow \infty$ and subsequently $\eps,\delta \rightarrow 0$, we obtain
the assertion.
\end{proof}

\begin{dfn*}
We say that a sequence of Borel probability measures $\set{\mu_m}$ tends to $\mu$ from within if
$\mu_m = \mu |_{A_m} / \mu(A_m)$ for some sequence of Borel sets $A_m$ such that $\mu(A_m)
\rightarrow 1$, and in addition $\mu^+(A_m) \rightarrow 0$.
\end{dfn*}

\begin{lem} \label{lem:within}
Let $(\Omega,d)$ be a metric space and let $\set{\mu_m}$ be a sequence of Borel
probability measures on $(\Omega,d)$ which tends to $\mu$ from within. Then for
any $t\in(0,1)$:
\[
\liminf_{m \rightarrow \infty} I_{(\Omega,d,\mu_m)}(t) \geq \liminf_{s \rightarrow t}
I_{(\Omega,d,\mu)}(s) ~.
\]
\end{lem}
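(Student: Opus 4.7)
The plan is to mirror the proof of Lemma~\ref{lem:above}, adapted to the fact that $\mu_m = \mu|_{A_m}/\mu(A_m)$ is supported on $A_m$. Restricting $\mu$ to $A_m$ can only \emph{decrease} the Minkowski boundary of a set (the portion of the boundary lying outside $A_m$ is lost), so the core task is to show this loss is quantitatively controlled by $\mu^+(A_m)\to 0$.

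Write $I,I_m$ for the isoperimetric profiles. Fix $\delta>0$ and, for each $m$, pick a near-minimizer $B_m\subseteq A_m$ for $I_m(t)$, i.e.\ a Borel set with $\mu_m(B_m)=t$ and $\mu_m^+(B_m)\leq I_m(t)+\delta$. Then $\mu(B_m)=t\mu(A_m)\to t$, and by the isoperimetric inequality for $\mu$, $\mu^+(B_m)\geq I(\mu(B_m))$. The main technical step is the comparison
\[
\mu(A_m)\,\mu_m^+(B_m) \;\geq\; \mu^+(B_m) - \mu^+(A_m),
\]
which I would prove from the inclusion $(B_m)_\eps\setminus A_m \subseteq (A_m)_\eps\setminus A_m$ (valid since $B_m\subseteq A_m$ gives $(B_m)_\eps\subseteq (A_m)_\eps$). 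This yields the pointwise estimate
\[
\mu((B_m)_\eps\cap A_m) - \mu(B_m) \;\geq\; \bigl[\mu((B_m)_\eps) - \mu(B_m)\bigr] - \bigl[\mu((A_m)_\eps) - \mu(A_m)\bigr],
\]
and dividing by $\eps$ and taking $\liminf_{\eps\to 0^+}$ delivers the comparison.

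Assembling the pieces gives the chain
\[
I_m(t)+\delta \;\geq\; \mu_m^+(B_m) \;\geq\; \frac{\mu^+(B_m)-\mu^+(A_m)}{\mu(A_m)} \;\geq\; \frac{I(\mu(B_m))-\mu^+(A_m)}{\mu(A_m)}.
\]
Taking $\liminf_m$, using $\mu(B_m)\to t$, $\mu(A_m)\to 1$, and $\mu^+(A_m)\to 0$, and finally sending $\delta\to 0$, yields the claimed bound $\liminf_m I_m(t)\geq \liminf_{s\to t} I(s)$.

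The delicate point is the passage from the pointwise estimate to the comparison of $\liminf$'s in the key step: in full generality one only has $\liminf(F-g)\geq \liminf F-\limsup g$, so what the argument literally produces is the comparison with $\mu^+(A_m)$ replaced by the \emph{upper} Minkowski content of $A_m$. For the sets $A_m$ constructed in the paper's approximation procedure (which will be chosen regular enough — e.g.\ with Lipschitz or $C^2$ boundary — that the Minkowski content of $A_m$ is a genuine limit in $\eps$), this upper content coincides with $\mu^+(A_m)$ and the argument goes through as written.
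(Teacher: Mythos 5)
Your proof is correct and follows essentially the same route as the paper's: the paper's own argument is precisely the chain $I_m(t)+\delta \geq \mu_m^+(B_m) \geq \bigl(\mu^+(B_m)-\mu^+(A_m)\bigr)/\mu(A_m) \geq I(\mu(B_m))/\mu(A_m) - \mu^+(A_m)/\mu(A_m)$, with the middle inequality asserted without justification, which you supply via the inclusion $(B_m)_\eps \subseteq (A_m)_\eps$. Your closing caveat about $\liminf$ versus $\limsup$ of the Minkowski quotient for $A_m$ is a legitimate subtlety that the paper's one-line proof glosses over, and your resolution (regularity of the approximating sets in the actual applications) is the right one.
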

\begin{proof}
We continue with the same assumptions and notations as in the proof of the previous lemma and
definition. In our case, we may assume that $B_m \subset A_m$. Then:
\[
 I_m(t) + \delta \geq \mu_m^+(B_m) \geq \frac{\mu^+(B_m) - \mu^+(A_m)}{\mu(A_m)}
\geq \frac{I(\mu(B_m)) - \mu^+(A_m)}{\mu(A_m)} \geq
\inf_{\abs{s-t}<\eps} \frac{I(s) - \mu^+(A_m)}{\mu(A_m)} ~.
\]
Taking the limit as $m \rightarrow \infty$ and subsequently $\eps,\delta \rightarrow 0$, we obtain
the assertion.
\end{proof}

\begin{rem}
It is quite non-trivial to come up with other conditions which
ensure the conclusion of Lemmas \ref{lem:above} and
\ref{lem:within}. Of course convergence in the $L_\infty$ norm of
the densities with respect to the Riemannian volume form would also
do, but this seems an impractical assumption since $\mu$ may have a
non-continuous density. Another interesting possibility which works is to assume that $\mu_m$ are
obtained by pushing $\mu$ forward using mappings $T_m$, so that $\norm{T_m}_{Lip}$ tends to 1.
Unfortunately, we do not know how to show that an arbitrary log-concave measure
$\mu$ in $\Real^n$ may be approximated by smooth log-concave measures $\mu_m$ of this type.
\end{rem}

Next, we recall the definition of $q$-capacity (we will only require
the case $q=1$). Capacities were introduced in the 1960's by Maz'ya
\cite{MazyaSobolevImbedding,MazyaCapacities}, Federer and
Fleming \cite{FedererFleming}, and were used by Bobkov and Houdr\'e
in \cite{BobkovHoudreMemoirs,BobkovHoudre}. We follow a variation on the definition given
in \cite{MazyaBook} (for general $q$), which was
extended by Barthe, Cattiaux and Roberto (with $q=2$) in
\cite{BCRHard} (after being introduced in \cite{BartheRoberto}). We
conform to the definition implicitly used by Sodin in
\cite{SodinLpIsoperimetry} and Sodin and the author in
\cite{EMilmanSodinIsoperimetryForULC}.

\begin{dfn*}
Given a metric probability space $(\Omega,d,\mu)$, $0<q<\infty$ and
$0 \leq a \leq b \leq 1$, we denote:
\[
 \Cap_q(a,b) := \inf\set{ \norm{\abs{\nabla \Phi}}_{L_q(\mu)} ; \mu\set{\Phi=1} \geq a \;,\;
\mu\set{\Phi=0} \geq 1-b },
\]
where the infimum is on all $\Phi : \Omega \rightarrow [0,1]$ which
are Lipschitz-on-balls (recall the definition of $\abs{\nabla \Phi}$
given in Remark \ref{rem:grad-defn}).
\end{dfn*}

The following proposition encapsulates the connection between
$1$-capacity and the isoperimetric profile $I = I_{(\Omega,d,\mu)}$.
The proof is very much along the lines of the proof of Lemma
\ref{lem:Cheeger=L1}, so we will omit it here; the reader is referred to
Sodin \cite[Proposition A]{SodinLpIsoperimetry} for an elementary
derivation (note the slight difference in our formulation). We only
remark that it suffices to use Lipschitz functions $\Phi$ in the
definition of capacity above for the purpose of this proposition.

\begin{prop}[Maz'ya, Federer--Fleming, Bobkov--Houdr\'e] \label{prop:Sodin-prop}
For all $0<a < b < 1$:
\begin{equation} \label{eq:Sodin-prop}
 \inf_{a \leq t \leq b} I(t) \leq \Cap_1(a,b) \leq \inf_{a \leq t < b}
I(t) ~.
\end{equation}
\end{prop}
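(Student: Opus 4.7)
The plan is to prove the two inequalities separately, mirroring the strategy used for Lemma \ref{lem:Cheeger=L1}: the upper bound comes from a direct construction of Lipschitz approximants to characteristic functions of near-extremal sets, and the lower bound comes from the co-area inequality applied to competitor functions $\Phi$ for the capacity.

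For the upper bound $\Cap_1(a,b) \leq \inf_{a \leq t < b} I(t)$, I would fix $t \in [a,b)$ and any Borel set $A$ with $\mu(A) = t$, and use the standard Lipschitz approximation $\Phi_\eps(x) := (1 - d(x,A)/\eps) \vee 0$. This function satisfies $\set{\Phi_\eps = 1} \supseteq A$ and $\set{\Phi_\eps = 0} = \Omega \setminus A_{\eps,d}$, so $\mu\set{\Phi_\eps = 1} \geq t \geq a$, and since $\mu(A_{\eps,d}) \to \mu(\overline A) \geq t$ while one may replace $A$ by a slightly shrunken version if needed so that $\mu(A_{\eps,d}) \leq b$ for all sufficiently small $\eps$, $\Phi_\eps$ qualifies as a competitor in the definition of $\Cap_1(a,b)$. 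Since $\abs{\nabla \Phi_\eps} \leq 1/\eps$ and is supported in $A_{\eps,d} \setminus A$, we get $\norm{\abs{\nabla \Phi_\eps}}_{L_1(\mu)} \leq (\mu(A_{\eps,d}) - \mu(A))/\eps$, and taking $\liminf_{\eps \to 0}$ yields $\Cap_1(a,b) \leq \mu^+(A)$. Minimizing over $A$ and then over $t \in [a,b)$ gives the desired upper bound.

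For the lower bound $\inf_{a \leq t \leq b} I(t) \leq \Cap_1(a,b)$, I would apply the co-area inequality to any Lipschitz-on-balls $\Phi : \Omega \to [0,1]$ admissible for the capacity. Writing $m(s) := \mu\set{\Phi > s}$, the hypotheses $\mu\set{\Phi = 1} \geq a$ and $\mu\set{\Phi = 0} \geq 1-b$ force $a \leq m(s) \leq b$ for all $s \in (0,1)$, and the co-area inequality (e.g.\ in the form used in \cite{BobkovHoudre}) gives
\[
\norm{\abs{\nabla \Phi}}_{L_1(\mu)} \geq \int_0^1 \mu^+\set{\Phi > s}\, ds \geq \int_0^1 I(m(s))\, ds \geq \inf_{a \leq t \leq b} I(t).
\]
Taking the infimum over admissible $\Phi$ finishes this direction.

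The only subtle point, and thus the main thing to handle carefully, is the approximation step in the upper bound: one must justify that $\mu(A_{\eps,d})$ can be made strictly less than $b$ for all sufficiently small $\eps$, even though $\mu(\overline A)$ could a priori exceed $\mu(A)$. This is why the upper bound is stated with the strict inequality $t < b$ rather than $t \leq b$: given $t < b$, one chooses a set $A$ with $\mu(A) = t$ and $\mu^+(A)$ arbitrarily close to $I(t)$, and by the definition of $\mu^+(A)$ as a $\liminf$ one automatically has $\mu(A_{\eps,d}) - \mu(A) = O(\eps)$ along a subsequence, so $\mu(A_{\eps,d}) < b$ eventually. Everything else in the argument is routine once the co-area inequality for metric probability spaces and the definition of $\abs{\nabla \Phi}$ from Remark \ref{rem:grad-defn} are in hand.
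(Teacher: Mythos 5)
The paper does not actually give a proof of this proposition---it states that the argument runs along the same lines as Lemma~\ref{lem:Cheeger=L1} (the Maz'ya / Federer--Fleming / Bobkov--Houdr\'e co-area technique) and refers to Sodin's Proposition~A---and your proof is exactly that argument, so it matches the paper's intended approach. Your treatment of the asymmetry in \eqref{eq:Sodin-prop} via the $\liminf$ in the definition of $\mu^+$ is precisely the right observation. One small imprecision: $\abs{\nabla \Phi_\eps}$ is supported in $\overline{A_{\eps,d}} \setminus A^\circ$ rather than in $A_{\eps,d}\setminus A$ (it can be nonzero on $\partial A$, where $\Phi_\eps\equiv 1$ only as a boundary value), so the clean bound $\norm{\abs{\nabla\Phi_\eps}}_{L_1(\mu)}\le(\mu(A_{\eps,d})-\mu(A))/\eps$ requires, say, restricting the infimum defining $I(t)$ to sets with $\mu(\partial A)=0$ or shifting the truncation so that $\overline A$ lies in the interior of the level set $\{\Phi_\eps=1\}$; this is a standard and harmless repair, at the same level of detail the paper itself leaves implicit in its sketch of Lemma~\ref{lem:Cheeger=L1}.
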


Since obviously $\Cap_1(a,b) = \Cap_1(1-b,1-a)$, it follows that:
\[
 \inf_{a \leq t \leq b} I(t) \leq \inf_{1-b \leq t < 1-a} I(t) ~.
\]
Letting $b$ converge to $a$, and replacing $a,b$ with $1-b,1-a$, we
obtain:
\begin{cor} \label{cor:profile-symmetry}
If $I$ is lower semi-continuous at $t$ and $1-t$, $t \in (0,1)$, then $I(t) =
I(1-t)$.
\end{cor}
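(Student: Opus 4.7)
The starting point is the inequality displayed just above the statement,
\[
\inf_{a \le s \le b} I(s) \;\le\; \inf_{1-b \le s < 1-a} I(s),
\]
valid for $0 < a \le b < 1$, together with the companion inequality obtained by the substitution $(a,b) \mapsto (1-b,1-a)$,
\[
\inf_{1-b \le s \le 1-a} I(s) \;\le\; \inf_{a \le s < b} I(s).
\]
My plan is to substitute $a = t-\varepsilon$, $b = t+\varepsilon$ in each display and pass to the limit $\varepsilon \to 0^+$, using the LSC hypothesis at $t$ and at $1-t$ to identify the limits of the four infima with $I(t)$ and $I(1-t)$. This will give both $I(t) \le I(1-t)$ and $I(1-t) \le I(t)$.

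For the first display, the left-hand side is the infimum of $I$ over $[t-\varepsilon,t+\varepsilon]$; since this set contains $t$, the infimum is always at most $I(t)$, while LSC at $t$ guarantees $\liminf_{\varepsilon\to 0^+}\inf_{[t-\varepsilon,t+\varepsilon]}I \ge I(t)$, so the limit is exactly $I(t)$. The right-hand side is $\inf_{s\in[1-t-\varepsilon,1-t+\varepsilon)}I(s)$, and since $1-t$ belongs to this half-open interval, the same argument using LSC at $1-t$ produces the limit $I(1-t)$. This yields $I(t) \le I(1-t)$.

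The identical procedure applied to the companion display swaps the roles: the left-hand side converges to $I(1-t)$ (again $1-t$ lies in the closed interval $[1-t-\varepsilon,1-t+\varepsilon]$), while the right-hand side $\inf_{s\in[t-\varepsilon,t+\varepsilon)}I(s)$ converges to $I(t)$ (since $t$ lies in the half-open interval). This yields the reverse $I(1-t) \le I(t)$, completing the equality.

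The one point that deserves a line of care is that the strictness of the upper endpoint in the half-open intervals $[1-b,1-a)$ and $[a,b)$ is harmless: for small $\varepsilon$ the target point ($1-t$, respectively $t$) sits strictly between the two endpoints, so it lies in the interior of the interval and the LSC argument ``sees'' a two-sided neighborhood of the target, making the limit of the infimum equal to $I(\cdot)$ at the target. I do not foresee any other obstacles; the argument is essentially a routine limit extraction from the capacity-based bracket inequality already established.
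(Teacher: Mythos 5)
Your proof is correct and follows essentially the same route as the paper: combine the capacity bracket of Proposition \ref{prop:Sodin-prop} with the symmetry $\Cap_1(a,b)=\Cap_1(1-b,1-a)$ and pass to the limit using lower semi-continuity. Your symmetric choice $a=t-\varepsilon$, $b=t+\varepsilon$ simply makes precise the paper's terser ``letting $b$ converge to $a$, and replacing $a,b$ with $1-b,1-a$''; note also that once you have $I(t)\le I(1-t)$ you could avoid invoking the second display altogether by applying the same argument at the point $1-t$ (both LSC hypotheses being symmetric in $t\leftrightarrow 1-t$).
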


\begin{lem} \label{lem:approx-easy-direction}
Let $(\Omega,d)$ be a metric space and let $\set{\mu_m}$ be a sequence of Borel
probability measures on $(\Omega,d)$ which converges in the total-variation norm to $\mu$.
Assume in addition that $I_{(\Omega,d,\mu_m)}$ are concave on $(0,1)$. Then for any $t \in (0,1)$:
\[
\liminf_{s\rightarrow t} I_{(\Omega,d,\mu)}(s) \geq \limsup_{m
\rightarrow \infty} I_{(\Omega,d,\mu_m)}(t) ~.
\]
\end{lem}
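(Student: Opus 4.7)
The natural bridge between $I_\mu$ and $I_{\mu_m}$ is the $1$-capacity $\Cap_1$, whose definition is nearly insensitive to total-variation perturbations, and which Proposition \ref{prop:Sodin-prop} sandwiches against the isoperimetric profile from both sides. My strategy is: use the upper bound $\Cap_1^\mu(a,b) \leq \inf_{s \in [a,b)} I_\mu(s)$ to turn a lower bound on capacity into a lower bound on $I_\mu$ near $t$, and use concavity of $I_{\mu_m}$ together with the lower bound $\Cap_1^{\mu_m}(a',b') \geq \inf_{[a',b']} I_{\mu_m}$ to relate $\Cap_1^{\mu_m}$ back to the values $I_{\mu_m}(t)$.

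\textbf{Step 1 (TV-stability of capacity).} Fix $0 < a < t < b < 1$ and $\delta>0$. Choose a Lipschitz $\Phi:\Omega\to[0,1]$ with $\mu\{\Phi=1\}\geq a$, $\mu\{\Phi=0\}\geq 1-b$ and $\int|\nabla \Phi|\,d\mu \leq \Cap_1^\mu(a,b) + \delta$; restriction to Lipschitz $\Phi$ is permissible, as noted after Proposition \ref{prop:Sodin-prop}. Writing $\eps_m := \|\mu-\mu_m\|_{TV} \to 0$, the constraints give $\mu_m\{\Phi=1\}\geq a-\eps_m$ and $\mu_m\{\Phi=0\}\geq 1-b-\eps_m$, so $\Phi$ is admissible for $\Cap_1^{\mu_m}(a-\eps_m,b+\eps_m)$. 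Since $|\nabla \Phi|$ is bounded by the Lipschitz constant of $\Phi$, bounded-convergence gives $\int|\nabla \Phi|\,d\mu_m \to \int|\nabla \Phi|\,d\mu$, whence
\[
\Cap_1^\mu(a,b) + \delta \;\geq\; \limsup_{m\to\infty} \Cap_1^{\mu_m}(a-\eps_m,\,b+\eps_m) ~.
\]
Letting $\delta \downarrow 0$ yields $\Cap_1^\mu(a,b) \geq \limsup_m \Cap_1^{\mu_m}(a-\eps_m,b+\eps_m)$.

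\textbf{Step 2 (Concavity gives a chord bound).} By Proposition \ref{prop:Sodin-prop} applied to $\mu_m$, $\Cap_1^{\mu_m}(a-\eps_m,b+\eps_m) \geq \inf_{s \in [a-\eps_m,b+\eps_m]} I_{\mu_m}(s)$. Since $I_{\mu_m}$ is concave on $(0,1)$ and non-negative (with continuous extension $I_{\mu_m}(0),I_{\mu_m}(1)\geq 0$), the chords from $(0,I_{\mu_m}(0))$ to $(t,I_{\mu_m}(t))$ and from $(t,I_{\mu_m}(t))$ to $(1,I_{\mu_m}(1))$ lie below the graph, giving $I_{\mu_m}(s) \geq (s/t) I_{\mu_m}(t)$ for $s\in(0,t]$ and $I_{\mu_m}(s)\geq \frac{1-s}{1-t} I_{\mu_m}(t)$ for $s\in[t,1)$. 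Therefore
\[
\inf_{s\in[a-\eps_m,b+\eps_m]} I_{\mu_m}(s) \;\geq\; \min\!\left(\tfrac{a-\eps_m}{t},\,\tfrac{1-b-\eps_m}{1-t}\right) I_{\mu_m}(t)~.
\]

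\textbf{Step 3 (Combining and letting $a,b\to t$).} Applying Proposition \ref{prop:Sodin-prop} on the $\mu$ side gives $\inf_{s\in[a,b)} I_\mu(s) \geq \Cap_1^\mu(a,b)$; chaining Steps 1--2 then yields
\[
\inf_{s\in[a,b)} I_\mu(s) \;\geq\; \min\!\left(\tfrac{a}{t},\,\tfrac{1-b}{1-t}\right)\limsup_{m\to\infty} I_{\mu_m}(t)~.
\]
Set $a = t-\eta$, $b = t+\eta$ and let $\eta\downarrow 0$; the minimum tends to $1$, and the left-hand side tends to $\min(I_\mu(t),\,\liminf_{s\to t} I_\mu(s))$, which is dominated by $\liminf_{s\to t} I_\mu(s)$. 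This is precisely the claim.

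\textbf{Main obstacle.} The only subtlety is the uniformity needed when translating ``$\Phi$ is admissible for a slightly perturbed capacity'' into an integral inequality: for a fixed $\Phi$ this requires $\int|\nabla\Phi|\,d\mu_m\to\int|\nabla\Phi|\,d\mu$, which is why it is essential to work with genuinely Lipschitz (rather than Lipschitz-on-balls) test functions, so that $|\nabla \Phi|$ is bounded and TV-convergence suffices. With that in place, everything else is bookkeeping around the double limit $m\to\infty$ followed by $\eta\downarrow 0$, and the fact that concave non-negative $I_{\mu_m}$ can be minorized on any interior interval by a linear interpolant anchored at the endpoints.
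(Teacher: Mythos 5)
Your proof is correct and follows essentially the same route as the paper: near-optimal Lipschitz test functions for the capacity, total-variation convergence to pass the integral $\int\abs{\nabla\Phi}\,d\mu_m\to\int\abs{\nabla\Phi}\,d\mu$, Proposition \ref{prop:Sodin-prop} on both sides, and the concavity chord bound anchored at the endpoints. The only difference is cosmetic: you make the intermediate quantity $\Cap_1$ explicit, while the paper manipulates the test function $\Phi$ directly.
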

\begin{proof}
As usual, denote $I = I_{(\Omega,d,\mu)}$ and $I_m =
I_{(\Omega,d,\mu_m)}$ for short. Let $t\in (0,1)$ and small $\eps>0$
be given, and let $\Phi : (\Omega,d) \rightarrow [0,1]$ denote a
Lipschitz function so that:
\[
\mu\set{\Phi=1} \geq t-\eps \;\;,\;\; \mu\set{\Phi=0} \geq 1-t-\eps
~.
\]
For any small $\delta>0$, there exists an $m_0$ so that for any $m \geq m_0$:
\[
\mu_m\set{\Phi=1} \geq t-\eps-\delta \;\;,\;\; \mu_m\set{\Phi=0}
\geq 1-t-\eps-\delta ~.
\]
We conclude by Proposition \ref{prop:Sodin-prop} and the concavity of $I_m$ that:
\[
 \int |\nabla \Phi| d\mu_m \geq \inf_{t - \eps- \delta \leq s \leq t+\eps+\delta} I_m(s) \geq
\min\brac{\frac{t-\eps-\delta}{t},\frac{1-t-\eps-\delta}{1-t}}
I_m(t) ~.
\]
Since $\Phi$ is Lipschitz (hence $|\nabla \Phi|$ is bounded), and
$\set{\mu_m}$ converge to $\mu$ in total-variation, we can pass to the
limit as $m \rightarrow \infty$:
\[
 \int |\nabla \Phi| d\mu \geq
\min\brac{\frac{t-\eps-\delta}{t},\frac{1-t-\eps-\delta}{1-t}}
\limsup_{m\rightarrow \infty} I_m(t) ~.
\]
Taking infimum on all such $\Phi$ as above and using Proposition
\ref{prop:Sodin-prop} again, we obtain:
\[
 \inf_{t-\eps \leq s < t+\eps} I(s) \geq
\min\brac{\frac{t-\eps-\delta}{t},\frac{1-t-\eps-\delta}{1-t}}
\limsup_{m\rightarrow \infty} I_m(t) ~.
\]
Taking the limit of $\eps,\delta$ to 0, we obtain the desired conclusion.
\end{proof}

\begin{rem}
It is clear from the proof that the concavity condition may be seriously relaxed (e.g. to
equicontinuity), and the regularity condition on $I_m$ obtained in Lemma \ref{lem:I-continuity}
below may also be used.
\end{rem}

Combining the last three lemmas we immediately obtain:

\begin{prop} \label{prop:convergence}
Let $(\Omega,d)$ be a metric space, let $\set{\mu_m}$ be a sequence of Borel
probability measures on $(\Omega,d)$ which converges in the total-variation norm to $\mu$, and
assume that $I_{(\Omega,d,\mu_m)}$ are all concave on $(0,1)$. If in addition $\set{\mu_m}$ tend to $\mu$
from above or from within, then for any $t\in (0,1)$:
\[
\liminf_{m \rightarrow \infty} I_{(\Omega,d,\mu_m)}(t)=
\limsup_{m \rightarrow \infty} I_{(\Omega,d,\mu_m)}(t)=
\liminf_{s\rightarrow t} I_{(\Omega,d,\mu)}(s) ~.
\]
In particular, if $I_{(\Omega,d,\mu)}$ is in addition lower semi-continuous, we have (pointwise):
\[
\lim_{m \rightarrow \infty} I_{(\Omega,d,\mu_m)}= I_{(\Omega,d,\mu)} ~.
\]
\end{prop}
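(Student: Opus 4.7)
The proof will essentially be a sandwich argument that chains together the three preceding lemmas; no new analytic input is required at this stage, since the hard work has already been invested in Lemmas \ref{lem:above}, \ref{lem:within}, and \ref{lem:approx-easy-direction}.

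The plan is as follows. First, regardless of whether the convergence is from above or from within, I would invoke the appropriate one of Lemma \ref{lem:above} or Lemma \ref{lem:within} to obtain the lower bound
\[
\liminf_{m \rightarrow \infty} I_{(\Omega,d,\mu_m)}(t) \;\geq\; \liminf_{s \rightarrow t} I_{(\Omega,d,\mu)}(s) ~.
\]
Next, using the assumed concavity of the profiles $I_{(\Omega,d,\mu_m)}$ on $(0,1)$ together with the total-variation convergence $\mu_m \to \mu$, I would apply Lemma \ref{lem:approx-easy-direction} to obtain the complementary bound
\[
\liminf_{s\rightarrow t} I_{(\Omega,d,\mu)}(s) \;\geq\; \limsup_{m \rightarrow \infty} I_{(\Omega,d,\mu_m)}(t) ~.
\]
Chaining these with the trivial inequality $\limsup_m \geq \liminf_m$ gives
\[
\liminf_{m \rightarrow \infty} I_{(\Omega,d,\mu_m)}(t) \;\geq\; \liminf_{s\rightarrow t} I_{(\Omega,d,\mu)}(s) \;\geq\; \limsup_{m \rightarrow \infty} I_{(\Omega,d,\mu_m)}(t) \;\geq\; \liminf_{m \rightarrow \infty} I_{(\Omega,d,\mu_m)}(t) ~,
\]
forcing equality throughout and proving the first assertion.

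For the ``in particular'' clause, it remains to identify $\liminf_{s \to t} I_{(\Omega,d,\mu)}(s)$ with $I_{(\Omega,d,\mu)}(t)$ whenever $I_{(\Omega,d,\mu)}$ is lower semi-continuous at $t$. Lower semi-continuity gives $\liminf_{s \to t} I(s) \geq I(t)$, while the reverse inequality is immediate by including $s = t$ in the $\liminf$ (or by considering the constant sequence $s = t$). Hence the common value above equals $I_{(\Omega,d,\mu)}(t)$ and the pointwise convergence $I_{(\Omega,d,\mu_m)} \to I_{(\Omega,d,\mu)}$ follows.

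The only potential obstacle I see is purely bookkeeping: making sure that the hypotheses of the cited lemmas are met under either mode of convergence (``from above'' or ``from within''), both of which are themselves subsumed under total-variation convergence as required by Lemma \ref{lem:approx-easy-direction}. Since both notions were defined precisely so that the one-sided bound in Lemmas \ref{lem:above} and \ref{lem:within} holds, the combination is immediate, and I expect no further obstacle.
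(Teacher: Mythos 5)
Your proposal is correct and matches the paper's argument exactly: the paper simply states that the proposition follows by ``combining the last three lemmas,'' and your sandwich of Lemma \ref{lem:above} or \ref{lem:within} with Lemma \ref{lem:approx-easy-direction} (plus $\limsup \geq \liminf$ and the lower semi-continuity observation for the final clause) is precisely that combination, spelled out.
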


The following lemma, which extends the argument given by Gallot in
\cite[Lemma 6.2]{GallotIsoperimetricInqs} for compact manifolds with uniform density, provides a
sufficient condition for the isoperimetric profile to be continuous.

\begin{lem} \label{lem:I-continuity}
Let $\Omega = (M,g)$ denote an $n$-dimensional ($n\geq 2$) smooth complete oriented connected Riemannian manifold and
let $d$ denote the induced geodesic distance. Let $\mu$ denote an absolutely continuous measure with
respect to $vol_M$, such that its density is bounded from above on every ball (but not necessarily from below,
nor do we assume it is continuous). Then $I = I_{(\Omega,d,\mu)}$ is absolutely continuous on
$[0,1]$, and in fact is locally of H\"{o}lder exponent $\frac{n-1}{n}$.
\end{lem}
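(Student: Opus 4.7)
The strategy is to establish the stronger local H\"older bound $|I(s)-I(t)|\leq C|s-t|^{(n-1)/n}$ on any compact subinterval of $[0,1]$, which immediately implies absolute continuity on $[0,1]$. The mechanism is the classical surgery argument: given a near-minimizer $A$ for $I(t)$ (i.e.\ $\mu(A)=t$ and $\mu^+(A)<I(t)+\eps$), produce a competitor for $I(s)$ by unioning $A$ with, or subtracting from $A$, a small geodesic ball carrying $\mu$-mass exactly $\delta:=|s-t|$. The H\"older exponent $(n-1)/n$ arises because a geodesic ball of $\mu$-mass $\delta$ has radius $\asymp \delta^{1/n}$ and hence boundary $\mu$-measure $\asymp \delta^{(n-1)/n}$.

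Two technical ingredients are needed. First, a pair of subadditivity bounds for the Minkowski content, both obtained by direct set-manipulation: $\mu^+(A\cup B)\leq \mu^+(A)+\mu^{+,*}(B)$ follows from $(A\cup B)_\eps=A_\eps\cup B_\eps$ together with inclusion-exclusion, while $\mu^+(A\setminus B)\leq \mu^+(A)+\mu^-(B)$ follows from the inclusion $(A\cap B^c)_\eps\setminus(A\cap B^c)\subset (A_\eps\setminus A)\cup (B\cap (B^c)_\eps)$, where $\mu^{+,*}$ and $\mu^-$ denote the outer-upper and inner Minkowski contents. For a geodesic ball $B(x_0,r)$ with $r$ below the injectivity radius at $x_0$ and $\rho\leq D$ on a neighborhood of $\overline{B(x_0,r)}$, both $\mu^{+,*}(B(x_0,r))$ and $\mu^-(B(x_0,r))$ are bounded by $CDr^{n-1}$ via the standard Riemannian area formula for geodesic spheres.

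Second, the choice of ball center. For $t\in(0,1)$ and any $A$ with $\mu(A)=t$, both $A$ and $A^c$ have positive $\mu$-mass, so $\mu$-a.e.\ point of each is simultaneously a Lebesgue point of $\rho$ (with respect to $vol_M$) where $\rho>0$ and a Lebesgue density point of the set in question. In the case $s>t$, pick such $x_0\in A^c$; by Lebesgue differentiation, $r\mapsto \mu(B(x_0,r)\cap A^c)$ is continuous, vanishes at $r=0$, and grows like $\rho(x_0)\cdot vol_M(B(x_0,r))\asymp r^n$ for $r$ below some $r_0(x_0)$, so the intermediate value theorem produces $r\asymp \delta^{1/n}$ solving this equation to equal $\delta$. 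Then $A':=A\cup B(x_0,r)$ satisfies $\mu(A')=s$ and $\mu^+(A')\leq I(t)+\eps+CD\delta^{(n-1)/n}$; letting $\eps\downarrow 0$ yields $I(s)\leq I(t)+CD\delta^{(n-1)/n}$. The opposite inequality is symmetric: start from a near-minimizer $A'$ of $I(s)$, pick $x_0\in A'$ as above, and use $A:=A'\setminus B(x_0,r)$ together with the $\mu^-$ estimate.

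The main obstacle is to make the constant $CD$ locally uniform in $t$, since a priori both $\rho(x_0)$ and $r_0(x_0)$ depend on the chosen $x_0$, which depends on the near-minimizer. This is resolved by restricting the choice of $x_0$ to a fixed compact set $K\subset \Omega$ on which $\rho$ is pinched between two positive constants (such $K$ exists with $\mu(K)$ arbitrarily close to $1$, by truncation, using that $\rho$ is locally bounded above and is the density of a probability measure), which is available as soon as both $A\cap K$ and $A^c\cap K$ have positive $\mu$-mass---an automatic condition once $t$ lies in a compact subinterval of $(0,1)$. The endpoints $t\in\{0,1\}$ are handled directly by applying the same surgery to $A=\emptyset$ or $A=\Omega$, which gives $I(\delta),\,I(1-\delta)\leq CD\delta^{(n-1)/n}$, matching $I(0)=I(1)=0$ and completing the H\"older bound.
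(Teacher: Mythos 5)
Your overall strategy --- modify a near-minimizer of $I(t)$ by attaching or excising a geodesic ball of $\mu$-mass $|s-t|$ and radius $\lesssim|s-t|^{1/n}$, then charge the extra boundary to the sphere area $\lesssim|s-t|^{(n-1)/n}$ --- is exactly the paper's, and your Minkowski-content subadditivity bookkeeping is fine. The gap is in the ball-selection step. You locate the center $x_0$ via the Lebesgue density theorem applied to $A^c$ (resp.\ $A$), so the estimate $\mu(B(x_0,r)\cap A^c)\gtrsim\rho(x_0)\,r^n$ is guaranteed only for $r$ below a threshold that depends on the competitor $A$, not just on $x_0$. Pinning $x_0$ to a compact set $K$ where $\rho$ is pinched controls $\rho(x_0)$ and the local geometry, but it does nothing to control the scale at which the \emph{set} $A^c$ attains density $\geq 1/2$ at $x_0$; that scale can degenerate as the near-minimizers $A_\eta$ vary with $\eta\downarrow 0$. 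Concretely, for a fixed pair $(s,t)$ you need a ball of radius $\leq C\delta^{1/n}$ meeting $A_\eta^c$ in mass exactly $\delta=|s-t|$ for \emph{every} $\eta$; your argument supplies this only for $\delta<\delta_0(A_\eta)$, and $\delta_0(A_\eta)$ may tend to $0$ with $\eta$. What one actually extracts this way is $\limsup_{s\to t}I(s)\leq I(t)+\eta$ for each $\eta$, i.e.\ a one-sided semicontinuity statement, not the H\"older bound.

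The repair is to replace the pointwise density argument by an integral-geometric average over centers, which is what the paper does: by Fubini,
\[
\int_{B(x_0,R_\theta)}\mu\brac{A^c\cap B(x,r)}\,dvol_M(x)=\int_{A^c}\tVol_M\brac{B(y,r)\cap B(x_0,R_\theta)}\,d\mu(y)\ \gtrsim\ r^n\,\mu\brac{A^c\cap B(x_0,R_\theta-1)}~,
\]
so for \emph{every} $r$ below a threshold depending only on $\theta=\mu(A)$ and the local geometry (controlled via Rauch comparison), \emph{some} center $x$ satisfies $\mu(A^c\cap B(x,r))\geq c(\theta)\,r^n$, uniformly over all competitors $A$ of measure $\theta$. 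Absolute continuity of $\mu$ then lets you shrink the radius to hit the mass $\delta$ exactly, and the rest of your surgery goes through with constants depending only on $t$ (degenerating only near the endpoints, which you treat separately, as does the paper).
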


\begin{proof}
By Lebesgue's Theorem, we know for almost every $x\in M$ (with respect to
$vol_M$),
\[
\mu(B_M(x,\eps)) = \frac{d\mu}{dvol_M}(x) \tVol_M(B_M(x,\eps)) (1 +
o(1)) ~,
\]
and clearly:
\begin{equation} \label{eq:sa-estimate}
\mu^+(B_M(x,\eps)) \leq \mu_\infty(\overline{B_M(x,2\eps)}) \tVol_M(\partial
B_M(x,\eps)) ~,
\end{equation}
where $B_M(x,R)$ denotes the ball in $M$ of radius $R$ around $x$, $\tVol_M$ denotes the
Riemannian volume on $M$ (and by abuse of notation the induced volume on any submanifold as well),
and $\mu_\infty(\C)$ denotes the upper bound on the density of $\mu$ on a compact set $\C \subset M$.
By Rauch's Comparison Theorem, for any such compact set $\C$ (and in particular a singleton),
there exists a $\eps_\C < 1/2$ so that for any $x \in \C$ and $\eps < \eps_\C$:
\begin{equation} \label{eq:control-minus-1}
 \frac{3}{4} \eps^n \Vol{B^n} < \tVol_M(B_M(x,\eps)) < \frac{5}{4} \eps^{n} \Vol{B^n} \; ,
\end{equation}
\begin{equation} \label{eq:control0}
 \frac{3}{4} \eps^{n-1} \Vol{S^{n-1}} < \tVol_M(\partial B_M(x,\eps)) < \frac{5}{4} \eps^{n-1}
\Vol{S^{n-1}} \; ,
\end{equation}
where $B^n$ and $S^{n-1}$ denote the Euclidean unit ball and sphere, respectively, and
$\tVol$ denotes Euclidean volume. Therefore as $t\rightarrow 0$:
\[
I(t),I(1-t) \leq C_{n,\mu} t^{(n-1)/ n} (1+o(1)) ~,
\]
where $C_{n,\mu}$ depends on $n$ and $\mu$ only. Since clearly $I(0) = I(1) = 0$, this takes care of
the continuity at $0$ and $1$.

Now fix $x_0 \in M$ and define $g : (0,1) \rightarrow \Real_+$ to be the function:
\[
 g(\eps) := \inf\set{ R > 0 ; \mu(B_M(x_0,R)) \geq 1-\eps} ~.
\]
Given $0<\theta<1$, set $R_\theta = g(\theta/2) + 1$, $\eps_\theta = \eps_{B_M(x_0,R_\theta+1)}$, and
$\mu_\infty(\theta) = \mu_\infty(\overline{B_M(x_0,R_\theta+1)})$. Let $K_\theta$ denote the
(possibly negative) lower bound on the sectional curvature of $K$ on $B_M(x_0,R_\theta)$.
Rauch's Theorem also implies that:
\begin{equation} \label{eq:control1}
\tVol_M (B_M(x_0,R_\theta)) \leq \tVol_{M_{K_\theta}}(B_{M_{K_\theta}}(R_\theta)) ~,
\end{equation}
where $M_K$ denotes the simply connected model space with constant curvature $K$, $\tVol_{M_K}$
denotes the volume on $M_K$ and $B_{M_K}(R)$ is any ball in $M_K$ of radius $R$.

Given a set $A \subset M$ with $\theta = \mu(A)>0$, note that by Fubini's
Theorem, (\ref{eq:control-minus-1}) and the definition of $g$, for any $\eps < \eps_\theta < 1$:
\begin{eqnarray}
\nonumber
 & & \int_{B_M(x_0,R_\theta)} \mu(A \cap B_M(x,\eps)) dvol_M(x) = \int_A \tVol_M (B_M(y,\eps) \cap
B_M(x_0,R_\theta)) d\mu (y) \\
\nonumber &\geq& \int_{A \cap B_M(x_0,R_\theta-1)} \tVol_M (B_M(y,\eps)) d\mu(y) \geq \frac{3}{4} \eps^n
\Vol{B^n} \mu(A \cap B_M(x_0,g(\mu(A)/2))) \\
\label{eq:control2} &\geq& \frac{3}{8} \eps^n \Vol{B^n} \mu(A) ~.
\end{eqnarray}
We conclude from $(\ref{eq:control2})$ and $(\ref{eq:control1})$
that given any $A \subset M$ with $0<\theta=\mu(A)<1$ and $\eps<\eps_\theta$, there exists an
$x\in B_M(x_0,R_\theta)$ such that:
\begin{equation} \label{eq:control3}
\mu(A \cap B_M(x,\eps)) \geq \frac{3}{8} \frac{\eps^n
\Vol{B^n}}{\tVol_M (B_M(x_0,R_\theta))} \mu(A) \geq \eps^n \Vol{B^n}
f(\mu(A)) ~,
\end{equation}
where $f$ is defined as:
\[
 f(\theta) = \frac{3}{8} \frac{\theta}{\tVol_{M_{K_\theta}}(B_{M_{K_\theta}}(g(\theta/2)+1))} ~.
\]
Now let $0<s<t<1$ be close enough such that there exists an $\eps_1<\eps_t$ such that:
\begin{equation} \label{eq:control4}
t-s = \eps_1^n \Vol{B^n} f(t) ~.
\end{equation}
By definition, for any $\eta > 0$, there exists a set $A$ such that
$\mu(A) = t$ and $\mu^+(A) \leq I(t) + \eta$. By (\ref{eq:control3}) there exists an
$x\in B_M(x_0,R_t)$ such that $\mu(A \setminus B_M(x,\eps_1)) \leq s$, and since $\mu$ is absolutely
continuous, it follows that there exists an $\eps_2 \leq \eps_1$ such that $\mu(A \setminus
B_M(x,\eps_2)) = s$. Therefore:
\[
 I(s) \leq \mu^+(A \setminus B_M(x,\eps_2)) \leq \mu^+(A) + \mu^+(B_M(x,\eps_2)) \leq I(t) + \eta +
\mu_\infty(t) \frac{5}{4} \eps_1^{n-1} \Vol{S^{n-1}} ~,
\]
where we have used (\ref{eq:sa-estimate}) and (\ref{eq:control0}) in the last inequality. Sending $\eta$ to 0 and plugging in
(\ref{eq:control4}), we conclude that for some constant $C_n$ which depends on $n$:
\[
 I(s) \leq I(t) + C_n \mu_\infty(t) \brac{\frac{t-s}{f(t)}}^{\frac{n-1}{n}} ~.
\]
To get the inequality in the other direction, we require that $0<s<t<1$ are close
enough so that $\eps_1 < \eps_{1-s}$ in addition satisfies:
\[
t-s = \eps_1^n \Vol{B^n} f(1-s) ~.
\]
Now let $A \subset M$ be such that $\mu(A) = s$ and $\mu^+(A) \leq I(s) + \eta$. Applying
(\ref{eq:control3}) for the set $M \setminus A$, we find an $x \in B_M(x_0,R_{1-s})$ and $\eps_2 \leq \eps_1$ such
that $\mu(A \cup B_M(x,\eps_2)) = t$. Repeating the above argument then gives:
\[
 I(t) \leq I(s) + C_n \mu_\infty(1-s) \brac{\frac{t-s}{f(1-s)}}^{\frac{n-1}{n}} ~.
\]
Since $f$ is monotone, this concludes the proof.
\end{proof}

Our approximation argument is now clear. Given a measure $\mu$ in
the setting of Lemma \ref{lem:I-continuity}, we know that its
isoperimetric profile $I$ is continuous. Assume that $\mu$ can be
approximated from above or from within by measures $\set{\mu_m}$
satisfying our generalized smooth convexity assumptions. By Theorem
\ref{thm:Intro-concavity}, the corresponding profiles $\set{I_m}$
(and when the densities are uniform, also the renormalized profiles
$\{I_m^{n/(n-1)}\}$) are concave, and so applying Proposition
\ref{prop:convergence}, we deduce the pointwise convergence of $I_m$
to $I$, which clearly preserves concavity. We therefore deduce:

\begin{thm} \label{thm:main-approx}
Let $\Omega = (M,g)$ denote an $n$-dimensional ($n\geq 2$) smooth complete oriented connected Riemannian manifold
and let $d$ denote the induced geodesic distance. For each $m \geq 1$, let $\set{\mu_m}$ denote a sequence of
Borel probability measures on $\Omega_m \subset \Omega$ so that $(\Omega_m,d,\mu_m)$ satisfies our generalized smooth convexity assumptions.
Assume that $\set{\mu_m}$ tends to an absolutely continuous Borel
probability measure $\mu$ from above or from within,
and denote $I_m = I_{(\Omega_m,d,\mu_m)}$ and $I = I_{(\Omega,d,\mu)}$.
Then $I_m \rightarrow I$ pointwise and consequently $I$ is concave on $[0,1]$. Moreover, if each $\mu_m$ is uniform over $\Omega_m$, then $I^{n/(n-1)}$ is also concave on $[0,1]$.
\end{thm}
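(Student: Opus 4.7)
The plan is to piece together Theorem \ref{thm:Intro-concavity}, Lemma \ref{lem:I-continuity}, and Proposition \ref{prop:convergence}. First, since each $(\Omega_m,d,\mu_m)$ satisfies our generalized smooth convexity assumptions, Theorem \ref{thm:Intro-concavity} gives immediately that $I_m$ is concave on $(0,1)$, and additionally that $I_m^{n/(n-1)}$ is concave on $[0,1]$ whenever $\mu_m$ is uniform on $\Omega_m$. Since the class of concave functions is closed under pointwise limits, it will suffice to establish the pointwise convergence $I_m \to I$ on $[0,1]$; passing to the limit in the concavity inequalities will then finish the proof, with the $I^{n/(n-1)}$ assertion following by continuity of the power function.

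For the pointwise convergence I would invoke Proposition \ref{prop:convergence}, which is essentially tailor-made for this situation but requires that $I = I_{(\Omega,d,\mu)}$ be lower semi-continuous on $(0,1)$. To obtain this I would appeal to Lemma \ref{lem:I-continuity}, whose sole analytic hypothesis is that the density of $\mu$ with respect to $vol_M$ be bounded above on every ball of $M$. This hypothesis has to be read off from the approximation conditions: in the \emph{from above} case, the inequality $\mu_m(A) \geq \mu(A)/c_m$ for all Borel $A$ forces $d\mu/dvol_M \leq c_m\, d\mu_m/dvol_M$ almost everywhere, and since each $\mu_m$ has a $C^2$ density (hence bounded on every compact ball), so does $\mu$. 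In the \emph{from within} case, $\mu|_{A_m} = \mu(A_m)\,\mu_m$, so on $A_m$ the density of $\mu$ agrees up to the scalar $\mu(A_m)$ with a continuous density; since $\mu(A_m) \to 1$, one can pass to a subsequence whose $A_{m_k}$ cover $\Omega$ up to a $\mu$-null set, from which one recovers the ball-wise boundedness of the density of $\mu$.

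With $I$ now known to be continuous on $[0,1]$, Proposition \ref{prop:convergence} supplies $I_m(t) \to I(t)$ for every $t \in (0,1)$, and the endpoint values are zero on both sides. Passing to the limit in the concavity of $I_m$ (respectively of $I_m^{n/(n-1)}$ in the uniform case) together with the continuity of $I$ at the endpoints then yields concavity of $I$ (respectively $I^{n/(n-1)}$) on $[0,1]$. The hard part will be the density bound in the \emph{from within} case, since the sets $A_m$ need not be nested or exhaust $\Omega$ pointwise; the subsequencing argument sketched above is the cleanest route I see, and once it is in place everything else is a mechanical application of the three results cited.
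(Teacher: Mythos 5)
Your proposal follows the paper's proof exactly: the paper likewise assembles Theorem \ref{thm:Intro-concavity}, Lemma \ref{lem:I-continuity} and Proposition \ref{prop:convergence} in precisely this order, remarking only that one must verify the local density bound required by Lemma \ref{lem:I-continuity} for the limit measure $\mu$ (a point it defers to a remark in a companion paper). Your verification of that bound in the \emph{from above} case is exactly right; in the \emph{from within} case the subsequencing step is the one loose point (a countable cover by sets on each of which the density is locally bounded does not by itself bound the density on a ball, since the bounds may degenerate along the sequence), but this is precisely the detail the paper itself leaves unproved.
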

\begin{proof}
The argument has already been sketched. We only remark that it is not hard to verify the validity of
the assumptions of Lemma \ref{lem:I-continuity} on $\mu$, as the limit of $\set{\mu_m}$ as above (see e.g. \cite[Remark 6.2]{EMilmanRoleOfConvexityInFunctionalInqs}).
\end{proof}

\begin{cor} \label{cor:approx-bodies}
Let $\Omega$ denote any (non-smooth) convex bounded domain in
$\Real^n$ ($n \geq 2$), let $\mu$ denote the uniform probability
measure on $\Omega$ and let $d$ denote the Euclidean metric. Then
our convexity assumptions are satisfied, $I = I_{(\Omega,d,\mu)}$ is
concave on $[0,1]$, and so is $I^{n/(n-1)}$.
\end{cor}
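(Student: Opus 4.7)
The plan is to apply Theorem~\ref{thm:main-approx} with ambient manifold $M=\Real^n$, approximating $\mu=\lambda_\Omega$ \emph{from above} by uniform probability measures $\mu_m=\lambda_{\Omega_m}$ on smooth convex supersets $\Omega_m\supset\Omega$ with $\Omega_m\to\Omega$ in Hausdorff distance. Note that since $\mu$ is supported on $\Omega$, the isoperimetric profile $I_{(\Omega,|\cdot|,\mu)}$ coincides with $I_{(\Real^n,|\cdot|,\mu)}$, so there is no obstruction to working on the full ambient manifold.

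For the geometric step, I would invoke the classical fact that every bounded convex body $\Omega\subset\Real^n$ is the intersection of a decreasing sequence $\set{\Omega_m}$ of bounded convex domains with $C^\infty$ boundary and strictly positive Gaussian curvature (see Schneider's monograph on convex bodies). One concrete route is first to pass to the Minkowski sum $\Omega\oplus\eps_m B_2^n$ (which has $C^{1,1}$ boundary), then to mollify its support function on $S^{n-1}$ by convolution with a rotationally symmetric smooth kernel so as to upgrade the boundary regularity to $C^\infty$. In either variant one obtains $\Omega_m\supset\Omega$ convex with $C^2$ boundary and $\Vol{\Omega_m}\to\Vol{\Omega}$.

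Setting $\mu_m:=\lambda_{\Omega_m}$, each triple $(\Omega_m,|\cdot|,\mu_m)$ satisfies the generalized smooth convexity assumptions: $\Omega_m$ is (locally) convex in $\Real^n$ with $C^2$ boundary, the density $\psi_m\equiv\log\Vol{\Omega_m}$ is constant, and so $Ric_g+Hess_g\,\psi_m\equiv 0$. To verify the ``from above'' condition, since $\Omega\subset\Omega_m$, for every Borel $A\subset\Real^n$,
\[
\mu_m(A)=\frac{\Vol{A\cap\Omega_m}}{\Vol{\Omega_m}}\geq\frac{\Vol{A\cap\Omega}}{\Vol{\Omega_m}}=\frac{\mu(A)}{c_m},\quad c_m:=\frac{\Vol{\Omega_m}}{\Vol{\Omega}}\to 1,
\]
while $\mu_m\to\mu$ in total variation follows from $\Vol{\Omega_m\setminus\Omega}\to 0$, which is immediate from Hausdorff convergence. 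Theorem~\ref{thm:main-approx} then yields pointwise convergence $I_m\to I$, and hence concavity of $I=I_{(\Omega,|\cdot|,\mu)}$ on $[0,1]$; since each approximating measure $\mu_m$ is uniform, the same theorem delivers concavity of $I^{n/(n-1)}$.

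Finally, to certify that our ``convexity assumptions'' from the Introduction are satisfied, I would give a parallel, simpler approximation on all of $\Real^n$: let $\tilde\mu_m$ be the probability measure on $\Real^n$ whose density is $(\chi_\Omega*\phi_m)/\Vol{\Omega}$, where $\phi_m$ is a Gaussian mollifier with variance tending to zero. By the Pr\'ekopa--Leindler inequality this density is log-concave, it lies in $C^\infty(\Real^n)$ and is strictly positive, so $\tilde\mu_m=e^{-\tilde\psi_m}\,dx$ with $\tilde\psi_m\in C^\infty(\Real^n)$ convex, fulfilling the smooth convexity assumptions as formulated in the Introduction; total-variation convergence $\tilde\mu_m\to\mu$ follows from standard mollification. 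The only nontrivial step in the whole argument is the classical convex-geometric smoothing producing $\Omega_m$ with $C^2$ boundary while preserving both convexity and Hausdorff convergence; all the genuine analytic content has already been absorbed into Theorem~\ref{thm:main-approx} and, one layer deeper, Theorem~\ref{thm:Intro-concavity}.
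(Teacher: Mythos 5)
Your proposal is correct and follows essentially the same route as the paper: approximate $\Omega$ from outside by smooth convex bodies (the paper likewise cites Schneider and flags that the outer parallel body $\Omega\oplus\eps B_2^n$ only gives $C^1$ regularity, which your support-function mollification addresses), verify the ``from above'' condition via the volume ratio $c_m=\Vol{\Omega_m}/\Vol{\Omega}\to 1$, and conclude by Theorem~\ref{thm:main-approx}. Your additional Gaussian-mollification step certifying that the convexity assumptions of the Introduction hold is a reasonable way to make explicit a point the paper leaves implicit, and is consistent with how the paper handles the log-concave case in Corollary~\ref{cor:approx-lc}.
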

\begin{proof}
Approximate $\Omega$ from outside by smooth convex domains using standard methods (see e.g.
\cite{Schneider-Book}). Note that $\Omega_\eps$ will only guarantee $C^1$ smoothness.
\end{proof}

\begin{cor} \label{cor:approx-lc}
Let $\Omega = \Real^n$ ($n \geq 1$), let $\mu$ denote any absolutely
continuous log-concave probability measure (with possibly non-smooth
density) and let $d=\abs{\cdot}$ denote the Euclidean metric. Then
our convexity assumptions are satisfied and $I = I_{(\Omega,d,\mu)}$
is concave on $(0,1)$ (and if $n \geq 2$, on $[0,1]$).
\end{cor}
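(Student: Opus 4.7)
The plan is to construct smooth log-concave probability measures $\tilde\mu_t$ that satisfy the (generalized) smooth convexity assumptions on $\Real^n$ and tend to $\mu$ ``from above'' as $t \downarrow 0$, so that Theorem \ref{thm:main-approx} (or, for $n=1$, its underlying building blocks Proposition \ref{prop:convergence} combined with Theorem \ref{thm:Intro-concavity}) may be invoked to conclude the concavity of $I_{(\Real^n,|\cdot|,\mu)}$. Writing $d\mu = e^{-\psi}\,dx$ with $\psi : \Real^n \to \Real \cup \set{+\infty}$ convex and lower semi-continuous, I would regularize $\psi$ in two steps. First, apply the Moreau--Yosida inf-convolution $\psi_t(x) := \inf_y [\psi(y) + |x-y|^2/(2t)]$, which yields a finite convex $C^{1,1}$ function with $\nabla \psi_t$ being $(1/t)$-Lipschitz, $\psi_t \leq \psi$ pointwise, and $\psi_t \uparrow \psi$ as $t \downarrow 0$. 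Then mollify via $\tilde\psi_t := \psi_t * \eta_t$, where $\eta_t$ is the $t$-rescaling of a fixed radial $C^\infty$ probability density $\eta$; the function $\tilde\psi_t$ is then $C^\infty$ convex, and a second-order Taylor expansion combined with the Lipschitz bound on $\nabla \psi_t$ gives the uniform estimate $0 \leq \tilde\psi_t(x) - \psi_t(x) \leq Ct$ for some $C = C(\eta)$, so $\tilde\psi_t \leq \psi + Ct$ pointwise on $\Real^n$.

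Setting $\tilde\mu_t := \tilde Z_t^{-1} e^{-\tilde\psi_t}\,dx$ with $\tilde Z_t := \int e^{-\tilde\psi_t}\,dx$, the smoothness and convexity of $\tilde\psi_t$ together with $\mathrm{Ric} \equiv 0$ on $\Real^n$ ensure that $\tilde\mu_t$ satisfies the smooth convexity assumptions for each $t > 0$; this in particular verifies the convexity assumption for $\mu$ itself once total-variation convergence $\tilde\mu_t \to \mu$ is established. The pointwise bound $\tilde\psi_t \leq \psi + Ct$ gives directly
\[
\tilde\mu_t(A) \geq \tilde Z_t^{-1} e^{-Ct}\,\mu(A) \qquad \forall \text{ Borel } A \subset \Real^n,
\]
which is precisely the ``from above'' condition provided $\tilde Z_t \to 1$. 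This follows by sandwiching: the displayed bound yields $\tilde Z_t \geq e^{-Ct}$, while $\tilde\psi_t \geq \psi_t$ combined with monotone convergence applied to $\psi_t \uparrow \psi$ gives $\tilde Z_t \leq \int e^{-\psi_t}\,dx \to \int e^{-\psi}\,dx = 1$. Theorem \ref{thm:main-approx} then delivers the claimed concavity of $I$ on $[0,1]$ for $n \geq 2$; for $n = 1$ one instead applies Proposition \ref{prop:convergence} directly (noting that each $I_{\tilde\mu_t}$ is concave on $(0,1)$ by Theorem \ref{thm:Intro-concavity}), obtaining concavity on $(0,1)$ as stated.

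The main technical obstacle is ensuring $\int e^{-\psi_t}\,dx < \infty$ for all sufficiently small $t > 0$, which is needed both to make sense of the normalization and to justify the monotone-convergence step above. This requires $\psi_t$ to retain enough growth at infinity; for log-concave probability measures $\psi$ is super-linear at infinity in a suitable sense (as is standard from Borell's lemma), and one verifies that this growth is inherited by $\psi_t$ for small $t$. A related subtlety is when $\set{\psi = +\infty}$ has positive Lebesgue measure (the support of $\mu$ being a proper convex subset of $\Real^n$), in which case $\tilde\mu_t$ has positive density everywhere while $\mu$ does not; however, the inequality $\tilde\psi_t \leq \psi + Ct$ is trivially satisfied on $\set{\psi = +\infty}$, so the ``from above'' argument and the convergence $\tilde\mu_t \to \mu$ in total variation (via Scheff\'e's lemma applied to the a.e.\ convergence $\tilde\psi_t \to \psi$) proceed without modification.
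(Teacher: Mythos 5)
Your construction for $n \geq 2$ is correct and takes a genuinely different route from the paper. The paper first reduces to compactly supported $\mu$ by truncating to Euclidean balls $rB_2^n$ (an approximation ``from within'', justified via the co-area formula), and then regularizes the \emph{measure} by Gaussian convolution (log-concavity being preserved by Pr\'ekopa--Leindler) followed by a dilation about an interior point to force the ``from above'' condition. You instead regularize the \emph{potential} $\psi$ directly, via Moreau--Yosida inf-convolution followed by mollification, and the quantitative estimate $\psi_t \leq \tilde\psi_t \leq \psi_t + Ct \leq \psi + Ct$ yields a single global approximation from above with no truncation step. This is arguably cleaner: it handles compact and non-compact supports uniformly and makes the constants $c_m$ in the ``from above'' definition completely explicit. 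The one point you rightly flag --- integrability of $e^{-\psi_t}$ for small $t$ --- does go through, since $\psi \geq a|x| - b$ for a log-concave probability density and the inf-convolution inherits linear growth (splitting the infimum according to whether $|y| \geq |x|/2$ or not). So for $n \geq 2$ your appeal to Theorem \ref{thm:main-approx} is legitimate.

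The $n=1$ case, however, has a genuine gap. Proposition \ref{prop:convergence} only yields $\lim_{t \downarrow 0} I_{\tilde\mu_t}(s) = \liminf_{s' \to s} I_{(\Real,|\cdot|,\mu)}(s')$ for $s \in (0,1)$; to conclude that $I$ itself (rather than its lower regularization) is concave, you need $I$ to be lower semi-continuous on $(0,1)$. For $n \geq 2$ this is supplied by Lemma \ref{lem:I-continuity}, but that lemma explicitly requires $n \geq 2$ (its proof rests on the volume and surface-area asymptotics $\eps^n$ versus $\eps^{n-1}$ of small geodesic balls), and you give no substitute argument in dimension one. The paper avoids the issue entirely by invoking Bobkov's Theorem \ref{thm:Bobkov-concavity}, which establishes concavity of the one-dimensional profile directly from the characterization of half-lines as minimizers. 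To repair your argument you would either need to do the same, or separately prove continuity of $I$ on $(0,1)$ for an arbitrary (non-smooth) log-concave measure on $\Real$ --- which, while true (the density $e^{-\psi}$ is continuous on the interior of the support since $\psi$ is convex), is not something your approximation scheme provides for free.
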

\begin{proof}
The case $n = 1$ follows from Theorem \ref{thm:Bobkov-concavity} in the Appendix. For the case $n \geq 2$,
we will need to approximate $\mu$ from above and within by a sequence of smooth log-concave probability
measures. Since we did not find a standard reference for this, we outline the argument.

First, assume that the support $B$ of $\mu$ is compact. Approximate $\mu$ by smooth log-concave
probability measures $\set{\nu_{\eps}}$ in total-variation distance
using standard methods (e.g. convolution with a Gaussian mollifier).
Now define $\eta_{\eps,\delta}$
to be the dilatation of $\nu_{\eps}$ given by $\eta_{\eps,\delta}(A) = \nu_\eps(x_0 + (1+\delta)(A-x_0))$ for all Borel sets $A$,
where $x_0$ is a point in the interior of $B$ (another possibility would be to use sup-convolution with a small Gaussian).
It is then not hard to check that for a suitable subsequence, $\eta_{\eps,\delta(\eps)}$ tends to $\mu$ from above,
from which the assertion follows by Theorem \ref{thm:main-approx}.

In case the support of $\mu$ is not compact, we repeat the above argument for the truncated measures $\mu_r =
\mu|_{r B^n_2} / \mu(r B^n_2)$, where $B^n_2$ denotes the Euclidean unit-ball.
Note that $\mu^+(r B^n_2) \rightarrow 0$ as $r \rightarrow \infty$ by the co-area formula:
\[
 \int_0^\infty \mu^+(r B^n_2) dr = \int_0^\infty \mu^+\set{x \in \Real^n ; |x| \geq r} dr =  \int_{\Real^n} | \nabla |\cdot | \; | d\mu = 1 ~.
\]
Hence $\set{\mu_r}$ tends to $\mu$ from within, and so by Theorem
\ref{thm:main-approx} the claim now follows for arbitrary log-concave measures.
\end{proof}

\subsection{Stability of First-Moment Concentration}
\label{subsect:stability-FM}

Up to now, we have only concluded the Main Theorem \ref{thm:Main} under our \emph{smooth} convexity assumptions.
We now describe how to extend these assumptions to our general convexity assumptions.

Indeed, assume that $\mu$ can be approximated in total-variation by measures $\set{\mu_m}$ with
density $\exp(-\psi_m)$ such that $\psi_m \in C^2(M)$ and $Ric_g + Hess_g \psi_m \geq 0$ on $\Omega = (M,g)$.
We would like to show that our Main Theorem, stating that $D_{Che}(\Omega,d,\mu) \geq c D_{FM}(\Omega,d,\mu)$
for some universal constant $c>0$, still holds. It is immediate to deduce from Lemma \ref{lem:approx-easy-direction} that:
\[
 D_{Che}(\Omega,d,\mu) \geq \limsup_{m \rightarrow \infty} D_{Che}(\Omega,d,\mu_m) ~,
\]
and using our Main Theorem for the smooth measures $\mu_m$ (and Lemma \ref{lem:E-M}), we deduce that:
\[
 D_{Che}(\Omega,d,\mu) \geq c \limsup_{m \rightarrow \infty} D^M_{FM}(\Omega,d,\mu_m) ~,
\]
for some universal constant $c>0$. The First Moment constant is particularly easy to handle, since
there is no $\norm{\abs{\nabla f}}_{L_q}$ term which needs to be controlled. The following lemma, which is
an adaptation of a classical lemma of C. Borell \cite{Borell-logconcave} from the Euclidean case to the
Riemannian-manifold-with-density setting, enables us
to reduce to the case that $\set{\mu_m}$ are all supported on some compact set:

\begin{lem} \label{lem:Borell}
Let $x_0 \in M$ and $R>0$ be such that $\theta = \mu_m(B(x_0,R)) > 1/2$. Then:
\[
\forall t \geq 1\;\;\; \mu_m(M \setminus B(x_0,tR) ) \leq \theta \brac{\frac{1-\theta}{\theta}}^{\frac{t+1}{2}} ~.
\]
\end{lem}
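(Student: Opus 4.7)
The plan is to adapt the classical proof of Borell's lemma (for log-concave measures on $\Real^n$) to the Riemannian-with-density setting, replacing Euclidean convex combinations by geodesic $\lambda$-midpoints and the usual Prékopa--Leindler inequality by its $CD(0,\infty)$ counterpart on a Riemannian manifold with density, as established by Cordero-Erausquin, McCann and Schmuckenschl{\"a}ger and already cited by the paper in Section~\ref{sec:cor}. Set $A := B(x_0, R)$, so that $\mu_m(A) = \theta$, and $C := M \setminus B(x_0, tR)$, so that the quantity to be bounded is $\mu_m(C)$.

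The purely metric input is that, choosing $\lambda := 2/(t+1) \in (0,1]$, every point $z$ lying on a minimizing geodesic from some $y \in A$ to some $x \in C$ at parameter $\lambda$ (i.e.\ $d(y,z) = \lambda \, d(y,x)$ and $d(z,x) = (1-\lambda)\, d(y,x)$) necessarily falls outside $A$. Indeed, two applications of the triangle inequality give
\[
 d(z, x_0) \;\geq\; d(x, x_0) - (1-\lambda)\, d(x,y) \;\geq\; \lambda\, d(x,x_0) - (1-\lambda)\, d(x_0,y) \;>\; \lambda\, t R - (1-\lambda) R \;=\; R,
\]
where the last equality is precisely the defining relation $\lambda(t+1) = 2$. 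Hence the set $Z_\lambda(A, C)$ of all such $\lambda$-midpoints is contained in $M \setminus A$.

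Now I would invoke the $CD(0,\infty)$ Prékopa--Leindler inequality on $(M, g, \mu_m)$, applied to the characteristic functions of $A$, $C$ and $Z_\lambda(A, C)$, to conclude
\[
 1 - \theta \;=\; \mu_m(M \setminus A) \;\geq\; \mu_m\!\brac{Z_\lambda(A, C)} \;\geq\; \mu_m(A)^{1-\lambda}\, \mu_m(C)^{\lambda} \;=\; \theta^{1-\lambda}\, \mu_m(C)^{\lambda}.
\]
Solving for $\mu_m(C)$ and using $1/\lambda = (t+1)/2$ then yields the claimed bound $\mu_m(C) \leq \theta \brac{(1-\theta)/\theta}^{(t+1)/2}$.

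The single delicate point is the appeal to the generalized Prékopa--Leindler inequality: $Z_\lambda(A, C)$ need not even be Borel when minimizing geodesics are non-unique, but this is handled in the standard way via a measurable $\lambda$-midpoint selection (e.g.\ the one produced by optimal transport), which is precisely the framework worked out in \cite{CMSManifoldWithDensity}. Once this tool is in hand, the rest of the argument is the two-line algebraic manipulation above, and the hypothesis $\theta > 1/2$ appears only to guarantee that $(1-\theta)/\theta < 1$ so that the right-hand side genuinely decays in $t$.
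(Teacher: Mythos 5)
Your proof is correct and follows essentially the same route as the paper: the same choice of interpolation parameter $s=2/(t+1)$, the same triangle-inequality observation that the intermediate points between $B(x_0,R)$ and $M\setminus B(x_0,tR)$ avoid $B(x_0,R)$, and the same application of the Cordero-Erausquin--McCann--Schmuckenschl\"ager Pr\'ekopa--Leindler inequality to the three characteristic functions, followed by the identical algebra. The measurability caveat you raise about $Z_\lambda(A,C)$ is moot in this formulation, since the functional statement is applied with $h=\chi_{M\setminus A}$, the indicator of a genuine Borel set containing all the intermediate points.
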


Given this lemma, it is easy to proceed as follows. Fix $x_0 \in \Omega$ and $R>0$ so that $\mu(B(x_0,R)) \geq 3/4$.
Then for some $m_0$ and all $m \geq m_0$, we have $\mu_m(B(x_0,R)) \geq 2/3$, and hence by the lemma we conclude that:
\[
 \forall m \geq m_0 \;\; \forall t \geq 1\;\;\; \mu_m(\Omega \setminus B(x_0,tR) ) \leq 2^{-\frac{t+1}{2}} ~.
\]
Let $f_m$ denote the $1$-Lipschitz functions on $\Omega$ so that $M_{\mu_m} f_m = 0$ and $1/D^M_{FM}(\Omega,d,\mu_m) = \int |f_m | d\mu_m$ (we assume without loss of generality that the supremum is achieved). Since $f_m$ are continuous, $M_{\mu_m} f_m = 0$ and $\mu_m(B(x_0,R)) > 1/2$, there must exist a $x_m \in B(x_0,R)$ so that $f_m(x_m) = 0$. Since $f_m$ are $1$-Lipschitz, it follows that for any $t \geq 1$:
\begin{eqnarray*}
&  & \int_{\Omega \setminus B(x_0,tR)} |f_m| d\mu_m \leq \int_{\Omega \setminus B(x_0,tR)} d(x,x_m) d\mu_m(x) \\
&\leq & d(x_m,x_0) \mu_m(\Omega \setminus B(x_0,tR) ) + \int_{\Omega \setminus B(x_0,tR)} d(x,x_0) d\mu_m(x) \\
&\leq & R \brac{2^{-\frac{t+1}{2}} + \int_t^\infty 2^{-\frac{s+1}{2}} ds} ~.
\end{eqnarray*}
Hence, given $\eps>0$, there exists a $t \geq 1$ so that:
\[
\sup_{m \geq m_0} \abs{\frac{1}{D^M_{FM}(\Omega,d,\mu_m)} - \int_{B(x_0,tR)} |f_m| d\mu_m } \leq \eps ~.
\]
But since our Lipschitz functions $f_m$ are uniformly bounded on $B(x_0,tR)$ by $(t+1)R$ (by passing
through $x_m$ as before),
the convergence of $\set{\mu_m}$ to $\mu$ in total-variation implies:
\[
 \lim_{m \rightarrow \infty} \sup_{m_1 \geq m_0} \abs{\int_{B(x_0,tR)} |f_{m_1}| d\mu_m - \int_{B(x_0,tR)} |f_{m_1}| d\mu } = 0 ~.
\]
Finally, we note that for $m$ large enough, by the Markov-Chebyshev inequality (we assume here without loss of
generality that $M_\mu f_m \geq 0$):
\[
\frac{1}{2} - \frac{1}{6} \leq \mu_m\set{f_m \leq 0} - \frac{1}{6} \leq
\mu\set{f_m \leq 0} \leq \mu\set{\abs{f_m - M_\mu f_m} \geq M_\mu f_m} \leq \frac{1}{D^M_{FM}(\Omega,d,\mu) M_\mu f_m} ~,
\]
so $\abs{M_\mu f_m} \leq 3 / D^M_{FM}(\Omega,d,\mu)$. Combining everything together, we deduce that for $m$ large enough:
\begin{eqnarray*}
 \frac{1}{D^M_{FM}(\Omega,d,\mu_m)} \leq \eps + \int_{B(x_0,tR)} |f_m| d\mu_m \leq 2\eps + \int_{B(x_0,tR)} |f_m| d\mu \\
\leq 2\eps + \abs{M_\mu f_m} + \int_\Omega |f_m - M_{\mu} f_m | d\mu \leq 2\eps + \frac{4}{D^M_{FM}(\Omega,d,\mu)} ~.
\end{eqnarray*}
Since $\eps>0$ was arbitrary, we conclude that:
\[
 D_{Che}(\Omega,d,\mu) \geq c \limsup_{m \rightarrow \infty} D^M_{FM}(\Omega,d,\mu_m) \geq \frac{c}{4} D^M_{FM}(\Omega,d,\mu) ~.
\]
This concludes the proof, since as usual, we may pass from $D^M_{FM}$ to $D_{FM}$ using Lemma \ref{lem:E-M}.

\medskip

For completeness, we provide a proof of Lemma \ref{lem:Borell},
using the following remarkable generalization of the Pr\'ekopa-Leindler inequality (e.g. \cite{BrascampLiebPLandLambda1})
due to Cordero-Erausquin, McCann and Schmuckenschl{\"a}ger \cite{CMSManifoldWithDensity} (generalizing their
own result from \cite{CMSInventiones}). Given $x,y \in M$ and $s \in [0,1]$, define:
\[
 Z_s(x,y) := \set{z \in M ; d(x,z) = s d(x,y) \text{ and } d(z,y) = (1-s)d(x,y)} ~.
\]

\begin{thm}[Cordero-Erausquin--McCann--Schmuckenschl{\"a}ger] \label{thm:CMS-manifold-with-density}
Assume that $d\mu = \exp(-\psi) dvol_M$ with $\psi \in C^2(M)$ and $Ric_g + Hess_g \psi \geq 0$ on $M$.
Let $s \in [0,1]$ and $f,g,h : M \rightarrow \Real_+$ be such that:
\[
 \forall x,y \in M \;\;\; \forall z \in Z_s(x,y) \;\;\;\;\; h(z) \geq f^{1-s}(x) g^s(y) ~.
\]
Then:
\[
 \int_M h d\mu \geq \brac{\int_M f d\mu}^{1-s} \brac{\int_M g d\mu}^{s} ~.
\]
\end{thm}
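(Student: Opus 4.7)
The plan is to follow the optimal-transport approach of Cordero-Erausquin--McCann--Schmuckenschl\"ager. First I would reduce to the case of probability densities. Setting $F = \int f\, d\mu$ and $G = \int g\, d\mu$, we may assume $F, G > 0$ (else the right-hand side is zero). Let $\rho_0 := f/F$ and $\rho_1 := g/G$, and define probability measures $d\mu_0 := \rho_0\, d\mu$ and $d\mu_1 := \rho_1\, d\mu$ on $M$.

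Next, apply McCann's theorem on Riemannian optimal transport: there is a $\mu_0$-a.e.~uniquely defined Borel map $T : M \to M$ pushing $\mu_0$ forward to $\mu_1$ that is optimal for the quadratic cost $d^2/2$, and moreover there is a $\mu_0$-measurable selection $T_s$ with $T_s(x) \in Z_s(x, T(x))$ (the displacement interpolation along the unique minimizing geodesic from $x$ to $T(x)$). Let $\mu_s := (T_s)_*\mu_0$ and write $\rho_s := d\mu_s/d\mu$.

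The main analytic step, which is the heart of the CMS theorem, is the Jacobian inequality for $T_s$ with respect to the weighted volume $d\mu = e^{-\psi}\, dvol_M$ under the $CD(0,\infty)$ condition $Ric_g + Hess_g\,\psi \geq 0$. By computing the second derivative in $s$ of $-\log \rho_s(T_s(x))$ using the Riccati equation for the Jacobi fields along the geodesic $s \mapsto T_s(x)$, one shows that $s \mapsto -\log \rho_s(T_s(x))$ is a convex function of $s$ for $\mu_0$-a.e.~$x$, which yields the pointwise estimate
\begin{equation*}
\rho_s(T_s(x)) \;\leq\; \rho_0(x)^{1-s}\,\rho_1(T(x))^{s} \qquad \mu_0\text{-a.e.}
\end{equation*}
This is exactly where the curvature-dimension hypothesis is used, and it is the main obstacle: the statement reduces to dimension $n$ computations (comparing to constant-curvature model spaces) for $CD(K,N)$ with $N<\infty$, but the $CD(0,\infty)$ case isolates only the $\log$-determinant plus the Hessian of $\psi$ and produces no dimensional term. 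In our setting this is the content of the CMS theorem and is quoted as such; the full derivation can be found in \cite{CMSInventiones, CMSManifoldWithDensity}.

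Finally I would combine everything. Using the hypothesis $h(T_s(x)) \geq f(x)^{1-s} g(T(x))^s = F^{1-s} G^{s}\,\rho_0(x)^{1-s}\rho_1(T(x))^s$, change variables via $\mu_s = (T_s)_*\mu_0$, and apply the Jacobian estimate:
\begin{align*}
\int_M h\, d\mu \;&\geq\; \int_M \frac{h(y)}{\rho_s(y)}\,\rho_s(y)\,d\mu(y) \;=\; \int_M \frac{h(T_s(x))}{\rho_s(T_s(x))}\,\rho_0(x)\,d\mu(x) \\
&\geq\; F^{1-s} G^{s} \int_M \frac{\rho_0(x)^{1-s}\rho_1(T(x))^{s}}{\rho_0(x)^{1-s}\rho_1(T(x))^{s}}\,\rho_0(x)\,d\mu(x) \;=\; F^{1-s} G^{s},
\end{align*}
which is the desired inequality. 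One final technicality is handling the set $\{\rho_s=0\}$: since we only integrate against $d\mu_s$ there the contribution is zero, and the reciprocal $1/\rho_s$ is used only $\mu_s$-a.e.\ (equivalently only at points of the form $T_s(x)$ with $\rho_0(x)>0$), so the computation is legitimate.
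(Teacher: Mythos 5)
The paper does not actually prove this theorem: it is quoted, with attribution, from Cordero-Erausquin--McCann--Schmuckenschl\"ager \cite{CMSManifoldWithDensity}, so there is no internal proof to compare against. Your outline is the standard optimal-transport argument from that reference, and its overall structure --- normalizing $f,g$ to densities $\rho_0,\rho_1$, taking the displacement interpolation $\mu_s=(T_s)_*\mu_0$ along the ($\mu_0$-a.e.\ unique) minimizing geodesics provided by McCann's theorem, invoking the pointwise estimate $\rho_s(T_s(x))\le\rho_0(x)^{1-s}\rho_1(T(x))^s$, and concluding by the change of variables --- is correct; the final computation is sound, including the handling of $\set{\rho_s=0}$.

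Two caveats. First, a sign slip: \emph{convexity} of $s\mapsto-\log\rho_s(T_s(x))$ would give $\rho_s(T_s(x))\ge\rho_0(x)^{1-s}\rho_1(T(x))^s$, the reverse of what you need. The correct statement under $CD(0,\infty)$ is \emph{concavity} of $s\mapsto-\log\rho_s(T_s(x))$, equivalently concavity of $s\mapsto\log J_s(x)$ for the Jacobian of $T_s$ relative to $d\mu$, via the Monge--Amp\`ere relation $\rho_0(x)=\rho_s(T_s(x))\,J_s(x)$ together with $J_0\equiv 1$. The displayed pointwise inequality you then use is the right one, so this is a wording error rather than a broken step, but as written the intermediate claim contradicts the estimate you derive from it. Second, that pointwise Jacobian estimate \emph{is} the theorem --- everything else is bookkeeping --- and you defer it entirely to \cite{CMSInventiones,CMSManifoldWithDensity}. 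That is defensible here precisely because the paper itself cites the result without proof, but it means your proposal is an exposition of the known proof with its core ingredient quoted, not an independent derivation.
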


\begin{proof}[Proof of Lemma \ref{lem:Borell}]
Let $t\geq 1$, and observe that:
\begin{equation} \label{eq:Borel-trick}
\forall x \in B(x_0,R) \;,\; \forall y \in M \setminus B(x_0,tR) \;\;\;\;\; Z_{\frac{2}{t+1}}(x,y) \cap B(x_0,R) = \emptyset ~.
\end{equation}
Indeed, if this is not so, there would exist a $z \in M$ so that:
\[
 d(x,z) = \frac{2}{t+1} d(x,y) \;,\; d(z,y) = \frac{t-1}{t+1} d(x,y) \;,\; d(z,x_0) < R ~.
\]
But then:
\[
 d(y,x_0) \leq d(y,z) + d(z,x_0) < \frac{t-1}{t+1} ( d(x,x_0) + d(x_0,y) ) + R < \frac{t-1}{t+1} d(y,x_0) + \frac{2t}{t+1} R ~,
\]
which would imply that $d(y,x_0) < tR$, a contradiction. Hence, (\ref{eq:Borel-trick}) implies that the
functions $f = \chi_{B(x_0,R)}$, $g = \chi_{M \setminus B(x_0,tR)}$ and $h = \chi_{M \setminus B(x_0,R)}$ satisfy
the assumption of Theorem \ref{thm:CMS-manifold-with-density} with $s = \frac{2}{t+1}$.
Theorem \ref{thm:CMS-manifold-with-density} then implies that:
\[
1-\theta \geq \theta^{\frac{t-1}{t+1}} \mu_m(M \setminus B(x_0,tR) )^{\frac{2}{t+1}} ~,
\]
and the conclusion of the lemma follows.
\end{proof}

\section*{Appendix}
\renewcommand{\thesection}{A}
\setcounter{thm}{0}
\setcounter{equation}{0}  \setcounter{subsection}{0}

In the Appendix, we provide more details regarding the statement and
ideas underlying the proof of Theorem \ref{thm:Intro-concavity} from
the Introduction, as it plays an essential role in our argument. In
the statement of this theorem, we have summarized a series of
results in Riemannian Geometry concerning the concavity of the
isoperimetric profile, which were proved under increasingly general
convexity assumptions.
An essential ingredient in the proofs of these results is provided by Geometric Measure Theory,
which guarantees the existence and regularity of the isoperimetric minimizers, and permits the use
of a variational argument to deduce the concavity of the profile.

\subsection{Manifolds with uniform densities}

First, we survey the case where the metric space $(\Omega,d)$ is given by a bounded domain
(connected open set) with $C^2$ boundary in a smooth complete oriented connected $n$-dimensional ($n\geq 2$)
Riemannian manifold $(M,g)$ along with the induced geodesic distance $d$ in $M$, and the probability
measure $\mu$ is given by the restriction to $\Omega$ of the Riemannian volume form $vol_M$ on $M$,
normalized so that $\mu(\Omega) = 1$.
We summarize for completeness some remarkable results provided by
Geometric Measure Theory about the existence and regularity of isoperimetric minimizers in the case
we are considering,
and refer to the books of Federer \cite{FedererBook}, Morgan \cite{MorganBook},
Giusti \cite{GiustiBook} and Burago and Zalgaller
\cite{BuragoZalgallerBook} for further information.

\begin{thm*}[Almgren \cite{AlmgrenExistenceAndRegularity,AlmgrenMemoirs}, Bombieri
\cite{BombieriRegularityTheory}, Gonzales--Massari--Tamanini \cite{GMT}, Gr\"{u}ter \cite{Gruter},
Morgan \cite{MorganRegularityOfMinimizers}]
For any $t\in(0,1)$, there exists an open isoperimetric minimizer $A$ of measure $t$ for the
isoperimetric problem on $(\Omega,d,\mu)$ as above.
The boundary $\Sigma = \overline{\partial A \cap \Omega}$ can be written as a disjoint union of a
regular part $\Sigma_r$ and a set of singularities $\Sigma_s$, with the following properties:
\begin{itemize}
\item
$\Sigma_r \cap \Omega$ is a smooth, embedded hypersurface of constant mean curvature.
\item
$\Sigma_r$ meets $\partial \Omega$ orthogonally.
\item
$\Sigma_s$ is a closed set of Hausdorff co-dimension not smaller than 8. This result is sharp.
\end{itemize}
\end{thm*}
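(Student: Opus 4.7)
The plan is to follow the standard geometric measure theory program for perimeter-minimizing sets under a volume constraint. First, one replaces smooth subdomains by Caccioppoli (finite-perimeter) sets: for Borel $A \subset \Omega$ define
\[
P_\Omega(A) := \sup\set{\int_A \text{div}_M X \, dvol_M \; : \; X \in C^1_c(\Omega,TM), \; |X|_g \leq 1} ~,
\]
which agrees with $vol_M(\partial A \cap \Omega)$ when $\partial A$ is smooth and meets $\partial \Omega$ transversally. This functional is lower semicontinuous under $L^1$-convergence of characteristic functions, and BV compactness in the bounded $C^2$ domain $\Omega$ yields an $L^1$-accumulation point of any perimeter-bounded minimizing sequence of sets of prescribed mass $t \cdot vol_M(\Omega)$. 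Lower semicontinuity delivers an isoperimetric minimizer $A$; replacing $A$ by its set of Lebesgue density one produces an open representative.

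For interior regularity, the first variation of $P_\Omega - \Lambda \, vol_M$ along deformations compactly supported in $\Omega$ must vanish for an appropriate Lagrange multiplier $\Lambda$, so the reduced boundary $\partial^* A \cap \Omega$ has distributional mean curvature equal to the constant $\Lambda$. Invoking the interior regularity theorem of De Giorgi, Federer, Bombieri, Almgren and Tamanini for $\Lambda$-almost-minimizers of perimeter on a smooth Riemannian manifold, $\partial^* A \cap \Omega$ is a smooth embedded constant-mean-curvature hypersurface away from a closed singular set $\Sigma_s$. The bound $\dim_{\mathcal{H}} \Sigma_s \leq n - 8$ comes from Federer's dimension reduction argument: tangent cones at singular points (obtained via blow-ups in a normal chart) are themselves perimeter-minimizing cones in $\Real^n$, and Simons' theorem rules out non-flat stable minimal cones of dimension at most $6$. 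Sharpness is witnessed in the interior by the Simons cone $\set{x_1^2 + \cdots + x_4^2 = x_5^2 + \cdots + x_8^2}$ in $\Real^8$, shown to be area-minimizing by Bombieri, De Giorgi and Giusti.

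For boundary regularity, I would follow Grüter: after flattening $\partial \Omega$ locally by a $C^2$ chart and reflecting $A$ across the boundary, the reflected set is an almost-minimizer of perimeter in a full neighborhood of the boundary point, so the interior theory applies across the former boundary. The orthogonality of $\Sigma_r$ with $\partial \Omega$ is then the natural Neumann condition produced by the first variation: since $P_\Omega$ does not charge the part of $\partial A$ lying in $\partial \Omega$, deformations tangent to $\partial \Omega$ produce no boundary contribution, forcing the mean curvature vector of $\Sigma_r$ along $\Sigma_r \cap \partial \Omega$ to be perpendicular to $\partial \Omega$, which in codimension one means $\Sigma_r$ meets $\partial \Omega$ orthogonally. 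The main obstacle in the entire program is the interior regularity together with the dimension reduction; these rest on monotonicity of mass-density ratios for stationary integral varifolds, Allard's $\eps$-regularity theorem, and Simons' identity for stable minimal hypersurfaces. Once this deep machinery is in place, the existence, constant mean curvature, boundary orthogonality and sharpness assertions all follow in a comparatively routine manner.
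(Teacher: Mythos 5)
The paper does not actually prove this theorem: it is quoted as a black-box result of Almgren, Bombieri, Gonzales--Massari--Tamanini, Gr\"uter and Morgan, with the reader referred to the books of Federer, Morgan, Giusti and Burago--Zalgaller. Your outline reproduces the standard proof architecture from those sources --- BV compactness and lower semicontinuity of relative perimeter for existence, replacement of the volume constraint by a $\Lambda$-penalization so that the interior almost-minimizer regularity theory, Federer's dimension reduction and Simons' theorem apply, the Bombieri--De Giorgi--Giusti cone for sharpness, and Gr\"uter's reflection for free-boundary regularity and orthogonality --- and it is essentially correct as a roadmap. The two steps you quietly defer to the literature are genuinely the delicate ones: showing that a volume-constrained minimizer is in fact a $\Lambda$-minimizer of perimeter (the content of \cite{GMT}), and the fact that reflecting across the flattened $C^2$ boundary yields only a Lipschitz metric, so the interior theory must be applied in the form valid for almost-minimizers of non-smooth elliptic integrands; both are handled exactly along the lines you indicate in \cite{Gruter} and \cite{MorganRegularityOfMinimizers}.
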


For all the results to be described, it is essential that the
Hausdorff co-dimension of the singular part of the boundary is large
(although typically knowing that it is greater than 3 is
sufficient). This approach was used by M. Gromov in his influential
generalization of P. L\'evy's isoperimetric inequality
\cite{GromovGeneralizationOfLevy},\cite[Appendix C]{Gromov}. The negligible singular part permits to
consider a normal variation of the regular part, and from there on
one may continue by using the readily available tools from
Riemannian Geometry to calculate the first and second variations of volume and area.
Before proceeding, we remark that most results
we will mention deduce that the isoperimetric profile satisfies a second order differential inequality
under more general convexity assumptions than stated
(e.g. a negative lower bound on the Ricci curvature), and provide a characterization of the equality case as well.

\medskip

The first convexity assumption which we add is that the Ricci
curvature tensor $Ric_g$ of $(M,g)$ be non-negative. When $M$ is a closed
manifold and $\Omega = M$, and under the additional assumption that
all isoperimetric minimizers are smooth submanifolds (this is
always the case when $n \leq 7$), it was shown by Bavard and Pansu
\cite{BavardPansu} that $I$ is concave on $[0,1]$. In fact, these
authors attribute the same statement without the assumption on the
smoothness of the isoperimetric minimizers to B\'erard, Besson and
Gallot. This was also formally verified by Morgan and Johnson
\cite[Section 2.1 and Proposition 3.3]{MorganJohnson}. Gallot in
\cite[Corollary 6.6]{GallotIsoperimetricInqs} showed that in fact
the renormalized profile $I^{n/(n-1)}$ is concave in this case. This
result captures the right dependence of the dimension in the
exponent.

For our applications, the case where $\Omega$ is a proper subset of
$M$ is of most interest. In that case, to deduce the concavity of
the isoperimetric profile, clearly one has to add some additional
assumptions on $\Omega$. When $(M,g)$ is the Euclidean space
$(\Real^n,\abs{\cdot})$, it was first shown by Sternberg and Zumbrun
\cite{SternbergZumbrun} that a natural condition is that $\Omega$ be
convex, in which case they showed that the profile $I$ is indeed
concave. This result was further strengthened by Kuwert
\cite{Kuwert}, who showed that the renormalized profile
$I^{n/(n-1)}$ is also concave. This was then generalized by Bayle
and Rosales \cite{BayleRosales} to the case of a Riemannian manifold
with non-negative Ricci curvature, under the assumption that
$\Omega$ is \emph{locally convex}:

\begin{dfn*}
A domain $\Omega \subset (M,g)$ is said to be locally convex, if all geodesics in $M$ tangent
to $\partial \Omega$ are locally outside of $\Omega$. By a result of Bishop
\cite{BishopInBayleRosales}, in case that $\Omega$ has $C^2$ boundary, this is equivalent to
requiring that the second fundamental form of $\partial \Omega$ with respect to the normal pointing
into $\Omega$ be positive semi-definite on all of $\partial \Omega$.
\end{dfn*}

We summarize the above results in the following:

\begin{thm}[Bavard--Pansu, B\'erard--Besson--Gallot, Gallot, Morgan--Johnson,
Sternberg--Zumbrun, Kuwert, Bayle--Rosales]
\label{thm:uniform-concavity} Let $(M,g)$ be a smooth complete oriented connected
Riemannian manifold of dimension $n \geq 2$ with non-negative Ricci
curvature, and let $\Omega$ denote a locally convex bounded domain
in $(M,g)$. Let $d$ denote the induced geodesic distance in $(M,g)$
and $\mu$ the restriction to $\Omega$ of the canonical volume form
$vol_M$ on $M$, normalized so that $\mu(\Omega) = 1$. Assume in
addition that $\Omega$ has $C^2$ smooth boundary. Then the
isoperimetric profile $I = I_{(\Omega,d,\mu)}$ is a concave function
on $[0,1]$. Moreover, so is $I^{n/(n-1)}$.
\end{thm}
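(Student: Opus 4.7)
My plan is to combine the existence and regularity theorem for isoperimetric minimizers (recalled just before Theorem \ref{thm:uniform-concavity}) with a classical second-variation-of-area argument to produce a smooth upper barrier for $I$ at every interior point. Fix $t_0 \in (0,1)$ and let $A_0 \subset \Omega$ be an isoperimetric minimizer with $\mu(A_0)=t_0$, so $\partial A_0 \cap \Omega$ decomposes into a smooth constant-mean-curvature hypersurface $\Sigma_r$ meeting $\partial\Omega$ orthogonally and a singular part $\Sigma_s$ with $\dim_{\mathcal{H}}\Sigma_s \le n-9$. Deform $A_0$ along the outward unit normal $N$ to $\Sigma_r$ with speed $u\equiv 1$ (smoothly cut off near $\Sigma_s$), producing a family $A_s$ of volume $V(s)$ and area $a(s)$. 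Since $V'(0)=|\Sigma_r|>0$, I invert and write $a = \mathcal{A}(V)$ as a $C^2$ function near $V=t_0$; by admissibility, $I(V)\le \mathcal{A}(V)$ with equality at $V=t_0$, so $\mathcal{A}$ is a $C^2$ upper barrier for $I$ at the arbitrary interior point $t_0$.

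The heart of the argument is the second-variation computation. The first variation gives $a'(0)=H|\Sigma_r|$ and $V'(0)=|\Sigma_r|$, hence $\mathcal{A}'(t_0)=H$, and a routine computation yields $V''(0)=H|\Sigma_r|$. For the second variation of area, the gradient term drops out because $u$ is constant, leaving
\begin{equation*}
a''(0) \;=\; -\int_{\Sigma_r}\!\bigl(\mathrm{Ric}_g(N,N)+|II_{\Sigma_r}|^2\bigr)\,d\sigma \;-\; \int_{\partial\Sigma_r \cap \partial\Omega}\!II_{\partial\Omega}(N,N)\,d\ell .
\end{equation*}
Each contribution is non-negative under our hypotheses: $\mathrm{Ric}_g\ge 0$ by assumption; local convexity of $\Omega$, combined with the fact that orthogonal contact forces $N$ to be tangent to $\partial\Omega$ along the contact curve, yields $II_{\partial\Omega}(N,N)\ge 0$; and Cauchy--Schwarz gives $|II_{\Sigma_r}|^2 \ge H^2/(n-1)$. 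Thus $a''(0)\le -H^2|\Sigma_r|/(n-1)$, and inserting these into $\mathcal{A}''(t_0)=(a''(0)V'(0)-a'(0)V''(0))/V'(0)^3$ gives
\begin{equation*}
\mathcal{A}''(t_0) \;\le\; -\tfrac{n}{n-1}\cdot\tfrac{H^2}{|\Sigma_r|}\;\le\;0,
\end{equation*}
and moreover $\mathcal{A}(t_0)\mathcal{A}''(t_0)+(n-1)^{-1}\mathcal{A}'(t_0)^2 \le -H^2 \le 0$, which is precisely the pointwise ODE condition for $\mathcal{A}^{n/(n-1)}$ to be concave at $t_0$.

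Since $I(t_0)=\mathcal{A}(t_0)$ and $I\le \mathcal{A}$ locally, these inequalities transfer to $I$ in the viscosity sense at every interior $t_0$, giving both $I''\le 0$ and $(I^{n/(n-1)})''\le 0$. Together with the continuity of $I$ (Lemma \ref{lem:I-continuity}) and $I(0)=I(1)=0$, this promotes the pointwise inequalities to concavity of $I$ and of $I^{n/(n-1)}$ on the closed interval $[0,1]$. The main obstacle, and the reason the large co-dimension of $\Sigma_s$ is essential, is to carry out the variational computation rigorously despite the singularities: one must approximate $u\equiv 1$ by smooth compactly supported variations on $\Sigma_r$ vanishing in a shrinking neighborhood of $\Sigma_s$. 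Because $\Sigma_s$ has Hausdorff co-dimension at least $8$ (in particular at least $3$) inside the $(n-1)$-dimensional surface $\Sigma$, it has vanishing $W^{1,2}$-capacity there, so the Dirichlet cost of the cut-off is negligible and all error terms in $a(s)$, $V(s)$ vanish in the limit, preserving the bound on $a''(0)$. The remaining passage from viscosity second-order inequalities to classical concavity is standard for continuous functions on an interval.
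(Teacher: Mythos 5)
Your overall strategy is exactly the one the paper attributes to Sternberg--Zumbrun, Kuwert and Bayle--Rosales: the paper itself gives no proof of this theorem, only a survey in the Appendix explaining that one takes a regular isoperimetric minimizer, performs a unit-speed normal variation cut off near the high-codimension singular set, and reads off a second-order differential inequality for $I$ in the barrier/viscosity sense. So your architecture (existence/regularity, orthogonal free boundary, sign of $II_{\partial\Omega}(N,N)$ via local convexity, Cauchy--Schwarz $|II|^2\ge H^2/(n-1)$, capacity argument for the cut-off, passage from viscosity inequalities to concavity) is the right one and matches the cited literature.

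There is, however, a concrete error in the heart of the computation: your second variation of area omits the term $\int_{\Sigma_r}H^2\,d\sigma$. For the geodesic normal variation with $u\equiv 1$ one has $a'(s)=\int_\Sigma H_s J_s\,d\sigma$ with $\partial_s J=H J$ and the Riccati equation $\partial_s H=-(|II|^2+\mathrm{Ric}(N,N))$, so the correct formula is
\begin{equation*}
a''(0)=\int_{\Sigma_r}\bigl(H^2-|II_{\Sigma_r}|^2-\mathrm{Ric}_g(N,N)\bigr)d\sigma-\int_{\partial\Sigma_r}II_{\partial\Omega}(N,N)\,d\ell .
\end{equation*}
Your version without the $H^2$ term leads to the intermediate claims $\mathcal{A}''(t_0)\le-\frac{n}{n-1}\frac{H^2}{|\Sigma_r|}$ and $\mathcal{A}\mathcal{A}''+\frac{(\mathcal{A}')^2}{n-1}\le-H^2$, and these are \emph{false}: for a half-ball of radius $r$ meeting a flat piece of $\partial\Omega$ in $\Real^n$ one computes $\mathcal{A}''=-\frac{H^2}{(n-1)|\Sigma_r|}$ and $\mathcal{A}\mathcal{A}''+\frac{(\mathcal{A}')^2}{n-1}=0$ exactly (consistent with $I^{n/(n-1)}$ being affine there), violating both of your strict bounds. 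The good news is that the error is self-correcting once the formula is fixed: since $H$ is constant, $\int_{\Sigma_r}H^2\,d\sigma=H^2|\Sigma_r|$ cancels against $-a'(0)V''(0)=-H^2|\Sigma_r|^2$ in the numerator $a''V'-a'V''$, yielding $\mathcal{A}''(t_0)\le-\frac{H^2}{(n-1)|\Sigma_r|}\le 0$ and precisely $\mathcal{A}\mathcal{A}''+\frac{(\mathcal{A}')^2}{n-1}\le 0$, which is the sharp ODE condition for concavity of $\mathcal{A}^{n/(n-1)}$, with equality in the model case. You should rerun this sanity check; the rest of the argument (the barrier reduction, the capacity estimate for the cut-off near $\Sigma_s$, and the endpoint behaviour via $I(0)=I(1)=0$ and continuity) is sound, modulo the standard care needed to compare the Minkowski boundary measure of the deformed competitors with the area functional $a(s)$.
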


\subsection{Manifolds with densities}

As before, let $(M,g)$ denote an $n$-dimensional ($n\geq 2$)
smooth complete oriented connected Riemannian manifold with induced geodesic distance $d$. In
addition, let $\psi \in C^2(M)$ be such that $d\mu =
\exp(-\psi) d vol_M$ is a probability measure on $M$. Since the
influential work of Bakry and \'Emery \cite{BakryEmery} in the
abstract framework of diffusion generators, it is known that a
natural convexity condition on a manifold with density, which
replaces the condition $Ric_g \geq 0$ in the uniform density case,
is to require the following $CD(0,\infty)$ Curvature-Dimension
condition:
\begin{equation} \label{eq:RicHess}
Ric_g + Hess_g \psi \geq 0 \;\; \text{ as 2-tensor fields } ~.
\end{equation}

\begin{thm}[Bayle \cite{BayleThesis}, Morgan
\cite{MorganManifoldsWithDensity,MorganNewBook}]
\label{thm:MnfldsDensityConcave} Let $\Omega = (M,g)$ and $d,\mu$ as
above. Assume that (\ref{eq:RicHess}) holds on $\Omega$. Then $I =
I_{(\Omega,d,\mu)}$ is a concave function on $[0,1]$.
\end{thm}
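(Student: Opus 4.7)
The plan is to adapt the variational strategy of Bavard--Pansu and Sternberg--Zumbrun to the manifold-with-density setting, with the Bakry--\'Emery hypothesis \eqref{eq:RicHess} replacing the classical Ricci bound and the weighted mean curvature $H_\psi := H - \sscalar{\nabla \psi, N}$ replacing the usual mean curvature. The input from geometric measure theory is that for each $t_0 \in (0,1)$ there exists an open weighted-isoperimetric minimizer $A_{t_0}$ with $\mu(A_{t_0}) = t_0$, whose boundary decomposes into a smooth regular part $\Sigma_r$ of constant weighted mean curvature and a singular part $\Sigma_s$ of Hausdorff codimension at least $8$; the standard regularity theory carries over to the weighted setting because the smooth positive weight $e^{-\psi}$ is locally comparable to a constant.

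I would next record the weighted first and second variation formulas for compactly supported normal deformations $uN$ of $\Sigma_r$. The first variation of the weighted perimeter equals $\int_{\Sigma_r} u H_\psi \, e^{-\psi} \, d\mathcal{H}^{n-1}$, so minimality forces $H_\psi$ to be constant on $\Sigma_r$. For volume-preserving deformations, the second variation reads
\[
 Q(u) = \int_{\Sigma_r} \brac{|\nabla_{\Sigma_r} u|^2 - u^2 \brac{|B|^2 + (Ric_g + Hess_g \psi)(N,N)}} e^{-\psi} \, d\mathcal{H}^{n-1},
\]
where $B$ is the second fundamental form of $\Sigma_r$. This is the single point at which \eqref{eq:RicHess} is used: it makes the curvature term non-negative, so the second variation is dominated by the Dirichlet energy of $u$.

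The final step is the parallel-hypersurface comparison. Let $\Omega_s$ denote the set of points at signed distance $<s$ from $A_{t_0}$, and set $v(s) := \mu(\Omega_s)$, $P(s) := \mu^+(\partial \Omega_s)$. Then $v'(s) = P(s)$, and the first and second variation formulas together with the Riccati evolution $\partial_s H_\psi = -|B|^2 - (Ric_g + Hess_g \psi)(N,N)$ of $H_\psi$ under the unit normal flow give, at $s=0$,
\[
 \brac{\frac{d^2 P}{dv^2}}_{s=0} = -\frac{1}{P(0)^2} \int_{\Sigma_r} \brac{|B|^2 + (Ric_g + Hess_g \psi)(N,N)} e^{-\psi} \, d\mathcal{H}^{n-1} \leq 0,
\]
where the constancy of $H_\psi$ on $\Sigma_r$ is used to kill the variance of $H_\psi$ that would otherwise appear. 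Since $I(v(s)) \leq P(s)$ for all $s$ with equality at $s=0$, the smooth function $v \mapsto P(v)$ is an upper barrier to $I$ at $t_0$ with non-positive second derivative there, so its tangent line at $t_0$ is a local supporting line for $I$ from above. As $t_0 \in (0,1)$ was arbitrary and $I$ is continuous by Lemma \ref{lem:I-continuity}, this local upper-supporting-line property forces $I$ to be concave on $(0,1)$, and continuity at the endpoints extends concavity to $[0,1]$.

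The main obstacle is handling the singular set $\Sigma_s$: one cannot directly plug $u \equiv 1$ into $Q$ because the test function must be compactly supported away from $\Sigma_s$. The Morgan--Ros cutoff argument, which exploits the bound $\dim_{\mathcal{H}} \Sigma_s \leq n-8$, produces Lipschitz cutoffs $\eta_k \to 1$ on $\Sigma_r$ with $\int_{\Sigma_r} |\nabla \eta_k|^2 e^{-\psi} \, d\mathcal{H}^{n-1} \to 0$, and letting $k \to \infty$ recovers the inequality used above. A secondary subtlety is that $I$ is not a priori smooth, so the statement ``$I$ has non-positive second derivative'' must be interpreted through the upper-barrier construction rather than classically; this is precisely what the parallel-set family provides, and why continuity of $I$ from Lemma \ref{lem:I-continuity} is enough to conclude honest concavity without appealing to a viscosity-solution framework.
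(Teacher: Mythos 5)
Your proposal is correct and is essentially the argument the paper attributes to Bayle and Morgan (the paper only cites this result, its own contribution being the verification of existence and regularity of weighted minimizers): GMT existence/regularity with a codimension-$8$ singular set, constancy of the weighted mean curvature from the first variation, the Bakry--\'Emery term $(Ric_g+Hess_g\,\psi)(N,N)\geq 0$ in the second variation/Riccati comparison, a cutoff argument near $\Sigma_s$, and the standard upper-barrier criterion for concavity of the continuous profile. The only two points worth making explicit, since they are precisely what the paper's discussion supplies, are that existence of a minimizing current on a non-compact complete $M$ rests on the finiteness of $\mu(M)$, and that for the resulting regular minimizer the Minkowski boundary measure (used to define $I$) coincides with the weighted perimeter, so that $I(t_0)=P(0)$ as your barrier argument requires.
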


This theorem was proved by Bayle in \cite{BayleThesis} under the assumption that $M$ is a closed
manifold. It was noted (without explanation) by Morgan
\cite[Corollary 9]{MorganManifoldsWithDensity}
that the same proof applies for a general complete manifold, as long as it has finite
$\mu$-measure. Indeed, Bayle's argument remains exactly the same; the only point one needs to check
is the existence and regularity of isoperimetric minimizers in the manifold with density setting.
The argument goes as follows: it was shown by Morgan in
\cite[Remark 3.10]{MorganRegularityOfMinimizers} that given a complete smooth Riemannian
manifold with positive density $\rho \in C^k(M)$ ($k \geq 0$), if there exists an area minimizing
current then its boundary is necessarily $C^k$ regular outside a set of Hausdorff codimension at
least 8. As explained e.g. in
\cite{MorganRegularityOfMinimizers,MorgansStudentThesis,MorganManifoldsWithDensity}, the existence
of an area minimizing current is guaranteed by the local compactness Theorem for currents (see
\cite{MorganBook}), as soon as the $\mu$-measure of $M$ is finite, which is always the case in our
setting. Since the minimizing
current is regular by the previous result, it follows that the usual notion of weighted area (i.e.
Minkowski boundary measure) and the weighted area of a current coincide, and hence there exists a
regular minimizer of Minkowski boundary measure.

The assumption that $M$ has finite mass is essential for the
existence of minimizers, otherwise one may construct counterexamples
(see \cite{BenjaminiCao} or \cite[p. 51]{BayleThesis}). It is also
essential that the density be continuous, otherwise minimizers need not
necessarily exist
(consider the density $\frac{1}{4} \chi_{[0,1] \times [0,1]} +
\chi_{[\frac{1}{4},1]\times[0,1]}$ on $[0,1]\times[0,1]$).

\medskip

We remark that the same existence and regularity argument works for
manifolds with a smooth boundary. Let $\Omega \subset (M,g)$ be a
domain (connected open set) with $C^2$ boundary, let $d$ be the
geodesic distance induced by $(M,g)$, and let $d\mu = \exp(-\psi) d
vol_M|_\Omega$ with $\psi \in C^2(\overline{\Omega})$ so that
$\mu(\Omega)=1$. One can easily check that the argument of
Gr\"{u}ter \cite{Gruter} on the constant curvature of the regular
part of the boundary and the orthogonality still applies, with a
minor change in the conclusion. We summarize this in the following:

\begin{thm*}[Morgan \cite{MorganRegularityOfMinimizers,MorganBook,MorganNewBook}, Gr\"{u}ter
\cite{Gruter}]
For any $t\in(0,1)$, there exists an open isoperimetric minimizer $A$ of measure $t$ for the
isoperimetric problem on $(\Omega,d,\mu)$ as above. The boundary $\Sigma = \overline{\partial A \cap
\Omega}$ can be written as a disjoint union of a regular part $\Sigma_r$ and a set of singularities
$\Sigma_s$, with the following properties:
\begin{itemize}
\item
$\Sigma_r \cap \Omega$ is a $C^2$ smooth, embedded hypersurface of constant \emph{generalized} mean
curvature, defined as:
\[
H_{\Sigma_r,\psi}(x) := H_{\Sigma_r}(x) + \frac{1}{n-1} g_x(\nabla_x \psi, \nu_{\Sigma_r}(x)),
\]
where $H_{\Sigma_r}(x)$ denotes the usual mean curvature of $\Sigma_r$ in the direction of the
unit normal $\nu_{\Sigma_r}(x)$ pointing into $A$ (i.e. the trace of the second fundamental form
divided by $(n-1)$), for $x\in \Sigma_r \cap \Omega$.
\item
$\Sigma_r$ meets $\partial \Omega$ orthogonally (even in the
presence of a density).
\item
$\Sigma_s$ is a closed set of Hausdorff co-dimension not smaller than 8.
\end{itemize}
\end{thm*}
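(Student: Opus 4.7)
The plan is to adapt and combine two existing lines of argument: Morgan's existence and regularity theory for isoperimetric minimizers on manifolds with density \cite{MorganRegularityOfMinimizers,MorganManifoldsWithDensity}, and Gr\"{u}ter's \cite{Gruter} free-boundary treatment of isoperimetric hypersurfaces meeting a smooth barrier orthogonally. Since $\psi \in C^2(\overline{\Omega})$ and $\mu(\Omega) = 1$ is finite, the weighted area functional $A \mapsto \mu^+(A) = \int_{\partial^* A \cap \Omega} e^{-\psi} \, dvol_M$ is an elliptic parametric integrand with $C^2$ positive coefficients, locally bounded and bounded away from zero. Consequently the weighted and Riemannian perimeters are locally comparable on $\overline{\Omega}$, and the standard machinery of Geometric Measure Theory for elliptic integrands is at our disposal.

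First I would establish existence by the direct method. Take a minimizing sequence $\set{A_k}$ of Caccioppoli sets in $\Omega$ with $\mu(A_k) = t$ and $\mu^+(A_k) \to I(t)$. Comparability of perimeters yields a uniform bound on the unweighted perimeters of $A_k$, so by the Federer--Fleming compactness theorem (cf.~\cite{MorganBook}) a subsequence converges in $L^1_{\textrm{loc}}(\Omega)$ to some Caccioppoli set $A$. Lower semicontinuity of weighted perimeter (continuity of the density) and continuity of weighted volume under $L^1$ convergence of characteristic functions give $\mu(A) = t$ and $\mu^+(A) = I(t)$. Next, on $\Omega$ the minimizer $A$ is a local minimizer of an elliptic $C^2$ parametric integrand, so Morgan's Remark 3.10 in \cite{MorganRegularityOfMinimizers} applies and yields the decomposition $\Sigma = \Sigma_r \sqcup \Sigma_s$ with $\Sigma_r \cap \Omega$ a $C^2$ embedded hypersurface and $\Sigma_s$ a closed set of Hausdorff codimension at least $8$; sharpness is inherited from the unweighted Euclidean setting.

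For the constant generalized mean curvature, test $A$ against the flow $\Phi_s$ of a smooth vector field $X$ compactly supported in $\Omega$. Expanding the tangential divergence of $e^{-\psi} X$ along $\Sigma_r$ in the first variation of weighted area produces
\[
\frac{d}{ds}\Big|_{s=0} \int_{\Phi_s(\Sigma_r)} e^{-\psi} \, dvol_M = -(n-1) \int_{\Sigma_r} H_{\Sigma_r,\psi}\, g(X,\nu_{\Sigma_r})\, e^{-\psi} \, dvol_M ~,
\]
the second term in $H_{\Sigma_r,\psi}$ arising precisely from differentiating $e^{-\psi}$ in the normal direction. Coupling this with the first variation of $\mu$-volume under the constraint $\mu(\Phi_s(A)) \equiv t$ and applying a standard Lagrange-multiplier argument forces $H_{\Sigma_r,\psi}$ to equal a single constant throughout $\Sigma_r$. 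For orthogonality at $\partial \Omega$, follow Gr\"{u}ter \cite{Gruter}: test with a vector field $X$ that is tangent to $\partial \Omega$ along $\partial \Omega$, so that its flow preserves $\Omega$. The first variation of weighted area then picks up, in addition to the interior term above, a boundary contribution proportional to
\[
\int_{\Sigma_r \cap \partial \Omega} \cos \theta(x)\, g_x(X,\tau)\, e^{-\psi(x)} \, d\sigma(x) ~,
\]
where $\theta(x)$ is the contact angle and $\tau$ is the conormal of $\Sigma_r \cap \partial \Omega$ in $\Sigma_r$; arbitrariness of the tangential $X$ forces $\cos \theta \equiv 0$.

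The main technical obstacle is to verify that Gr\"{u}ter's free-boundary regularity theory (his $C^{1,\alpha}$ boundary estimates, boundary monotonicity formula and excess-decay lemma) is not disturbed by the smooth positive weight $e^{-\psi}$. The weight preserves ellipticity and the variational structure, and near $\partial \Omega$ contributes only lower-order perturbations which the standard blow-up analysis absorbs; nonetheless this step requires a careful re-examination of Gr\"{u}ter's arguments rather than a verbatim citation. Once this is accomplished, assembling the four ingredients above yields the theorem as stated.
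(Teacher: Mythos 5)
Your proposal follows essentially the same route as the paper: the paper likewise obtains existence and interior regularity by citing Morgan's results for manifolds with positive $C^k$ density (existence from the compactness theorem for currents, using that $\mu$ has finite total mass; regularity and the codimension-$8$ singular set from Remark 3.10 of \cite{MorganRegularityOfMinimizers}), and then observes that Gr\"uter's free-boundary argument for constant mean curvature and orthogonality at $\partial\Omega$ still applies with the $C^2$ weight, the only change being the extra $\frac{1}{n-1}g(\nabla\psi,\nu)$ term in the generalized mean curvature --- precisely the first-variation computation you spell out. The one caveat is that in your direct-method sketch the volume constraint $\mu(A)=t$ need not pass to an $L^1_{\mathrm{loc}}$ limit when $\Omega$ is unbounded (mass can escape to infinity); this is exactly where the finiteness of $\mu(\Omega)$ is used in Morgan's argument, to which the paper defers for this point.
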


It is then a (tedious) exercise to follow the proof of Sternberg and
Zumbrun \cite{SternbergZumbrun} and Bayle \cite{BayleThesis} (see also
\cite{BayleRosales}) and to deduce the following extension of
Theorem \ref{thm:MnfldsDensityConcave}:

\begin{thm}[after Sternberg and Zumbrun \cite{SternbergZumbrun} and Bayle \cite{BayleThesis}]
Let $\Omega \subset (M,g)$ be a locally convex domain with $C^2$
boundary, and let $d$,$\mu$ as above. Assume that (\ref{eq:RicHess})
holds on $\Omega$. Then $I = I_{(\Omega,d,\mu)}$ is a concave
function on $[0,1]$.
\end{thm}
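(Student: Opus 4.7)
The plan is to follow the variational proof of Sternberg--Zumbrun \cite{SternbergZumbrun} and Bayle \cite{BayleThesis}, adapted to the presence of a density $e^{-\psi}$ and of a boundary $\partial\Omega$. Fix $t\in(0,1)$ and apply the existence and regularity theorem stated just above to obtain an open isoperimetric region $A_t$ whose boundary inside $\Omega$ decomposes as $\Sigma=\Sigma_r\cup\Sigma_s$, where $\Sigma_r$ is a $C^2$ embedded hypersurface of constant generalized mean curvature $H_{\Sigma_r,\psi}$ meeting $\partial\Omega$ orthogonally, and $\Sigma_s$ is closed with Hausdorff codimension at least $8$. I would then consider smooth normal variations $u\nu$ of $\Sigma_r$ (with $\nu$ the inward unit normal to $A_t$), exponentiate them to a one-parameter family $\phi_s$ of diffeomorphisms of $\Omega$ preserving $\partial\Omega$, and use cut-offs $u_\varepsilon$ supported away from $\Sigma_s$---possible by the codimension bound---to approximate a general admissible $u$.

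The first variation of $\mu(\phi_s(A_t))$ equals $\int_{\Sigma_r} u\,d\mu_{\Sigma_r}$, where $d\mu_{\Sigma_r}=e^{-\psi}d\mathcal{H}^{n-1}$, and the first variation of the weighted perimeter is $(n-1)\int_{\Sigma_r}H_{\Sigma_r,\psi}\,u\,d\mu_{\Sigma_r}$, the orthogonal meeting condition killing the would-be $\partial\Sigma_r$ boundary contribution. This identifies $(n-1)H_{\Sigma_r,\psi}$ as a one-sided derivative of $I$ at $t$. The heart of the argument is the second variation at fixed weighted volume, which for a normal variation $u\nu$ with $\int u\,d\mu_{\Sigma_r}=0$ takes the form
\[
\mathcal{Q}(u)=\int_{\Sigma_r}\!\Bigl(|\nabla^{\Sigma_r}u|^2-(Ric_g+Hess_g\psi)(\nu,\nu)\,u^2-|II_{\Sigma_r}|^2\,u^2\Bigr)d\mu_{\Sigma_r}-\int_{\partial\Sigma_r}\!II_{\partial\Omega}(\nu,\nu)\,u^2\,d\mu_{\partial\Sigma_r},
\]
the boundary term arising precisely because orthogonality at $\partial\Omega$ forces $\nu$ to be a tangent direction of $\partial\Omega$; minimality of $A_t$ gives $\mathcal{Q}(u)\ge0$ on the constraint $\int u\,d\mu_{\Sigma_r}=0$.

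To deduce concavity I would take $u$ to be a cut-off of the constant function, renormalize so that the first variation of volume equals $1$, and remove the zero-mean constraint by comparing $I(t+s)$ to the perimeter of the varied region evaluated at volume $t+s$. The gradient term vanishes in the constant-$u$ limit, yielding
\[
I''(t)\le -\frac{1}{\mu_{\Sigma_r}(\Sigma_r)}\!\left(\int_{\Sigma_r}\!(Ric_g+Hess_g\psi)(\nu,\nu)\,d\mu_{\Sigma_r}+\int_{\partial\Sigma_r}\!II_{\partial\Omega}(\nu,\nu)\,d\mu_{\partial\Sigma_r}\right)\le 0,
\]
the last inequality using the Bakry--\'Emery hypothesis $Ric_g+Hess_g\psi\ge0$ in the bulk and local convexity $II_{\partial\Omega}\ge 0$ on the boundary. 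Since $I$ is continuous on $[0,1]$ (Lemma \ref{lem:I-continuity}) and $I(0)=I(1)=0$, this differential inequality, understood in the viscosity sense across non-differentiability points of $I$, extends concavity to all of $[0,1]$.

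The main technical obstacle is the rigorous treatment of the singular set $\Sigma_s$: one must justify that the second-variation inequality survives the limit of truncations $u_\varepsilon\to u$ around $\Sigma_s$, which relies on $\dim_{\mathcal{H}}\Sigma_s\le n-8$ to produce cut-offs whose Dirichlet energy tends to $0$. A secondary delicacy is the correct identification of the boundary integrand in $\mathcal{Q}$ with the unweighted second fundamental form $II_{\partial\Omega}$ rather than a $\psi$-modified version, which is what makes local convexity of $\partial\Omega$ (as opposed to a joint condition on $\partial\Omega$ and $\psi$) the right hypothesis in the density setting; this identification requires a careful decomposition of $Hess_g\psi$ along the orthogonal frame at $\partial\Sigma_r$. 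Both points are standard in the cited literature and constitute the only places where the density and the boundary interact nontrivially.
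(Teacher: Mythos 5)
Your proposal follows exactly the route the paper intends: it invokes the existence/regularity theorem for weighted isoperimetric minimizers in a domain with boundary (generalized constant mean curvature, orthogonality at $\partial\Omega$, singular set of codimension at least $8$) and then runs the Sternberg--Zumbrun/Bayle second-variation argument, with the Bakry--\'Emery term $(Ric_g+Hess_g\psi)(\nu,\nu)$ in the bulk and the unweighted $II_{\partial\Omega}(\nu,\nu)$ on the boundary, which is precisely the ``tedious exercise'' the paper defers to the cited literature. The only cosmetic slips are that the prefactor in your final differential inequality should be $1/\mu_{\Sigma_r}(\Sigma_r)^2$ after normalizing the volume derivative to $1$, and that you dropped the (harmless, nonnegative) $|II_{\Sigma_r}|^2$ term; neither affects the sign and hence the concavity conclusion.
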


In the one-dimensional case $n=1$, it was shown by S. Bobkov
\cite{BobkovExtremalHalfSpaces} that all of the above theorems hold
as well (here there is no point to consider a general manifold):
\begin{thm}[Bobkov] \label{thm:Bobkov-concavity}
Let $(\Omega,d) = (\Real,\abs{\cdot})$ and let $\mu$ be an arbitrary
absolutely continuous log-concave measure on $\Omega$. Then $I =
I_{(\Omega,d,\mu)}$ is a concave function on $(0,1)$.
\end{thm}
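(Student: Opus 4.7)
The plan is to reduce Theorem \ref{thm:Bobkov-concavity} to a direct one-dimensional computation by exploiting the fact that on $\Real$, isoperimetric minimizers for log-concave measures are half-lines. Write $\rho = d\mu/dx$; since $\rho$ is log-concave, its support is an interval on whose interior $\rho$ is strictly positive and locally Lipschitz. Let $F(x) = \mu((-\infty,x])$ be the CDF, $U := F^{-1} : (0,1) \to \mathrm{int}(\mathrm{supp}\,\mu)$ the inverse CDF, and define the candidate profile
\[
 h(t) := \rho(U(t)) \;,\;\; t \in (0,1)~.
\]

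\textbf{Step 1: identify the isoperimetric minimizers.} The half-line $(-\infty, U(t)]$ has $\mu$-measure $t$ and Minkowski boundary measure $\rho(U(t)) = h(t)$; the complementary half-line $[U(1-t),\infty)$ similarly realizes boundary measure $h(1-t)$. This yields the upper bound $I(t) \leq \min(h(t), h(1-t))$. For the reverse inequality, I would first reduce to bounded open sets (which is legitimate since $\mu^+$ is a liminf of $\eps$-enlargements) and then use the countable-interval decomposition $A = \bigsqcup_i (a_i,b_i)$, for which $\mu^+(A) = \sum_i (\rho(a_i) + \rho(b_i))$. A sliding rearrangement argument, pushing the intervals toward an endpoint of $\mathrm{supp}\,\mu$, cannot increase the boundary measure because $\log \rho$ is concave (so extremal values of $\rho$ are attained near the edges of the support), and consolidating into a single interval then reduces to the half-line case. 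This gives $I(t) = \min(h(t), h(1-t))$.

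\textbf{Step 2: concavity of $h$.} In the smooth case $\rho \in C^2$ with $\rho > 0$ on its support, differentiating $F(U(t)) = t$ gives $U'(t) = 1/\rho(U(t)) = 1/h(t)$, whence
\[
 h'(t) = \rho'(U(t)) \, U'(t) = (\log \rho)'(U(t)) \;,\quad h''(t) = \frac{(\log \rho)''(U(t))}{\rho(U(t))} \leq 0~,
\]
by log-concavity of $\rho$. Consequently both $h(t)$ and $t \mapsto h(1-t)$ are concave on $(0,1)$, and the pointwise minimum of two concave functions is concave, giving the result.

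\textbf{Step 3: removing smoothness.} For general absolutely continuous log-concave $\mu$, approximate by setting $\psi = -\log \rho$ and convolving with a Gaussian mollifier, so that the resulting densities $\rho_m = \exp(-\psi_m)$ are smooth, strictly positive and log-concave; a mild dilation and renormalization as in the proof of Corollary \ref{cor:approx-lc} produces probability measures $\mu_m$ tending to $\mu$ from above (or, after truncation, from within). Each $I_{(\Real,|\cdot|,\mu_m)}$ is concave on $(0,1)$ by Steps 1--2, and Proposition \ref{prop:convergence} yields pointwise convergence $I_m \to I$ on $(0,1)$. Concavity is preserved under pointwise limits, completing the proof.

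The main obstacle is Step 1: rigorously justifying $\mu^+(A) \geq \min(h(t), h(1-t))$ for an arbitrary Borel set. Although the intuition (``consolidate and slide'') is transparent, care is required to connect the definition of $\mu^+$ via $\eps$-enlargements to the endpoint-sum formula for open sets with infinitely many components, and to handle unbounded support. Steps 2 and 3 are essentially routine calculus together with the approximation machinery developed in Section \ref{sec:AA}.
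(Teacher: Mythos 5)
Your overall strategy coincides with the paper's: Theorem \ref{thm:Bobkov-concavity} is quoted from Bobkov's work \cite{BobkovExtremalHalfSpaces}, and the paper's own justification (see the Remark following the theorem) consists precisely of your Step 2 --- half-lines minimize, hence $I(t)=\min\bigl(F'\circ F^{-1}(t),\,F'\circ F^{-1}(1-t)\bigr)$, and differentiation plus log-concavity gives concavity of each branch. Your computation $h''(t)=(\log\rho)''(U(t))/\rho(U(t))\le 0$ is exactly that argument and is correct.

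The genuine gap is in Step 1, which you rightly identify as the crux but do not actually close; the paper simply cites Bobkov for it, whereas you attempt to reprove it. As written, your sliding/consolidation sketch does not work in the order you present it. First, the reason sliding a single interval $(a,b)$ of measure $t_i$ cannot increase the boundary measure is \emph{not} that ``extremal values of $\rho$ are attained near the edges of the support''; it is that, parametrizing $a=U(s)$, $b=U(s+t_i)$, the boundary measure $s\mapsto h(s)+h(s+t_i)$ is a \emph{concave} function of the sliding parameter and hence is minimized at an endpoint of its domain, where one of the two terms degenerates to a half-line contribution. Second, consolidating several intervals of measures $t_1,\dots,t_k$ into one requires the subadditivity $\sum_i J(t_i)\ge J\bigl(\sum_i t_i\bigr)$ for the candidate profile $J(t)=\min(h(t),h(1-t))$, which again follows from concavity of $h$ together with $J\ge 0$. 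In both places the concavity of $h$ --- your Step 2 --- is the engine, so the logical order must be reversed: establish concavity of $h$ first, then deduce that half-lines minimize, then conclude $I=J$ is concave. One must also reduce from arbitrary Borel sets to \emph{finitely} many intervals before summing endpoint contributions. Finally, Step 3 is both unnecessary and slightly circular: Proposition \ref{prop:convergence} only yields $I_m(t)\to\liminf_{s\to t}I(s)$, and upgrading this to $I_m\to I$ needs lower semi-continuity of $I$, which Lemma \ref{lem:I-continuity} provides only for $n\ge 2$ (indeed Corollary \ref{cor:approx-lc} invokes Theorem \ref{thm:Bobkov-concavity} directly for $n=1$ for precisely this reason). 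The smoothing is avoidable: for a general log-concave density, $h$ is locally absolutely continuous on $(0,1)$ with $h'(t)=(\log\rho)'(U(t))$ a.e., which is non-increasing since $(\log\rho)'$ is non-increasing and $U$ is increasing, and this already gives concavity of $h$ without approximation.
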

\begin{rem}
Bobkov showed that in this case, the minimizing sets are always
given by half-lines, from which it is immediate that $I(t) = \min(F'
\circ F^{-1}(t), F' \circ F^{-1}(1-t))$, where $F(s) =
\mu(-\infty,s)$. Using that $\mu$ is log-concave, direct
differentiation reveals that $I$ is concave. Note that the case
$n=1$ is special since $I$ may be discontinuous at $0$ and $1$, but
this has absolutely no consequences to our applications.
\end{rem}

\setlinespacing{0.74} \setlength{\bibspacing}{0pt}
\vspace{-20pt}

\bibliographystyle{plain}

\def\cprime{$'$}

\end{document}